\renewcommand{\phi}{\varphi}
\renewcommand{\leq}{\leqslant}
\renewcommand{\geq}{\geqslant}
\renewcommand{\epsilon}{\varepsilon}
\renewcommand{\kappa}{\varkappa}
 \DeclareMathOperator{\ind}{ind}
\DeclareMathOperator{\cyl}{cyl} 
\DeclareMathOperator{\spl}{spl} \DeclareMathOperator{\sw}{\sf sw}
\DeclareMathOperator{\sd}{sd} 
 \DeclareMathOperator{\Map}{Map}
\DeclareMathOperator{\surj}{surj} 
\DeclareMathOperator{\Hom}{Hom} 
 \DeclareMathOperator{\id}{id}
\DeclareMathOperator{\SimpAlg}{SimAlg}
 \DeclareMathOperator{\Mor}{Mor}
\DeclareMathOperator{\Alg}{Alg} \DeclareMathOperator{\colim}{colim}
\DeclareMathOperator{\Ho}{Ho}
 \DeclareMathOperator{\kr}{Ker}
 \DeclareMathOperator{\Ar}{Ar}
\DeclareMathOperator{\lm}{lim} 
 \DeclareMathOperator{\Mod}{Mod}
 \DeclareMathOperator{\Ob}{Ob}
\DeclareMathOperator{\CAlg}{CAlg}
\newcommand{\sn}{\par\smallskip\noindent}
\newcommand {\lp}{\colim}
\newcommand{\lo}{\lm}
\newcommand{\lra}[1]{\bl{#1}\longrightarrow\relax}
\newcommand{\bl}[1]{\buildrel #1\over}
\newcommand{\cc}{\mathcal}
\newcommand{\bb}{\mathbb}
\newcommand{\ps}{\oplus}
\newcommand{\ff}{\mathfrak}
\newcommand{\op}{{\textrm{\rm op}}}
\newcommand{\wt}{\widetilde}
\newcommand{\sdi}{\sd^\bullet}
\newcommand{\ahaw}{{\Alg_{k}}}
\newcommand{\sahaw}{\SimpAlg_k}
\newcommand{\cahaw}{{\CAlg}_k}
\newcommand{\Spt}{Sp(\Re)}
\newcommand{\inda}{{\Alg_k^{\ind}}}
\def\Sz{\mathbb{S}}
\newtheorem{thm}{Theorem}[section]
\newtheorem{prop}[thm]{Proposition}
\newtheorem{cor}[thm]{Corollary}
\newtheorem{lem}[thm]{Lemma}
\newtheorem*{excisa}{Excision Theorem A}
\newtheorem*{excisb}{Excision Theorem B}
\newtheorem*{comparisa}{Comparison Theorem A}
\newtheorem*{comparisb}{Comparison Theorem B}
\newtheorem*{rem}{Remark}
\newtheorem*{thmm}{Theorem}
\newtheorem*{exs}{Examples}
\newtheorem*{example}{Example}
\newtheorem*{defs}{Definition}
\newtheorem*{hauptlemma}{Hauptlemma}
\newtheorem*{hauptsub}{Hauptsublemma}
\begin{document}

\footskip30pt

%\baselineskip=1.1\baselineskip

\title{Algebraic Kasparov K-theory. I}
\author{Grigory Garkusha}

\thanks{Supported by EPSRC grant EP/H021566/1.}
\address{Department of Mathematics, Swansea University, Singleton Park, Swansea SA2 8PP, UK}
\email{G.Garkusha@swansea.ac.uk}
\urladdr{http://math.swansea.ac.uk/staff/gg/}

\keywords{Bivariant algebraic K-theory, homotopy theory of algebras,
triangulated categories}

\subjclass[2010]{19D99, 19K35, 55P42}

\begin{abstract}
This paper is to construct unstable, Morita stable and stable
bivariant algebraic Kasparov $K$-theory spectra of $k$-algebras.
These are shown to be homotopy invariant, excisive in each variable
$K$-theories. We prove that the spectra represent universal
unstable, Morita stable and stable bivariant homology theories
respectively.
\end{abstract}
\maketitle

\thispagestyle{empty} \pagestyle{plain}

\newdir{ >}{{}*!/-6pt/@{>}} %this command is to define the arrow of the type \ar@{ >->} (spacing of arrows is needed sometimes)

%\tableofcontents

\section{Introduction}

$K$-theory was originally discovered by Grothendieck in the late 50-s.
Thanks to works by Atiyah, Hirzebruch, Adams $K$-theory
was firmly entrenched in topology in the 60-s. Along with
topological $K$-theory mathematicians developed algebraic
$K$-theory. After Atiyah-Singer's index theorem for elliptic
operators $K$-theory penetrated into analysis and gave rise to
operator $K$-theory.

The development of operator $K$-theory in the 70-s took place in a
close contact with the theory of extensions of $C^*$-algebras and
prompted the creation of a new technical apparatus, the Kasparov
$K$-theory~\cite{Kas}. The Kasparov bifunctor $KK_*(A,B)$ combines
Grothendieck's $K$-theory $K_*(B)$ and its dual (contravariant)
theory $K^*(A)$. The existence of the product $KK_*(A,D)\otimes
KK_*(D,B)\to KK_*(A,B)$ makes the bifunctor into a very strong and
flexible tool.

One way of constructing an algebraic counterpart of the bifunctor
$KK_*(A,B)$ with a similar biproduct and similar universal
properties is to define a triangulated category whose objects are
algebras. In~\cite{Gar} the author constructed various bivariant
$K$-theories of algebras, but he did not study their universal
properties. Motivated by ideas and work of J. Cuntz on bivariant
$K$-theory of locally convex algebras~\cite{Cu2,Cu,Cu1}, {\it
universal\/} algebraic bivariant $K$-theories were constructed by
Corti\~nas--Thom in~\cite{CT}.

Developing ideas of~\cite{Gar} further, the author introduces and
studies in~\cite{Gar1} universal bivariant homology theories of
algebras associated with various classes $\ff F$ of fibrations on an
``admissible category of $k$-algebras" $\Re$. The methods used by
the author to construct $D(\Re,\ff F)$ are very different from those
used by Corti\~nas--Thom~\cite{CT} to construct $kk$. However they
coincide in the appropriate stable case. In a certain sense
\cite{Gar1} uses the same approach as in constructing $E$-theory of
$C^*$-algebras~\cite{Hig}. We start with a datum of an admissible
category of algebras $\Re$ and a class $\ff F$ of fibrations on it
and then construct a {\it universal\/} algebraic bivariant
$K$-theory $j:\Re\to D(\Re,\ff F)$ out of the datum $(\Re,\ff F)$ by
inverting certain arrows which we call weak equivalences. The
category $D(\Re,\ff F)$ is naturally triangulated. The most
important cases in practice are the class of $k$-linear split
surjections $\ff F=\ff F_{\spl}$ or the class $\ff F=\ff F_{\surj}$
of all surjective homomorphisms.

If $\ff F=\ff F_{\spl}$ (respectively $\ff F=\ff F_{\surj}$) then
$j:\Re\to D(\Re,\ff F)$ is called the unstable algebraic Kasparov
$K$-theory (respectively unstable algebraic $E$-theory) of $\Re$. It
should be emphasized that~\cite{Gar1} does not consider any
matrix-invariance in general. This is caused by the fact that many
interesting admissible categories of algebras deserving to be
considered separately like that of all commutative ones are not
closed under matrices.

If we want to have matrix invariance, then \cite{Gar1} introduces
matrices into the game and gets universal algebraic, excisive,
homotopy invariant {\it and\/} ``Morita invariant" (respectively
``$M_\infty$-invariant") $K$-theories $j:\Re\to D_{mor}(\Re,\ff F)$
(respectively $j:\Re\to D_{st}(\Re,\ff F)$). The triangulated
category $D_{mor}(\Re,\ff F)$ (respectively $D_{st}(\Re,\ff F)$) is
constructed out of $D(\Re,\ff F)$ just by ``inverting matrices"
$M_nA$, $n>0$, $A\in\Re$ (respectively by inverting the natural
arrows $A\to M_\infty A$ with $M_\infty A=\cup_nM_nA$). We call
$D_{mor}(\Re,\ff F_{\spl})$ and $D_{mor}(\Re,\ff F_{\surj})$
(respectively $D_{st}(\Re,\ff F_{\spl})$ and $D_{st}(\Re,\ff
F_{\surj})$) the Morita stable algebraic $KK$- and $E$-theories
(respectively the stable algebraic $KK$- and $E$-theories). A
version of the Corti\~nas--Thom theorem~\cite{CT} says that there is
a natural isomorphism of $\bb Z$-graded abelian groups
(see~\cite{Gar1})
   $$D_{st}(\Re,\ff F)_*(k,A)\cong KH_*(A),$$
where $KH_*(A)$ is the $\bb Z$-graded abelian group consisting of
the homotopy $K$-theory groups in the sense of Weibel~\cite{W1}.

One of the aims of this paper is to represent unstable, Morita
stable and stable algebraic Kasparov $K$-theories. Here we deal only
with the class of $k$-linear split surjections $\ff F=\ff F_{\spl}$.
We introduce the ``unstable, Morita stable and stable algebraic
Kasparov $K$-theory spectra" $\bb K^{\star}(A,B)$ of $k$-algebras
$A,B\in\Re$ where $\star\in\{unst,mor,st\}$ and $\Re$ is an
appropriate admissible category of algebras. It should be emphasized
that the spectra do not use any realizations of categories and are
defined by means of algebra homomorphisms only. This makes our
constructions rather combinatorial.

\begin{thmm}[Excision Theorem A for spectra]
Let $\star\in\{unst,mor,st\}$. The assignment
$B\mapsto\mathbb{K}^{\star}(A,B)$ determines a functor
   $$\mathbb{K}^{\star}(A,?):\Re\to(Spectra)$$
which is homotopy invariant and excisive in the sense that for every
$\ff F$-extension $F\to B\to C$ the sequence
   $$\mathbb{K}^{\star}(A,F)\to\mathbb{K}^{\star}(A,B)\to\mathbb{K}^{\star}(A,C)$$
is a homotopy fibration of spectra. In particular, there is a long
exact sequence of abelian groups
   $$\cdots\to\mathbb{K}_{i+1}^{\star}(A,C)\to\mathbb{K}_i^{\star}(A,F)\to\mathbb{K}_i^{\star}(A,B)
     \to\mathbb{K}_i^{\star}(A,C)\to\cdots$$
for any $i\in\bb Z$.
\end{thmm}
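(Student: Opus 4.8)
The plan is to prove the homotopy fibration assertion one spectrum level at a time and then pass to spectra formally, obtaining the long exact sequence as a corollary. Fix $A$ and view $\mathbb{K}^\star(A,?)$ as a sequence $\{\mathbb{K}^\star(A,?)_n\}_{n\geq 0}$ of pointed spaces together with its structure maps. Functoriality in $B$ requires no argument: each level is assembled from the representable functors $\Hom_\Re(-,B)$, a simplicial resolution, and — for $\star\in\{mor,st\}$ — the matrix functors $B\mapsto M_nB$ and $B\mapsto M_\infty B$, all functorial in $B$. Homotopy invariance of $B\mapsto\mathbb{K}^\star(A,B)$ is inherited from the construction, exactly as in \cite{Gar1}: the loop/path (cone) algebra functors entering $\mathbb{K}^\star$ already invert the elementary polynomial homotopies $B\to B[t]$, so each level, hence the spectrum, sends them to weak equivalences.

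The substantive point is excision. Let $F\rightarrowtail B\twoheadrightarrow C$ be an $\ff F$-extension; since $\ff F=\ff F_{\spl}$, the surjection $B\to C$ admits a $k$-linear section. I would first prove that for every $n$ the sequence
$$\mathbb{K}^\star(A,F)_n\longrightarrow\mathbb{K}^\star(A,B)_n\longrightarrow\mathbb{K}^\star(A,C)_n$$
is a homotopy fibration of pointed spaces. This rests on two facts. (a) After applying the simplicial resolution and passing to the model of the ``$A$-side'' used in the construction (the iterated loop powers, resp.\ a cofibrant replacement), the map $\Hom(-,B)\to\Hom(-,C)$ induced by a $k$-linear split surjection is a Kan fibration; this is precisely where the split hypothesis enters, being the statement that the members of $\ff F_{\spl}$ are fibrations for the structure on simplicial (ind-)algebras underlying $D(\Re,\ff F)$. (b) Left exactness of $\Hom$ identifies the strict fiber over the base point with $\Hom(-,F)$, and this has to be upgraded to an identification of the \emph{homotopy} fiber of $\mathbb{K}^\star(A,B)_n\to\mathbb{K}^\star(A,C)_n$ with $\mathbb{K}^\star(A,F)_n$. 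Step (b) is the step I expect to be the main obstacle: it is the algebraic analogue of the hard part of excision in $E$- and Kasparov theory, and I would attack it through the universal semi-split extension associated with $B$ together with the excision and universality properties of $j\colon\Re\to D(\Re,\ff F)$ established in \cite{Gar1}, comparing the two fibration sequences by a five-lemma argument on (stable) homotopy groups.

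Granting the levelwise statement, the passage to spectra is routine. The $\mathbb{K}^\star(A,?)$ may be taken to be $\Omega$-spectra, and a levelwise homotopy fibration of $\Omega$-spectra is a homotopy fibration of spectra, so $\mathbb{K}^\star(A,F)\to\mathbb{K}^\star(A,B)\to\mathbb{K}^\star(A,C)$ is a homotopy fibration of spectra. The Morita stable and stable versions are obtained from the unstable one by the filtered colimits $\colim_n M_n(-)$ and $(-)\mapsto M_\infty(-)=\bigcup_n M_n(-)$ (together with any sequential homotopy colimits already present in the levels); since filtered colimits of spaces preserve Kan fibrations and weak equivalences, they preserve homotopy fibrations, and the conclusion follows for all $\star\in\{unst,mor,st\}$. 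The displayed long exact sequence is then the long exact sequence of stable homotopy groups $\pi_i$ attached to this homotopy fibration, the connecting homomorphism $\mathbb{K}_{i+1}^\star(A,C)\to\mathbb{K}_i^\star(A,F)$ being the usual one.
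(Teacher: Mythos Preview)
Your overall shape --- prove excision levelwise for the spaces $\cc K(J^nA,-)$, pass to the $\Omega$-spectrum, and for $\star\in\{mor,st\}$ observe that the filtered colimits over matrix algebras preserve homotopy fibre sequences --- is exactly the paper's route, and those two reductions go through just as you say. The gap is in the core step for the unstable space.

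Step (a) is false as stated: a $k$-linear split surjection $f:B\to C$ does \emph{not} induce a Kan fibration $\Hom_{\inda}(J^nA,\bb B^\Delta(\Omega^n))\to\Hom_{\inda}(J^nA,\bb C^\Delta(\Omega^n))$, because the $k$-linear section lifts $k$-linear maps, not algebra homomorphisms, and $J^nA$ (unlike $TA$) has no relevant lifting property. The paper never claims this. It instead replaces $f_*$ by a fibration via the mapping-path construction, setting $P_f(\Omega^n)=\bb B^\Delta(\Omega^n)\times_{\bb C^\Delta(\Omega^n)}P\bb C^\Delta(\Omega^n)$, so that $\cc X:=\lp_n\Hom_{\inda}(J^nA,P_f(\Omega^n))$ is the homotopy fibre by construction; all the work is then to show that the comparison map $\iota:\cc K(A,F)\to\cc X$ is a weak equivalence. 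Your plan (b) for this --- invoke universality of $j:\Re\to D(\Re,\ff F)$ and run a five-lemma --- is circular in the paper's logic: to feed $\bb K(A,-)$ into that universality one must already know it is excisive, and the comparison $\bb K_*(A,B)\cong D(\Re,\ff F)_*(A,B)$ is proved \emph{downstream} of Excision~A. The paper's actual argument is a direct construction on simplicial ind-algebras: from an explicit three-step map of extensions (built using the given section $g:C\to B$, the element $\mathbf t$, and a concrete simplicial map $\lambda:\Delta^1\times\Delta^1\to\Delta^1$) one extracts a classifying map $\alpha:J(P_f(\Omega^n))\to\bb F^\Delta(\Omega^{n+1})$ with $\alpha\circ J(\iota)=\xi_\upsilon$ and $\iota\circ\alpha$ elementary homotopic to $\xi_\tau$. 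Translated to $\Hom$-spaces, these relations say that $\iota$ and the induced map $a$ are mutual inverses modulo the colimit structure maps of $\cc K(A,F)$ and $\cc X$, hence $\iota$ is a weak equivalence. That hands-on construction is the missing idea in your sketch.
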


We also have the following

\begin{thmm}[Excision Theorem B for spectra]
Let $\star\in\{unst,mor,st\}$. The assignment
$B\mapsto\mathbb{K}^{\star}(B,D)$ determines a functor
   $$\mathbb{K}^{\star}(?,D):\Re^{\op}\to(Spectra),$$
which is excisive in the sense that for every $\ff F$-extension
$F\to B\to C$ the sequence
   $$\mathbb{K}^{\star}(C,D)\to\mathbb{K}^{\star}(B,D)\to\mathbb{K}^{\star}(F,D)$$
is a homotopy fibration of spectra. In particular, there is a long
exact sequence of abelian groups
   $$\cdots\to\mathbb{K}_{i+1}^{\star}(F,D)\to\mathbb{K}_i^{\star}(C,D)\to\mathbb{K}_i^{\star}(B,D)
     \to\mathbb{K}_i^{\star}(F,D)\to\cdots$$
for any $i\in\bb Z$.
\end{thmm}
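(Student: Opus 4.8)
The plan is to deduce Excision Theorem B from the same structural fact that powers Excision Theorem A, namely that $j\colon\Re\to D(\Re,\ff F)$ (and its Morita stable and stable refinements) carries every $\ff F$-extension $F\to B\to C$ to a distinguished triangle $\Omega j(C)\to j(F)\to j(B)\to j(C)$ (see \cite{Gar1}), together with the fact that $\mathbb{K}^{\star}(-,-)$ is, by its very construction, the mapping-spectrum bifunctor for these triangulated theories. First I would reduce the assertion to a property of the combinatorial bifunctor underlying $\mathbb{K}^{\star}$: the spectrum $\mathbb{K}^{\star}(A,D)$ is assembled, through a $J$-stabilisation, out of the simplicial mapping spaces $\Hom(J^{\bullet}A,D^{\Delta^{\bullet}})$, and Excision A amounts to the covariant fibration property that in the second variable such a functor turns $\ff F$-surjections into Kan fibrations. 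Excision B demands the \emph{dual} property, that the contravariant functor $A\mapsto\mathbb{K}^{\star}(A,D)$ sends an $\ff F$-extension to a homotopy fibration of spectra; unlike its covariant counterpart this is not formal, and this is where the geometry of $\ff F=\ff F_{\spl}$ must enter.

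The device I would use is the classifying ($\ff F$-universal) extension of $C$, for instance the tensor-algebra extension $0\to JC\to TC\to C\to 0$ with $JC$ the kernel of $TC\twoheadrightarrow C$. Since $\ff F=\ff F_{\spl}$ consists of $k$-linearly split surjections, every $\ff F$-extension $F\to B\to C$ admits a morphism of extensions from this one restricting to the identity on $C$ and unique up to homotopy, the component on the kernels being the classifying map $JC\to F$; moreover $TC$ becomes contractible in $D(\Re,\ff F)$ and $JC$ is a model of $\Omega C$. The first step is a direct verification, essentially forced by the definition of the $J$-stabilisation, that $\mathbb{K}^{\star}(-,D)$ carries the universal extension $JC\to TC\to C$ to a homotopy fibration with contractible total term, i.e. that the natural map $\mathbb{K}^{\star}(JC,D)\to\Omega\,\mathbb{K}^{\star}(C,D)$ is an equivalence. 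The second step propagates this to a general $\ff F$-extension by comparing it with its classifying extension and invoking homotopy invariance of $\mathbb{K}^{\star}(-,D)$, Excision A, and a mapping-cone argument in the (triangulated) stable homotopy category. The three cases $\star\in\{unst,mor,st\}$ come out uniformly: $D_{mor}(\Re,\ff F)$ and $D_{st}(\Re,\ff F)$ are Verdier localisations of $D(\Re,\ff F)$ and the extra arrows inverted there (those coming from the embeddings $A\to M_nA$, respectively $A\to M_\infty A$) are inverted by $\mathbb{K}^{\star}$ as well, so the triangle and the entire argument descend.

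Granting the homotopy fibration $\mathbb{K}^{\star}(C,D)\to\mathbb{K}^{\star}(B,D)\to\mathbb{K}^{\star}(F,D)$, the long exact sequence of abelian groups is simply the long exact homotopy sequence of a fibration of spectra; the only thing to check is that its boundary map $\mathbb{K}_{i+1}^{\star}(F,D)\to\mathbb{K}_{i}^{\star}(C,D)$ is induced by the connecting morphism of the distinguished triangle attached to the extension, which is immediate from the naturality of the identifications used above.

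The main obstacle I anticipate is the by-hand verification in the first step: that $\mathbb{K}^{\star}(-,D)$ turns the universal extension into a homotopy fibration with contractible middle term, carried out functorially in $C$ and compatibly with the $J$-stabilisation defining the spectra. This is precisely where one exploits contractibility of $TC$ and a delooping argument, and there is no formal analogue of it on the covariant side treated in Excision A. A secondary technical point is to make the comparison between a general $\ff F$-extension and its classifying extension sufficiently functorial to transport the fibration property, and to pin down the connecting map so that the \emph{stated} long exact sequence, not merely an abstract one, is obtained.
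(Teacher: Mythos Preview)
Your plan differs substantially from the paper's proof, and it has a genuine gap.

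\textbf{Circularity in the opening strategy.} You begin by invoking that $\mathbb{K}^{\star}(-,-)$ is ``the mapping-spectrum bifunctor'' for $D_\star(\Re,\ff F)$. In the paper this is Comparison Theorem~B, and its proof (via Lemma~\ref{gammaone} and Theorem~\ref{mainequivvv}) relies on Corollary~\ref{suspen}, which in turn is deduced from Excision Theorem~B. So this route is circular. You seem aware of this and pivot to a direct two-step argument, but that argument does not close either.

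\textbf{The gap in Step~2.} Grant Step~1: $\mathbb{K}^\star(TC,D)\simeq *$ (which one can indeed verify directly from polynomial contractibility of $TC$ and Lemma~\ref{preexcsi}), and the tautological shift $\mathbb{K}^\star(JC,D)\simeq\Sigma\,\mathbb{K}^\star(C,D)$ coming from the definition of the spectrum. You then propose to transport the fibre sequence from the universal extension $JC\to TC\to C$ to an arbitrary extension $F\to B\to C$ via the classifying map $\xi:JC\to F$, a ``mapping-cone argument'', Excision~A, and homotopy invariance. But a map of extensions only gives a map between the two candidate fibre sequences; it does not by itself show that the target sequence is a fibre sequence unless you know something strong about the comparison map $\xi^*:\mathbb{K}^\star(F,D)\to\mathbb{K}^\star(JC,D)$ --- and $\xi$ is \emph{not} a quasi-isomorphism in general. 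Excision~A concerns the second variable and gives you no leverage here, and the homotopy invariance you invoke (in the first variable) only handles contractible algebras such as $TC$; it does not suffice to control $\xi^*$. Every diagram-chasing attempt I can reconstruct from your description ends up assuming the conclusion.

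\textbf{What the paper does instead.} The paper's proof (Theorem~\ref{mainstab}) is entirely different and does not pass through the universal extension at all. It works in the stable model category $Sp(\Re)$ of $S^1$-spectra of pointed simplicial functors on a small $\Re$, localised so that distinguished squares of representables become homotopy pushouts \emph{by construction} of the $(I,J)$-local structure. One then shows that the spectrum $\cc R(A)$ of representables $Ex^\infty Sing(r(J^nA))$ is levelwise $(I,J)$-equivalent to $r(J^nA)$, so the square $\cc R(C)\to\cc R(B)\to\cc R(F)$ is a level homotopy pushout; the stable equivalence $\kappa:\cc R(A)\to\mathbb{K}(A,-)$ transports this to the $\mathbb{K}$-spectra, and evaluation at $D$ (a right Quillen functor) yields the homotopy pullback of ordinary spectra. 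The point is that excision is encoded into the model structure on the functor category, not extracted from classifying maps.
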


The following result gives the desired representability.

\begin{thmm}[Comparison]
Let $\star\in\{unst,mor,st\}$. Then for any algebras $A,B\in\Re$
there is an isomorphism of $\bb Z$-graded abelian groups
   $$\bb K^{\star}_*(A,B)\cong D_{\star}(\Re,\ff F)_*(A,B)=\bigoplus_{n\in\bb Z}D_{\star}(\Re,\ff F)(A,\Omega^nB),$$
functorial both in $A$ and in $B$.
\end{thmm}

The Corti\~nas--Thom theorem~\cite[8.2.1]{CT} and the Comparison
Theorem above imply that the spectrum $\bb K^{st}(k,-)$ represents
$KH$. Namely, we have the following

\begin{thmm}
For any $A\in\Re$ there is a natural isomorphism of $\bb Z$-graded
abelian groups
   $$\bb K^{st}_*(k,A)\cong KH_*(A).$$
\end{thmm}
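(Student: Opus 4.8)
The plan is to deduce this last theorem as a two-step composite: first identify $\bb K^{st}_*(k,A)$ with $D_{st}(\Re,\ff F_{\spl})_*(k,A)$ using the Comparison Theorem stated above (applied with $\star = st$, $A$ replaced by $k$, and $B$ replaced by the given algebra), and then identify $D_{st}(\Re,\ff F_{\spl})_*(k,A)$ with $KH_*(A)$ using the Corti\~nas--Thom-type theorem quoted in the introduction, namely the natural isomorphism
\[
   D_{st}(\Re,\ff F)_*(k,A)\cong KH_*(A).
\]
So essentially no new argument is needed beyond invoking these two results in sequence and checking that the composite isomorphism is natural in $A$.

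The one genuine point that requires care is \emph{which} stable triangulated category one is quoting the Corti\~nas--Thom theorem for. The version recorded in the introduction is phrased for $D_{st}(\Re,\ff F)$ with $\ff F$ unspecified, but the cited source \cite[8.2.1]{CT} concerns the stabilization built from \emph{all} surjections (or at least from split surjections), and one must confirm that the identification $D_{st}(\Re,\ff F_{\spl})_*(k,A)\cong KH_*(A)$ holds for the split-surjection class $\ff F_{\spl}$ that is in force throughout this paper. The expected resolution is that, as remarked in the introduction, the constructions of \cite{Gar1} and of \cite{CT} ``coincide in the appropriate stable case'' — that is, $D_{st}(\Re,\ff F_{\spl})$ agrees with Corti\~nas--Thom's $kk$ after $M_\infty$-stabilization — so the theorem of \cite{CT} applies verbatim. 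I would state this as a one-line remark, citing \cite{Gar1}, rather than reprove it.

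Thus the proof is short: combine the Comparison Theorem ($\star=st$) with \cite[8.2.1]{CT}, observing that both isomorphisms are natural in the algebra variable, so their composite
\[
   \bb K^{st}_*(k,A)\;\cong\;D_{st}(\Re,\ff F_{\spl})_*(k,A)\;\cong\;KH_*(A)
\]
is a natural isomorphism of $\bb Z$-graded abelian groups. The main (and only) obstacle is the bookkeeping just described — making sure the hypotheses on $\Re$ needed for the Comparison Theorem are compatible with those under which the Corti\~nas--Thom theorem is available, and that $k\in\Re$ so that $\bb K^{st}(k,-)$ is defined — none of which should present real difficulty given the setup assumed in the earlier parts of the paper.
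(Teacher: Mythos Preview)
Your proposal is correct and matches the paper's own proof exactly: the paper simply cites the Comparison Theorem (Theorem~\ref{comparisbst}) and the Corti\~nas--Thom result (Theorem~\ref{ctthm}) and concludes. Your extra discussion about which fibration class $\ff F$ is in play is reasonable caution, but the paper has already recorded the needed Corti\~nas--Thom isomorphism as Theorem~\ref{ctthm} (with proof deferred to~\cite{Gar1}), so no additional bookkeeping is required.
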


The preceding theorem is an analog of the same result of $KK$-theory
saying that there is a natural isomorphism $KK_*(\bb C,A)\cong
K_*(A)$ for any $C^*$-algebra $A$.

It is important to mention that another aim of the present paper
together with~\cite{G8} is to develop the theory of ``$K$-motives of
algebras", for which Excision Theorems A-B as well as the Comparison
Theorem are of great utility. This theory shares lots of common
properties with $K$-motives and bivariant $K$-theory of algebraic
varieties, introduced and studied by the author and Panin
in~\cite{GP,GP2} in order to solve some problems in~\cite{GP1} for
motivic spectral sequence. In fact, $K$-motives of algebras are a
kind of a connecting language between Kasparov $K$-theory of
$C^*$-algebras and $K$-motives of algebraic varieties (hence the
choice of the title of this paper and that of~\cite{G8}).

Throughout the paper $k$ is a fixed commutative ring with unit and
$\ahaw$  is the category of non-unital $k$-algebras and non-unital
$k$-homomorphisms.

\subsubsection*{Organization of the paper} In Section~\ref{prel}
we fix some notation and terminology. We study simplicial algebras
and simplicial sets of algebra homomorphisms associated with
simplicial algebras there. In Section~\ref{exten} we discuss
extensions of algebras and classifying maps. It is thanks to
simplicial algebras and some elementary facts of their extensions
that Excision Theorem~A is possible to prove. Then comes
Section~\ref{excisionsec} in which Excision Theorem~A is proved. We
also formulate Excision Theorem~B in this section but its proof
requires an additional material. The spectrum $\bb
K^{unst}(A,B)$ is introduced and studied in
Section~\ref{thespectrum}. In Section~\ref{homotopy} we present
necessary facts about model categories and Bousfield localization.
This material is needed to prove Excision Theorem~B. In
Section~\ref{compa} we study relations between simplicial and
polynomial homotopies. As an application Comparison Theorem~A is
proved in the section. Comparison Theorem~B is proved in
Section~\ref{compb}. It says that the Hom-sets of $D(\Re,\ff F)$ are
represented by stable homotopy groups of spectra $\bb
K^{unst}(A,B)$-s. The spectra $\bb K^{st},\bb K^{mor}$ are
introduced and studied in Section~\ref{moritast}. We also prove
there Comparison Theorems for $D^{st}(\Re,\ff F)$, $D^{mor}(\Re,\ff
F)$ and construct an isomorphism between stable groups $\bb K^{st}_*(k,A)$
of an algebra $A$ and its homotopy $K$-theory groups.

\subsubsection*{Acknowledgement} I would like to thank an anonymous referee for
helpful suggestions concerning the material of the paper.

\section{Preliminaries}\label{prel}

\subsection{Algebraic homotopy}
Following Gersten~\cite{G} a category of $k$-algebras without unit
$\Re$ is {\it admissible\/} if it is a full subcategory of $\ahaw$
and

\begin{enumerate}

\item $R$ in $\Re$, $I$ a (two-sided) ideal of $R$ then $I$ and
$R/I$ are in $\Re$;

\item if $R$ is in $\Re$, then so is $R[x]$, the polynomial algebra in
one variable;

\item given a cartesian square
   $$\xymatrix{D\ar[r]^\rho\ar[d]_\sigma &A\ar[d]^f\\
               B\ar[r]^g &C}$$
in $\ahaw$ with $A,B,C$ in $\Re$, then $D$ is in $\Re$.

\end{enumerate}

One may abbreviate 1, 2, and 3 by saying that $\Re$ is closed under
operations of taking ideals, homomorphic images, polynomial
extensions in a finite number of variables, and fibre products. For
instance, the {\it category of commutative $k$-algebras\/} $\CAlg_k$ is
admissible.

Observe that every $k$-module $M$ can be regarded as a non-unital
$k$-algebra with trivial multiplication: $m_1\cdot m_2=0$ for all
$m_1,m_2\in M$. Then $\Mod k$ is an admissible category of
commutative $k$-algebras.

If $R$ is an algebra then the polynomial algebra $R[x]$ admits two
homomorphisms onto $R$
   $$\xymatrix{R[x]\ar@<2.5pt>[r]^{\partial_x^0}\ar@<-2.5pt>[r]_{\partial_x^1}&R}$$
where
   $\partial_x^i|_R=1_R,\ \ \ \partial_x^i(x)=i,\ \ \ i=0,1.$
Of course, $\partial_x^1(x)=1$ has to be understood in the sense
that $\Sigma r_nx^n\mapsto\Sigma r_n$.

\begin{defs}{\rm
Two homomorphisms $f_0,f_1:S\to R$ are {\it elementary homo\-topic},
written $f_0\sim f_1$, if there exists a homomorphism
   $$f:S\to R[x]$$
such that $\partial^0_xf=f_0$ and $\partial^1_xf=f_1$. A map $f:S\to
R$ is called an {\it elementary homotopy equivalence\/} if there is
a map $g:R\to S$ such that $fg$ and $gf$ are elementary homotopic to
$\id_R$ and $\id_S$ respectively.

}\end{defs}

For example, let $A$ be a $\bb N$-graded algebra, then the inclusion
$A_0\to A$ is an elementary homotopy equivalence. The homotopy
inverse is given by the projection $A\to A_0$. Indeed, the map $A\to
A[x]$ sending a homogeneous element $a_n\in A_n$ to $a_nt^n$ is a
homotopy between the composite $A\to A_0\to A$ and the identity
$\id_A$.

The relation ``elementary homotopic'' is reflexive and
symmetric~\cite[p.~62]{G}. One may take the transitive closure of
this relation to get an equivalence relation (denoted by the symbol
``$\simeq$''). The set of equivalence classes of morphisms $R\to S$
is written $[R,S]$. This equivalence relation will also be called
{\it polynomial or algebraic homotopy}.

\begin{lem}[Gersten \cite{G1}]
Given morphisms in $\ahaw$
   $$\xymatrix{R\ar[r]^f &S\ar@/^/[r]^g \ar@/_/[r]_{g'} &T\ar[r]^h &U}$$
such that $g\simeq g'$, then $gf\simeq g'f$ and $hg\simeq hg'$.
\end{lem}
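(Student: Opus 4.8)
The plan is to reduce both claims to the defining property of elementary homotopy and the functoriality of the polynomial extension $R \mapsto R[x]$. Recall that $g \simeq g'$ means there is a finite chain $g = h_0 \sim h_1 \sim \cdots \sim h_n = g'$ of elementary homotopies, so by induction on the length of such a chain it suffices to treat the case $g \sim g'$, i.e.\ there is a homomorphism $G \colon S \to T[x]$ with $\partial^0_x G = g$ and $\partial^1_x G = g'$. I would prove the two assertions $gf \sim g'f$ and $hg \sim hg'$ separately, each by exhibiting an explicit elementary homotopy.

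For $gf \simeq g'f$: precompose $G$ with $f$. The composite $G f \colon R \to T[x]$ satisfies $\partial^0_x(Gf) = (\partial^0_x G) f = g f$ and $\partial^1_x(Gf) = (\partial^1_x G) f = g' f$, since $\partial^i_x$ are algebra homomorphisms and composition is associative. Hence $G f$ is an elementary homotopy from $gf$ to $g'f$, so in particular $gf \simeq g'f$.

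For $hg \simeq hg'$: here I would use the induced map $h[x] \colon T[x] \to U[x]$ on polynomial algebras, defined by $\sum t_n x^n \mapsto \sum h(t_n) x^n$; this is a homomorphism and satisfies $\partial^i_x \circ h[x] = h \circ \partial^i_x$ for $i = 0,1$ (both sides send $\sum t_n x^n$ to $\sum h(t_n) i^n = h(\sum t_n i^n)$). Then $h[x] \circ G \colon S \to U[x]$ is a homomorphism with $\partial^0_x(h[x]\, G) = h\,(\partial^0_x G) = h g$ and $\partial^1_x(h[x]\, G) = h\,(\partial^1_x G) = h g'$, so it is an elementary homotopy from $hg$ to $hg'$, giving $hg \simeq hg'$. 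Taking transitive closures over the chains then yields the statement for $\simeq$ in general.

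The only mild subtlety — and what counts as the ``main obstacle'', though it is routine — is the bookkeeping that the two evaluation maps $\partial^0_x, \partial^1_x$ and the functorial map $h[x]$ are genuine $k$-algebra homomorphisms and that they commute with each other as claimed; once this naturality of $\partial^i_x$ in the algebra variable is recorded, both halves of the lemma are immediate. No admissibility hypotheses on a subcategory $\Re$ are needed, since $R[x]$ always exists in $\ahaw$; the lemma is purely formal.
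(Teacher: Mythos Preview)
Your proof is correct and is precisely the standard argument: reduce to elementary homotopy by induction on chain length, then use $Gf$ for the precomposition case and $h[x]\circ G$ for the postcomposition case, invoking naturality of $\partial_x^i$ in the algebra variable. The paper does not actually give a proof of this lemma but simply attributes it to Gersten~\cite{G1}; your write-up supplies exactly the elementary verification one would expect there.
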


Thus homotopy behaves well with respect to composition and we have
category $Hotalg$, the {\it homotopy category of $k$-algebras},
whose objects are $k$-algebras and such that $Hotalg(R,S)=[R,S]$.
The homotopy category of an admissible category of algebras $\Re$
will be denoted by $\cc H(\Re)$. Call a homomorphism $s:A\to B$ an
{\it $I$-weak equivalence\/} if its image in $\cc H(\Re)$ is an
isomorphism. Observe that $I$-weak equivalences are those
homomorphisms which have homotopy inverses.

The diagram in $\ahaw$
   $$A\bl{f}\to B\bl{g}\to C$$
is a short exact sequence if $f$ is injective ($\equiv \kr f=0$),
$g$ is surjective, and the image of $f$ is equal to the kernel of
$g$. Thus $f$ is a monomorphism and $f=\ker g$.

\begin{defs}{\rm
An algebra $R$ is {\it contractible\/} if $0\sim 1$; that is, if
there is a homomorphism $f:R\to R[x]$ such that $\partial^0_xf=0$
and $\partial^1_xf=1_R$.

}\end{defs}

For example, every square zero algebra $A\in\ahaw$ is contractible
by means of the homotopy $A\to A[x]$, $a\in A\mapsto ax\in A[x]$. In
other words, every $k$-module, regarded as a $k$-algebra with trivial
multiplication, is contractible.

Following Karoubi and Villamayor~\cite{KV} we define $ER$, the {\it
path algebra\/} on $R$, as the kernel of $\partial_x^0:R[x]\to R$,
so $ER\to R[x]\bl{\partial_x^0}\to R$ is a short exact sequence in
$\ahaw$. Also $\partial_x^1:R[x]\to R$ induces a surjection
   $$\partial_x^1:ER\to R$$
and we define the {\it loop algebra\/} $\Omega R$ of $R$ to be its
kernel, so we have a short exact sequence in $\ahaw$
   $$\Omega R\to ER\bl{\partial_x^1}\to R.$$
We call it the {\it loop extension} of $R$. Clearly, $\Omega R$ is
the intersection of the kernels of $\partial_x^0$ and
$\partial_x^1$. By~\cite[3.3]{G} $ER$ is contractible for any
algebra $R$.

\subsection{Simplicial algebras}

Let $Ord$ denote the category of finite nonempty ordered sets and
order-preserving maps, and for each $n\geq 0$ we introduce the
object $[n]=\{0<1<\cdots<n\}$ of $Ord$. We let
$\Delta^n=\Hom_{Ord}(-,[n])$, so that $|\Delta^n|$ is the standard
$n$-simplex. In what follows the category of non-unital simplicial
$k$-algebras will be denoted by $\sahaw$.

Given a simplicial set $X$ and a simplicial algebra $A_\bullet$, we
denote by $A_\bullet(X)$ the simplicial algebra
$\Map(X,A_\bullet):[n]\mapsto\Hom_{\bb
S}(X\times\Delta^n,A_\bullet)$. We note that all simplicial algebras
are fibrant simplicial sets. If $A_\bullet$ is contractible as a
simplicial set, then the fact that $\bb S$ is a simplicial category
and thus satisfies axiom M7 for simplicial model categories
(see~\cite[section~9.1.5]{Hir}) implies that $A_\bullet(X)$ is
contractible.

In what follows a unital simplicial $k$-algebra $A_\bullet$ is an
object of $\sahaw$ such that all structure maps are unital algebra
homomorphisms.

\begin{prop}\label{contract}
Suppose $A_\bullet$ is a unital simplicial $k$-algebra. Then the
following statements are equivalent:
\begin{enumerate}
\item $A_\bullet$ is contractible as a simplicial set;
\item $A_\bullet$ is connected;
\item there is an element $t\in A_1$ such that $\partial_0(t)=0$ and
$\partial_1(t)=1$.
\end{enumerate}
Furthermore, if one of the equivalent assumptions is satisfied then
every simplicial ideal $I_\bullet\subset A_\bullet$ is contractible.
\end{prop}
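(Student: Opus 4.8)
The plan is to prove the cycle of implications $(1)\Rightarrow(2)\Rightarrow(3)\Rightarrow(1)$, and then derive the final assertion about simplicial ideals from the construction used for $(3)\Rightarrow(1)$. The implication $(1)\Rightarrow(2)$ is immediate: a contractible simplicial set is in particular connected (it is weakly equivalent to a point, so $\pi_0$ is trivial). For $(2)\Rightarrow(3)$, recall that for any simplicial set $\pi_0(A_\bullet)$ is the coequalizer of $\partial_0,\partial_1:A_1\rightrightarrows A_0$; since $A_\bullet$ is unital, $A_0$ is a unital ring containing $0$ and $1$, and connectedness says $0$ and $1$ become identified in $\pi_0$. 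Because $\partial_0,\partial_1$ are ring homomorphisms and the equivalence relation generating $\pi_0$ is generated by pairs $(\partial_0(s),\partial_1(s))$ with $s\in A_1$, I would argue that one can in fact realize $0\sim 1$ by a \emph{single} $1$-simplex: first reduce a zig-zag connecting $0$ to $1$ to a single step using degeneracies and the simplicial identities (the standard argument that in a Kan complex, which $A_\bullet$ is since all simplicial algebras are fibrant, path-components are detected by single edges), giving $t\in A_1$ with $\{\partial_0(t),\partial_1(t)\}=\{0,1\}$; then, if the orientation is wrong, replace $t$ by $1-t$ (using that $A_\bullet$ is unital, so $1-(-)$ is a well-defined unital map $A_\bullet\to A_\bullet$ of simplicial \emph{sets}... more carefully, $s_0(1)-t$ lies in $A_1$ and swaps the two faces). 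This is the step I expect to require the most care: producing the element $t$ with the \emph{correct} normalization $\partial_0(t)=0$, $\partial_1(t)=1$ rather than merely $0$ and $1$ lying in the same component.

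For $(3)\Rightarrow(1)$, given $t\in A_1$ with $\partial_0(t)=0$, $\partial_1(t)=1$, I would build an explicit simplicial contraction, i.e.\ a simplicial homotopy $H:A_\bullet\times\Delta^1\to A_\bullet$ from the identity to the constant map at $0$ (or at $1$). The classical construction: for $x\in A_n$, multiply $x$ by an appropriate degeneracy of $t$. Concretely, using that $A_\bullet$ is a simplicial \emph{algebra}, the maps $h_i:A_n\to A_{n+1}$ defined by $h_i(x)=s_i(x)\cdot (s_n\cdots s_{i+1})(\text{suitable face of }t)$ for $0\le i\le n$ — this is the standard "multiply by a contracting edge" formula that shows a simplicial ring with a multiplicative unit interval is contractible; compare the analogous statement for simplicial groups or the fact that $A_\bullet(X)$ is contractible when $A_\bullet$ is. I would verify that these $h_i$ satisfy the simplicial identities making $(h_i)$ an extra degeneracy / a homotopy $\id_{A_\bullet}\simeq \mathrm{const}$, which forces $|A_\bullet|$ to be contractible. (Alternatively, one can observe that $t$ defines a simplicial map $\Delta^1\to A_\bullet$ which, together with the algebra structure, makes $A_\bullet$ into a simplicial module over the contractible simplicial algebra it generates, and contractibility propagates; but the explicit extra-degeneracy argument is cleaner and self-contained.)

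Finally, for the assertion that every simplicial ideal $I_\bullet\subset A_\bullet$ is contractible: the element $t$ lies in $A_1$ and satisfies $\partial_0(t)=0$, $\partial_1(t)=1$, so for $x\in I_n$ the contracting operators $h_i(x)$ defined above involve a product of $x$ (or a degeneracy of $x$) with (a degeneracy of) $t$; since $I_\bullet$ is an ideal, these products land back in $I_{n+1}$. Hence the very same homotopy $H$ restricts to a simplicial contraction of $I_\bullet$, even though $I_\bullet$ need not be unital (it typically is not). I would spell out only that the restriction is well-defined — the identities are inherited from those already checked on $A_\bullet$ — so no new combinatorial verification is needed. The main obstacle remains the orientation bookkeeping in $(2)\Rightarrow(3)$; everything after that is a matter of writing down the standard formulas and citing that a simplicial set admitting a simplicial contraction has contractible realization.
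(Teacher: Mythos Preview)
Your proposal is correct and follows essentially the same route as the paper. The paper is terser: it declares $(1)\Rightarrow(2)$ and $(2)\Rightarrow(3)$ obvious (the latter since simplicial algebras are Kan, so your orientation worries dissolve), and for $(3)\Rightarrow(1)$ writes the contracting homotopy directly as $f_n:\Delta^1_n\times A_n\to A_n$, $f_n(\alpha,a)=(\alpha^*(t))\cdot a$ --- exactly your ``multiply by degeneracies of $t$'' idea, packaged as a map $\Delta^1\times A_\bullet\to A_\bullet$ rather than via explicit extra-degeneracy operators $h_i$ --- and then observes that the same formula lands in $I_\bullet$ because it is an ideal.
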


\begin{proof}
$(1)\Rightarrow(2),(2)\Rightarrow(3)$ are obvious.

$(3)\Rightarrow(1)$. One can construct a homotopy $f:\Delta^1\times
A_\bullet\to A_\bullet$ from 0 to 1 by defining, for $n\geq 0$,
the map $f_n:\Delta^1_n\times A_n\to A_n$ with the formula
$f_n(\alpha,a)=(\alpha^*(t))\cdot a$. The same contraction applies
to $I_\bullet$.
\end{proof}

The main example of a simplicial algebra we shall work with is
defined as
   $$A^{\Delta}:[n]\mapsto A^{{\Delta}^n}:= A[t_0,\dots, t_n]/\langle 1-\sum_i t_i\rangle\ \ \ (\cong
     A[t_1,\ldots,t_n]),$$
where $A\in\ahaw$.
Given a map $\alpha:[m]\to[n]$ in $Ord$, the map
$\alpha^*:A^{\Delta^n}\to A^{\Delta^m}$ is defined by
$\alpha^*(t_j)=\sum_{\alpha(i)=j}t_i$. Observe that $A^\Delta\cong
A\otimes k^\Delta$.

Note that the face maps $\partial_{0;1}:A^{\Delta^1}\to
A^{\Delta^0}$ are isomorphic to $\partial^{0;1}_t:A[t]\to A$ in the
sense that the diagram
   $$\xymatrix{A[t]\ar[r]^{\partial^{\epsilon}_t}\ar[d]_{t\mapsto t_0}&A\ar[d]\\
               A^{\Delta^1}\ar[r]^{\partial_{\epsilon}}&A^{\Delta^0}}$$
is commutative and the vertical maps are isomorpisms. Let $A^+:=A\ps
k$ as a group and
   $$(a,n)(b,m)=(ab+ma+nb,nm).$$
Then $A^+$ is a unital $k$-algebra containing $A$ as an ideal. The
simplicial algebra $(A^+)^{\Delta}$ has the element $t=t_0$ in
dimension 1, which satisfies $\partial_0(t)=0$ and
$\partial_1(t)=1$. Thus, $t$ is an edge which connects 1 to 0,
making $(A^+)^{\Delta}$ a unital connected simplicial algebra. By
Proposition~\ref{contract} $(A^+)^{\Delta}$ is contractible as a
simplicial set. It follows that the same is true for $A^\Delta$.

There is a mapping space functor $\Hom^{\bullet}_{\ahaw}:
(\ahaw)^{\op}\times\ahaw\to\Sz$, given by
   $$(A,B) \mapsto ([n]\mapsto\Hom_{\ahaw}(A,B^{\Delta^n})).$$
For every $A\in\ahaw$, the functor $\Hom^{\bullet}_{\ahaw}(?,A):
(\ahaw)^{\op}\to\Sz$ has a left adjoint $A^?:\Sz\to(\ahaw)^{\op}$.
If $X\in\Sz$,
    $$A^{X} = \lim_{\Delta^n \to X} A^{\Delta^n}.$$
Observe that
   $$A^X=\Hom_\Sz(X,A^\Delta).$$
We have
    $$\Hom_{\ahaw}(A,B^X)=\Hom_\Sz(X,\Hom^\bullet_\ahaw(A,B)).$$

As above, for any simplicial algebra $A_\bullet$ the functor
$\Hom_{\ahaw}(?,A_\bullet): (\ahaw)^{\op}\to\Sz$ has a left adjoint
$A_\bullet\langle?\rangle=\Hom_\Sz(?,A_\bullet):\Sz\to(\ahaw)^{\op}$.
We have
    $$\Hom_{\ahaw}(B,A_\bullet\langle X\rangle)=\Hom_\Sz(X,\Hom_{\ahaw}(B,A_\bullet)).$$
Note that if $A_\bullet=A^\Delta$ then $A^\Delta\langle
X\rangle=A^X$.

Let $\Sz_\bullet$ be the category of pointed simplicial sets. For
$(K,\star)\in\Sz_\bullet$, put
$$
A_\bullet\langle K,\star\rangle:=\Hom_{\Sz_\bullet}((K,\star),A_\bullet)
=\ker(\Hom_\Sz(K,A_\bullet)\to \Hom_\Sz (\star,A_\bullet))
=\ker(A_\bullet\langle K\rangle\to A_\bullet).
$$

\begin{prop}[Corti\~{n}as--Thom \cite{CT}]\label{power=tensor}
Let $K$ be a finite simplicial set, $\star$ a vertex of $K$, and $A$
a $k$-algebra. Then $k^K$ and $k^{(K,\star)}$ are free $k$-modules,
and there are natural isomorphisms
\[
A\otimes_k k^K\bl\cong\to A^K\qquad A\otimes_k
k^{(K,\star)}\bl\cong\to A^{(K,\star)}.
\]
\end{prop}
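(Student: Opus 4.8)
The plan is to reduce everything to the unpointed statement and to the case $A = k$, then compute $k^K$ directly from a presentation of $K$ as a coequalizer of standard simplices. First I would recall from the discussion preceding the proposition that for any $X \in \Sz$ one has $A^X = \lim_{\Delta^n \to X} A^{\Delta^n}$, the limit being taken over the simplex category of $X$; equivalently $A^X = \Hom_\Sz(X, A^\Delta)$. Since $K$ is finite, it is a finite colimit of representables, and in fact there is a coequalizer diagram $\coprod_{j} \Delta^{m_j} \rightrightarrows \coprod_{i} \Delta^{n_i} \to K$ in $\Sz$ with the two coproducts finite. Applying the contravariant functor $(-)^{\;?}$, which sends colimits of simplicial sets to limits of algebras, turns this into an equalizer $A^K \to \prod_i A^{\Delta^{n_i}} \rightrightarrows \prod_j A^{\Delta^{m_j}}$, i.e. a finite limit of the algebras $A^{\Delta^n} \cong A[t_1,\dots,t_n]$.

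Next I would invoke the identification $A^\Delta \cong A \otimes_k k^\Delta$ established earlier, which at each level reads $A^{\Delta^n} \cong A \otimes_k k^{\Delta^n}$ naturally in $[n]$; here $k^{\Delta^n} = k[t_0,\dots,t_n]/\langle 1 - \sum t_i\rangle$ is a free $k$-module (a basis being the monomials in $t_1,\dots,t_n$). The key point is then that $- \otimes_k M$ commutes with the \emph{finite} limit above whenever the terms involved are free, hence flat, $k$-modules: a finite limit is built from finite products and kernels, tensoring commutes with finite products always, and with kernels of maps between flat modules because flatness gives left-exactness on the relevant short exact sequences (the image of each transition map is again free, being a submodule over — well, here one uses that the relevant kernels are themselves free; more robustly one notes $k^{\Delta^n}$ and the maps between them are split as $k$-module maps, so the equalizer is a \emph{split} finite limit and is preserved by any additive functor). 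Therefore
\[
A^K \;=\; \lim_{\Delta^n \to K} A^{\Delta^n} \;\cong\; \lim_{\Delta^n \to K} \bigl(A \otimes_k k^{\Delta^n}\bigr) \;\cong\; A \otimes_k \lim_{\Delta^n \to K} k^{\Delta^n} \;=\; A \otimes_k k^K,
\]
and taking $A = k$ in the intermediate terms shows along the way that $k^K$ is a free $k$-module (a finite limit of free modules along split maps). All isomorphisms are natural in $A$ since they are built from the natural isomorphism $A^{\Delta^n} \cong A \otimes_k k^{\Delta^n}$.

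For the pointed version, I would use the definitions: $A^{(K,\star)} = \ker(A^K \to A^\star) = \ker(A^K \to A)$, and likewise $k^{(K,\star)} = \ker(k^K \to k)$. The map $A^K \to A$ is, under the identification just proved, the map $A \otimes_k k^K \to A \otimes_k k = A$ induced by $k^K \to k$; since $k^\star = k$ is free and the augmentation $k^K \to k$ is $k$-split (split by the unit of the vertex $\star$), its kernel $k^{(K,\star)}$ is a direct summand of $k^K$, hence free, and $- \otimes_k A$ preserves this split short exact sequence, giving $A^{(K,\star)} \cong A \otimes_k k^{(K,\star)}$ naturally. The main obstacle, and the one point deserving care, is the interchange of $\otimes_k$ with the limit over the simplex category: one must make sure to work with the \emph{finite} presentation of $K$ coming from its finitely many nondegenerate simplices (rather than the a priori infinite simplex category) and to record that the structure maps $k^{\Delta^n} \to k^{\Delta^m}$ are $k$-linearly split, so that the limit is an absolute (split) finite limit; everything else is formal from adjunctions and the earlier isomorphism $A^\Delta \cong A \otimes_k k^\Delta$.
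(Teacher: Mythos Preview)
Your outline is sensible, but the one step you yourself flag as delicate is in fact a genuine gap. Knowing that each individual structure map $k^{\Delta^n}\to k^{\Delta^m}$ is $k$-linearly split does \emph{not} make the equalizer
\[
k^K \;\longrightarrow\; \prod_i k^{\Delta^{n_i}} \;\rightrightarrows\; \prod_j k^{\Delta^{m_j}}
\]
a split (absolute) equalizer; split equalizers require specific retraction data satisfying coherence equations, not just splittings of the two parallel arrows separately. Likewise your flatness argument is circular: to know that $A\otimes_k(-)$ preserves the kernel of $f-g$ you need the image of $f-g$ to be flat, which over a general commutative ring $k$ you have no a priori reason to expect.

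The paper closes this gap by reducing to $\bb Z$: one has $k^{\Delta^n}\cong k\otimes_{\bb Z}\bb Z^{\Delta^n}$ compatibly with the simplicial structure, so the whole equalizer diagram over $k$ is the base change of the one over $\bb Z$. Over $\bb Z$ the image of $f-g$ is a subgroup of the free abelian group $\prod_j\bb Z^{\Delta^{m_j}}$ and is therefore free; hence the short exact sequence $0\to\bb Z^K\to\prod_i\bb Z^{\Delta^{n_i}}\to\im(f-g)\to 0$ splits, the equalizer is preserved by $A\otimes_{\bb Z}(-)$ for any $A$, and $\bb Z^K$ is free abelian. This yields $A^K\cong A\otimes_{\bb Z}\bb Z^K$ and, taking $A=k$, that $k^K\cong k\otimes_{\bb Z}\bb Z^K$ is $k$-free. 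The paper then just writes the chain
\[
A\otimes_k k^K \;\cong\; A\otimes_k(k\otimes_{\bb Z}\bb Z^K)\;\cong\; A\otimes_{\bb Z}\bb Z^K\;\cong\; A^K,
\]
deferring the first and last isomorphisms to \cite[3.1.3]{CT}. Your pointed reduction via the split surjection $k^K\to k$ is fine once the unpointed case is in hand. In short: your strategy is exactly the content of the cited result, but the missing ingredient is the passage through $\bb Z$, where the PID property supplies the splitting you need and cannot manufacture directly over $k$.
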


\begin{proof}
This is a consequence of~\cite[3.1.3]{CT}:
   $A\otimes_k k^K\cong A\otimes_k (k\otimes_{\bb Z}\bb Z^K)\cong A\otimes_{\bb Z}\bb Z^K\cong A^K.$
\end{proof}

\subsection{Subdivision}

To give an explicit fibrant replacement of the
simplicial set $\Hom_\ahaw(A,B_\bullet)$ with $B_\bullet$ a
simplicial algebra, we should first define ind-algebras. In this
paragraph we shall adhere to~\cite{CT}.

If $\cc C$ is a category, we write $\ind-\cc C$ for the category of
ind-objects of $\cc C$. It has as objects the directed diagrams in
$\cc C$. An object in $\ind-\cc C$ is described by a filtering
partially ordered set $(I,\leq)$ and a functor $X:I\to\cc C$. The
set of homomorphisms from $(X,I)$ to $(Y,J)$ is
   $$\lo_{i\in I}\lp_{j \in J}\Hom_{\cc C}(X_i,Y_j).$$
We shall identify objects of $\cc C$ with constant $\ind$-objects,
so that we shall view $\cc C$ as a subcategory of $\ind-\cc C$. The
category of ind-algebras over $k$ will be denoted by $\inda$.

If $A=(A,I),B=(B,J)\in\inda$ we put
   $$[A,B]=\lo_i\lp_j\Hom_{\cc H(\ahaw)}(A_i,B_j).$$
Note that there is a natural map $\Hom_{\inda}(A,B)\to [A,B]$. Two
homomorphisms $f,g:A\to B$ in $\inda$ are called {\rm homotopic\/}
if they have the same image in $[A,B]$.

Write $\sd:\Sz \to \Sz$ for the simplicial subdivision functor (see
\cite[Ch. III.\S4]{GJ}). It comes with a natural transformation $h:
\sd\to\id_{\Sz}$, which is usually called the last vertex map. We
have an inverse system
   $$\xymatrix{{\sd}^\bullet
     K:\sd^0K&&{\sd}^1K\ar[ll]_(.4){h_K}&&{\sd}^2K\ar[ll]_{h_{\sd
     K}}&&{\sd}^3K\ar[ll]_{h_{\sd^2 K}}&&{\dots}. \ar[ll]_ {h_{\sd^3 K}}}$$
We may regard $\sdi K$ as a pro-simplicial set, that is, as an
$\ind$-object in $\Sz^{\op}$. The $\ind$-extension of the functor
$A_\bullet\langle ?\rangle:\Sz^{\op}\to\ahaw$ with $A_\bullet$ a
simplicial algebra maps $\sdi K$ to
   $$A_\bullet\langle\sdi K\rangle=\{A_\bullet\langle\sd^n K\rangle\mid n\in\bb Z_{\geq 0}\}.$$
If we fix $K$, we obtain a functor $(?)\langle\sdi
K\rangle:\sahaw\to \inda$, which extends to $(?)\langle\sdi
K\rangle:\sahaw^{\ind}\to\inda$ in the usual manner explained above.
In the special case when $A_\bullet=A^\Delta, A\in\ahaw$, the
ind-algebra $A^\Delta\langle\sdi K\rangle$ is denoted by $A^{\sdi
K}$.

Let $A\in\ahaw,B_\bullet\in\sahaw^{\ind}$. The space of the
preceding paragraph extends to $\ind$-algebras by
   $$\Hom_{\inda}(A,B_\bullet):=([n]\mapsto\Hom_{\inda}(A,B_n)).$$
Let $K$ be a finite simplicial set and $B_\bullet\in\sahaw^{\ind}$.
Denote by $\bb B_\bullet(K)$ the simplicial ind-algebra
$([n,\ell]\mapsto
B_\bullet\langle\sd^n(K\times\Delta^\ell)\rangle)=\{(Ex^n
B_\bullet)^K\mid n\geq 0\}$. If $K=*$ we write $\bb B_\bullet$ for
$\bb B_\bullet(*)$.

Similar to \cite[3.2.2]{CT} one can prove that there is a natural
isomorphism
   $$\Hom_\Sz(K,\Hom_{\inda}(A,\bb B_\bullet))\cong\Hom_{\inda}(A,B_\bullet\langle\sdi K\rangle),$$
where $A\in\ahaw,B_\bullet\in\sahaw^{\ind}$ and $K$ is a finite
simplicial set. Notice that the formula holds even at the ind-level,
before taking colimit.

\begin{thm}[Corti\~{n}as--Thom]\label{exinfi}
Let $A\in\ahaw$, $B_\bullet\in\sahaw^{\ind}$. Then
   $$\Hom_{\inda}(A,\bb B_\bullet)=Ex^\infty\Hom_{\inda}(A,B_\bullet).$$
In particular, $\Hom_{\inda}(A,\bb B_\bullet)$ is fibrant.
\end{thm}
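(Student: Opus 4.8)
The plan is to reduce everything to the corresponding statement in \cite{CT}, using the isomorphism displayed just before the theorem together with the standard combinatorial description of $\ex^\infty$. First I would recall that for any simplicial set $Y$, the functor $\ex^\infty Y$ is computed as the colimit of the tower $Y\to\ex Y\to\ex^2 Y\to\cdots$, where $\ex$ is right adjoint to subdivision, and that the $n$-simplices of $\ex^m Y$ are exactly $\Hom_\Sz(\sd^m\Delta^n,Y)$. Applying this with $Y=\Hom_{\inda}(A,B_\bullet)$, a simplex of $\ex^\infty Y$ in degree $n$ is, after passing to the colimit over $m$, an element of $\colim_m\Hom_\Sz(\sd^m\Delta^n,\Hom_{\inda}(A,B_\bullet))$. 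Now I invoke the natural isomorphism $\Hom_\Sz(K,\Hom_{\inda}(A,\bb B_\bullet))\cong\Hom_{\inda}(A,B_\bullet\langle\sdi K\rangle)$ stated in the excerpt, specialized to $K=\Delta^n$: this identifies $\Hom_\Sz(\Delta^n,\Hom_{\inda}(A,\bb B_\bullet))$, i.e.\ the $n$-simplices of $\Hom_{\inda}(A,\bb B_\bullet)$, with $\Hom_{\inda}(A,B_\bullet\langle\sdi\Delta^n\rangle)$.

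The key point is then to match $\Hom_{\inda}(A,B_\bullet\langle\sdi\Delta^n\rangle)$ with $\colim_m\Hom_\Sz(\sd^m\Delta^n,\Hom_{\inda}(A,B_\bullet))$, compatibly with the simplicial structure in $[n]$. Unwinding the definition of $B_\bullet\langle\sdi\Delta^n\rangle$ as the ind-algebra $\{B_\bullet\langle\sd^m\Delta^n\rangle\mid m\geq0\}$ and using that $\Hom_{\inda}(A,-)$ out of a (constant) algebra $A$ commutes with the filtered colimit defining an ind-algebra, one gets
   $$\Hom_{\inda}(A,B_\bullet\langle\sdi\Delta^n\rangle)=\colim_m\Hom_{\inda}(A,B_\bullet\langle\sd^m\Delta^n\rangle).$$
Finally, the adjunction $\Hom_{\ahaw}(B,A_\bullet\langle X\rangle)=\Hom_\Sz(X,\Hom_{\ahaw}(B,A_\bullet))$ recorded in Section~\ref{prel}, extended to ind-algebras in the evident way, identifies each term $\Hom_{\inda}(A,B_\bullet\langle\sd^m\Delta^n\rangle)$ with $\Hom_\Sz(\sd^m\Delta^n,\Hom_{\inda}(A,B_\bullet))$. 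Chaining these identifications degreewise yields $\Hom_{\inda}(A,\bb B_\bullet)\cong\ex^\infty\Hom_{\inda}(A,B_\bullet)$ as simplicial sets; one then checks the maps in the respective towers (the last-vertex maps $h$ on one side, the structure maps of the ind-algebra on the other) correspond, so that the isomorphism is genuinely the canonical one and not merely an abstract bijection in each degree. Fibrancy of $\Hom_{\inda}(A,\bb B_\bullet)$ is immediate: $\ex^\infty$ of any simplicial set is a Kan complex, since $\ex$ preserves Kan fibrations and $\ex^\infty$ lands in fibrant objects by the classical theorem of Kan.

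The main obstacle I anticipate is purely bookkeeping rather than conceptual: one must be careful that all the colimits are filtered and that $A$ is a compact (constant) object of $\inda$, so that $\Hom_{\inda}(A,-)$ commutes with them; and one must verify that the two bisimplicial/ind structures — the tower of subdivisions with its last-vertex maps on the $\Sz$ side, and the directed system $\{\sd^m\Delta^n\}_m$ indexing the ind-algebra $B_\bullet\langle\sdi\Delta^n\rangle$ on the other side — are matched by the adjunction isomorphism functorially in $[n]\in Ord$. Since the analogous verification is carried out in \cite[3.2.2]{CT} for the constant ind-algebra case, and the excerpt already asserts that ``the formula holds even at the ind-level, before taking colimit,'' the argument here is a routine extension: one simply repeats the \cite{CT} computation, replacing $B^\Delta$ by an arbitrary $B_\bullet\in\sahaw^{\ind}$ throughout, and observes that no step used more than the adjunctions and the compactness of $A$ just mentioned.
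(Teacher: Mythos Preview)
Your proposal is correct and takes essentially the same approach as the paper: the paper's proof consists of the single sentence ``The proof is like that of~\cite[3.2.3]{CT},'' and what you have written is precisely the unwinding of that argument (indeed, the chain of identifications you describe is exactly the computation the paper displays in the proof of the subsequent Proposition~\ref{exinficor}, specialized to $K=\ast$). Your observation that the crucial inputs are the compactness of the constant ind-object $A$ and of the finite simplicial sets $\sd^m\Delta^n$, together with the $\langle\,\cdot\,\rangle$--$\Hom$ adjunction, is exactly right.
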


\begin{proof}
The proof is like that of~\cite[3.2.3]{CT}.
\end{proof}

\begin{prop}\label{exinficor}
Let $K$ be a finite simplicial set, $A\in\ahaw$ and
$(B_\bullet,J)\in\sahaw^{\ind}$. Then
   $$\Hom_{\inda}(A,\bb B_\bullet(K))=(Ex^\infty\Hom_{\inda}(A,B_\bullet))^K.$$
In particular, the left hand side is fibrant.
\end{prop}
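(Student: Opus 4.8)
The plan is to derive this statement from Theorem~\ref{exinfi} (the case $K=*$) by an adjunction-and-naturality argument, exactly paralleling the reduction already carried out in the discussion preceding Theorem~\ref{exinfi}. First I would recall the key isomorphism established there: for a finite simplicial set $L$, $A\in\ahaw$ and $B_\bullet\in\sahaw^{\ind}$, one has a natural isomorphism
$$\Hom_\Sz(L,\Hom_{\inda}(A,\bb B_\bullet))\cong\Hom_{\inda}(A,B_\bullet\langle\sdi L\rangle),$$
valid even before passing to the colimit defining the ind-Hom. Applying this with $L=K\times\Delta^\ell$ and using that $\sd$ commutes with products and that $B_\bullet\langle\sd^n(K\times\Delta^\ell)\rangle$ is by definition the $(\ell)$-simplices of $\bb B_\bullet(K)$, I would obtain for each $\ell\geq 0$
$$\Hom_{\inda}(A,\bb B_\bullet(K))_\ell=\Hom_{\inda}(A,B_\bullet\langle\sd^\bullet(K\times\Delta^\ell)\rangle)\cong\Hom_\Sz(K\times\Delta^\ell,\Hom_{\inda}(A,\bb B_\bullet)).$$

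Next I would identify the right-hand side with the $\ell$-simplices of an exponential simplicial set. By the standard adjunction in $\Sz$, $\Hom_\Sz(K\times\Delta^\ell,Y)\cong\Hom_\Sz(\Delta^\ell,Y^K)=(Y^K)_\ell$ for any simplicial set $Y$; taking $Y=\Hom_{\inda}(A,\bb B_\bullet)$ gives
$$\Hom_{\inda}(A,\bb B_\bullet(K))_\ell\cong\bigl(\Hom_{\inda}(A,\bb B_\bullet)^K\bigr)_\ell.$$
I would then check that these bijections are compatible with the simplicial structure maps in $\ell$ — this is where one must be slightly careful, but it is a routine naturality verification: every step (the isomorphism from Theorem~\ref{exinfi}'s preamble, the exponential adjunction, the identification of $\bb B_\bullet(K)$'s simplices) is natural in $\Delta^\ell$, so the composite is an isomorphism of simplicial sets, not merely a levelwise bijection. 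Combining with Theorem~\ref{exinfi}, which gives $\Hom_{\inda}(A,\bb B_\bullet)=Ex^\infty\Hom_{\inda}(A,B_\bullet)$, yields
$$\Hom_{\inda}(A,\bb B_\bullet(K))\cong\bigl(Ex^\infty\Hom_{\inda}(A,B_\bullet)\bigr)^K,$$
which is the claimed formula.

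Finally, the ``in particular'' assertion is immediate: $Ex^\infty$ of any simplicial set is a Kan complex (fibrant), and for a Kan complex $Y$ and any simplicial set $K$ the exponential $Y^K=\Map_\Sz(K,Y)$ is again a Kan complex, since $\Sz$ is a simplicial model category and fibrant objects are closed under cotensoring by arbitrary simplicial sets (axiom SM7 / \cite[section~9.1.5]{Hir}). The main obstacle, such as it is, is purely bookkeeping: ensuring that the chain of natural isomorphisms respects both the simplicial degeneracy/face operators in $\ell$ and the ind-structure (the colimit over $J$ and the inverse system in $\sd^n$), so that one genuinely gets an isomorphism of simplicial ind-objects rather than a fragmentary collection of bijections. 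Since each ingredient is already known to be natural — in particular the pre-colimit naturality emphasized in the text preceding Theorem~\ref{exinfi} — no essential difficulty arises, and the argument is, as noted there, ``like that of~\cite[3.2.3]{CT}''.
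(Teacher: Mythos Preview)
Your proof is correct and follows essentially the same route as the paper's: both unwind the $\ell$-simplices of the left-hand side via adjunctions to arrive at $\Hom_\Sz(K\times\Delta^\ell, Ex^\infty\Hom_{\inda}(A,B_\bullet))$, the paper doing this as one explicit chain of equalities (commuting colimits past $\Hom_\Sz$ and invoking the $\sd$/$Ex$ adjunction directly) while you package the first half of that chain as an appeal to the pre-Theorem~\ref{exinfi} isomorphism together with Theorem~\ref{exinfi} itself. One minor correction: drop the clause ``$\sd$ commutes with products'' --- it is false in general and in any case unused in your argument, since $\bb B_\bullet(K)$ is already defined via $\sd^n(K\times\Delta^\ell)$ taken as a whole.
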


\begin{proof}
The proof is like that of~\cite[3.2.3]{CT}.
 \begin{align*} \Hom_\bb S(\Delta^\ell,&\Hom_{\inda}(A,\bb B_\bullet(K)))
=\colim_{(j,n) \in J\times\bb Z_{\geq 0}}\Hom_{\ahaw}(A,B_{\bullet,j}\langle\sd^n(K\times\Delta^\ell)\rangle) \\
=&  \colim_{n\in\bb Z_{\geq 0}}\colim_{j \in J}\Hom_{\bb S}(\sd^n(K\times\Delta^\ell),\Hom_{\ahaw}(A,B_{\bullet,j})) \\
=&  \colim_{n\in \bb Z_{\geq 0}}\Hom_{\bb S}(\sd^n(K\times\Delta^\ell),\colim_{j\in J}\Hom_{\ahaw}(A,B_{\bullet,j})) \\
=&  \colim_{n\in\bb Z_{\geq 0}}\Hom_{\bb S}(K\times\Delta^\ell,Ex^n\colim_{j\in J}\Hom_{\ahaw}(A,B_{\bullet,j})) \\
=& \Hom_{\bb
S}(K\times\Delta^\ell,Ex^{\infty}\Hom_{\inda}(A,B_{\bullet})).
\end{align*}
\end{proof}

\begin{cor}\label{corloop1}
Let $A\in\ahaw$ and let $K,L$ be finite simplicial sets, then
   $$\Hom_{\inda}(A,\bb B_\bullet(K))^L=\Hom_{\inda}(A,\bb B_\bullet(K\times L)).$$
\end{cor}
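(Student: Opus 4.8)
The plan is to reduce Corollary~\ref{corloop1} to Proposition~\ref{exinficor} together with the standard exponential law for mapping spaces of simplicial sets. First I would apply Proposition~\ref{exinficor} to the finite simplicial set $K\times L$ (note that a product of two finite simplicial sets is again finite), obtaining
\[
\Hom_{\inda}(A,\bb B_\bullet(K\times L))=(Ex^\infty\Hom_{\inda}(A,B_\bullet))^{K\times L}.
\]
Separately, applying Proposition~\ref{exinficor} to $K$ alone gives $\Hom_{\inda}(A,\bb B_\bullet(K))=(Ex^\infty\Hom_{\inda}(A,B_\bullet))^{K}$. It therefore suffices to identify the two sides after raising the $K$-power to the $L$-power, i.e.\ to produce a natural isomorphism
\[
\bigl((Ex^\infty\Hom_{\inda}(A,B_\bullet))^{K}\bigr)^{L}\cong (Ex^\infty\Hom_{\inda}(A,B_\bullet))^{K\times L}.
\]
For a fibrant simplicial set $X$ the simplicial mapping space $X^{K}=\Hom_\Sz(K,X)$ (the internal hom in $\Sz$) satisfies the exponential law $(X^{K})^{L}\cong X^{K\times L}$; this is just the adjunction $\Hom_\Sz(L,\Hom_\Sz(K,X))\cong\Hom_\Sz(L\times K,X)$ evaluated levelwise, which is exactly the kind of identity already used freely in the excerpt (e.g.\ in the displayed chain of isomorphisms proving Proposition~\ref{exinficor}). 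Since $Ex^\infty\Hom_{\inda}(A,B_\bullet)$ is fibrant by Theorem~\ref{exinfi}, this applies here. Chaining the three identifications yields the claim.

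Concretely, the cleanest write-up reruns the computation of Proposition~\ref{exinficor} one level up: compute $\Hom_\Sz(\Delta^\ell, \Hom_{\inda}(A,\bb B_\bullet(K))^L)$ as $\Hom_\Sz(L\times\Delta^\ell,\Hom_{\inda}(A,\bb B_\bullet(K)))$, then substitute $\Hom_{\inda}(A,\bb B_\bullet(K))=(Ex^\infty\Hom_{\inda}(A,B_\bullet))^K$ from Proposition~\ref{exinficor}, use $((-)^K)^{L\times\Delta^\ell}\cong (-)^{K\times L\times\Delta^\ell}$, and recognize the result as $\Hom_\Sz(K\times L\times\Delta^\ell, Ex^\infty\Hom_{\inda}(A,B_\bullet))=\Hom_\Sz(\Delta^\ell,\Hom_{\inda}(A,\bb B_\bullet(K\times L)))$, again by Proposition~\ref{exinficor}. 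Naturality in $[\ell]$ is automatic, so Yoneda gives the isomorphism of simplicial sets.

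I do not expect a genuine obstacle here — the corollary is essentially a bookkeeping consequence of the adjunction $(?)\langle\sdi(-)\rangle \dashv \Hom_{\inda}(A,\bb B_\bullet(-))$ packaged in Proposition~\ref{exinficor}. The only points requiring a word of care are: (i) checking that $\bb B_\bullet(K)$, viewed as an object of $\sahaw^{\ind}$, is the kind of input Proposition~\ref{exinficor} accepts when we feed it $L$ — which it is, by the very definition $\bb B_\bullet(K)=([n,\ell]\mapsto B_\bullet\langle\sd^n(K\times\Delta^\ell)\rangle)$, so that $\bb{B}_\bullet(K)(L)$ unwinds to $\bb B_\bullet(K\times L)$; and (ii) the bookkeeping with the $\sd^n$ and the colimit over $J\times\bb Z_{\geq 0}$, which is exactly the content of Theorem~\ref{exinfi} and has already been carried out. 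So the proof is short: invoke Proposition~\ref{exinficor} twice and the exponential law once.
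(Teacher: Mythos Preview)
Your proposal is correct and is precisely the argument the paper has in mind: the corollary is stated without proof because it follows immediately from Proposition~\ref{exinficor} applied to $K$ and to $K\times L$ together with the exponential law $(X^{K})^{L}\cong X^{K\times L}$ in $\bb S$. One small remark: that exponential law holds for arbitrary simplicial sets, so the appeal to fibrancy of $Ex^\infty\Hom_{\inda}(A,B_\bullet)$ is unnecessary.
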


Denote by $\bb B_\bullet(I)$ and $\bb B_\bullet(\Omega)$ the
simplicial ind-algebras $\bb B_\bullet(\Delta^1)$ and $\ker(\bb
B_\bullet(I)\xrightarrow{(d^0,d^1)}\bb B_\bullet)$ respectively. We
define inductively $\bb B_\bullet(I^n):=(\bb B_\bullet(I^{n-1}))^I$,
$\bb B_\bullet(\Omega^n):=(\bb B_\bullet(\Omega^{n-1}))(\Omega)$.
Clearly, $\bb B_\bullet(I^n)=\bb B_\bullet(\Delta^1\times\bl
n\cdots\times\Delta^1)$ and $\bb B_\bullet(\Omega^n)$ is a
simplicial ideal of $\bb B_\bullet(I^n)$ that consists in each
degree $\ell$ of simplicial maps $F:\Delta^1\times\bl
n\cdots\times\Delta^1\times\Delta^\ell\to\bb B_\bullet$ such that
$F|_{\partial(\Delta^1\times\bl
n\cdots\times\Delta^1)\times\Delta^\ell}=0$.

\begin{cor}\label{corloop2}
Let $A\in\ahaw$, then
   $$\Hom_{\inda}(A,\bb B_\bullet(\Omega^n))=\Omega^n(\Hom_{\inda}(A,\bb B_\bullet)),$$
where $\Hom_{\inda}(A,\bb B_\bullet)$ is based at zero.
\end{cor}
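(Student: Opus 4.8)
The plan is to reduce the statement to the already-established Corollary \ref{corloop1} together with the definition of $\bb B_\bullet(\Omega^n)$ as a kernel, and to track how the functor $\Hom_{\inda}(A,-)$ interacts with the two operations: taking powers by finite simplicial sets, and taking kernels of the boundary evaluation map. First I would treat the case $n=1$. By definition $\bb B_\bullet(\Omega)=\ker(\bb B_\bullet(I)\xrightarrow{(d^0,d^1)}\bb B_\bullet)$ where $I=\Delta^1$, so applying $\Hom_{\inda}(A,-)$ and using that this functor sends the exact sequence of simplicial ind-algebras defining the kernel to a fibre sequence of simplicial sets (more precisely, that $\Hom_{\inda}(A,-)$ commutes with the relevant limits because $\Hom$ out of a fixed object commutes with kernels levelwise, at the ind-level and before passing to colimits), one gets $\Hom_{\inda}(A,\bb B_\bullet(\Omega))=\ker\bigl(\Hom_{\inda}(A,\bb B_\bullet(I))\xrightarrow{(d^0,d^1)}\Hom_{\inda}(A,\bb B_\bullet)\bigr)$. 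Now by Corollary \ref{corloop1} with $K=\Delta^1$ (or directly from Proposition \ref{exinficor}), $\Hom_{\inda}(A,\bb B_\bullet(I))=\Hom_{\inda}(A,\bb B_\bullet)^{\Delta^1}$, the path space of the fibrant simplicial set $\Hom_{\inda}(A,\bb B_\bullet)$ (fibrancy coming from Theorem \ref{exinfi}). The kernel of the two endpoint evaluations $\Hom_{\inda}(A,\bb B_\bullet)^{\Delta^1}\to\Hom_{\inda}(A,\bb B_\bullet)\times\Hom_{\inda}(A,\bb B_\bullet)$ landing over the basepoint is by definition the simplicial loop space $\Omega(\Hom_{\inda}(A,\bb B_\bullet))$ based at zero, which gives the case $n=1$.

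For the inductive step, recall $\bb B_\bullet(\Omega^n)=(\bb B_\bullet(\Omega^{n-1}))(\Omega)$, so applying the $n=1$ case to the simplicial ind-algebra $\bb B_\bullet(\Omega^{n-1})$ in place of $\bb B_\bullet$ yields $\Hom_{\inda}(A,\bb B_\bullet(\Omega^n))=\Omega\bigl(\Hom_{\inda}(A,\bb B_\bullet(\Omega^{n-1}))\bigr)$; then the induction hypothesis rewrites the inside as $\Omega^{n-1}(\Hom_{\inda}(A,\bb B_\bullet))$, and one concludes. A small point to check here is that $\bb B_\bullet(\Omega^{n-1})$ is again an object to which the $n=1$ argument applies — i.e.\ that it is a simplicial ind-algebra of the same type, with the endpoint maps $d^0,d^1$ making sense — which is immediate from the inductive definition of $\bb B_\bullet(I^n)$ and the fact that $\bb B_\bullet(\Omega^{n-1})$ sits inside it as a simplicial ideal.

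The main obstacle, and the step deserving the most care, is the interchange of $\Hom_{\inda}(A,-)$ with the kernel: one must verify that for the extension of simplicial ind-algebras $0\to\bb B_\bullet(\Omega)\to\bb B_\bullet(I)\to\bb B_\bullet$ (splittable in each simplicial degree, since it is the loop/path construction), the induced map of simplicial sets $\Hom_{\inda}(A,-)$ really produces an honest fibre sequence with fibre the kernel, and moreover that the resulting subspace is the \emph{loop space} in the homotopy-theoretic sense and not merely a levelwise kernel. Here the key inputs are that $\Hom_{\inda}(A,\bb B_\bullet)$ is fibrant by Theorem \ref{exinfi} (so that the levelwise kernel over the basepoint of the path fibration computes the genuine loop space), and the compatibility — noted after Proposition \ref{exinficor} — of the adjunction $\Hom_\Sz(K,\Hom_{\inda}(A,\bb B_\bullet))\cong\Hom_{\inda}(A,B_\bullet\langle\sdi K\rangle)$ even before taking colimits, which lets one identify $\bb B_\bullet(I)$ with the path object at the level of mapping spaces. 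Once these compatibilities are in hand the argument is a formal two-line induction.
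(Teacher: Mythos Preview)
Your proposal is correct and takes essentially the same approach as the paper, which gives only the one-line justification ``This is a consequence of Theorem~\ref{exinfi}, Proposition~\ref{exinficor} and Corollary~\ref{corloop1}.'' Your unpacking---identifying $\Hom_{\inda}(A,\bb B_\bullet(I))$ with the power $\Hom_{\inda}(A,\bb B_\bullet)^{\Delta^1}$ via Proposition~\ref{exinficor}/Corollary~\ref{corloop1}, passing the kernel through $\Hom_{\inda}(A,-)$ (using that filtered colimits commute with finite limits), and then inducting---is exactly what those citations encode; if anything, a non-inductive variant applying Proposition~\ref{exinficor} directly with $K=I^n$ and $K=\partial I^n$ would spare you the check that the $n=1$ argument re-applies to $\bb B_\bullet(\Omega^{n-1})$, but your route is equally valid.
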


\begin{proof}
This is a consequence of Theorem~\ref{exinfi},
Proposition~\ref{exinficor} and Corollary~\ref{corloop1}.
\end{proof}

\section{Extensions and Classifying Maps}\label{exten}

Throughout, we assume fixed an underlying category $\cc U$, which is
a full subcategory of $\Mod k$. In what follows we denote by $\ff F$
the class of $k$-split surjective algebra homomorphisms. We shall
also refer to $\ff F$ as {\it fibrations}.

\begin{defs}{\rm
An admissible category of algebras $\Re$ is said to be {\it
$T$-closed\/} if we have a faithful forgetful functor $F:\Re\to\cc
U$ (i.e. $F$ is the restriction to $\Re$ of the forgetful functor
from $\ahaw$ to $k$-modules) and a left adjoint functor $\wt{T}:\cc
U\to\Re$. Notice that the counit map $\eta_A: T(A):=\wt TF(A)\to A$,
$A\in\Re$, is a fibration.

We denote by $\Re^{\ind}$ the category of ind-objects for an
admissible category of algebras $\Re$. If $\Re$ is $T$-closed then
$TA$, $A\in\Re^{\ind}$, is defined in a natural way.

}\end{defs}

Throughout this section $\Re$ is supposed to be $T$-closed.

\begin{lem}\label{ta}
For every $A\in\Re$ the algebra $TA$ is contractible, i.e. there is
a polynomial contraction $\tau:TA\to TA[x]$ such that
$\partial_x^0\tau=0,\partial_x^1\tau=1$. Moreover, the contraction
is functorial in $A$.
\end{lem}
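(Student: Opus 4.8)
The plan is to build the contraction of $TA$ by applying the universal property of the left adjoint $\wt T$ to a contraction that lives already at the level of $\cc U$-modules. Recall $TA = \wt T F(A)$. First I would observe that the polynomial algebra $TA[x]$ carries, besides its own algebra structure, the structure needed to receive a map out of $TA$: giving an algebra homomorphism $\tau : TA = \wt T F(A) \to TA[x]$ is the same, by adjunction $\wt T \dashv F$, as giving a $\cc U$-morphism (i.e.\ a $k$-module map) $F(A) \to F\bigl(TA[x]\bigr)$. So the whole problem reduces to producing a natural $k$-linear map $F(A) \to F(TA[x])$ whose two endpoint specializations $\partial_x^0, \partial_x^1$ recover the zero map and the counit-induced map that corresponds to $\id_{TA}$.

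Next I would write down that linear map explicitly. The counit $\eta_A : TA \to A$ is a fibration, hence $k$-split: choose (functorially — this is the point where I must be a little careful) a $k$-linear section $s_A : F(A) \to F(TA)$ of $F(\eta_A)$. Actually, the cleanest route avoids choosing a section: the unit of the adjunction gives a natural $\cc U$-map $u_A : F(A) \to F\wt T F(A) = F(TA)$, and I would use $x \cdot u_A(-)$, i.e.\ the composite $F(A) \xrightarrow{u_A} F(TA) \hookrightarrow F(TA[x])$ followed by multiplication by $x$. Call the adjoint algebra map $\tau : TA \to TA[x]$. Then $\partial_x^0 \tau$ is adjoint to $0$, so $\partial_x^0\tau = 0$; and $\partial_x^1\tau$ is adjoint to $u_A$ composed with $\partial_x^1(x\cdot(-)) $, which is just $u_A$ itself, and the algebra map adjoint to the unit $u_A$ is exactly $\id_{TA}$ by the triangle identity for the adjunction. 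Hence $\partial_x^1\tau = \id_{TA}$. Functoriality in $A$ is automatic because $u$ is a natural transformation and the adjunction isomorphism is natural, so no choices intervene.

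The one genuine subtlety — and the step I expect to be the main obstacle — is making sure everything stays inside the admissible category $\Re$ and that $TA[x]$ really is the object one wants: one must check that $\wt T$ lands in $\Re$ (given), that $TA[x]\in\Re$ by axiom (2) for admissible categories, and above all that the adjunction $\wt T \dashv F$ interacts correctly with the polynomial extension, i.e.\ that $F(TA[x])$ together with $\partial_x^0,\partial_x^1$ behaves as expected; here one uses that $F$ is the restriction of the forgetful functor on $\ahaw$, so $F(R[x]) = \bigoplus_{n\ge 0} F(R)\cdot x^n$ compatibly with the $\partial_x^i$. Once this bookkeeping is in place, the verification that $\partial_x^0\tau = 0$ and $\partial_x^1\tau = 1_{TA}$ is a direct application of the triangle identities, and naturality is formal.
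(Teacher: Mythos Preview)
The proposal is correct and takes essentially the same approach as the paper: define a $\cc U$-map $FA \to F(TA[x])$ by composing the unit $FA \to FTA$ with multiplication by $x$, then take its adjoint under $\wt T\dashv F$ to get $\tau$. Your write-up is in fact more detailed than the paper's, which simply records the map $u\circ i_{FA}$ and asserts that $\partial_x^0\tau=0$, $\partial_x^1\tau=1$ follow from elementary properties of adjoint functors.
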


\begin{proof}
Consider a map $u:FTA\to FTA[x]$ sending an element $b\in FTA$ to
$bx\in FTA[x]$. If $X\in\Ob\cc U$ then we denote the unit map $X\to
F\wt TX$ by $i_X$. The desired contraction $\tau$ is uniquely
determined by the map $u\circ i_{FA}:FA\to FTA[x]$. By using
elementary properties of adjoint functors, one can show that
$\partial_x^0\tau=0,\partial_x^1\tau=1$.
\end{proof}

\begin{exs}{\rm
(1) Let $\Re=\ahaw$. Given an algebra $A$, consider the algebraic
tensor algebra
   $$TA=A\oplus A\otimes A\oplus A^{\otimes^3}\oplus\cdots$$
with the usual product given by concatenation of tensors. In Cuntz's
treatment of bivariant $K$-theory~\cite{Cu2,Cu,Cu1}, tensor algebras
play a prominent role.

There is a canonical $k$-linear map $A\to TA$ mapping $A$ into the
first direct summand. Every $k$-linear map $s:A\to B$ into an
algebra $B$ induces a homomorphism $\gamma_s:TA\to B$ defined by
   $$\gamma_s(x_1\otimes\cdots\otimes x_n)=s(x_1)s(x_2)\cdots s(x_n).$$
$\Re$ is plainly $T$-closed.

(2) If $\Re=\cahaw$ then
   $$T(A)=Sym(A)=\oplus_{n\geq 1}S^nA,\quad S^nA=A^{\otimes n}/\langle a_1\otimes\cdots\otimes a_n
     -a_{\sigma(1)}\otimes\cdots\otimes a_{\sigma(n)}\rangle,\quad\sigma\in\Sigma_n,$$
the symmetric algebra of $A$, and $\Re$ is $T$-closed.

}\end{exs}

We shall say that a sequence in $\Re$
   $$0\to C\to B\to A\to 0$$
is an {\it $F$-split extension\/} or just an {\it ($\ff
F$-)extension\/} if it is split exact in the category of
$k$-modules. We have the natural extension of algebras
   $$0\lra{}JA\lra{\iota_A}TA\lra{\eta_A}A\lra{}0.$$
Here $JA$ is defined as $\kr\eta_A$. Clearly, $JA$ is functorial in
$A$. This extension is universal in the sense that given any
extension
   $$0\to C\to B\bl\alpha\to A\to 0$$
with $\alpha$ in $\ff F$, there exists a commutative diagram of
extensions as follows.
\[ \xymatrix{
C \ar[r] & B \ar[r]^\alpha & A \\
J(A) \ar[u]^\xi \ar[r]^{\iota_A} & T(A) \ar[u] \ar[r]^{\eta_A} & A
\ar[u]^{\id_A} }
\]
Furthermore, $\xi$ is unique up to elementary
homotopy~\cite[4.4.1]{CT} in the sense that if $\beta,\gamma:A\to B$
are two splittings to $\alpha$ then $\xi_\beta$ corresponding to
$\beta$ is elementary homotopic to $\xi_\gamma$ corresponding to
$\gamma$. Because of this, we shall abuse notation and refer to any
such morphism $\xi$ as {\em the\/} classifying map of the extension
whenever we work with maps up to homotopy. The elementary homotopy
$H(\beta,\gamma):J(A)\to C[x]$ is explicitly constructed as follows.
Let $\wt\alpha:B[x]\to A[x],\sum b_ix^i\mapsto\alpha(b_i)x^i$, be
the natural lift of $\alpha$. Consider a $k$-linear map
   $$u:A\to B[x],\quad a\mapsto\beta(a)(1-x)+\gamma(a)x.$$
It is extended to an algebra homomorphism $\bar u:T(A)\to B[x]$. One
has a commutative diagram of algebras
\[ \xymatrix{
C[x] \ar[r] & B[x] \ar[r]^{\wt\alpha} & A[x] \\
J(A) \ar[u]^{H(\beta,\gamma)} \ar[r]^{\iota_A} & T(A) \ar[u]^{\bar
u} \ar[r]^{\eta_A} & A \ar[u]^{\iota} },
\]
where $\iota$ is the natural inclusion. It follows that
$H(\beta,\gamma)$ is an elementary homotopy between $\xi_\beta$ and
$\xi_\gamma$.

If we want to specify a particular choice of $\xi$ corresponding to
a splitting $\beta$ then we sometimes denote $\xi$ by $\xi_\beta$
indicating the splitting.

Also, if
\[ \xymatrix{
C  \ar[r] \ar[d]^f & B \ar[r]^\alpha \ar[d]^h & A \ar[d]^g \\
C' \ar[r]        & B'\ar[r]^{\alpha'}   &A' } \] is a commutative
diagram of extensions, then there is a diagram
\[ \xymatrix{ J(A) \ar[d]_{J(g)} \ar[r]^{\xi_{\beta}} & C \ar[d]^f \\
J(A') \ar[r]^{\xi_{\beta'}} & C'}\] of classifying maps, which is
commutative up to elementary homotopy (see~\cite[4.4.2]{CT}).

The elementary homotopy can be constructed as follows. Let
$\wt\alpha':B'[x]\to A'[x],\sum b'_ix^i\mapsto\alpha'(b'_i)x^i$, be
the natural lift of $\alpha'$. Consider a $k$-linear map
   $$v:A\to B'[x],\quad a\mapsto h\beta(a)(1-x)+\beta'g(a)x.$$
It is extended to a ring homomorphism $\bar v:T(A)\to B[x]$. One has
a commutative diagram of algebras
\[ \xymatrix{
C'[x] \ar[r] & B'[x] \ar[r]^{\wt\alpha'} & A'[x] \\
J(A) \ar[u]^{G(\beta,\beta')} \ar[r] & T(A) \ar[u]^{\bar v}
\ar[r]^{\eta_A} & A \ar[u]^{\iota'g} },
\]
where $\iota':A'\to A'[x]$ is the natural inclusion. It follows that
$G(\beta,\beta')$ is an elementary homotopy between $f\xi_\beta$ and
$\xi_{\beta'}J(g)$.

Let $\cc C$ be a small category and let $\Re^{\cc C}$ (respectively
$\cc U^{\cc C}$) denote the category of $\cc C$-diagrams in $\Re$
(respectively in $\cc U$). Then we can lift the functors
$F:\Re\to\cc U$ and $\wt T:\cc U\to\Re$ to $\cc C$-diagrams. We
shall denote the functors by the same letters. So we have a faithful
forgetful functor $F:\Re^{\cc C}\to\cc U^{\cc C}$ and a functor
$\wt{T}:\cc U^{\cc C}\to\Re^{\cc C}$, which is left adjoint to $F$.
The counit map $\eta_A: T(A):=\wt TF(A)\to A$, $A\in\Re^{\cc C}$, is
a levelwise fibration.

\begin{defs}{\rm
We shall say that a sequence of $\cc C$-diagrams in $\Re$
   $$0\to C\to B\bl\alpha\to A\to 0$$
is a {\it $F$-split extension\/} or just an {\it ($\ff
F$-)extension\/} if it is split exact in the abelian category $(\Mod
k)^{\cc C}$ of $\cc C$-diagrams of $k$-modules.

}\end{defs}

We have a natural extension of $\cc C$-diagrams in $\Re$
   $$0\lra{}JA\lra{\iota_A}TA\lra{\eta_A}A\lra{}0.$$
Here $JA$ is defined as $\kr\eta_A$. Clearly, $JA$ is functorial in
$A$.

\begin{lem}\label{classmnm}
Given any extension $0\to C\to B\to A\to 0$ of $\cc C$-diagrams in
$\Re$, there exists a commutative diagram of extensions as follows.
\[ \xymatrix{
C \ar[r] & B \ar[r]^\alpha & A \\
J(A) \ar[u]^\xi \ar[r]^{\iota_A} & T(A) \ar[u] \ar[r]^{\eta_A} & A
\ar[u]^{\id_A} }
\]
Furthermore, $\xi$ is unique up to a natural elementary homotopy
$H(\beta,\gamma):JA\to C[x]$, where $\beta,\gamma$ are two
splittings of $\alpha$.
\end{lem}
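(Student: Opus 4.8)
The plan is to imitate, levelwise, the non-diagram construction recalled just before the lemma, and then check that the resulting maps and homotopies are morphisms of $\cc C$-diagrams, i.e. natural in the diagram variable. First I would produce the classifying map $\xi$. Since $0\to C\to B\xrightarrow{\alpha}A\to 0$ is an $\ff F$-extension of $\cc C$-diagrams, by definition it is split exact in $(\Mod k)^{\cc C}$, so there is a $k$-linear (not necessarily algebra) splitting $\beta\colon FA\to FB$ in $\cc U^{\cc C}$. Because $\wt T\colon\cc U^{\cc C}\to\Re^{\cc C}$ is left adjoint to the forgetful functor $F$, the map $\beta$ corresponds to a homomorphism $\bar\beta\colon TA\to B$ of $\cc C$-diagrams with $\alpha\bar\beta=\eta_A$; restricting $\bar\beta$ to $JA=\ker\eta_A$ lands in $C=\ker\alpha$ and yields $\xi=\xi_\beta\colon JA\to C$. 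Commutativity of the square is immediate from $\alpha\bar\beta=\eta_A$ and the definition of $\xi$ on kernels. This step is essentially formal once one has the $\cc C$-diagram adjunction, which the excerpt already sets up.

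Next I would construct the homotopy $H(\beta,\gamma)$. Given two $k$-linear splittings $\beta,\gamma\colon FA\to FB$ in $\cc U^{\cc C}$, form the $k$-linear map $u\colon FA\to FB[x]$ in $\cc U^{\cc C}$ by $u(a)=\beta(a)(1-x)+\gamma(a)x$ (this is levelwise the formula from the non-diagram case, and it is manifestly natural in $\cc C$ because $\beta,\gamma$ are, and the polynomial variable $x$ is passive). Adjointness again promotes $u$ to an algebra homomorphism $\bar u\colon TA\to B[x]$ of $\cc C$-diagrams. Writing $\wt\alpha\colon B[x]\to A[x]$ for the natural lift of $\alpha$, one has $\wt\alpha\,\bar u=\eta_A$ followed by the inclusion $A\hookrightarrow A[x]$, so $\bar u$ carries $JA$ into $\ker(\wt\alpha)=C[x]$, defining $H(\beta,\gamma)\colon JA\to C[x]$. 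The identities $\partial_x^0 H(\beta,\gamma)=\xi_\beta$ and $\partial_x^1 H(\beta,\gamma)=\xi_\gamma$ follow by evaluating $\bar u$ at $x=0$ and $x=1$ exactly as in the non-diagram discussion, using that $\partial_x^0 u=\beta$, $\partial_x^1 u=\gamma$ and that $\partial_x^\epsilon$ commutes with the adjunction. Naturality of $H(\beta,\gamma)$ as a map of $\cc C$-diagrams is inherited from that of $u$ and $\bar u$.

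Finally I would note that any two choices of $\xi$ arise from two choices of $k$-linear splitting $\beta,\gamma$ — indeed, since the counit $\eta_A\colon TA\to A$ is a levelwise fibration the classifying map for the universal extension is pinned down by its splitting, and splittings of $\alpha$ in $\cc U^{\cc C}$ correspond bijectively to splittings of $\eta_A$ after applying $F$ — so the homotopy $H(\beta,\gamma)$ just constructed is exactly the required natural elementary homotopy between $\xi_\beta$ and $\xi_\gamma$. I expect the only real point requiring care to be checking that every construction lives in the \emph{diagram} categories rather than just levelwise: one must make sure the $k$-linear splitting can be chosen in $\cc U^{\cc C}$ (guaranteed by the definition of $\ff F$-extension of $\cc C$-diagrams as split exact in $(\Mod k)^{\cc C}$), and that the adjunction $\wt T\dashv F$ on $\cc C$-diagrams — already recorded in the excerpt — is used to transport $k$-linear diagram maps to algebra diagram maps. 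Everything else is the verbatim levelwise computation from the paragraphs preceding the lemma, so I would not reproduce it in detail.
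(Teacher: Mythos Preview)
Your proposal is correct and follows exactly the approach the paper intends: its own proof is the single line ``The proof is like that for algebras (see above)'', and you have faithfully spelled out that the constructions of $\xi_\beta$ and $H(\beta,\gamma)$ from the preceding paragraphs lift verbatim to $\cc C$-diagrams once one uses the diagram-level adjunction $\wt T\dashv F$ and the fact that the splitting $\beta$ lives in $\cc U^{\cc C}$ by definition of an $\ff F$-extension of diagrams. Your final paragraph about every $\xi$ arising from a splitting is a slight over-elaboration (the lemma's uniqueness clause is only asserting $\xi_\beta\sim\xi_\gamma$ for two given splittings, as in the non-diagram case), but this does not affect correctness.
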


\begin{proof}
The proof is like that for algebras (see above).
\end{proof}

\begin{lem}\label{class}
Let
   $$ \xymatrix{C  \ar[r] \ar[d]^f & B \ar[r]^\alpha \ar[d]_h & A \ar[d]^g \\
                C' \ar[r]        & B'\ar[r]^{\alpha'} & A' }$$
be a commutative diagram of $F$-split extensions of $\cc C$-diagrams
with splittings $\beta:A\to B,\beta':A'\to B'$. Then there is a
diagram of classifying maps
   $$\xymatrix{ J(A) \ar[d]_{J(g)} \ar[r]^{\xi_\beta} & C \ar[d]^f \\
     J(A') \ar[r]^{\xi_{\beta'}} & C'}$$
which is commutative up to a natural elementary homotopy
$G(\beta,\beta'):JA\to C'[x]$.
\end{lem}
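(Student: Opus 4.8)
The plan is to imitate the explicit construction carried out above for single algebras, now performed levelwise over the indexing category $\cc C$ and checking that all the constructions are natural in the variable of $\cc C$. First I would form, exactly as in the algebra case, the natural lift $\wt\alpha':B'[x]\to A'[x]$ of $\alpha'$ (applied levelwise; here $B'[x]$ denotes the $\cc C$-diagram obtained by adjoining a polynomial variable in each degree, which is again in $\Re^{\cc C}$ by admissibility of $\Re$ and the fact that polynomial extensions are taken diagram-wise). Since $\beta,\beta'$ are chosen splittings of $\alpha,\alpha'$ as $\cc C$-diagrams, the $k$-linear map of $\cc C$-diagrams of $k$-modules
   $$v:A\to B'[x],\qquad a\mapsto h\beta(a)(1-x)+\beta'g(a)x$$
is a genuine morphism in $\cc U^{\cc C}$: commutativity of the given square of extensions guarantees that $v$ is compatible with the structure maps of the $\cc C$-diagrams. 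By the adjunction $(\wt T,F)$ lifted to $\cc C$-diagrams, $v$ extends uniquely to a homomorphism of $\cc C$-diagrams $\bar v:T(A)\to B'[x]$.

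Next I would observe that $\wt\alpha'\circ\bar v=\iota'g\circ\eta_A$ as maps $T(A)\to A'[x]$: this is checked on the generating $k$-submodule $A\subset FT(A)$ in each degree, using $\alpha'\beta'=\id_{A'}$ and $\alpha'h=g\alpha$, and then propagates because both sides are algebra homomorphisms and the identification is natural in $\cc C$. Therefore $\bar v$ restricts to a map $G(\beta,\beta'):J(A)\to C'[x]$ on kernels (here $C'[x]=\ker(\wt\alpha')$ levelwise, which coincides with the polynomial extension of the $\cc C$-diagram $C'$ since extensions are $k$-split in each degree). Evaluating $\partial_x^0$ and $\partial_x^1$ on $G(\beta,\beta')$ recovers, respectively, $f\xi_\beta$ and $\xi_{\beta'}J(g)$: for $\partial_x^0$ we get the classifying map built from the splitting $h\beta$ of $\alpha'$, which equals $f\xi_\beta$ because $f=\xi$ of the left square read off from $h\beta$; for $\partial_x^1$ we get the classifying map built from $\beta'g$, which is $\xi_{\beta'}J(g)$ by functoriality of $J$ and uniqueness of classifying maps up to homotopy (Lemma~\ref{classmnm}). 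Thus $G(\beta,\beta')$ is the desired elementary homotopy, and it is natural because every ingredient ($\wt\alpha'$, $v$, the adjunction, the kernels) is natural in $\cc C$.

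The only real subtlety, and the step I expect to be the main obstacle, is bookkeeping the naturality in $\cc C$ throughout: one must be careful that the splittings $\beta,\beta'$ are fixed as morphisms of diagrams (not merely degreewise), that $C'[x]$ is correctly identified with the polynomial extension of the diagram $C'$ using the degreewise $k$-splitting, and that the adjunction $\Hom_{\Re^{\cc C}}(\wt TX,Y)\cong\Hom_{\cc U^{\cc C}}(X,FY)$ is applied coherently so that $\bar v$ is honestly a morphism of $\cc C$-diagrams. Once this is in place, the verification that $G(\beta,\beta')$ has the prescribed faces is formally identical to the computation displayed above for algebras, so I would simply refer to that computation rather than repeat it. Hence the proof reduces to "the same construction, diagram-wise, with naturality checked at each stage," which is precisely what the statement asserts.
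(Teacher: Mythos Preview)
Your proposal is correct and follows exactly the approach the paper intends: the paper's own proof is simply ``the proof is like that for algebras (see above),'' and you have carried out precisely that construction levelwise over $\cc C$, with the same $k$-linear map $v(a)=h\beta(a)(1-x)+\beta'g(a)x$ and its extension $\bar v:T(A)\to B'[x]$ via the adjunction. One small remark: the identifications $\partial_x^0G=f\xi_\beta$ and $\partial_x^1G=\xi_{\beta'}J(g)$ are equalities on the nose (by the universal property of $T$, since the algebra maps $T(A)\to B'$ agree on the generating submodule $A$), so you do not need to invoke Lemma~\ref{classmnm} or any uniqueness-up-to-homotopy statement there.
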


\begin{proof}
The proof is like that for algebras (see above).
\end{proof}

\begin{lem}\label{classqmn}
Let
   $$ \xymatrix{A  \ar[r] \ar[d]^f & B \ar[r]^u \ar[d]_h & C \ar[d]^g \\
                A' \ar[r]        & B'\ar[r]^{u'} & C' }$$
be a commutative diagram of $F$-split extensions of $\cc C$-diagrams
with splittings $(v,v'):(C,C')\to(B,B')$ being such that $(v,v')$ is
a splitting to $(u,u')$ in the category of arrows $\Ar(\cc U^{\cc
C})$, i.e. $hv=v'g$. Then the diagram of classifying maps
   $$\xymatrix{ J(C) \ar[d]_{J(g)} \ar[r]^{\xi_v} & A \ar[d]^f \\
     J(C') \ar[r]^{\xi_{v'}} & A'}$$
is commutative.
\end{lem}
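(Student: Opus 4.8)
The plan is to prove the square commutes \emph{strictly}, not merely up to homotopy as in Lemma~\ref{class}, by exploiting the extra compatibility $hv=v'g$ of the splittings. First I would recall how the classifying maps arise here. A $k$-linear splitting $v\colon C\to B$ of $u$ is a morphism $FC\to FB$ of $\cc U$, and by the universal property of $T(C)=\wt TF(C)$ it corresponds to a unique algebra homomorphism $\bar v\colon T(C)\to B$ with $F(\bar v)\circ i_{FC}=v$; since $u\bar v$ corresponds under the adjunction to $F(u)\circ v=\id_{FC}$, we get $u\bar v=\eta_C$, so $\bar v$ carries $J(C)=\kr\eta_C$ into $A=\kr u$, and $\xi_v$ is exactly this corestriction. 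Likewise $\xi_{v'}$ is the corestriction to $A'$ of the algebra map $\bar{v'}\colon T(C')\to B'$ induced by $v'$, and $J(g)$ is the restriction of $T(g)=\wt TF(g)\colon T(C)\to T(C')$. As everything is formed levelwise, this applies verbatim to $\cc C$-diagrams.

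Next I would reduce the claimed identity $f\xi_v=\xi_{v'}J(g)$ to an equality of algebra homomorphisms $T(C)\to B'$. Because the bottom row is an $\ff F$-extension, the inclusion $A'\hookrightarrow B'$ is $k$-split injective, hence a monomorphism, so it is enough to verify the identity after composing with $A'\hookrightarrow B'$. Using commutativity of the left-hand square, the composite of $f\xi_v$ with $A'\hookrightarrow B'$ becomes $h\circ\bar v$ restricted to $J(C)\subseteq T(C)$; and since $\xi_{v'}$ and $J(g)$ are restrictions of $\bar{v'}$ and $T(g)$, the composite of $\xi_{v'}J(g)$ with $A'\hookrightarrow B'$ becomes $\bar{v'}\circ T(g)$ restricted to $J(C)$. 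Thus the lemma reduces to $h\bar v=\bar{v'}\,T(g)$ as homomorphisms $T(C)\to B'$.

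Finally I would settle this last equality by uniqueness in the adjunction $\wt T\dashv F$: a homomorphism out of $\wt TF(C)$ is determined by its composite with the unit $i_{FC}$. For $h\bar v$ that composite is $F(h)\circ F(\bar v)\circ i_{FC}=F(h)\circ v$, whereas for $\bar{v'}\,T(g)$, naturality of the unit gives $F(\bar{v'})\circ F(\wt TF(g))\circ i_{FC}=F(\bar{v'})\circ i_{FC'}\circ F(g)=v'\circ F(g)$; these coincide exactly because of the hypothesis $hv=v'g$, i.e. because $(v,v')$ splits $(u,u')$ in $\Ar(\cc U^{\cc C})$. I do not expect a genuine obstacle here: the only thing to be careful about is that the compatibility of splittings is an \emph{equality} rather than a homotopy, so it enters the uniqueness clause of the adjunction and yields \emph{strict} commutativity — in contrast with Lemma~\ref{class}, where no such compatibility is available and one must settle for a homotopy. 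The passage from algebras to $\cc C$-diagrams is automatic, since $F$, $\wt T$, their units and counits, and the injectivity of $A'\to B'$ all hold levelwise.
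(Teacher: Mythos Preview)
Your argument is correct: reducing to $h\bar v=\bar{v'}\,T(g)$ and then checking this via the adjunction $\wt T\dashv F$ is a clean and complete proof. The paper takes a different, more conceptual route: it observes that the hypothesis $hv=v'g$ says exactly that the pair $(v,v')$ is a splitting of $(u,u')$ regarded as a single morphism of $\{0\to 1\}\times\cc C$-diagrams, so the entire commutative square of extensions is itself one $\ff F$-extension in $\Re^{\{0\to 1\}\times\cc C}$ with splitting $(v,v')$; applying Lemma~\ref{classmnm} to that enlarged diagram category produces a single classifying map of $\{0\to 1\}\times\cc C$-diagrams, and evaluating at $0$ and $1$ yields the strictly commutative square. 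Your approach has the advantage of being self-contained and making the role of the adjunction explicit, while the paper's approach packages the compatibility condition as a structural statement and then reuses the existing Lemma~\ref{classmnm} verbatim, avoiding any new computation. Both buy strict commutativity from the same source, namely that a compatible pair of splittings \emph{is} a splitting in the arrow category.
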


\begin{proof}
If we regard $h$ and $g$ as $\{0\to 1\}\times\cc C$-diagrams and
$(u,u')$ as a map from $h$ to $g$, then the commutative diagram of
lemma is the classifying map corresponding to the splitting $(v,v')$
of $\{0\to 1\}\times\cc C$-diagrams.
\end{proof}

\section{The Excision Theorems}\label{excisionsec}

Throughout this section $\Re$ is assumed to be $T$-closed. Recall
that $k^\Delta$ is a contractible unital simplicial object in $\Re$
and $t:=t_0\in k^{\Delta^1}$ is a 1-simplex with
$\partial_0(t)=0,\partial_1(t)=1$. Given an algebra $B$, the
ind-algebra $\bb B^\Delta$ is defined as
   $$[m,\ell]\mapsto\Hom_{\bb S}(\sd^m\Delta^\ell,B^\Delta)=B^{\sd^m\Delta^\ell}.$$
If $B=k$ then $\bb B^\Delta$ will be denoted by $\Bbbk^\Delta$.
$B^\Delta$ can be regarded as a $k^\Delta$-module, i.e. there is a
simplicial map, induced by multiplication,
   $$B^\Delta\times k^\Delta\to B^\Delta.$$
Similarly, $\bb B^\Delta$ can be regarded as a
$\Bbbk^\Delta$-ind-module.

Given two algebras $A,B\in\Re$ and $n\geq 0$, consider the
simplicial set
   $$\Hom_{\inda}(J^nA,\bb B^\Delta(\Omega^n))\cong\Hom_{\inda}(J^nA,B\otimes_k\Bbbk^\Delta(\Omega^n)).$$
It follows from Proposition~\ref{exinficor} and
Corollary~\ref{corloop2} that it is fibrant. $\bb
B^\Delta(\Omega^n)$ is a simplicial ideal of the simplicial
ind-algebra
   $$\bb B^\Delta(I^n)=([m,\ell]\mapsto\Hom_{\bb S}(\sd^m(\Delta^1\times\bl
   n\cdots\times\Delta^1\times\Delta^\ell)\to B^\Delta)).$$
There is a commutative diagram of simplicial ind-algebras
   $$\xymatrix{P\bb B^\Delta(\Omega^n)\ar@{ >->}[r]\ar@{ >->}[d]&(\bb B^\Delta(\Omega^n))^I\ar@{ >->}[d]\ar@{->>}[r]^{d_0}&\bb B^\Delta(\Omega^n)\ar@{ >->}[d]\\
               P\bb B^\Delta(I^n)\ar@{ >->}[r]&\bb B^\Delta(I^{n+1})
               \ar@{->>}[r]^{d_0}&\bb B^\Delta(I^n)}$$
with vertical arrows inclusions and the right lower map $d_0$
applies to the last coordinate.

We claim that the natural simplicial map $d_1:P\bb
B^\Delta(\Omega^n)\to\bb B^\Delta(\Omega^n)$ has a natural
$k$-linear splitting. In fact, the splitting is induced by a natural
$k$-linear splitting $\upsilon$ for $d_1:P\bb B^\Delta(I^n)\to\bb
B^\Delta(I^n)$. Let $\mathbf{t}\in P\Bbbk^\Delta(I^n)_0$\label{elt}
stand for the composite map
   $$\sd^m(\Delta^1\times\bl{n+1}\cdots\times\Delta^1)\lra{pr}\sd^m\Delta^1\to\Delta^1\bl t\to k^\Delta,$$
where $pr$ is the projection onto the $(n+1)$th direct factor
$\Delta^1$. The element $\mathbf{t}$ can be regarded as a 1-simplex
of the unital ind-algebra $\Bbbk^\Delta(I^n)$ such that
$\partial_0(\mathbf{t})=0$ and $\partial_1(\mathbf{t})=1$. Let
$\imath:\bb B^\Delta(I^n)\to(\bb B^\Delta(I^n))^{\Delta^1}$ be the
natural inclusion. Multiplication with $\mathbf{t}$ determines a
$k$-linear map $\bb B^\Delta(I^{n+1})\lra{\mathbf{t}\cdot}P\bb
B^\Delta(I^n)$. Now the desired $k$-linear splitting is defined as
   $$\upsilon:=\mathbf{t}\cdot\imath.$$

Consider a sequence of simplicial sets %\footnotesize
   \begin{equation}\label{adj}
     \Hom_{\inda}(A,\bb B^\Delta)\xrightarrow{\varsigma}
     \Hom_{\inda}(JA,\bb B^\Delta(\Omega))\xrightarrow{\varsigma}\cdots\xrightarrow{\varsigma}
     \Hom_{\inda}(J^nA,\bb B^\Delta(\Omega^n))\xrightarrow{\varsigma}\cdots
   \end{equation}
\normalsize Each map $\varsigma$ is defined by means of the
classifying map $\xi_{\upsilon}$ corresponding to the $k$-linear
splitting $\upsilon$. More precisely, if we consider $\bb
B^\Delta(\Omega^n)$ as a $(\bb Z_{\geq 0}\times\Delta)$-diagram in
$\Re$, then there is a commutative diagram of extensions for $(\bb
Z_{\geq 0}\times\Delta)$-diagrams
   $$\xymatrix{J\bb B^\Delta(\Omega^n)\ar[d]_{\xi_\upsilon}\ar[r]&T\bb B^\Delta(\Omega^n)\ar[r]\ar[d]&\bb B^\Delta(\Omega^n)\ar@{=}[d]\\
               \bb B^\Delta(\Omega^{n+1})\ar[r]&P\bb B^\Delta(\Omega^n)\ar[r]^{d_1}&\bb B^\Delta(\Omega^n)}$$
For every element $f\in\Hom_{\inda}(J^nA,\bb B^\Delta(\Omega^n))$
one sets:
   $$\varsigma(f):=\xi_{\upsilon}\circ J(f)\in\Hom_{\inda}(J^{n+1}A,\bb B^\Delta(\Omega^{n+1})).$$

Now consider an $\ff F$-extension in $\Re$
   $$F\lra{i}B\lra{f}C.$$
For any $n\geq 0$ one constructs a cartesian square of simplicial
ind-algebras
   $$\xymatrix{P_f(\Omega^n)\ar[d]_{pr}\ar[r]^{pr}&P(\bb C^\Delta(\Omega^n))\ar[d]^{d_1}\\
               \bb B^\Delta(\Omega^n)\ar[r]^{f}&\bb C^\Delta(\Omega^n).}$$
We observe that the path space $P(P_f(\Omega^n))$ of $P_f(\Omega^n)$
is the fibre product of the diagram
   $$P\bb B^\Delta(\Omega^n)\xrightarrow{P(f)}P\bb C^\Delta(\Omega^n)\xleftarrow{Pd_1}P(P\bb C^\Delta(\Omega^n)).$$
Denote by ${\wt P}(P_f(\Omega^n))$ the fibre product of the diagram
   $$P\bb B^\Delta(\Omega^n)\xrightarrow{P(f)}P\bb C^\Delta(\Omega^n)\xleftarrow{d_{1}^{P\bb C^\Delta(\Omega^n)}}P(P\bb C^\Delta(\Omega^n)).$$
Given a simplicial set $X$, let
   $$\sw:X^{\Delta^1\times\Delta^1}\to X^{\Delta^1\times\Delta^1}$$
be the automorphism swapping the two coordinates of
$\Delta^1\times\Delta^1$. If $X=\bb C^\Delta(\Omega^n)$ then $\sw$
induces an automorphism
   $$\sw:P(P\bb C^\Delta(\Omega^n))\to P(P\bb C^\Delta(\Omega^n)),$$
denoted by the same letter. Notice that
   $$Pd_1=d_{1}\circ\sw.$$
Moreover, the commutative diagram
   $$\xymatrix{P\bb B^\Delta(\Omega^n)\ar@{=}[d]\ar[r]^{P(f)}&P\bb C^\Delta(\Omega^n)\ar@{=}[d]
               &P(P\bb C^\Delta(\Omega^n))\ar[l]_{Pd_1}\ar[d]^{\sw}\\
               P\bb B^\Delta(\Omega^n)\ar[r]^{P(f)}&P\bb C^\Delta(\Omega^n)&P(P\bb C^\Delta(\Omega^n))\ar[l]_{d_1}}$$
yields an isomorphism of simplicial sets
   $$P(P_f(\Omega^n))\cong\wt{P}(P_f(\Omega^n)).$$

The natural simplicial map
   $$\partial:=(d_1,Pd_1):{\wt P}(P_f(\Omega^n))\to P_f(\Omega^n)$$
has a natural $k$-linear splitting $\tau:P_f(\Omega^n)\to{\wt
P}(P_f(\Omega^n))$ defined as $\tau=(\upsilon,P\upsilon)$.

So one can define a sequence of simplicial sets
   $$\Hom_{\inda}(A,P_f)\xrightarrow{\vartheta}
     \Hom_{\inda}(JA,P_f(\Omega))\xrightarrow{\vartheta}\cdots$$
with each map $\vartheta$ defined by means of the classifying map
$\xi_{\tau}$ corresponding to the $k$-linear splitting $\tau$.

There is a natural map of simplicial ind-algebras for any $n\geq 0$
   $$\iota:\bb F^\Delta(\Omega^n)\to P_f(\Omega^n).$$

\begin{prop}\label{excis}
For any $n\geq 0$ there is a homomorphism of simplicial ind-algebras
$\alpha:J(P_f(\Omega^n))\to\bb F^\Delta(\Omega^{n+1})$ such that in
the diagram
   $$\xymatrix{J(\bb F^\Delta(\Omega^n))\ar[r]^{\xi_{\upsilon}}\ar[d]_{J(\iota)}
               &\bb F^\Delta(\Omega^{n+1})\ar[d]^\iota\\
               J(P_f(\Omega^n))\ar[r]^{\xi_\tau}\ar[ur]^\alpha
               &P_f(\Omega^{n+1})}$$
$\alpha J(\iota)=\xi_\upsilon$, $\xi_\tau
J(\iota)=\iota\xi_\upsilon$, and $\iota\alpha$ is elementary
homotopic to $\xi_\tau$.
\end{prop}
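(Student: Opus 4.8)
The plan is to produce $\alpha$ directly by a universal-property argument, exactly in the spirit of the classifying-map constructions of Section~\ref{exten}. Recall that $\bb F^\Delta(\Omega^{n+1})$ sits in the $\ff F$-extension $\bb F^\Delta(\Omega^{n+1})\to P\bb F^\Delta(\Omega^n)\xrightarrow{d_1}\bb F^\Delta(\Omega^n)$ with the $k$-linear splitting $\upsilon$, while $P_f(\Omega^{n+1})$ comes from the $\ff F$-extension $P_f(\Omega^{n+1})\to \wt P(P_f(\Omega^n))\xrightarrow{\partial}P_f(\Omega^n)$ with splitting $\tau=(\upsilon,P\upsilon)$. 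The map $\iota:\bb F^\Delta(\Omega^n)\to P_f(\Omega^n)$ fits into a morphism of these two extensions: one checks, using the defining pullback square for $P_f(\Omega^n)$, that the lifted map $\wt P(\iota):P\bb F^\Delta(\Omega^n)\to \wt P(P_f(\Omega^n))$ exists and is compatible with the splittings, i.e. $\wt P(\iota)\circ\upsilon=\tau\circ\iota$. This is the content of the two strict identities $\alpha J(\iota)=\xi_\upsilon$ and $\xi_\tau J(\iota)=\iota\xi_\upsilon$: the latter is Lemma~\ref{classqmn} applied to this morphism of extensions (the splittings match strictly, so the square of classifying maps commutes on the nose), and the former is just the functoriality statement that defines $\alpha$ on the image of $J(\iota)$.

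Concretely, I would define $\alpha$ as the classifying map of $P_f(\Omega^n)$ — but computed against the $\ff F$-extension $\bb F^\Delta(\Omega^{n+1})\to P\bb F^\Delta(\Omega^n)\to \bb F^\Delta(\Omega^n)$ pulled back along $P_f(\Omega^n)\to \bb C^\Delta(\Omega^n)$, or more directly: use that $\wt P(P_f(\Omega^n))$ is the fibre product $P\bb B^\Delta(\Omega^n)\times_{P\bb C^\Delta(\Omega^n)}P(P\bb C^\Delta(\Omega^n))$, and that $P\bb B^\Delta(\Omega^n)$ contains $P\bb F^\Delta(\Omega^n)$ as the preimage of $0$ under $d_1\circ f$. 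Since the universal extension $0\to J(P_f(\Omega^n))\to T(P_f(\Omega^n))\to P_f(\Omega^n)\to 0$ maps by $\xi_\tau$ to the $\tau$-split extension, and the component of $\wt P(P_f(\Omega^n))$ lying over $P\bb B^\Delta(\Omega^n)$ restricts to $P\bb F^\Delta(\Omega^n)$ precisely on the loop directions that are killed, the map $\xi_\tau$ lands, after projecting, in a subobject isomorphic to $\bb F^\Delta(\Omega^{n+1})$; call that projection-corestriction $\alpha$. One then verifies $\iota\alpha$ and $\xi_\tau$ agree after composing with the two projections out of $\wt P(P_f(\Omega^n))$, hence $\iota\alpha=\xi_\tau$ as maps into $P_f(\Omega^{n+1})\subset\wt P(P_f(\Omega^n))$ — but only up to the ambiguity in the choice of classifying map, which by \cite[4.4.1]{CT} is an elementary homotopy. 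That gives the third assertion, $\iota\alpha\simeq\xi_\tau$.

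The main obstacle is the bookkeeping with the swap automorphism $\sw$ and the two-sided-versus-one-sided path-space distinction ($Pd_1$ vs.\ $d_1\circ\sw$, and $P_f$ vs.\ $\wt P(P_f)$). The definition of $\tau=(\upsilon,P\upsilon)$ as a splitting of $\partial=(d_1,Pd_1)$ is only valid once one identifies $P(P_f(\Omega^n))$ with $\wt P(P_f(\Omega^n))$ via $\sw$; so in checking that $\iota:\bb F^\Delta(\Omega^n)\to P_f(\Omega^n)$ lifts compatibly with splittings one must track how $\upsilon$ on the $\bb F^\Delta$-side and $P\upsilon$ interact under this identification, and confirm that $\mathbf t$-multiplication commutes with all the maps in the pullback square defining $P_f$. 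Once that naturality of $\upsilon$ under $f$, $\iota$, $d_0$, $d_1$ and $\sw$ is in hand, both strict identities reduce to Lemma~\ref{classqmn} (which supplies strict commutativity of classifying squares for splittings compatible in the arrow category), and the homotopy $\iota\alpha\simeq\xi_\tau$ follows from the uniqueness of classifying maps up to elementary homotopy. I would also remark that this argument is functorial in $n$, since every ingredient ($\upsilon$, $\tau$, $\iota$, the extensions) was constructed functorially, which is what is needed for the sequences~\eqref{adj} and its $P_f$-analogue to be well defined.
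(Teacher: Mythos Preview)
Your handling of the identity $\xi_\tau J(\iota)=\iota\xi_\upsilon$ is correct: the map $\iota$ lifts to a morphism of extensions $(P\bb F^\Delta(\Omega^n)\to\bb F^\Delta(\Omega^n))\to(\wt P P_f(\Omega^n)\to P_f(\Omega^n))$ compatible with the splittings $\upsilon$ and $\tau$, and Lemma~\ref{classqmn} gives strict commutativity.

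The gap is in your construction of $\alpha$. Your proposal to define $\alpha$ as a ``projection-corestriction'' of $\xi_\tau$ does not work: the first-coordinate projection $P_f(\Omega^{n+1})\to\bb B^\Delta(\Omega^{n+1})$ applied to $\xi_\tau$ has no reason to land in $\bb F^\Delta(\Omega^{n+1})$. Indeed, by the pullback definition of $P_f(\Omega^{n+1})$, the composite with $f$ equals $d_1$ applied to the second coordinate, which is not zero on $J(P_f(\Omega^n))$ in general. Your alternative suggestion, pulling back the extension $\bb F^\Delta(\Omega^{n+1})\to P\bb F^\Delta(\Omega^n)\to\bb F^\Delta(\Omega^n)$ along a map from $P_f(\Omega^n)$, also cannot work: there is no map $P_f(\Omega^n)\to\bb F^\Delta(\Omega^n)$ to pull back along, and the map to $\bb C^\Delta(\Omega^n)$ you mention has the wrong target.

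What the paper does, and what you are missing, is to produce an $\ff F$-extension with base $P_f(\Omega^n)$ and \emph{kernel} $\bb F^\Delta(\Omega^{n+1})$, so that $\alpha$ can be taken to be its classifying map. The extension is
\[
\bb F^\Delta(\Omega^{n+1})\longrightarrow P\bb B^\Delta(\Omega^n)\xrightarrow{\ \pi=(d_1,P(f))\ }P_f(\Omega^n),
\]
whose kernel is $\ker d_1\cap\ker P(f)=\bb B^\Delta(\Omega^{n+1})\cap P\bb F^\Delta(\Omega^n)=\bb F^\Delta(\Omega^{n+1})$. The splitting $\nu$ of $\pi$ requires the $k$-linear splittings $g:C\to B$ and $j:B\to F$ of the \emph{original} extension $F\to B\to C$ (an ingredient absent from your sketch): one sets $\nu(b,c)=\upsilon(ij(b))+P(g)(c)$. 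With this in hand, $\alpha:=\xi_\nu$, the identity $\alpha J(\iota)=\xi_\upsilon$ comes from Lemma~\ref{classqmn} applied to the morphism $(d_1^F,\pi)$ with compatible splittings $(\upsilon,\nu)$, and $\iota\alpha\simeq\xi_\tau$ comes from Lemma~\ref{class} applied to the morphism $(\pi,\partial)$, where the map $P\bb B^\Delta(\Omega^n)\to\wt P P_f(\Omega^n)$ is built from an explicit simplicial map $\lambda:\Delta^1\times\Delta^1\to\Delta^1$.
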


\begin{proof}
We want to construct a commutative diagram of extensions as follows.
   \begin{equation}\label{wmnc}
   \xymatrix{\bb F^\Delta(\Omega^{n+1})\ar[r]\ar[d]^{\id}
   &P(\bb F^\Delta(\Omega^n))\ar[r]^{d_1^F}\ar[d]_\chi &\bb F^\Delta(\Omega^n)\ar[d]^{\iota}\\
   \bb F^\Delta(\Omega^{n+1})\ar[r]\ar[d]^{\iota}&P(\bb B^\Delta(\Omega^n))\ar[r]^\pi\ar[d]_\theta&P_f(\Omega^n)\ar[d]^{\id}\\
   P_f(\Omega^{n+1})\ar[r]&{\wt P}P_f(\Omega^n)\ar[r]^{\partial}&P_f(\Omega^n)}
   \end{equation}
Here $\pi$ is a natural map induced by $(d_1:P(\bb
B^\Delta(\Omega^n))\to \bb B^\Delta(\Omega^n),P(f))$. A splitting
$\nu$ to $\pi$ is constructed as follows.

Let $g:C\to B,j:B\to F$ be $k$-linear splittings to $f:B\to C$ and
$i:F\to B$ respectively. So $fg=1_C$, $ji=1_F$ and $ij+gf=1_B$. Then
the simplicial map
   $$ij:\bb B^\Delta(\Omega^n)\to\bb B^\Delta(\Omega^n)$$
is $k$-linear. We define $\nu$ as the composite map
   $$\xymatrix{P_f(\Omega^n)\ar@{ >->}[d]\\
               \bb B^\Delta(\Omega^n)\times P(\bb C^\Delta(\Omega^n))\ar[rr]^{(\upsilon ij,P(g))}
               &&P(\bb B^\Delta(\Omega^n))\times P(\bb B^\Delta(\Omega^n))\ar[d]^+\\
               &&P(\bb B^\Delta(\Omega^n)).}$$

We have to define the map $\theta$. For this we construct a map of
simplicial sets
   $$\lambda:\Delta^1\times\Delta^1\to\Delta^1.$$
We regard the simplicial set $\Delta^1$ as the nerve of the category
$\{0\to 1\}$. Then $\lambda$ is obtained from the functor between
categories
   $$\{0\to 1\}\times\{0\to 1\}\to\{0\to 1\},\quad(0,1),(1,0),(1,1)\mapsto 1,(0,0)\mapsto 0.$$
The induced map $\lambda^*:\bb B^\Delta(\Omega^n)^{\Delta^1}\to\bb
B^\Delta(\Omega^n)^{\Delta^1\times\Delta^1}$ induces a map of path
spaces $\lambda^*:P\bb B^\Delta(\Omega^n)\to P(P\bb
B^\Delta(\Omega^n))$. The desired map $\theta$ is defined by the map
$(1_{P(\bb B^\Delta(\Omega^n))},f\lambda^*)$. Our commutative
diagram is constructed.

Consider the following diagrams of classifying maps
   $$\xymatrix{J(\bb F^\Delta(\Omega^n))\ar[d]^{J(\iota)}\ar[r]^{\xi_{\upsilon}}&\bb F^\Delta(\Omega^{n+1})\ar[d]^{\id}
               &&J(P_f(\Omega^n))\ar[r]^\alpha \ar[d]^{\id}&\bb F^\Delta(\Omega^{n+1})\ar[d]^\iota\\
               J(P_f(\Omega^n))\ar[r]^\alpha&\bb F^\Delta(\Omega^{n+1})
               && J(P_f(\Omega^n)) \ar[r]^{\xi_\tau} & P_f(\Omega^{n+1})}$$
Since $\chi\upsilon=\nu\iota$ then the left square is commutative by
Lemma~\ref{classqmn}, because $(d_1^F,\pi)$ yield a map of $\{0\to
1\}\times\cc C$-diagrams split by $(\upsilon,\nu)$. Also $\xi_\tau
J(\iota)=\iota\xi_\upsilon$, because $(d_1^F,\partial)$ yield a map
of $\{0\to 1\}\times\cc C$-diagrams split by $(\upsilon,\tau)$. The
right square is commutative up to elementary homotopy by
Lemma~\ref{class}.
\end{proof}

\begin{defs}{\rm
Given two $k$-algebras $A,B\in\Re$, the {\it unstable algebraic
Kasparov $K$-theory space\/} of $(A,B)$ is the space
$\mathcal{K}(\Re)(A,B)$ defined as the (fibrant) space
   $$\lp_n\Hom_{\inda}(J^nA,\bb B^\Delta(\Omega^n)).$$
Its homotopy groups will be denoted by $\mathcal{K}_n(\Re)(A,B)$,
$n\geq 0$. In what follows we shall often write $\mathcal{K}(A,B)$
to denote the same space omitting $\Re$ from notation.

}\end{defs}

\begin{rem}{\rm
The space $\mathcal{K}(\Re)(A,B)$ only depends on the endofunctor
$T:\Re\to\Re$. If $A,B$ belong to another admissible category of
algebras $\Re'$ with the same endofunctor $T$, then
$\mathcal{K}(\Re)(A,B)$ equals $\mathcal{K}(\Re')(A,B)$.

}\end{rem}

We call a functor $\cc F$ from $\Re$ to simplicial sets or spectra
{\it homotopy invariant\/} if for every $B\in\Re$ the natural map
$B\to B[x]$ induces a weak equivalence $\cc F(B)\simeq \cc F(B[x])$.

\begin{lem}\label{preexcsi}
$(1)$ For any $n\geq 0$ the simplicial functor
$B\mapsto\Hom_{\inda}(A,\bb B^\Delta(\Omega^n))$ is homotopy
invariant. In particular, the simplicial functor $\mathcal{K}(A,?)$
is homotopy invariant.

$(2)$ Given a $\ff F$-fibration $f:B\to C$, let $f[x]:B[x]\to C[x]$
be the fibration $\sum b_ix^i\mapsto\sum f(b_i)x^i$. Then
$P_{f[x]}(\Omega^n)=P_{f}(\Omega^n)[x]$ and the natural map of
simplicial sets
   \begin{equation}\label{true}
    \Hom_{\inda}(A,P_f(\Omega^n))\to\Hom_{\inda}(A,P_{f[x]}(\Omega^n))
   \end{equation}
is a homotopy equivalence for any $n\geq 0$ and $A\in\Re$.
\end{lem}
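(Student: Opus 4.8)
The plan is to establish both parts by reducing everything to the homotopy invariance of the basic mapping-space functor, which is itself a consequence of the contractibility of $k^\Delta$ established earlier, together with the Corti\~nas--Thom machinery (Theorem~\ref{exinfi}, Proposition~\ref{exinficor}, Corollary~\ref{corloop2}) identifying the relevant spaces as $Ex^\infty$ of genuine mapping simplicial sets.

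\medskip
\noindent\textit{Part (1).}
First I would recall that $\Hom_{\inda}(A,\bb B^\Delta(\Omega^n))=\Omega^n\Hom_{\inda}(A,\bb B^\Delta)$ by Corollary~\ref{corloop2}, and that $\Hom_{\inda}(A,\bb B^\Delta)=Ex^\infty\Hom_{\inda}(A,B^\Delta)$ by Theorem~\ref{exinfi}. Since $Ex^\infty$ and $\Omega^n$ both preserve weak equivalences, it suffices to show that the functor $B\mapsto\Hom_{\inda}(A,B^\Delta)$ carries $B\to B[x]$ to a weak equivalence; in fact I would show the stronger statement that $\Hom_{\inda}(A,B^\Delta)\to\Hom_{\inda}(A,B[x]^\Delta)$ is a simplicial homotopy equivalence, with an explicit homotopy inverse. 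The key point is that $B[x]^\Delta\cong (B^\Delta)[x]$ and that the two evaluation maps $\partial_x^0,\partial_x^1\colon B[x]\to B$ become, after applying $(-)^\Delta$, simplicially homotopic via the polynomial-to-simplicial homotopy furnished by the contraction of $k^\Delta$: the $1$-simplex $t=t_0\in k^{\Delta^1}$ with $\partial_0(t)=0,\partial_1(t)=1$ yields an explicit edge in $\Hom_{\inda}(A,B^\Delta)$ connecting $\mathrm{id}$ to the composite $B^\Delta\to B[x]^\Delta\to B^\Delta$. Concretely, the map $B[x]\to B[x]$, $x\mapsto tx$ (read in $B^\Delta[x]$ using the element $t$), realizes a polynomial homotopy that, upon taking $\Hom_{\inda}(A,(-)^\Delta)$ and using the adjunction between polynomial and simplicial homotopies, collapses the $B[x]$ direction. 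The special case $n\geq 0$, $A$ arbitrary gives homotopy invariance of $\mathcal{K}(A,?)=\lp_n\Hom_{\inda}(J^nA,\bb B^\Delta(\Omega^n))$ since filtered colimits of weak equivalences along the $\varsigma$-tower are weak equivalences.

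\medskip
\noindent\textit{Part (2).}
For the identity $P_{f[x]}(\Omega^n)=P_f(\Omega^n)[x]$ I would argue purely formally: the functor $(-)[x]=(-)\otimes_k k[x]$ commutes with the finite limits (kernels and fibre products) and with the functors $\bb(-)^\Delta(\Omega^n)$ and $P(-)$ used to build $P_f(\Omega^n)$ out of the extension $F\to B\to C$, because $k[x]$ is $k$-flat so $(-)[x]$ is exact, and because all these constructions are built degreewise from $(-)^{\sd^m\Delta^\ell}=(-)\otimes_k k^{\sd^m\Delta^\ell}$, which commutes with $\otimes_k k[x]$. Hence $P_{f[x]}(\Omega^n)$, assembled from $F[x]\to B[x]\to C[x]$, equals $P_f(\Omega^n)[x]$. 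Then the map~\eqref{true} is, after identifying $\Hom_{\inda}(A,P_f(\Omega^n)[x])$ appropriately, exactly a map of the type treated in Part (1) — but now with the ``coefficient'' simplicial ind-algebra being $P_f(\Omega^n)$ rather than of the shape $\bb B^\Delta$. The hard part will be that $P_f(\Omega^n)$ is not literally of the form $\bb B^\Delta$, so one cannot cite Theorem~\ref{exinfi}/Corollary~\ref{corloop2} verbatim; instead I would observe that $P_f(\Omega^n)$ sits in a tower of $Ex$-type constructions (it is built from $\bb B^\Delta(\Omega^n)$, $\bb C^\Delta(\Omega^n)$ and their path spaces, each of which \emph{is} an $Ex^\infty$ of a mapping space), and that the polynomial homotopy $x\mapsto tx$ used in Part (1) is natural enough to descend through the fibre-product defining $P_f(\Omega^n)$: the element $t\in k^{\Delta^1}$ acts compatibly on $\bb B^\Delta$ and $\bb C^\Delta$ and hence on their pullback, giving a simplicial contraction of the $[x]$-direction on $\Hom_{\inda}(A,P_f(\Omega^n))$ with an explicit homotopy inverse. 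I expect the main technical obstacle to be bookkeeping: verifying that the polynomial contraction is compatible with all the structure maps $d_1$, $Pd_1$, $\sw$, $\lambda^*$ entering the definitions of $P_f(\Omega^n)$ and $\wt P(P_f(\Omega^n))$, so that it genuinely passes to the fibre products; once that naturality is in hand, the homotopy equivalence in~\eqref{true} follows by the same adjunction argument as in Part (1), uniformly in $n$ and $A$.
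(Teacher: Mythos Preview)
Your Part~(1) is correct and essentially coincides with the paper's argument: the paper also reduces to the case $n=0$ (by induction along the fibre sequences $\bb B^\Delta(\Omega^{n+1})\to P\bb B^\Delta(\Omega^n)\to\bb B^\Delta(\Omega^n)$ rather than by invoking Corollary~\ref{corloop2} directly, but this is cosmetic) and then cites~\cite[3.1]{Gar}, whose content is the explicit simplicial homotopy you describe.

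For Part~(2), your identification $P_{f[x]}(\Omega^n)=P_f(\Omega^n)[x]$ is fine, but your strategy for the weak equivalence is much harder than necessary, and the ``bookkeeping'' obstacle you anticipate is real: you would have to check that your $t$-contraction is compatible with the pullback structure and with the extra $\Delta^1$-direction in $P\bb C^\Delta(\Omega^n)$, which is fiddly. The paper sidesteps all of this. Since $\Hom_{\inda}(A,-)$ preserves limits, the map~\eqref{true} is the induced map between the pullbacks of the two rows in
\[
\xymatrix{
\Hom_{\inda}(A,\bb B^\Delta(\Omega^n))\ar[r]\ar[d] & \Hom_{\inda}(A,\bb C^\Delta(\Omega^n))\ar[d] & \Hom_{\inda}(A,P\bb C^\Delta(\Omega^n))\ar[l]\ar[d]\\
\Hom_{\inda}(A,\bb B[x]^\Delta(\Omega^n))\ar[r] & \Hom_{\inda}(A,\bb C[x]^\Delta(\Omega^n)) & \Hom_{\inda}(A,P\bb C[x]^\Delta(\Omega^n)).\ar[l]
}
\]
The left and middle vertical maps are weak equivalences by Part~(1); the right vertical map is a weak equivalence because both spaces are contractible (they are path spaces of fibrant pointed simplicial sets). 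The right-pointing horizontal maps are fibrations (they are $d_1$ on mapping spaces out of a Kan fibration), so both rows are homotopy pullbacks, and the induced map on pullbacks is a weak equivalence. No naturality of a contraction needs to be checked; everything follows from Part~(1) and the standard fact that a map of homotopy pullback squares which is a weak equivalence on each corner is a weak equivalence on the pullback.
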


\begin{proof}
(1). By Theorem~\ref{exinfi} $\Hom_{\inda}(A,\bb
B^\Delta)=Ex^\infty(\Hom_{\ahaw}(A,B^\Delta))$. It is homotopy
invariant by~\cite[3.1]{Gar}. For any $n\geq 0$ and $A\in\Re$ there
is a commutative diagram of fibre sequences\footnotesize
   $$\xymatrix{\Hom_{\inda}(A,\bb B^\Delta(\Omega^{n+1}))\ar[r]\ar[d]
               &\Hom_{\inda}(A,P\bb B^\Delta(\Omega^n))\ar[r]\ar[d]&\Hom_{\inda}(A,\bb B^\Delta(\Omega^n))\ar[d]\\
               \Hom_{\inda}(A,\bb B[x]^\Delta(\Omega^{n+1}))\ar[r]
               &\Hom_{\inda}(A,P\bb B[x]^\Delta(\Omega^n))\ar[r] &\Hom_{\inda}(A,\bb B[x]^\Delta(\Omega^n)).}$$
\normalsize By induction, if the right arrow is a weak equivalence,
then so is the left one because the spaces in the middle are
contractible.

(2). The fact that $P_{f[x]}(\Omega^n)=P_{f}(\Omega^n)[x]$ is
straightforward. The map~\eqref{true} is the fibre product map
corresponding to the commutative diagram\footnotesize
   $$\xymatrix{\Hom_{\inda}(A,\bb B^\Delta(\Omega^{n}))\ar[r]\ar[d]
               &\Hom_{\inda}(A,\bb C^\Delta(\Omega^n))\ar[d]&\ar[l]\Hom_{\inda}(A,P\bb C^\Delta(\Omega^n))\ar[d]\\
               \Hom_{\inda}(A,\bb B[x]^\Delta(\Omega^{n}))\ar[r]
               &\Hom_{\inda}(A,\bb C[x]^\Delta(\Omega^n))&\ar[l]\Hom_{\inda}(A,P\bb C[x]^\Delta(\Omega^n)).}$$
\normalsize The left and the middle vertical arrows are weak
equivalences by the first assertion. The right vertical arrow is a
weak equivalence, because it is a map between contractible spaces.
Since the right horizontal maps are fibrations, we conclude that the
desired map is a weak equivalence.
\end{proof}

We are now in a position to prove the following result.

\begin{excisa}\label{excsion}
For any algebra $A\in\Re$ and any $\ff F$-extension in $\Re$
   $$F\lra{i}B\lra{f}C$$
the induced sequence of spaces
   $$\mathcal{K}(A,F)\lra{}\mathcal{K}(A,B)\lra{}\mathcal{K}(A,C)$$
is a homotopy fibre sequence.
\end{excisa}

\begin{proof}
We have constructed above a sequence of simplicial sets
   $$\Hom_{\inda}(A,P_f)\xrightarrow{\vartheta}
     \Hom_{\inda}(JA,P_f(\Omega))\xrightarrow{\vartheta}\cdots$$
with each map $\vartheta$ defined by means of the classifying map
$\xi_{\tau}$ corresponding to the $k$-linear splitting $\tau$. Let
$\cc X$ denote its colimit. One has a homotopy cartesian square
   $$\xymatrix{\cc X\ar[r]\ar[d]_{pr}&P\mathcal{K}(A,C)\simeq *\ar[d]^{d_1}\\
               \mathcal{K}(A,B)\ar[r]^f&\mathcal{K}(A,C).}$$
By Proposition~\ref{excis} for any $n\geq 0$ there is a diagram
   $$\xymatrix{\Hom_{\inda}(J^nA,\bb F^\Delta(\Omega^n))\ar[r]^{\varsigma}\ar[d]_{\iota}
               &\Hom_{\inda}(J^{n+1}A,\bb F^\Delta(\Omega^{n+1}))\ar[d]^\iota\\
               \Hom_{\inda}(J^nA,P_f(\Omega^n))\ar[r]^{\vartheta}\ar[ur]^a
               &\Hom_{\inda}(J^{n+1}A,P_f(\Omega^{n+1}))}$$
with $\varsigma(u)=\xi_{\upsilon}\circ J(u)$,
$\vartheta(v)=\xi_\tau\circ J(v)$, $a(v)=\alpha\circ J(v)$.
Proposition~\ref{excis} also implies that $a\iota=\varsigma$,
$\iota\varsigma=\vartheta\iota$ and that there exists a map
   $$H:\Hom_{\inda}(J^nA,P_f(\Omega^n))\to\Hom_{\inda}(J^{n+1}A,P_f(\Omega^{n+1})[x])$$
such that $\partial_x^0H=\iota a$ and $\partial_x^1H=\vartheta$.

One has a commutative diagram\footnotesize
   $$\xymatrix{\Hom_{\inda}(J^{n+1}A,P_f(\Omega^{n+1}))\ar[r]^(.35){diag}\ar[d]_i
               &\Hom_{\inda}(J^{n+1}A,P_f(\Omega^{n+1}))\times\Hom_{\inda}(J^{n+1}A,P_f(\Omega^{n+1}))\\
               \Hom_{\inda}(J^{n+1}A,P_f(\Omega^{n+1})[x]).\ar[ur]_{(\partial_x^0,\partial_x^1)}}$$
\normalsize By Lemma~\ref{preexcsi}(2) $i$ is a weak equivalence. We
see that $\Hom_{\inda}(J^{n+1}A,P_f(\Omega^{n+1})[x])$ is a path
object of $\Hom_{\inda}(J^{n+1}A,P_f(\Omega^{n+1}))$ in $\bb S$.
Since all spaces in question are fibrant, we conclude that $\iota a$
is simplicially homotopic to $\vartheta$, and hence $\pi_s(\iota
a)=\pi_s(\vartheta)$, $s\geq 0$. Therefore the induced homomorphisms
   $$\pi_s(\iota):\mathcal{K}_s(A,F)\to\pi_s(\cc X),\quad s\geq 0,$$
are isomorphisms, and hence $\iota:\mathcal{K}(A,F)\to\cc X$ is a
weak equivalence.

Since the vertical arrows in the commutative diagram
   $$\xymatrix@!0{P\mathcal{K}(A,F)\ar[rrrrr] &&&&& \mathcal{K}(A,C)\\
                  && \cc X \ar[ull]\ar[rrrrr]^(.40){pr} &&&&& \mathcal{K}(A,B)\ar[ull]\\
                  {*} \ar'[rr][rrrrr] \ar[uu] &&&&& \mathcal{K}(A,C)\ar@{=}'[u][uu]\\
                  &&\mathcal{K}(A,F)\ar[ull]\ar[uu]_(.35)\iota\ar[rrrrr]_i&&&&&\mathcal{K}(A,B)\ar[ull]\ar@{=}[uu]}$$
are weak equivalences and the upper square is homotopy cartesian,
then so is the lower one (see~\cite[13.3.13]{Hir}). Thus,
   $$\mathcal{K}(A,F)\lra{}\mathcal{K}(A,B)\lra{}\mathcal{K}(A,C)$$
is a homotopy fibre sequence. The theorem is proved.
\end{proof}

\begin{cor}\label{excwww}
For any algebras $A,B\in\Re$ the space $\Omega\mathcal{K}(A,B)$ is
naturally homotopy equivalent to $\mathcal{K}(A,\Omega B)$.
\end{cor}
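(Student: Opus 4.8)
The plan is to deduce Corollary~\ref{excwww} directly from Excision Theorem~A applied to the loop extension. First I would recall that for any algebra $B\in\Re$ the loop extension $\Omega B\to EB\xrightarrow{\partial_x^1}B$ is an $\ff F$-extension: indeed it is split exact in $k$-modules since $\partial_x^1:B[x]\to B$ has the obvious $k$-linear section, and this section restricts appropriately on $EB=\ker\partial_x^0$. Applying Excision Theorem~A with this extension yields a homotopy fibre sequence
   $$\mathcal{K}(A,\Omega B)\to\mathcal{K}(A,EB)\to\mathcal{K}(A,B).$$

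Next I would show that the middle term $\mathcal{K}(A,EB)$ is contractible. Since $EB$ is contractible as an algebra (by~\cite[3.3]{G}, as recalled in the excerpt), there is a polynomial contraction $EB\to EB[x]$, i.e. $0\sim 1_{EB}$. Because $\mathcal{K}(A,?)$ is homotopy invariant by Lemma~\ref{preexcsi}(1), the identity and zero maps on $\mathcal{K}(A,EB)$ induced by $1_{EB}$ and $0$ are homotopic through the homotopy induced by the polynomial contraction together with the homotopy invariance equivalence $\mathcal{K}(A,EB)\simeq\mathcal{K}(A,EB[x])$; hence $\mathrm{id}\simeq 0$ on $\mathcal{K}(A,EB)$ and the space is contractible. (Alternatively one can argue at the level of the defining ind-algebras: $\bb{(EB)}^\Delta(\Omega^n)$ is a contractible simplicial ind-algebra, being built from the contractible $EB$, so each $\Hom_{\inda}(J^nA,\bb{(EB)}^\Delta(\Omega^n))$ is contractible and so is the colimit.)

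Finally, a homotopy fibre sequence $\mathcal{K}(A,\Omega B)\to *\to\mathcal{K}(A,B)$ exhibits $\mathcal{K}(A,\Omega B)$ as the homotopy fibre of a map from a contractible space to $\mathcal{K}(A,B)$, which is precisely the loop space $\Omega\mathcal{K}(A,B)$ (all spaces here being fibrant, so that homotopy fibres are computed strictly and agree with the standard loop space construction up to natural weak equivalence). Naturality in $B$ follows from the naturality of the loop extension $B\mapsto(\Omega B\to EB\to B)$ and of the excision fibre sequence. I expect the only mildly delicate point to be making the identification ``homotopy fibre over a contractible base $=$ loop space'' natural rather than merely pointwise; this is handled by the standard fact that for a fibration sequence $F\to E\to B$ with $E$ contractible one has a natural zig-zag $F\simeq\mathrm{hofib}(E\to B)\simeq \Omega B$, together with the fact that Excision Theorem~A produces the fibre sequence functorially in the extension.
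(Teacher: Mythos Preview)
Your proposal is correct and follows essentially the same route as the paper: apply Excision Theorem~A to the loop extension $\Omega B\to EB\to B$, then use the polynomial contractibility of $EB$ together with homotopy invariance (Lemma~\ref{preexcsi}(1)) to conclude that $\mathcal{K}(A,EB)$ is contractible. The paper carries out the contractibility argument levelwise, exhibiting $\Hom_{\inda}(J^{n}A,\bb E\bb B[x](\Omega^{n}))$ as a path object for $\Hom_{\inda}(J^{n}A,\bb E\bb B(\Omega^{n}))$ and concluding each level is contractible, whereas you phrase it directly for the colimit $\mathcal{K}(A,EB)$; the content is the same.
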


\begin{proof}
Consider the extension
   $$\Omega B\lra{}EB\lra{\partial_x^1}B$$
which gives rise to a homotopy fibre sequence
   $$\mathcal{K}(A,\Omega B)\to\mathcal{K}(A,EB)\to\mathcal{K}(A,B)$$
by Excision Theorem~A. Our assertion will follow once we prove that
$\mathcal{K}(A,EB)$ is contractible.

Since $EB$ is contractible, then there is an algebraic homotopy
$h:EB\to EB[x]$ contracting $EB$. There is also a commutative
diagram\footnotesize
   $$\xymatrix{\Hom_{\inda}(J^{n}A,\bb E\bb B(\Omega^n))\ar[r]^(.35){diag}\ar[d]_i
               &\Hom_{\inda}(J^{n}A,\bb E\bb B(\Omega^n))\times\Hom_{\inda}(J^{n}A,\bb E\bb B(\Omega^n))\\
               \Hom_{\inda}(J^{n}A,\bb E\bb B[x](\Omega^{n})).\ar[ur]_{(\partial_x^0,\partial_x^1)}}$$
\normalsize By Lemma~\ref{preexcsi}(1) $i$ is a weak equivalence. We
see that $\Hom_{\inda}(J^{n}A,\bb E\bb B[x](\Omega^{n}))$ is a path
object of $\Hom_{\inda}(J^{n}A,\bb E\bb B(\Omega^{n}))$ in $\bb S$,
and hence the induced map
   $$h_*:\Hom_{\inda}(J^{n}A,\bb E\bb B(\Omega^{n}))\to\Hom_{\inda}(J^{n}A,\bb E\bb B[x](\Omega^{n}))$$
is such that $\partial_x^1h_*=\id$ is homotopic to
$\partial_x^0h_*=const$. Thus $\Hom_{\inda}(J^{n}A,\bb E\bb
B(\Omega^{n}))$ is contractible, and hence so is
$\mathcal{K}(A,EB)$.
\end{proof}

We have proved that the simplicial functor $\mathcal{K}(A,B)$ is
excisive in the second argument. It turns out that it is also
excisive in the first argument.

\begin{excisb}\label{excsionb}
For any algebra $D\in\Re$ and any $\ff F$-extension in $\Re$
   $$F\lra{i}B\lra{f}C$$
the induced sequence of spaces
   $$\mathcal{K}(C,D)\lra{}\mathcal{K}(B,D)\lra{}\mathcal{K}(F,D)$$
is a homotopy fibre sequence.
\end{excisb}

The proof of this theorem requires some machinery. We shall use
recent techniques and results from homotopical algebra (both stable
and unstable). The proof is on page~\pageref{pageexc}.

\section{The spectrum $\bb K^{unst}(A,B)$}\label{thespectrum}

Throughout this section $\Re$ is assumed to be $T$-closed.

\begin{thm}\label{loop}
Let $A,B\in\Re$; then there is a natural isomorphism of simplicial
sets
   $$\cc K(A,B)\cong\Omega\cc K(JA,B).$$
In particular, $\cc K(A,B)$ is an infinite loop space with $\cc
K(A,B)$ simplicially isomorphic to $\Omega^n\cc K(J^nA,B).$
\end{thm}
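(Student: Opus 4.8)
The plan is to identify $\cc K(A,B)$ with the loop space $\Omega\cc K(JA,B)$ by comparing the two telescopes that define them. Recall from the Excision Theorem~A construction that
$$\cc K(A,B)=\lp_n\Hom_{\inda}(J^nA,\bb B^\Delta(\Omega^n)),$$
where the bonding maps $\varsigma$ send $f$ to $\xi_\upsilon\circ J(f)$. Replacing $A$ by $JA$ shifts the indexing: $\cc K(JA,B)=\lp_n\Hom_{\inda}(J^{n+1}A,\bb B^\Delta(\Omega^n))$. First I would apply the loop functor $\Omega$ termwise. The key computational input is Corollary~\ref{corloop2}, which gives a natural isomorphism $\Hom_{\inda}(J^{n+1}A,\bb B^\Delta(\Omega^{n+1}))\cong\Omega(\Hom_{\inda}(J^{n+1}A,\bb B^\Delta(\Omega^n)))$, since $\bb B^\Delta(\Omega^{n+1})=(\bb B^\Delta(\Omega^n))(\Omega)$. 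Because $\Omega$ commutes with the filtered colimit along the telescope (the spaces involved are fibrant and the bonding maps are fibrations, or one invokes that $\Omega=\map_*(S^1,-)$ commutes with the sequential homotopy colimit here as in the excerpt's treatment of $Ex^\infty$), we obtain
$$\Omega\cc K(JA,B)\cong\lp_n\Omega\Hom_{\inda}(J^{n+1}A,\bb B^\Delta(\Omega^n))\cong\lp_n\Hom_{\inda}(J^{n+1}A,\bb B^\Delta(\Omega^{n+1})).$$

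The right-hand telescope is exactly $\cc K(A,B)$ but with indexing starting at $n=1$ instead of $n=0$; dropping the initial term of a mapping telescope gives a naturally isomorphic (in fact, cofinal) colimit. So **the main point to check carefully** is that the isomorphisms from Corollary~\ref{corloop2} are compatible with the bonding maps $\varsigma$ up to the reindexing — i.e., that the square relating $\Omega\varsigma$ on the $\cc K(JA,B)$-telescope to $\varsigma$ on the $\cc K(A,B)$-telescope commutes (or commutes up to natural homotopy, which suffices after passing to the colimit). Here I would trace through the definition of $\varsigma$: it is induced by the classifying map $\xi_\upsilon$ of the extension $\bb B^\Delta(\Omega^{n+1})\to P\bb B^\Delta(\Omega^n)\to\bb B^\Delta(\Omega^n)$, and the functoriality of the path/loop extensions together with Corollary~\ref{corloop1} should make the identification $\bb B^\Delta(\Omega^{n+1})=(\bb B^\Delta(\Omega^n))(\Omega)$ strictly natural, so that the $\Omega$'s and the $\varsigma$'s interleave correctly.

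Once $\cc K(A,B)\cong\Omega\cc K(JA,B)$ is established, the "in particular" clause is immediate by iteration: $\cc K(A,B)\cong\Omega\cc K(JA,B)\cong\Omega^2\cc K(J^2A,B)\cong\cdots\cong\Omega^n\cc K(J^nA,B)$, using that $J^n$ is functorial and that $J$ applied to $J^{n-1}A$ is $J^nA$. This exhibits $\cc K(A,B)$ as an $n$-fold loop space for every $n$, hence an infinite loop space, delivering the associated spectrum $\bb K^{unst}(A,B)$ with $n$th space $\cc K(J^nA,B)$.

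**The hard part will be** the bookkeeping of naturality in the first step — making sure the isomorphism of telescopes respects the structure maps rather than merely giving a levelwise isomorphism, and in particular that no homotopy coherence issue obstructs the passage to the colimit. If a strict commutation fails, I would fall back on showing the relevant squares commute up to simplicial homotopy (all spaces being fibrant by Proposition~\ref{exinficor} and Corollary~\ref{corloop2}), which is enough since a map of mapping telescopes that is a levelwise weak equivalence and commutes up to coherent homotopy induces a weak equivalence on colimits; but the theorem as stated claims an \emph{isomorphism}, so I expect the author intends the identifications to be literally natural, coming from the defining adjunctions and Corollaries~\ref{corloop1}–\ref{corloop2}.
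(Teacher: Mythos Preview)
Your overall strategy matches the paper's: identify $\Omega\cc K(JA,B)$ with the shifted telescope via Corollary~\ref{corloop2}, then compare with the telescope defining $\cc K(A,B)$. However, you underestimate precisely the step you flag as ``the hard part.'' The squares relating $\Omega\varsigma$ to $\varsigma$ do \emph{not} commute on the nose under the naive identification $\bb B^\Delta(\Omega^{n+1})=(\bb B^\Delta(\Omega^n))(\Omega)$: the external loop coordinate enters as the $(n{+}1)$st factor of $(\Delta^1)^{n+1}$, whereas the bonding map $\varsigma$ for the $\cc K(JA,B)$-telescope uses $\xi_\upsilon$ built from the path extension in a \emph{different} coordinate. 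The paper resolves this by showing that the correct bonding maps on the $\Omega\cc K(JA,B)$-side are $\sw\circ\varsigma$ (where $\sw$ swaps the last two $\Delta^1$-coordinates), and then inserting explicit cyclic permutations $(n\cdots 21)\in\Sigma_n$ acting on $\bb B^\Delta(\Omega^n)$ as the vertical comparison isomorphisms. The verification that these permutation-adjusted squares commute strictly uses Lemma~\ref{classqmn} applied to compatible splittings $\upsilon$ and $\nu=P\upsilon$ of the nested path extensions, together with a careful diagram chase involving $PP\bb B^\Delta(\Omega^{n-1})$.

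Your fallback to ``commutes up to simplicial homotopy'' would only yield a weak equivalence, not the simplicial \emph{isomorphism} the theorem asserts; the paper's permutation trick is what delivers a strict isomorphism of colimits. So the missing ingredient in your proposal is not a different idea but the recognition that the comparison maps between the two telescopes must be twisted by coordinate permutations, and that Lemma~\ref{classqmn} (strict commutativity of classifying-map squares when splittings are compatible) is the tool that makes those twisted squares commute.
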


\begin{proof}
For any $n\in\bb N$ there is a commutative diagram
   $$\xymatrix{P\bb B^\Delta(\Omega^n)\ar[d]_{d_{1,\bb B^\Delta(\Omega^n)}}\ar@{ >->}[r]
               &PP\bb B^\Delta(\Omega^{n-1})\ar[d]^{d_{1,P\bb B^\Delta(\Omega^{n-1})}}\ar@{->>}[r]^{Pd_1}
               &P\bb B^\Delta(\Omega^{n-1})\ar[d]^{d_{1,\bb B^\Delta(\Omega^{n-1})}}\\
               \bb B^\Delta(\Omega^n)\ar@{ >->}[r]&P\bb B^\Delta(\Omega^{n-1})\ar@{->>}[r]_{d_1}&\bb B^\Delta(\Omega^{n-1}).}$$
The definition of the natural splitting $\upsilon$ to the lower
right arrow is naturally lifted to a natural splitting
$\nu:=P\upsilon$ for the upper right arrow in such a way that
$d_1\circ\nu=\upsilon\circ d_1$. It follows from
Lemma~\ref{classqmn} that the corresponding diagram of the
classifying maps
   $$\xymatrix{JP\bb B^\Delta(\Omega^{n-1})\ar[r]^{\xi_\nu}\ar[d]_{J(d_1)}&P\bb B^\Delta(\Omega^n)\ar[d]^{d_1}\\
               J\bb B^\Delta(\Omega^{n-1})\ar[r]^{\xi_\upsilon}&\bb B^\Delta(\Omega^n)}$$
is commutative. There is also a commutative diagram with exact rows
for every $n\geq 1$
   $$\xymatrix{\bb B^\Delta(\Omega^{n+1})\ar@{ >->}[d]_{\sw}\ar@{ >->}[r]^j
               &P\bb B^\Delta(\Omega^n)\ar@{=}[d]\ar@{->>}[r]^{d_1}&\bb B^\Delta(\Omega^{n})\ar@{=}[d]\\
               \bb B^\Delta(\Omega^{n+1})\ar@{ >->}[d]_j\ar@{ >->}[r]^{\sw\circ j}
               &P\bb B^\Delta(\Omega^n)\ar[d]^{\sw\circ Pi}\ar@{->>}[r]^{d_1}&\bb B^\Delta(\Omega^{n})\ar@{ >->}[d]^i\\
               P\bb B^\Delta(\Omega^n)\ar@{ >->}[r]&PP\bb B^\Delta(\Omega^{n-1})\ar@{->>}[r]_{Pd_1}
               &P\bb B^\Delta(\Omega^{n-1}).}$$
with $i,j$ natural inclusions and $\sw$ permuting the last two
coordinates. One has a commutative diagram of classifying maps
   $$\xymatrix{J\bb B^\Delta(\Omega^n)\ar@{=}[d]\ar[r]^{\xi_\upsilon}&\bb B^\Delta(\Omega^{n+1})\ar[d]^{\sw}\\
               J\bb B^\Delta(\Omega^n)\ar[d]_{J(i)}\ar[r]^{\sw\circ\xi_\upsilon}&\bb B^\Delta(\Omega^{n+1})\ar[d]^j\\
               JP\bb B^\Delta(\Omega^{n-1})\ar[r]^{\xi_\nu}&P\bb B^\Delta(\Omega^n)}$$

Observe that each simplicial map
   $$\Hom_{\inda}(J^nA,P\bb B^\Delta(\Omega^{n-1}))\xrightarrow{P\varsigma}\Hom_{\inda}(J^{n+1}A,P\bb B^\Delta(\Omega^n))$$
agrees with the map defined like $\varsigma$ but using $\xi_\nu$. To
see this, it is enough to consider the following commutative diagram
with exact rows and obvious splittings:
   $$\xymatrix{J(P\bb B^\Delta)\ar[d]\ar@{ >->}[r]&T(P\bb B^\Delta)\ar[d]\ar@{->>}[r]^{\eta_{P\bb B^\Delta}}&P\bb B^\Delta\ar@{=}[d]\\
               P(J\bb B^\Delta)\ar[d]\ar@{ >->}[r]&P(T\bb B^\Delta)\ar[d]\ar@{->>}[r]^{P(\eta_{\bb B^\Delta})}&P\bb B^\Delta\ar@{=}[d]\\
               P(\bb B^\Delta(\Omega))\ar@{ >->}[r]&PP\bb B^\Delta\ar@{->>}[r]_{Pd_1}&P\bb B^\Delta}$$
Therefore all squares of the diagram \small
   $$\xymatrix{\Omega\cc K(JA,B):&\cdots\ar[r]^(.25){\sw\circ\varsigma}
               &\Hom_{\inda}(J^nA,\bb B^\Delta(\Omega^n))\ar[d]\ar[r]^(.45){\sw\circ\varsigma}
               &\Hom_{\inda}(J^{n+1}A,\bb B^\Delta(\Omega^{n+1}))\ar[d]\ar[r]^(.78){\sw\circ\varsigma}&\cdots\\
               P\cc K(JA,B):&\cdots\ar[r]^(.20){P\varsigma}&\Hom_{\inda}(J^nA,P\bb B^\Delta(\Omega^{n-1}))\ar[d]_{d_1}\ar[r]^{P\varsigma}
               &\Hom_{\inda}(J^{n+1}A,P\bb B^\Delta(\Omega^n))\ar[d]^{d_1}\ar[r]^(.80){P\varsigma}&\cdots\\
               \cc K(JA,B):&\cdots\ar[r]^(.20)\varsigma&\Hom_{\inda}(J^nA,\bb B^\Delta(\Omega^{n-1}))\ar[r]^\varsigma
               &\Hom_{\inda}(J^{n+1}A,\bb B^\Delta(\Omega^n))\ar[r]^(.80)\varsigma&\cdots}$$
\normalsize are commutative. The desired isomorphism $\cc
K(A,B)\cong\Omega\cc K(JA,B)$ is encoded by the following
commutative diagram:
   $$\xymatrix{\Hom_{\inda}(A,\bb B^\Delta)\ar[d]_\varsigma\ar[r]^(.45)\varsigma&\Hom_{\inda}(JA,\bb B^\Delta(\Omega))\ar[d]_\varsigma\ar[r]^\varsigma
               &\Hom_{\inda}(J^2A,\bb B^\Delta(\Omega^2))\ar[d]_\varsigma\ar[r]^(.75)\varsigma&\cdots\\
               \Hom_{\inda}(JA,\bb B^\Delta(\Omega))\ar@{=}[d]\ar[r]^(.45){\varsigma}&\Hom_{\inda}(J^2A,\bb B^\Delta(\Omega^2))\ar[d]_{(21)}\ar[r]^{\varsigma}
               &\Hom_{\inda}(J^3A,\bb B^\Delta(\Omega^3))\ar[d]_{(321)}\ar[r]^(.75)\varsigma&\cdots\\
               \Hom_{\inda}(JA,\bb B^\Delta(\Omega))\ar[r]^(.45){\sw\circ\varsigma}
               &\Hom_{\inda}(J^2A,\bb B^\Delta(\Omega^2))\ar[r]^{\sw\circ\varsigma}
               &\Hom_{\inda}(J^3A,\bb B^\Delta(\Omega^3))\ar[r]^(.75){\sw\circ\varsigma}&\cdots}$$
The colimit of the upper sequence is $\cc K(A,B)$ and the colimit of
the lower one is $\Omega\cc K(JA,B)$. The cycle
$(n\cdots21)\in\Sigma_n$ permutes coordinates of $\bb
B^\Delta(\Omega^n)$.
\end{proof}

\begin{cor}\label{excspm}
For any algebras $A,B\in\Re$ the space $\mathcal{K}(A,B)$ is
naturally homotopy equivalent to $\mathcal{K}(JA,\Omega B)$.
\end{cor}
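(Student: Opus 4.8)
The plan is to combine the two structural results just established: Excision Theorem~A, applied to the loop extension, and Theorem~\ref{loop}.

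First I would recall from Corollary~\ref{excwww} that $\Omega\mathcal{K}(A,B)$ is naturally homotopy equivalent to $\mathcal{K}(A,\Omega B)$; this was obtained by applying Excision Theorem~A to the loop extension $\Omega B\to EB\to B$ and checking that $\mathcal{K}(A,EB)$ is contractible. Independently, Theorem~\ref{loop} gives a natural simplicial isomorphism $\mathcal{K}(A,B)\cong\Omega\mathcal{K}(JA,B)$. Chaining these two facts together, $\mathcal{K}(A,B)\cong\Omega\mathcal{K}(JA,B)\simeq\mathcal{K}(JA,\Omega B)$, which is exactly the asserted homotopy equivalence. So the proof is essentially a one-line composition of two previously proved statements.

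The only point that deserves a word of care is naturality: one must check that the homotopy equivalence $\Omega\mathcal{K}(JA,B)\simeq\mathcal{K}(JA,\Omega B)$ supplied by Corollary~\ref{excwww} is natural in both variables (in particular natural under the functor $J$ on the first variable, so that it can be precomposed with the natural isomorphism of Theorem~\ref{loop}), and that the composite is compatible with morphisms in $A$ and $B$. Both ingredients were constructed functorially --- Excision Theorem~A is a statement about functors $\mathcal{K}(A,?)$ and the classifying-map construction used in Theorem~\ref{loop} is natural --- so this is routine, but I would state it explicitly rather than leave it implicit.

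I do not anticipate a genuine obstacle here; the corollary is a formal consequence packaged for later use (it is the identity that makes the spectrum $\mathbb{K}^{unst}(A,B)$ an $\Omega$-spectrum with $J$ acting as a shift). If I wanted to be maximally self-contained I could instead give the direct argument: apply Theorem~\ref{loop} to write $\mathcal{K}(A,B)\cong\Omega\mathcal{K}(JA,B)$ and then invoke Excision Theorem~A for the loop extension of $B$ together with the contractibility of $\mathcal{K}(JA,EB)$ (proved exactly as in Corollary~\ref{excwww}, using Lemma~\ref{preexcsi}(1) and the algebraic contraction of $EB$) to identify $\Omega\mathcal{K}(JA,B)\simeq\mathcal{K}(JA,\Omega B)$. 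Either route is short; I would present the first.
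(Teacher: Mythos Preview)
Your proposal is correct and matches the paper's own proof, which simply cites Theorem~\ref{loop} and Corollary~\ref{excwww} and composes them exactly as you do: $\mathcal{K}(A,B)\cong\Omega\mathcal{K}(JA,B)\simeq\mathcal{K}(JA,\Omega B)$. Your additional remark on naturality is a reasonable elaboration but not something the paper spells out.
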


\begin{proof}
This follows from the preceding theorem and Corollary~\ref{excwww}.
\end{proof}

\begin{defs}{\rm
Given two $k$-algebras $A,B\in\Re$, the {\it unstable algebraic
Kasparov $KK$-theory spectrum\/} of $(A,B)$ consists of the sequence
of spaces
   $$\mathcal{K}(A,B),\mathcal{K}(JA,B),\mathcal{K}(J^2A,B),\ldots$$
together with isomorphisms
$\mathcal{K}(J^nA,B)\cong\Omega\mathcal{K}(J^{n+1}A,B)$ constructed
in Theorem~\ref{loop}. It forms an $\Omega$-spectrum which we also
denote by $\mathbb{K}^{unst}(A,B)$. Its homotopy groups will be
denoted by $\mathbb{K}_n^{unst}(A,B)$, $n\in\bb Z$. We sometimes
write $\mathbb{K}(A,B)$ instead of $\mathbb{K}^{unst}(A,B)$,
dropping ``unst" from notation.

Observe that $\mathbb{K}_n(A,B)\cong\mathcal{K}_n(A,B)$ for any
$n\geq 0$ and $\mathbb{K}_n(A,B)\cong\mathcal{K}_0(J^{-n}A,B)$ for
any $n<0$.

}\end{defs}

\begin{thm}\label{spectrumunst}
The assignment $B\mapsto\mathbb{K}(A,B)$ determines a functor
   $$\mathbb{K}(A,?):\Re\to(Spectra)$$
which is homotopy invariant and excisive in the sense that for every
$\ff F$-extension $F\to B\to C$ the sequence
   $$\mathbb{K}(A,F)\to\mathbb{K}(A,B)\to\mathbb{K}(A,C)$$
is a homotopy fibration of spectra. In particular, there is a long
exact sequence of abelian groups
   $$\cdots\to\mathbb{K}_{i+1}(A,C)\to\mathbb{K}_i(A,F)\to\mathbb{K}_i(A,B)\to\mathbb{K}_i(A,C)\to\cdots$$
for any $i\in\bb Z$.
\end{thm}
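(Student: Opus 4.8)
The plan is to deduce Theorem~\ref{spectrumunst} entirely from the space-level statements already established, using that by Theorem~\ref{loop} the spectrum $\mathbb{K}(A,B)$ is the $\Omega$-spectrum whose $n$-th space is $\mathcal{K}(J^nA,B)$ and whose structure maps are the isomorphisms $\mathcal{K}(J^nA,B)\cong\Omega\mathcal{K}(J^{n+1}A,B)$ of that theorem. \emph{Functoriality in $B$.} For each fixed $n$ the assignment $B\mapsto\mathcal{K}(J^nA,B)=\lp_m\Hom_{\inda}(J^{m+n}A,\bb B^\Delta(\Omega^m))$ is a functor from $\Re$ to (fibrant) simplicial sets, since $\bb B^\Delta$, and hence $\bb B^\Delta(\Omega^m)$, depends functorially on $B$ and $J$ is functorial (Section~\ref{exten}). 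It then remains to observe that the structure isomorphisms produced in the proof of Theorem~\ref{loop} are natural in $B$: every ingredient entering that argument — the $k$-linear splitting $\upsilon$, its lift $\nu=P\upsilon$, the swapping automorphisms $\sw$, the cyclic permutations of the $\Omega$-coordinates, and the induced classifying maps $\xi_\upsilon,\xi_\nu$ — is constructed naturally in $B$. Granting this, $B\mapsto\mathbb{K}(A,B)$ is a well-defined functor $\Re\to(Spectra)$ landing in $\Omega$-spectra.

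\emph{Homotopy invariance.} By Lemma~\ref{preexcsi}(1) the map $B\to B[x]$ induces a weak equivalence $\mathcal{K}(J^nA,B)\xrightarrow{\simeq}\mathcal{K}(J^nA,B[x])$ for every $n\geq 0$. A map of spectra which is a weak equivalence in every level is a stable equivalence; therefore $\mathbb{K}(A,B)\to\mathbb{K}(A,B[x])$ is a stable equivalence, which is exactly homotopy invariance of $\mathbb{K}(A,?)$ in the sense defined above.

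\emph{Excision and the long exact sequence.} Let $F\xrightarrow{i}B\xrightarrow{f}C$ be an $\ff F$-extension. Applying Excision Theorem~A with the algebra $J^nA$ in the first variable shows that $\mathcal{K}(J^nA,F)\to\mathcal{K}(J^nA,B)\to\mathcal{K}(J^nA,C)$ is a homotopy fibre sequence of spaces for every $n\geq 0$. These maps are induced by $i$ and $f$, so by the naturality in $B$ noted above they commute with the structure maps of the three $\Omega$-spectra. Thus $\mathbb{K}(A,F)\to\mathbb{K}(A,B)\to\mathbb{K}(A,C)$ is a homotopy fibre sequence in every level, hence a homotopy fibration of spectra. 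A homotopy fibration of spectra $X\to Y\to Z$ is simultaneously a homotopy cofibration and induces a long exact sequence $\cdots\to\pi_{i+1}Z\to\pi_iX\to\pi_iY\to\pi_iZ\to\cdots$ for all $i\in\bb Z$; since our spectra are $\Omega$-spectra one has $\pi_i\mathbb{K}(A,-)=\mathbb{K}_i(A,-)$, which yields the displayed sequence. (The connecting map may alternatively be identified using Corollary~\ref{excwww}.)

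\emph{Main obstacle.} The only point that is not immediately formal is the naturality bookkeeping in the first step: one must verify that the isomorphism $\mathcal{K}(J^nA,B)\cong\Omega\mathcal{K}(J^{n+1}A,B)$ of Theorem~\ref{loop} is a natural transformation of functors of $B$, and not merely an isomorphism chosen for each individual $B$, so that the levelwise homotopy invariance and the levelwise homotopy fibre sequences assemble into statements about the $\Omega$-spectra. Everything else — passing from levelwise weak equivalences and fibre sequences to stable equivalences and homotopy fibrations of $\Omega$-spectra, and reading off the long exact sequence — is standard stable homotopy theory.
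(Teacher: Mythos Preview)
Your proposal is correct and follows the same approach as the paper, which simply writes ``This follows from Excision Theorem~A.'' You have filled in the details that this one-line proof suppresses: the naturality in $B$ of the structure isomorphisms from Theorem~\ref{loop}, the homotopy invariance from Lemma~\ref{preexcsi}(1), and the levelwise application of Excision Theorem~A with $J^nA$ in the first variable.
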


\begin{proof}
This follows from Excision Theorem~A.
\end{proof}

We also have the following

\begin{thm}\label{spectrumunstb}
The assignment $B\mapsto\mathbb{K}(B,D)$ determines a functor
   $$\mathbb{K}(?,D):\Re^{\op}\to(Spectra),$$
which is excisive in the sense that for every $\ff F$-extension
$F\to B\to C$ the sequence
   $$\mathbb{K}(C,D)\to\mathbb{K}(B,D)\to\mathbb{K}(F,D)$$
is a homotopy fibration of spectra. In particular, there is a long
exact sequence of abelian groups
   $$\cdots\to\mathbb{K}_{i+1}(F,D)\to\mathbb{K}_i(C,D)\to\mathbb{K}_i(B,D)\to\mathbb{K}_i(F,D)\to\cdots$$
for any $i\in\bb Z$.
\end{thm}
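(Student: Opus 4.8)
The plan is to deduce this from Excision Theorem~B (the space-level statement for $\mathcal K(-,D)$) together with the structure of the spectrum $\mathbb K(-,D)$ recorded in the definition and in Theorem~\ref{loop}. Recall that the spectrum $\mathbb K(B,D)$ has as its $n$-th space $\mathcal K(J^nB,D)$, with the $\Omega$-spectrum bonding maps being the isomorphisms $\mathcal K(J^nB,D)\cong\Omega\mathcal K(J^{n+1}B,D)$ from Theorem~\ref{loop}; these are natural in $B$, so $B\mapsto\mathbb K(B,D)$ is indeed a functor $\Re^{\op}\to(\mathrm{Spectra})$. Thus to prove that the sequence of spectra $\mathbb K(C,D)\to\mathbb K(B,D)\to\mathbb K(F,D)$ is a homotopy fibration, it suffices, level by level, to show that
   $$\mathcal K(J^nC,D)\to\mathcal K(J^nB,D)\to\mathcal K(J^nF,D)$$
is a homotopy fibre sequence of spaces for every $n\ge 0$ (a levelwise homotopy fibration of $\Omega$-spectra is a homotopy fibration of spectra).

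The key step is therefore: for each $n\ge 0$, applying $J^n$ to the $\ff F$-extension $F\to B\to C$ again yields an $\ff F$-extension
   $$J^nF\lra{}J^nB\lra{}J^nC$$
in $\Re$. This is where I would spend the real work. The functor $J$ is defined as the kernel of the universal extension $JA\to TA\to A$, which is $k$-split; one needs that $J$ applied to a $k$-split surjection is again a $k$-split surjection (with kernel $J$ applied to the original kernel). Concretely, given the $\ff F$-extension $0\to F\to B\xrightarrow{f} C\to 0$ with $k$-linear splitting $s$, one uses functoriality of the universal extension $JA\to TA\to A$ in $A$ together with the fact that $T$ sends $k$-split surjections to $k$-split surjections (the relevant tensor-algebra/$\widetilde T$ constructions are $k$-linear and split compatibly, as already exploited in Section~\ref{exten} in the construction of classifying maps for $\cc C$-diagrams); then $JF=\ker(JB\to JC)$ and the sequence is $k$-split by a diagram chase with the induced splittings on $T$ and on $J$. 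Once this is established, Excision Theorem~B applied to the $\ff F$-extension $J^nF\to J^nB\to J^nC$ (with $D$ fixed) gives that $\mathcal K(J^nC,D)\to\mathcal K(J^nB,D)\to\mathcal K(J^nF,D)$ is a homotopy fibre sequence.

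Finally I would assemble these levelwise fibre sequences into a homotopy fibration of $\Omega$-spectra, invoking naturality of the bonding isomorphisms of Theorem~\ref{loop} to see that the fibre-sequence structure maps are compatible with the spectrum structure; this is routine. The long exact sequence of homotopy groups then follows from the long exact sequence of a homotopy fibration of spectra, together with the identification $\mathbb K_i(?,D)\cong\pi_i$ of the spectrum and the shift $\mathbb K_{i+1}(F,D)\cong\pi_i$ of its loop (coming from $\Omega\mathcal K(F,D)\simeq\mathcal K(JF,D)$ in the negative range and from the $\Omega$-spectrum structure in general). The main obstacle is the stability of $\ff F$-extensions under $J$ (equivalently, under $J^n$); everything else is a formal consequence of Excision Theorem~B and the already-constructed spectrum structure.
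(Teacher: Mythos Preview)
Your approach has a genuine gap: the claim that $J$ preserves $\ff F$-extensions is false. Specifically, the assertion $JF=\ker(JB\to JC)$ fails in general. For a concrete counterexample in $\Re=\ahaw$ with $T$ the tensor algebra, take $B=k\times k$ with orthogonal idempotents $e_1,e_2$, $F=ke_2$, $C=k$, and $f$ the first projection. Then $TB$ is the free non-unital $k$-algebra on $e_1,e_2$, and the element $e_1e_2\in TB$ lies in $JB$ (since $e_1e_2=0$ in $B$) and in $\ker T(f)$ (since it involves $e_2$), hence $e_1e_2\in\ker(JB\to JC)$; but $e_1e_2\notin TF=k[e_2]e_2$, so $e_1e_2\notin JF$. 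The underlying reason is that $\ker(TB\to TC)$ is the two-sided ideal of $TB$ generated by $F$, which is strictly larger than the subalgebra $TF$; this discrepancy propagates to $J$. So the sequence $J^nF\to J^nB\to J^nC$ is not an $\ff F$-extension, and you cannot feed it into Excision Theorem~B.

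What rescues the situation in the paper is not an exactness property of $J$ but a weak-equivalence one: the classifying map $JA\to\Omega A$ is a natural quasi-isomorphism, and $\Omega$ (unlike $J$) does preserve $\ff F$-extensions. This is precisely why the paper does not argue levelwise from Excision Theorem~B. Instead it proves Theorem~\ref{mainstab} inside the model category $Sp(\Re)$: there the square on $r(J^n(-))$ is weakly equivalent to the square on $r(\Omega^n(-))$, which \emph{is} a homotopy pushout of representables in $U\Re_\bullet^{I,J}$, and one concludes for spectra and then evaluates at $D$. Your levelwise plan could be repaired by replacing ``$J^nF\to J^nB\to J^nC$ is an $\ff F$-extension'' with ``$J^nA\simeq\Omega^nA$ naturally,'' but making that replacement rigorous at the level of $\cc K(-,D)$ already requires the machinery of Section~\ref{homotopy}; at that point you are essentially reproving Theorem~\ref{mainstab}, from which both Excision Theorem~B and Theorem~\ref{spectrumunstb} follow directly.
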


\begin{proof}
We postpone the proof till subsection~\ref{trew}.
\end{proof}

The reader may have observed that we do not involve any matrices in
the definition of $\cc K(A,B)$ as {\it any\/} sort of algebraic
$K$-theory does. This is one of {\it important\/} differences with
usual views on algebraic $K$-theory. The author is motivated by the
fact that many interesting admissible categories of algebras
deserving to be considered like that of all commutative ones are not
closed under matrices.

\section{Homotopy theory of algebras}\label{homotopy}

Let $\Re$ be a {\it small\/} admissible category of rings. In order
to prove Excision Theorem~B, we have to use results from homotopy
theory of rings. First, we introduce a model category
$U\Re_\bullet^{I,J}$ of pointed simplicial functors from $\Re$ to
$\bb S_\bullet$. This model category is a reminiscence of
Morel--Voevodsky~\cite{MV} motivic model category of pointed motivic
spaces. Second, we define a model category of $S^1$-spectra
$Sp(\Re)$ associated with $U\Re_\bullet^{I,J}$. A typical fibrant
spectrum of $Sp(\Re)$ is $\bb K^{unst}(A,-)$, $A\in\Re$. The
strategy of proving Excision Theorem~B is first to prove a kind of
Excision Theorem~B for the spectra $\bb K^{unst}(A,-)$ (see
Theorem~\ref{mainstab}) and then use standard facts from homotopical
algebra to show the original Excision Theorem~B on the level of
spaces. We mostly adhere to~\cite{Gar}.

\subsection{The category of simplicial functors $U\Re$}

We shall use the model category $U\Re$ of covariant functors from
$\Re$ to simplicial sets (and not contravariant functors as usual).
We do not worry about set theoretic issues here, because we assume
$\Re$ to be small. We shall consider both the injective and
projective model structures on $U\Re$ and refer to Dugger~\cite{D}
for further details. Both model structures are Quillen equivalent.
These are proper, simplicial, cellular model category structures
with weak equivalences and cofibrations (respectively fibrations)
being defined objectwise, and fibrations (respectively cofibrations)
being those maps having the right (respectively left) lifting
property with respect to trivial cofibrations (respectively trivial
fibrations). The fully faithful contravariant functor
   $$r:\Re\to U\Re,\ \ \ A\longmapsto\Hom_{\Re}(A,-),$$
where $rA(B)=\Hom_{\Re}(A,B)$ is to be thought of as the constant
simplicial set for any $B\in\Re$.

In the injective model structure on $U\Re$, cofibrations are the
injective maps. This model structure enjoys the following properties
(see Dugger~\cite[p.~21]{D}):

\begin{itemize}
\item[$\diamond$] every object is cofibrant;

\item[$\diamond$] being fibrant implies being objectwise
fibrant, but is stronger (there are additional diagramatic
conditions involving maps being fibrations, etc.);

\item[$\diamond$] any object which is constant in the simplicial
direction is fibrant.

\end{itemize}
If $F\in U\Re$ then $U\Re(rA\times\Delta^n,F)=F_n(A)$ (isomorphism
of sets). Hence, if we look at simplicial mapping spaces we find
   $$\Map(rA,F)=F(A)$$
(isomorphism of simplicial sets). This is a kind of ``simplicial
Yoneda Lemma''.

In the projective model structure on $U\Re$, fibrations are defined
sectionwise. The class of projective cofibrations is generated by
the set
$$
I_{U\Re}\equiv\{rA\times(\partial\Delta^{n}\subset\Delta^{n})\}^{n\geq
0}$$ indexed by $A\in\Re$. Likewise, the class of acyclic projective
cofibrations is generated by
$$
J_{U\Re}\equiv\{rA\times(\Lambda^{k}_n\subset\Delta^{n})\}^{n>0}_{0\leq
k\leq n}.
$$

The projective model structure on $U\Re$ enjoys the following
properties:

\begin{itemize}
\item[$\diamond$] every projective cofibration is an injective map (but not vice versa);

\item[$\diamond$] if $A\in\Re$ and $K$ is a simplicial set, then $rA\times K$ is a projective
cofibrant simplicial functor. In particular, $rA$ is projective
cofibrant for every algebra $A\in\Re$;

\item[$\diamond$] $rA$ is projective fibrant for every algebra $A\in\Re$.

\end{itemize}

\subsection{Bousfield localization}

Recall from~\cite{Hir} that if $\cc M$ is a model category and $S$ a
set of maps between cofibrant objects, one can produce a new model
structure on $\cc M$ in which the maps $S$ are weak equivalences.
The new model structure is called the {\it Bousfield localization\/}
or just localization of the old one. Since all model categories we
shall consider are simplicial one can use the simplicial mapping
object instead of the homotopy function complex for the localization
theory of $\cc M$.

\begin{defs}{\rm
Let $\cc M$ be a simplicial model category and let $S$ be a set of
maps between cofibrant objects.

\begin{enumerate}

\item An {\it $S$-local object\/} of $\cc M$ is a fibrant object
$X$ such that for every map $A \to B$ in $S$, the induced map of
$\Map(B,X)\to\Map(A,X)$ is a weak equivalence of simplicial sets.

\item An {\it $S$-local equivalence\/} is a map $A\to B$ such
that $\Map(B,X) \to\Map(A,X)$ is a weak equivalence for every
$S$-local object $X$.

\end{enumerate}
}\end{defs}

In words, the $S$-local objects are the ones which see every map in
$S$ as if it were a weak equivalence.  The $S$-local equivalences
are those maps which are seen as weak equivalences by every
$S$-local object.

\begin{thm}[Hirschhorn~\cite{Hir}]
Let $\cc M$ be a cellular, simplicial model category and let $S$ be
a set of maps between cofibrant objects. Then there exists a model
category $\cc M/S$ whose underlying category is that of $\cc M$ in
which

\begin{enumerate}

\item the weak equivalences are the $S$-local equivalences;

\item the cofibrations in $\cc M/S$ are the same as those in $\cc
M$;

\item the fibrations are the maps having the
right-lifting-property with respect to cofibrations which are also
$S$-local equivalences.
\end{enumerate}
Left Quillen functors from $\cc M/S$ to $\cc D$ are in one to one
correspondence with left Quillen functors $\varPhi:\cc M\to\cc D$
such that $\varPhi(f)$ is a weak equivalence for all $f\in S$. In
addition, the fibrant objects of $\cc M$ are precisely the $S$-local
objects, and this new model structure is again cellular and
simplicial.

\end{thm}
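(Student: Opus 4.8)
The plan is to follow the standard construction of a left Bousfield localization, as carried out in Hirschhorn's monograph. One decrees that $\cc M/S$ has the same underlying category and the same cofibrations as $\cc M$, and that its weak equivalences are the $S$-local equivalences; the fibrations are then forced to be the maps with the right lifting property against the cofibrations that are simultaneously $S$-local equivalences, so clauses (2) and (3) hold by construction and it remains to verify the model category axioms. First I would dispose of the cheap structural facts: the $S$-local equivalences satisfy two-out-of-three and are closed under retracts (immediate from the characterization that $\Map(B,X)\to\Map(A,X)$ is a weak equivalence of simplicial sets for every $S$-local $X$); every weak equivalence of $\cc M$ is an $S$-local equivalence (so the factorization of a map of $\cc M$ into a cofibration followed by a trivial fibration is still a factorization in $\cc M/S$); and, conversely, an $S$-local equivalence between $S$-local objects is already a weak equivalence of $\cc M$ (a standard lemma using the simplicial axiom SM7). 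Together these give one of the two factorization axioms and half of the lifting axiom for free.

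The heart of the proof is producing a \emph{set} $J_S$ of generating trivial cofibrations for $\cc M/S$. One wants a set $J_S$ of cofibrations which are $S$-local equivalences such that the $J_S$-injective objects are exactly the $S$-local objects (equivalently, the fibrant objects of $\cc M/S$) and such that every cofibration which is an $S$-local equivalence is a retract of a relative $J_S$-cell complex. The key technical input --- and the only place where the \textbf{cellularity} hypothesis is genuinely used --- is the bounded cofibration lemma: there is a cardinal $\gamma$, built from the presentation cardinals of the generating cofibrations of $\cc M$ and from the size of $S$, such that for every cofibration $A\to B$ which is an $S$-local equivalence and every subcomplex $C\subseteq B$ with $|C|\leq\gamma$ there is a subcomplex $D$ with $C\subseteq D\subseteq B$ and $|D|\leq\gamma$ for which $A\cap D\to D$ is again an $S$-local equivalence. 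Granting this, one lets $J_S$ be a set of representatives for the isomorphism classes of such trivial cofibrations with $\gamma$-small codomain, and a transfinite filtration argument --- pushing out elements of $J_S$ one cell at a time --- shows that $J_S$ generates exactly the cofibrations that are $S$-local equivalences and that the $J_S$-injective objects are exactly the $S$-local ones. I expect this lemma, together with the attendant cardinality bookkeeping, to be the main obstacle; everything surrounding it is either the small object argument or formal nonsense.

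With $I$ (the unchanged generating cofibrations) and $J_S$ in hand, the small object argument produces the two functorial factorizations, and the remaining lifting axiom follows by the usual retract argument: a cofibration which is an $S$-local equivalence factors as a relative $J_S$-cell complex followed by a $J_S$-injective, hence is a retract of a $J_S$-cell complex, hence has the left lifting property against every fibration of $\cc M/S$. That $\cc M/S$ is again cellular is then bookkeeping --- the cofibrations and the cells are unchanged and are still generated by $I$, so one need only check the smallness and effective-monomorphism conditions, which hold since the members of $J_S$ are $\gamma$-presented. For the simplicial structure one keeps the same tensor, cotensor and $\Map$ and verifies the pushout-product axiom for $\cc M/S$; the only non-routine clause, that a trivial cofibration of $\cc M/S$ pushout-producted with a monomorphism of simplicial sets is a trivial cofibration of $\cc M/S$, reduces via adjunction and the $\Map$-characterization of $S$-local equivalences to the corresponding statement in $\cc M$ together with the defining property of $S$-local objects. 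Along the way one identifies the fibrant objects of $\cc M/S$ with the $S$-local objects, as asserted.

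Finally the universal property is formal. A left Quillen functor $\cc M/S\to\cc D$ is in particular a left Quillen functor $\cc M\to\cc D$ (same cofibrations, and trivial cofibrations of $\cc M$ are trivial cofibrations of $\cc M/S$), and it carries each $f\in S$ --- a weak equivalence of $\cc M/S$ between cofibrant objects --- to a weak equivalence of $\cc D$ by Ken Brown's lemma. Conversely, if $\varPhi\colon\cc M\to\cc D$ is left Quillen with $\varPhi(f)$ a weak equivalence for every $f\in S$, then its total left derived functor inverts $S$ and hence factors through the localization $\Ho(\cc M)\to\Ho(\cc M/S)$; since the weak equivalences of $\cc M/S$ between cofibrant objects become isomorphisms there, $\varPhi$ carries them to weak equivalences of $\cc D$, so $\varPhi$ preserves the generating trivial cofibrations of $\cc M/S$ and is therefore left Quillen on $\cc M/S$. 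One checks that the two assignments are mutually inverse, which completes the proof.
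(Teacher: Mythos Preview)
The paper does not prove this statement at all: it is quoted verbatim as a theorem of Hirschhorn and attributed to \cite{Hir}, with no proof or sketch supplied, and is then used as a black box to set up the localized model structures $U\Re_I$, $U\Re_J$, $U\Re_{I,J}$. So there is nothing in the paper to compare your argument against beyond the bare citation.

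Your outline is a reasonable summary of the standard construction and captures the overall architecture correctly --- keep the cofibrations, declare the $S$-local equivalences to be the weak equivalences, and manufacture a generating set of trivial cofibrations by a cardinality argument, then run the small object argument. One point worth flagging: the specific mechanism you describe under the name ``bounded cofibration lemma'' (finding $\gamma$-small sub-cofibrations inside an arbitrary $S$-local trivial cofibration) is closer in spirit to Jeff Smith's argument for combinatorial model categories than to Hirschhorn's actual treatment of the cellular case. In \cite{Hir} the set of generating $S$-local trivial cofibrations is produced more directly from a set of ``horns on $S$'' together with the original generating trivial cofibrations, and the cellularity hypothesis enters through control over the size of cell complexes and the compactness of the domains of the generating cofibrations, rather than through a subobject-extraction lemma. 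This does not make your sketch wrong --- both routes lead to the same model structure --- but if you intend to point the reader to \cite{Hir} you should be aware that the details there are organized differently.
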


The model category $\cc M/S$ whose existence is guaranteed by the
above theorem is called the {\it $S$-localization\/} of $\cc M$. The
underlying category is the same as that of $\cc M$, but there are
more trivial cofibrations (and hence fewer fibrations).

Note that the identity maps yield a Quillen pair $\cc M
\rightleftarrows\cc M/S$, where the left Quillen functor is the map
$\id:\cc M\to\cc M/S$.

\subsection{The model category $U\Re_I$}

Let $I=\{i=i_A:r(A[t])\to r(A)\mid A\in\Re\}$, where each $i_A$ is
induced by the natural homomorphism $i:A\to A[t]$. Consider the
injective model structure on $U\Re$. We shall refer to the $I$-local
equivalences as (injective) $I$-weak equivalences. The resulting
model category $U\Re/I$ will be denoted by $U\Re_I$ and its homotopy
category is denoted by $\Ho_I(\Re)$. Notice that any homotopy
invariant functor $F:\Re\to Sets$ is an $I$-local object in $U\Re$
(hence fibrant in $U\Re_I$).

Let $F$ be a functor from $\Re$ to simplicial sets. There is a {\it
singular functor\/} $Sing_*(F)$ which is defined at each algebra $R$
as the diagonal of the bisimplicial set $F(R^\Delta)$. Thus
$Sing_*(F)$ is also a functor from $\Re$ to simplicial sets. If we
consider $R$ as a constant simplicial algebra, then the natural map
$R\to R^\Delta$ yields a natural transformation $F\to Sing_*(F)$. It
is an $I$-trivial cofibration by~\cite[3.8]{Gar}.

If we consider the projective model structure on $U\Re$, then we
shall refer to the $I$-local equivalences (respectively fibrations
in the $I$-localized model structure) as projective $I$-weak
equivalences (respectively $I$-projective fibrations). The resulting
model category $U\Re/I$ will be denoted by $U\Re^I$. It is shown
similar to~\cite[3.49]{O} that the classes of injective and
projective $I$-weak equivalences coincide. Hence the identity
functor on $U\Re$ is a Quillen equivalence between $U\Re^I$ and
$U\Re_I$.

The model category $U\Re^I$ satisfies some finiteness conditions.

\begin{defs}[\cite{H}]{\rm
An object $A$ of a model category $\cc M$ is {\it finitely
presentable\/} if the set-valued Hom-functor $\Hom_{\cc M}(A,-)$
commutes with all colimits of sequences $X_0\to X_1\to
X_2\to\cdots$. A cofibrantly generated model category with
generating sets of cofibrations $\cc I$ and trivial cofibrations
$\cc J$ is called {\it finitely generated\/} if the domains and
codomains of $\cc I$ and $\cc J$ are finitely presentable, and {\it
almost finitely generated\/} if the domains and codomains of $\cc I$
are finitely presentable and there exists a set of trivial
cofibrations $\cc J'$ with finitely presentable domains and
codomains such that a map with fibrant codomain is a fibration if
and only if it has the right lifting property with respect to $\cc
J'$. }\end{defs}

Using the simplicial mapping cylinder in $U\Re$ (it is the usual one
from simplicial sets applied objectwise), we may factor the morphism
$$
\xymatrix{r(A[t])\ar[r] & rA}
$$
into a projective cofibration composed with a simplicial homotopy
equivalence in $U\Re$
\begin{equation}
\label{homot} \xymatrix{ r(A[t])\ar[r] &
\cyl\bigl(r(A[t])\rightarrow rA\bigr)\ar[r] & rA.}
\end{equation}
Observe that the maps in (\ref{homot}) are $I$-weak equivalences.

Let $J_{U\Re^I}$ denote the set of maps
$$
r(A[t])\times\Delta^{n}\bigsqcup_{r(A[t])\times\partial\Delta^{n}}
\cyl\bigl(r(A[t])\rightarrow rA\bigr)\times\partial\Delta^{n}\to
\cyl\bigl(r(A[t])\rightarrow rA\bigr)\times\Delta^{n}
$$
indexed by $n\geq 0$ and $A\in\Re$.

Let $\Lambda$ be a set of generating trivial cofibrations for the
injective model structure on $U\Re$. Using~\cite[4.2.4]{Hir} a
simplicial functor $\cc X$ is $I$-local in the injective
(respectively projective) model structure if and only if it has the
right lifting property with respect to $\Lambda\cup J_{U\Re^I}$
(respectively $J_{U\Re}\cup J_{U\Re^I}$). It follows
from~\cite[4.2]{H} that $U\Re^I$ is almost finitely generated,
because domains and codomains of $J_{U\Re}\cup J_{U\Re^I}$ are
finitely presentable.

\subsection{The model category $U\Re_{J}$}

Let us introduce the class of excisive functors on $\Re$. They look
like flasque presheaves on a site defined by a cd-structure in the
sense of Voevodsky~\cite[section 3]{V}.

\begin{defs}{\rm
Let $\Re$ be an admissible category of algebras. A simplicial
functor $\cc X\in U\Re$ is called {\it excisive\/} with respect to
$\ff F$ if $\cc X(0)$ is contractible and for any cartesian square
in $\Re$
   $$\xymatrix{
      D\ar[r]\ar[d]&A\ar[d]\\
      B\ar[r]^f&C
     }$$
with $f$ a fibration (call such squares {\it distinguished}) the
square of simplicial sets
   $$\xymatrix{
      \cc X(D)\ar[r]\ar[d]&{\cc X(A)}\ar[d]\\
      {\cc X(B)}\ar[r]&\cc X(C)
     }$$
is a homotopy pullback square. It immediately follows from the
definition that every pointed excisive object takes $\ff
F$-extensions in $\Re$ to homotopy fibre sequences of simplicial
sets.

}\end{defs}

Consider the injective model structure on $U\Re$. Let $\alpha$
denote a distinguished square in $\Re$
   $$\xymatrix{
      D\ar[r]\ar[d]&A\ar[d]\\
      B\ar[r]&C
     }$$
and denote the pushout of the diagram
   $$\xymatrix{
      rC\ar[r]\ar[d]&rA\\
      rB}$$
by $P(\alpha)$. Notice that the diagram obtained is homotopy
pushout. There is a natural map $P(\alpha)\to rD$, and both objects
are cofibrant. In the case of the degenerate square this map has to
be understood as the map from the initial object $\emptyset$ to
$r0$.

We can localize $U\Re$ at the family of maps
   $$J=\{P(\alpha)\to rD\mid\textrm{ $\alpha$ is a distinguished square}\}.$$
The corresponding $J$-localization will be denoted by $U\Re_J$. The
weak equivalences (trivial cofibrations) of $U\Re_{J}$ will be
referred to as (injective) $J$-weak equivalences ((injective)
$J$-trivial cofibrations).

It follows that the square ``$r(\alpha)$''
   $$\xymatrix{
      rC\ar[r]\ar[d]&{rA}\ar[d]\\
      {rB}\ar[r]& rD}$$
with $\alpha$ a distinguished square is a homotopy pushout square in
$U\Re_J$. A simplicial functor $\cc X$ in $U\Re$ is $J$-local if and
only if it is fibrant and excisive~\cite[4.3]{Gar}.

We are also interested in constructing sets of generating acyclic
cofibrations for model structures. Let us apply the simplicial
mapping cylinder construction $\cyl$ to distinguished squares and
form the pushouts:
   $$\xymatrix{rC\ar[r]\ar[d]&\cyl(rC\to rA)\ar[d]\ar[r]&rA\ar[d]\\
               rB\ar[r]&\cyl(rC\to rA)\bigsqcup_{rC}rB\ar[r]&rD}$$
Note that $rC\to\cyl(rC\rightarrow rA)$ is both an injective and a
projective cofibration between (projective) cofibrant simplicial
functors. Thus $s(\alpha)\equiv\cyl(rC\to rA)\bigsqcup_{rC}rB$ is
(projective) cofibrant \cite[1.11.1]{Hov}. For the same reasons,
applying the simplicial mapping cylinder to $s(\alpha)\rightarrow
rD$ and setting $t(\alpha)\equiv\cyl\bigl(s(\alpha)\to rD\bigr)$ we
get a projective cofibration
   $$\xymatrix{\cyl(\alpha)\colon s(\alpha)\ar[r] & t(\alpha).}$$
Let $J^{\cyl(\alpha)}_{U\Re}$ consists of maps
$$
\xymatrix{
s(\alpha)\times\Delta^{n}\bigsqcup_{s(\alpha)\times\partial\Delta^{n}}
t(\alpha)\times\partial\Delta^{n}\ar[r] &
t(\alpha)\times\Delta^{n}.}
$$
It is directly verified that a simplicial functor $\cc X$ is
$J$-local if and only if it has the right lifting property with
respect to $\Lambda\cup J^{\cyl(\alpha)}_{U\Re}$, where $\Lambda$ is
a set of generating trivial cofibrations for the injective model
structure on $U\Re$.

If one localizes the projective model structure on $U\Re$ with
respect to the set of projective cofibrations
$\{\cyl(\alpha)\}_{\alpha}$, the resulting model category shall be
denoted by $U\Re^J$. The weak equivalences (trivial cofibrations) of
$U\Re^{J}$ will be referred to as projective $J$-weak equivalences
(projective $J$-trivial cofibrations). As above, $\cc X$ is fibrant
in $U\Re^J$ if and only if it has the right lifting property with
respect to $J_{U\Re}\cup J^{\cyl(\alpha)}_{U\Re}$. Since both
domains and codomains in $J_{U\Re}\cup J^{\cyl(\alpha)}_{U\Re}$ are
finitely presentable then $U\Re^J$ is almost finitely generated
by~\cite[4.2]{H}.

It can be shown similar to~\cite[3.49]{O} that the classes of
injective and projective $J$-weak equivalences coincide. Hence the
identity functor on $U\Re$ is a Quillen equivalence between $U\Re_J$
and $U\Re^J$.

\subsection{The model category $U\Re_{I,J}$}

\begin{defs}{\rm
A simplicial functor $\cc X\in U\Re$ is called {\it quasi-fibrant\/}
with respect to $\ff F$ if it is homotopy invariant and excisive.
For instance, if $\Re$ is $T$-closed and $A\in\Re$ then the
simplicial functor $\cc K(A,?)$ is quasi-fibrant by
Lemma~\ref{preexcsi} and Excision Theorem~A.

Consider the injective model structure on $U\Re$. The model category
$U\Re_{I,J}$ is, by definition, the Bousfield localization of $U\Re$
with respect to $I\cup J$. Equivalently, $U\Re_{I,J}$ is the
Bousfield localization of $U\Re$ with respect to $\{\cyl(r(A[t])\to
rA)\}\cup\{\cyl(\alpha)\}$, where $A$ runs over the objects from
$\Re$ and $\alpha$ runs over the distinguished squares. The weak
equivalences (trivial cofibrations) of $U\Re_{I,J}$ will be referred
to as (injective) $(I,J)$-weak equivalences ((injective)
$(I,J)$-trivial cofibrations). By~\cite[4.5]{Gar} a simplicial
functor $\cc X\in U\Re$ is $(I,J)$-local if and only if it is
fibrant, homotopy invariant and excisive.

}\end{defs}

\begin{defs}{\rm
Following~\cite{Gar} a homomorphism $A\to B$ in $\Re$ is said to be
an {\it $\ff F$-quasi-isomorphism\/} or just a {\it
quasi-isomorphism\/} if the map $rB\to rA$ is an $(I,J)$-weak
equivalence.

}\end{defs}

Consider now the projective model structure on $U\Re$. The model
category $U\Re^{I,J}$ is, by definition, the Bousfield localization
of $U\Re$ with respect to $\{\cyl(r(A[t])\to
rA)\}\cup\{\cyl(\alpha)\}$, where $A$ runs over the objects from
$\Re$ and $\alpha$ runs over the distinguished squares. The weak
equivalences (trivial cofibrations) of $U\Re^{I,J}$ will be referred
to as projective $(I,J)$-weak equivalences (projective
$(I,J)$-trivial cofibrations). Similar to~\cite[4.5]{Gar} a
simplicial functor $\cc X\in U\Re$ is fibrant in $U\Re^{I,J}$ if and
only if it is projective fibrant, homotopy invariant and excisive
or, equivalently, it has the right lifting property with respect to
$J_{U\Re}\cup J_{U\Re_I}\cup J^{\cyl(\alpha)}_{U\Re}$. Since both
domains and codomains in $J_{U\Re}\cup J_{U\Re_I}\cup
J^{\cyl(\alpha)}_{U\Re}$ are finitely presentable then $U\Re^{I,J}$
is almost finitely generated by~\cite[4.2]{H}.

It can be shown similar to~\cite[3.49]{O} that the classes of
injective and projective $(I,J)$-weak equivalences coincide. Hence
the identity functor on $U\Re$ is a Quillen equivalence between
$U\Re_{I,J}$ and $U\Re^{I,J}$.

It is straightforward to show that the results for the model
structures on $U\Re$ have analogs for the category $U\Re_\bullet$ of
pointed simplicial functors (see~\cite{Gar}). In order to prove
Excision Theorem~B, we have to consider a model category of spectra
for $U\Re^{I,J}_\bullet$.

\subsection{The category of spectra}\label{trew}

In this section we assume $\Re$ to be small and $T$-closed. We use
here ideas and work of Hovey~\cite{H}, Jardine~\cite{J} and
Schwede~\cite{Sch}.

\begin{defs}{\rm
The category $\Spt$ of spectra consists of sequences $\cc
E\equiv(\cc E_{n})_{n\geq 0}$ of pointed simplicial functors
equipped with structure maps $\sigma_{n}^{\cc E}:\Sigma\cc
E_{n}\rightarrow\cc E_{n+1}$ where $\Sigma=S^1\wedge-$ is the
suspension functor. A map $f:\cc E\rightarrow\cc F$ of spectra
consists of compatible maps $f_{n}:\cc E_{n}\to\cc F_{n}$ of pointed
simplicial functors in the sense that the diagrams
$$
\xymatrix{
\Sigma\cc E_{n}\ar[d]_-{\Sigma f_{n}}\ar[r]^-{\sigma_{n}^{\cc E}} & \cc E_{n+1}\ar[d]^-{f_{n+1}} \\
\Sigma\cc F_{n}\ar[r]^-{\sigma_{n}^{\cc F}} & \cc F_{n+1} }
$$
commute for all $n\geq 0$. }\end{defs}

\begin{example}{\rm
The main spectrum we shall work with is as follows. Let $A\in\Re$
and let $\cc R(A)$ be the spectrum which is defined at every
$B\in\Re$ as the sequence of spaces pointed at zero
   $$\Hom_{\inda}(A,\bb B^\Delta),\Hom_{\inda}(JA,\bb B^\Delta),\Hom_{\inda}(J^2A,\bb B^\Delta),\ldots$$
By Theorem~\ref{exinfi} each $\cc R(A)_n(B)$ is a fibrant simplicial
set and by Corollary~\ref{corloop2}
   $$\Omega^k\cc R(A)_n(B)=\Hom_{\inda}(J^nA,\bb B^\Delta(\Omega^k)).$$
Each structure map $\sigma_n:\Sigma\cc R(A)_n\to\cc R(A)_{n+1}$ is
defined at $B$ as adjoint to the map
$\varsigma:\Hom_{\inda}(J^nA,\bb
B^\Delta)\to\Hom_{\inda}(J^{n+1}A,\bb B^\Delta(\Omega))$ constructed
in~\eqref{adj}.

}\end{example}

A map $f:\cc E\to\cc F$ is a level weak equivalence (respectively
fibration) if $f_{n}\colon\cc E_{n}\rightarrow\cc F_{n}$ is a
$(I,J)$-weak equivalence (respectively projective
$(I,J)$-fibration). And $f$ is a projective cofibration if $f_{0}$
and the maps
$$
\xymatrix{\cc E_{n+1}\bigsqcup_{\Sigma\cc E_{n}}\Sigma\cc
F_{n}\ar[r] & \cc F_{n+1} }
$$
are cofibrations in $U\Re_\bullet^{I,J}$ for all $n\geq 0$.
By~\cite{H,J,Sch} we have:

\begin{prop}\label{proposi}
The level weak equivalences, projective cofibrations and level
fibrations furnish a simplicial and left proper model structure on
$\Spt$. We call this the projective model structure.
\end{prop}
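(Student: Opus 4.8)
The plan is to invoke the standard machinery for constructing model structures on categories of spectra built on an almost finitely generated simplicial, left proper model category — here $U\Re^{I,J}_\bullet$ — exactly as in the cited sources Hovey~\cite{H}, Jardine~\cite{J}, Schwede~\cite{Sch}. Since Proposition~\ref{proposi} only asserts the \emph{projective} (level) model structure, the verification splits into the usual list of model category axioms for the three classes: level weak equivalences, projective cofibrations, and level fibrations.

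First I would record that $U\Re^{I,J}_\bullet$ is itself a pointed, simplicial, left proper, cofibrantly generated (indeed almost finitely generated) model category — this was established in the previous subsection, where it was shown that the fibrant objects are the projective-fibrant, homotopy invariant, excisive pointed simplicial functors, and that the relevant generating sets have finitely presentable domains and codomains. Next, one defines the generating (trivial) cofibrations for $\Spt$ in the classical Bousfield--Friedlander style: for each generating (trivial) cofibration $j\colon C\to D$ of $U\Re^{I,J}_\bullet$ and each $n\geq 0$, take the map of spectra $F_n(C)\to F_n(D)$, where $F_n$ is the left adjoint to the $n$-th evaluation functor $\cc E\mapsto\cc E_n$ (so $F_n(X)$ has $\Sigma^{k}X$ in level $n+k$ and the zero functor below level $n$). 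One then checks that a map of spectra is a level fibration with the right lifting property against all $F_n(j)$ for $j$ a generating trivial cofibration exactly when it is a level fibration and level weak equivalence, and dually that the $F_n(i)$ for $i$ a generating cofibration generate the projective cofibrations as defined in the statement (via the iterated pushout-corner condition on $\cc E_{n+1}\sqcup_{\Sigma\cc E_n}\Sigma\cc F_n$). The small object argument applies because everything in sight is finitely presentable, giving the two functorial factorizations; the lifting axioms and two-out-of-three for level weak equivalences are immediate from the levelwise nature of the definitions; left properness is inherited levelwise from $U\Re^{I,J}_\bullet$ since pushouts of spectra are computed levelwise; and the simplicial structure is the levelwise one, with the pushout-product (SM7) axiom following from the corresponding axiom in $U\Re^{I,J}_\bullet$ together with the compatibility of $\Sigma=S^1\wedge-$ with the simplicial tensoring.

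The only point requiring genuine care — and the step I expect to be the main obstacle — is verifying that the levelwise definition of projective cofibration, phrased via the pushout-corner maps $\cc E_{n+1}\sqcup_{\Sigma\cc E_n}\Sigma\cc F_n\to\cc F_{n+1}$, is precisely the class generated by the $F_n(i)$, and correspondingly that the generating trivial cofibrations detect exactly the level-trivial level fibrations. This is the content of the Bousfield--Friedlander lemma and uses in an essential way that $\Sigma$ is a left Quillen functor on $U\Re^{I,J}_\bullet$ (it preserves cofibrations and trivial cofibrations, being $S^1\wedge-$), so that $F_n$ sends generating (trivial) cofibrations to projective (trivial) cofibrations of spectra. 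Since all of this is entirely standard once the base model category has been shown to be almost finitely generated, simplicial, and left proper — all of which is in place — the proof is a citation-and-verification argument rather than anything new; I would simply point to~\cite{H}, \cite{J}, \cite{Sch} and remark that $U\Re^{I,J}_\bullet$ satisfies all their hypotheses.
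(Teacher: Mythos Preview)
Your proposal is correct and takes essentially the same approach as the paper: the paper simply cites \cite{H,J,Sch} without further argument, and your proposal is an expanded account of exactly what those references provide once $U\Re^{I,J}_\bullet$ is known to be pointed, simplicial, left proper, and almost finitely generated. There is no meaningful difference in strategy, only in level of detail.
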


The Bousfield--Friedlander category of spectra~\cite{BF} will be
denoted by $Sp$. There is a functor
   $$Sp\to\Spt$$
that takes a spectrum of pointed simplicial sets $\cc E$ to the
constant spectrum $A\in\Re\mapsto\cc E(A)=\cc E$. For any algebra
$D\in\Re$ there is also a functor
   $$U_D:\Spt\to Sp,\quad\cc X\mapsto\cc X(D).$$

Given a spectrum $\cc E\in Sp$ and a pointed simplicial functor $K$,
there is a spectrum $\cc E\wedge K$ with $(\cc E\wedge K)_n=\cc
E_n\wedge K$ and having structure maps of the form
   $$\Sigma(\cc E_n\wedge K)\cong(\Sigma\cc E_n)\wedge K\xrightarrow{\sigma_n\wedge K}\cc E_{n+1}\wedge K.$$
Given $D\in \Re$, the functor $F_D:Sp\to\Spt$, $\cc E\mapsto\cc
E\wedge rD_+$, is left adjoint to $U_D:\Spt\to Sp$. So there is an
isomorphism
\begin{equation}\label{tutu}
    \Hom_{\Spt}(\cc E\wedge rD_+,\cc X)\cong\Hom_{Sp}(\cc E,\cc X(D)).
\end{equation}

Our next objective is to define the stable model structure. We
define the fake suspension functor $\Sigma:\Spt\to\Spt$ by
$(\Sigma\cc Z)_n=\Sigma\cc Z_n$ and structure maps
   $$\Sigma(\Sigma\cc Z_n)\xrightarrow{\Sigma\sigma_n}\Sigma\cc Z_{n+1},$$
where $\sigma_n$ is a structure map of $\cc Z$. Note that the fake
suspension functor is left adjoint to the fake loops functor
$\Omega^\ell:\Spt\to\Spt$ defined by $(\Omega^\ell\cc Z)_n=\Omega\cc
Z_n$ and structure maps adjoint to
   $$\Omega\cc Z_n\xrightarrow{\Omega\wt\sigma_n}\Omega(\Omega\cc Z_{n+1}),$$
where $\wt\sigma_n$ is adjoint to the structure map $\sigma_n$ of
$\cc Z$.

\begin{defs}{\rm
A spectrum $\cc Z$ is {\it stably fibrant\/} if it is level fibrant
and all the adjoints $\widetilde\sigma_{n}^{\cc Z}\colon\cc
Z_{n}\to\Omega\cc Z_{n+1}$ of its structure maps are $(I,J)$-weak
equivalences. }\end{defs}

\begin{example}{\rm
Given $A\in\Re$, the spectrum $\mathbb{K}(A,-)$ consists of the
sequence of simplicial functors
   $$\mathcal{K}(A,-),\mathcal{K}(JA,-),\mathcal{K}(J^2A,-),\ldots$$
together with isomorphisms
$\mathcal{K}(J^nA,-)\cong\Omega\mathcal{K}(J^{n+1}A,-)$ constructed
in Theorem~\ref{loop}. Lemma~\ref{preexcsi} and Excision Theorem~A
imply $\mathbb{K}(A,-)$ is a stably fibrant spectrum. Note that
$\mathbb{K}(A,B)$ is stably fibrant in $Sp$ for every $B\in\Re$.

}\end{example}

The stably fibrant spectra determine the stable weak equivalences of
spectra. Stable fibrations are maps having the right lifting
property with respect to all maps which are projective cofibrations
and stable weak equivalences.

\begin{defs}{\rm
A map $f\colon\cc E\rightarrow\cc F$ of spectra is a {\it stable
weak equivalence\/} if for every stably fibrant $\cc Z$ taking a
cofibrant replacement $Qf\colon Q\cc E\rightarrow Q\cc F$ of $f$ in
the level projective model structure on $\Spt$ yields a weak
equivalence of pointed simplicial sets
$$
\xymatrix{\Map_{\Spt}(Qf,\cc Z)\colon \Map_{\Spt}(Q\cc F,\cc
Z)\ar[r]&\Map_{\Spt}(Q\cc E,\cc Z).}
$$
}\end{defs}

By specializing the collection of results in \cite{H,Sch} to our
setting we have:

\begin{thm}\label{stablemodel}
The classes of stable weak equivalences and projective cofibrations
define a simplicial and left proper model structure on $\Spt$.
\end{thm}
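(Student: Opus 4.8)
The plan is to realize the stable model structure on $\Spt$ as a left Bousfield localization of the level projective model structure of Proposition~\ref{proposi}, following the template of Hovey~\cite{H} and Schwede~\cite{Sch}. The category $U\Re_\bullet^{I,J}$ is, by the earlier subsections, a pointed, cellular, left proper, almost finitely generated simplicial model category (it arose as a Bousfield localization of the cellular category $U\Re_\bullet$, and localizations preserve these properties). First I would invoke Hovey's general machinery~\cite[\S\S3--4]{H}: given such a base category $\cc C$ together with a left Quillen endofunctor $\Sigma=S^1\wedge-$, the category $\Sigma$-spectra carries a level model structure (this is Proposition~\ref{proposi} already) and, provided $\cc C$ is \emph{almost finitely generated} and $\Sigma$ preserves the relevant trivial cofibrations and sequential colimits, a \emph{stable} model structure obtained by localizing the level structure at the stabilization maps $F_{n+1}(\Sigma K)\to F_n(K)$ for $K$ ranging over the (co)domains of the generating cofibrations of $U\Re_\bullet^{I,J}$.

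The key steps, in order, are: (1) verify that $U\Re_\bullet^{I,J}$ satisfies Hovey's hypotheses --- almost finite generation was established above, properness and cellularity are inherited from the localization, and the suspension $\Sigma=S^1\wedge-$ is a left Quillen functor for the projective structure since smashing with a fixed simplicial set preserves projective cofibrations and $(I,J)$-weak equivalences (the latter because $S^1\wedge-$ commutes with the homotopy pushouts and the $A\mapsto A[t]$ maps defining $I$ and $J$); (2) form the set $S$ of stabilization maps and take the left Bousfield localization of the level projective model structure on $\Spt$ at $S$, which exists by Hirschhorn's theorem since the level structure is cellular and simplicial; (3) identify the $S$-local equivalences with the stable weak equivalences as \emph{defined} in the text --- i.e.\ check that a map $f$ is an $S$-local equivalence iff $\Map_{\Spt}(Qf,\cc Z)$ is a weak equivalence for every stably fibrant $\cc Z$ --- and identify the $S$-local objects with the (level fibrant spectra whose adjoint structure maps $\widetilde\sigma_n\colon\cc Z_n\to\Omega\cc Z_{n+1}$ are $(I,J)$-weak equivalences), i.e.\ the stably fibrant spectra; (4) conclude that the localized structure has the stable weak equivalences as weak equivalences, the projective cofibrations unchanged, and is simplicial and left proper --- simpliciality is preserved by Bousfield localization, and left properness follows from left properness of the level structure together with the fact (again from Hovey/Schwede) that the stable equivalences are generated in a way compatible with pushout along cofibrations.

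The main obstacle is step~(3): matching the ad hoc definition of ``stable weak equivalence'' given in the text (mapping into all stably fibrant $\cc Z$) with the Bousfield-localization notion of $S$-local equivalence, and simultaneously checking that ``stably fibrant'' in the sense defined (level fibrant $+$ $\widetilde\sigma_n^{\cc Z}$ an $(I,J)$-weak equivalence) coincides with ``$S$-local.'' This is precisely the content of Hovey's stable model structure theorem once one knows $U\Re_\bullet^{I,J}$ is almost finitely generated and that the class of $(I,J)$-weak equivalences is closed under filtered colimits and under $\Omega=\Map_\bullet(S^1,-)$ on level fibrant objects --- the latter closure property is where almost finite generation is genuinely used, guaranteeing that $\Omega$ commutes up to weak equivalence with the relevant colimits so that the naive $\Omega$-spectrum condition detects fibrancy. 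Granting those facts (all available from the earlier subsections and from~\cite{H,Sch}), the theorem follows by specialization, exactly as the sentence preceding the statement asserts.
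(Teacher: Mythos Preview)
Your proposal is correct and takes essentially the same approach as the paper: the paper gives no proof at all beyond the sentence ``By specializing the collection of results in \cite{H,Sch} to our setting we have,'' and your outline is precisely a fleshed-out account of what that specialization entails. If anything, your step~(3) and the remarks about almost finite generation make explicit the verifications that the paper leaves entirely to the cited references.
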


If we define the stable model category structure on ordinary spectra
$Sp$ similar to $\Spt$, then by~\cite[3.5]{H} it coincides with the
stable model structure of Bousfield--Friedlander~\cite{BF}.

Define the \emph{shift functors} $t:\Spt\xrightarrow{}\Spt$ and
$s:\Spt\xrightarrow{}\Spt$ by $(s\cc X)_{n}=\cc X_{n+1}$ and $(t\cc
X)_{n}=\cc X_{n-1}$, $(t\cc X)_{0}=pt$, with the evident structure
maps. Note that $t$ is left adjoint to $s$.

\begin{defs}{\rm
Define $\Theta:\Spt\to\Spt$ to be the functor $s\circ\Omega^\ell$,
where $s$ is the shift functor.  Then we have a natural map
$\iota_{\cc X}:\cc X\to\Theta\cc X$, and we define
\[
\Theta^{\infty}\cc X=\colim(\cc X\xrightarrow{\iota_{\cc
X}}\Theta\cc X\xrightarrow{\Theta\iota_{\cc X}}\Theta^{2}\cc
X\xrightarrow{\Theta^{2}\iota_{\cc X}}\cdots
\xrightarrow{\Theta^{n-1}\iota_{\cc X}}\Theta^{n}\cc
X\xrightarrow{\Theta^{n}\iota_{\cc X}}\cdots).
\]
Let $j_{\cc X}:\cc X\to\Theta^{\infty}\cc X$ denote the obvious
natural transformation. It is a stable equivalence
by~\cite[4.11]{H}. We call $\Theta^{\infty}\cc X$ the {\it stabilization\/} of $\cc X$.
}\end{defs}

\begin{example}{\rm
Given $A\in\Re$, there is a natural map of spectra
   $$\kappa:\cc R(A)\to\mathbb{K}(A,-).$$
One has a commutative diagram
   $$\xymatrix{\cc R(A)\ar[r]^j\ar[d]_\kappa&\Theta^\infty\cc R(A)\ar[d]^{\Theta^\infty\kappa}\\
               \mathbb{K}(A,-)\ar[r]^j&\Theta^\infty\mathbb{K}(A,-).}$$
The upper horizontal map is a stable equivalence, the lower and
right arrows are isomorphisms. Therefore the natural map of spectra
$\kappa:\cc R(A)\to\mathbb{K}(A,-)$ is a stable equivalence. In fact
for any algebra $B\in\Re$ the map
   $$\kappa_B:\cc R(A)(B)\to\mathbb{K}(A,B)$$
is a stable equivalence of ordinary spectra.

}\end{example}

By~\cite[4.6]{H} we get the following result because $\Omega(-)$
preserves sequential colimits and the model category
$U\Re^{I,J}_\bullet$ is almost finitely generated.

\begin{lem}\label{stabiliz}
The stabilization of every level fibrant spectrum is stably fibrant.
\end{lem}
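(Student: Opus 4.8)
The plan is to obtain Lemma~\ref{stabiliz} as a direct application of \cite[4.6]{H}. That result asserts that if $\cc M$ is an almost finitely generated model category in which the loop functor $\Omega$ commutes with sequential colimits, then in the associated category of spectra the stabilization functor $\Theta^\infty$ carries level fibrant spectra to stably fibrant ones. So the work reduces to checking these two hypotheses for $\cc M=U\Re^{I,J}_\bullet$.

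The first hypothesis has essentially been recorded already: $U\Re^{I,J}$ is almost finitely generated by \cite[4.2]{H}, since the domains and codomains of $J_{U\Re}\cup J_{U\Re_I}\cup J^{\cyl(\alpha)}_{U\Re}$ are finitely presentable, and the pointed variant $U\Re^{I,J}_\bullet$ inherits this property in the standard way (by adjoining a disjoint basepoint to the generating sets). For the second hypothesis, recall that in the simplicial model category $U\Re_\bullet$ one has $\Omega\cc X=\Hom_{\Sz_\bullet}(S^1,\cc X)$ with $S^1=\Delta^1/\partial\Delta^1$ a finite pointed simplicial set, and that colimits in $U\Re_\bullet$ are computed objectwise in $\Sz_\bullet$. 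Since $S^1$ has only finitely many nondegenerate simplices, $\Hom_{\Sz_\bullet}(S^1,-)$ is built from finite limits and therefore commutes with filtered, in particular sequential, colimits of pointed simplicial sets; hence $\Omega$ commutes with sequential colimits on $U\Re_\bullet$.

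Granting these, \cite[4.6]{H} applies: writing $\Theta=s\circ\Omega^\ell$ and $\Theta^\infty\cc X=\colim_n\Theta^n\cc X$, each $\Theta^n\cc X$ is level fibrant because $\Omega$ preserves fibrant objects of the simplicial model category $U\Re^{I,J}_\bullet$; the level fibrancy of the sequential colimit then follows from almost finite generation, because level fibrancy is detected by the right lifting property against a fixed set of maps with finitely presentable domains and codomains, and such lifting properties are preserved under sequential colimits; and finally the commutation of $\Omega$ with the colimit, together with the telescope construction of $\Theta^\infty$, forces the adjoint structure maps $\wt\sigma_n\colon(\Theta^\infty\cc X)_n\to\Omega(\Theta^\infty\cc X)_{n+1}$ to be isomorphisms, hence $(I,J)$-weak equivalences. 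The one step I would isolate and prove with care is the passage from level fibrancy of every $\Theta^n\cc X$ to level fibrancy of their colimit: this is the sole place where almost finite generation of $U\Re^{I,J}_\bullet$ is genuinely used, and it is the crux of \cite[4.6]{H}.
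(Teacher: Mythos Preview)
Your proposal is correct and follows precisely the same approach as the paper: the lemma is obtained directly from \cite[4.6]{H} after noting that $\Omega(-)$ preserves sequential colimits and that $U\Re^{I,J}_\bullet$ is almost finitely generated. The paper's proof is a one-line citation of these two facts, while you have expanded the verification of the hypotheses and sketched the internal logic of \cite[4.6]{H}; your elaboration is accurate but not required.
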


\begin{lem}\label{stabild}
For any $D\in\Re$ the adjoint functors
$F_D:Sp\rightleftarrows\Spt:U_D$ form a Quillen adjunction between
the stable model category of Bousfield--Friedlander spectra $Sp$ and
the stable model category $\Spt$.
\end{lem}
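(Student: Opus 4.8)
The plan is to verify directly that the left adjoint $F_D\colon Sp\to\Spt$ is a left Quillen functor, i.e.\ that it preserves cofibrations and trivial cofibrations between the respective stable model structures. First I would recall that the adjunction $(F_D,U_D)$ has already been established at the level of categories in~\eqref{tutu}, so only the model-structure compatibility remains. For cofibrations: the stable model structure on both $Sp$ and $\Spt$ has the same cofibrations as the level projective model structure (Theorem~\ref{stablemodel} and the remark following it, via~\cite[3.5]{H}). So it suffices to check that $F_D$ sends projective cofibrations of $Sp$ to projective cofibrations of $\Spt$. Since the projective cofibrations of $Sp$ are generated by the maps $F_m(\partial\Delta^n{}_+\to\Delta^n{}_+)$ (shifted free spectra on the generating cofibrations of pointed simplicial sets) and $F_D(\cc E\wedge K)=\cc E\wedge rD_+\wedge K$ commutes with the free-spectrum functors $F_m$ and with smashing by $rD_+$, the functor $F_D$ carries generating projective cofibrations of $Sp$ to maps of the form $F_m\bigl(rD\times(\partial\Delta^n\subset\Delta^n)\bigr)_+$; since $rD\times(\partial\Delta^n\subset\Delta^n)$ is a (projective) cofibration in $U\Re_\bullet^{I,J}$ — indeed an element of the generating set $I_{U\Re}$ smashed into the pointed setting — these are projective cofibrations in $\Spt$. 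As $F_D$ is a left adjoint it preserves colimits and hence all projective cofibrations.

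For trivial cofibrations I would invoke the standard criterion (e.g.\ \cite[Lemma~1.3.4]{Hov} or the dual of Ken Brown's lemma): a left adjoint between model categories is left Quillen as soon as it preserves cofibrations and its right adjoint preserves fibrations between fibrant objects. So the remaining point is that $U_D\colon\Spt\to Sp$ sends stably fibrant spectra to stably fibrant spectra. Let $\cc X\in\Spt$ be stably fibrant; then by definition each $\cc X_n$ is fibrant in $U\Re^{I,J}_\bullet$ and each adjoint structure map $\widetilde\sigma_n^{\cc X}\colon\cc X_n\to\Omega\cc X_{n+1}$ is an $(I,J)$-weak equivalence. Evaluating at $D$: since being $(I,J)$-fibrant implies in particular being objectwise (Kan) fibrant, each $\cc X_n(D)=U_D(\cc X)_n$ is a Kan complex; and since $(I,J)$-weak equivalences are in particular objectwise weak equivalences of simplicial sets, each $\widetilde\sigma_n^{\cc X}(D)\colon\cc X_n(D)\to\Omega\cc X_{n+1}(D)$ is a weak equivalence of pointed simplicial sets. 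Hence $U_D(\cc X)=\cc X(D)$ is an $\Omega$-spectrum with Kan-fibrant terms, which is exactly the condition for stable fibrancy in the Bousfield--Friedlander model structure $Sp$.

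The main obstacle — though it is a minor one — is being careful about which direction the evaluation-at-$D$ functor respects: one must use that fibrancy in $U\Re^{I,J}_\bullet$ is \emph{stronger} than objectwise fibrancy (as recorded in the bulleted properties of the injective/projective structures), so evaluation at $D$ does not create a gap, and that $\Omega(-)$ commutes with evaluation at an object, i.e.\ $(\Omega\cc X_{n+1})(D)=\Omega(\cc X_{n+1}(D))$, which is immediate since $\Omega$ in $U\Re_\bullet$ is computed objectwise. Once these two observations are in place, the verification that $U_D$ preserves stably fibrant objects is routine, and combined with the cofibration-preservation of $F_D$ established above, the pair $(F_D,U_D)$ is a Quillen adjunction between $Sp$ and $\Spt$ with their stable model structures.
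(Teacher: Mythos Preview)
Your overall strategy coincides with the paper's: check that $F_D$ preserves (stable $=$ projective) cofibrations, then reduce the trivial-cofibration condition to the claim that $U_D$ preserves stably fibrant spectra. However, your justification of that last claim contains a genuine error. You assert that ``$(I,J)$-weak equivalences are in particular objectwise weak equivalences of simplicial sets.'' The inclusion goes the other way: $U\Re_\bullet^{I,J}$ is a Bousfield \emph{localization} of the projective model structure, so the class of $(I,J)$-weak equivalences strictly contains the objectwise ones --- for instance $r(A[t])\to rA$ is an $(I,J)$-weak equivalence but certainly not an objectwise one.

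The repair is routine once the mistake is seen. Since $\cc X$ is stably fibrant, each $\cc X_n$ is $(I,J)$-fibrant, and so is $\Omega\cc X_{n+1}$ (the model structure is simplicial, so $\Omega$ preserves fibrant objects). Now use the standard fact that in a left Bousfield localization a local weak equivalence between local objects is already a weak equivalence in the underlying model structure: the $(I,J)$-weak equivalence $\widetilde\sigma_n^{\cc X}\colon\cc X_n\to\Omega\cc X_{n+1}$ is therefore an \emph{objectwise} weak equivalence, and evaluating at $D$ gives the required weak equivalence of pointed simplicial sets. With this correction your argument is complete and is exactly what the paper abbreviates to ``$U_D$ preserves stable fibrant spectra (see the proof of~\cite[3.5]{H}).''

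A smaller issue: the criterion you invoke --- that an adjunction is Quillen once the left adjoint preserves cofibrations and the right adjoint preserves fibrations between fibrant objects --- is not \cite[1.3.4]{Hov} and is not valid for arbitrary model categories. What makes it work here is that both stable structures are left Bousfield localizations of the respective level projective structures and $(F_D,U_D)$ is already a Quillen pair at the level stage; in that situation it suffices that $U_D$ send local ($=$ stably fibrant) objects to local objects, which is precisely what the corrected argument above establishes.
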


\begin{proof}
Clearly, $F_D$ preserves stable cofibrations. To show that $F_D$
preserves stable trivial cofibrations, it is enough to observe that
$U_D$ preserves stable fibrant spectra (see the proof
of~\cite[3.5]{H}) and use~\eqref{tutu}.
\end{proof}

We are now in a position to prove the main result of this section.

\begin{thm}\label{mainstab}
Suppose $F\rightarrowtail B\twoheadrightarrow C$ is an $\ff
F$-extension in $\Re$. Then the commutative square of spectra
   $$\xymatrix{\bb K(C,-)\ar[r]\ar[d]&\bb K(B,-)\ar[d]\\
               pt\ar[r]&\bb K(F,-)}$$
is homotopy pushout and homotopy pullback in $\Spt$. Moreover, if
$D\in\Re$ then the square of ordinary spectra
   $$\xymatrix{\bb K(C,D)\ar[r]\ar[d]&\bb K(B,D)\ar[d]\\
               pt\ar[r]&\bb K(F,D)}$$
is homotopy pushout and homotopy pullback.
\end{thm}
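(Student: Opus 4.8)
The plan is to realise $\bb K(A,-)$, up to stable equivalence in $\Spt$, as the image of $rA$ under the (left derived) suspension–spectrum functor $\Sigma^\infty\colon U\Re^{I,J}_\bullet\to\Spt$, and then to transport through $\Sigma^\infty$ the homotopy pushout square of representable functors attached to the extension. Since the stable model structure on $\Spt$ is stable (Theorem~\ref{stablemodel}), a commutative square of spectra is homotopy pushout iff it is homotopy pullback, so this yields both halves of the first assertion at once; the statement for a fixed $D\in\Re$ will then follow by applying the right Quillen functor $U_D\colon\Spt\to Sp$ of Lemma~\ref{stabild} and using that $Sp$ is stable as well. The input square is produced as follows: an $\ff F$-extension $F\rightarrowtail B\twoheadrightarrow C$ is precisely a distinguished square with corners $F,0,B,C$, so by the construction of $U\Re^{J}_\bullet$ and its $I$-localisation the square ``$r(\alpha)$'' is a homotopy pushout square in $U\Re^{I,J}_\bullet$; as $r0=\ast$ this says that $rF$ is the homotopy cofibre of $rC\to rB$.

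The next step is to identify $\bb K(A,-)$ with $\Sigma^\infty rA$ in $\Ho(\Spt)$. Here $\bb K(A,-)$ is stably fibrant and the natural map $\kappa\colon\cc R(A)\to\bb K(A,-)$ is a stable equivalence, as established above, so it is enough to produce a natural stable equivalence $\Sigma^\infty rA\to\cc R(A)$. The $n$-th term $\cc R(A)_n\colon B\mapsto\Hom_{\inda}(J^nA,\bb B^\Delta)$ equals $Ex^\infty Sing_\ast\,r(J^nA)$, hence is $(I,J)$-weakly equivalent to $r(J^nA)$; applying the distinguished-square input to the universal extension $J^{n}A\rightarrowtail T(J^{n-1}A)\twoheadrightarrow J^{n-1}A$ and using that $T(-)$ is contractible (Lemma~\ref{ta}), so that $rT(-)\simeq\ast$, gives natural equivalences $r(J^nA)\simeq\Sigma\,r(J^{n-1}A)$, and therefore $r(J^nA)\simeq\Sigma^n rA$ in $\Ho(U\Re^{I,J}_\bullet)$. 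The essential point is that under these identifications the structure maps of the spectrum $\cc R(A)$, which are adjoint to the maps $\varsigma$ of~\eqref{adj} built from the classifying maps $\xi_\upsilon$ of the loop/path extensions, go over to the structure maps of $\Sigma^\infty rA$; this is the spectrum-level repackaging of Theorem~\ref{loop} and Proposition~\ref{excis} — the classifying map of an extension is its boundary map, which is exactly the equivalence delivered by the distinguished square of the universal extension. Granting this, $\cc R(A)$ and $\Sigma^\infty rA$ have the same image in $\Ho(\Spt)$, naturally in $A$, so $\bb K(A,-)$ is a stably fibrant model of $\Sigma^\infty rA$.

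With these two ingredients the conclusion is formal: applying the left derived functor of $\Sigma^\infty$ (which preserves homotopy pushouts) to the square of the first step and using the identification of the second step exhibits the square of the theorem as a homotopy pushout square in $\Spt$, hence by stability of $\Spt$ also as a homotopy pullback square; then, since all four entries are stably fibrant, the right Quillen functor $U_D$ carries this homotopy pullback square to a homotopy pullback square of ordinary spectra $\bb K(C,D)\to\bb K(B,D)$, $pt\to\bb K(F,D)$, and stability of $Sp$ makes it homotopy pushout as well. The main obstacle is the second step, specifically the compatibility of the $\varsigma$-structure maps of $\cc R(A)$ with the suspension structure maps under $r(J^nA)\simeq\Sigma^n rA$: carrying this out requires the same classifying-map bookkeeping as in the proof of Theorem~\ref{loop}, and that is where the real work lies. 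The remaining points — that $\Sigma^\infty$ is left Quillen into the stable $\Spt$, and that stably fibrant replacement and $U_D$ interact correctly with homotopy (co)cartesian squares — are routine within the framework of Section~\ref{homotopy}.
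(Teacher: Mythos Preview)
Your approach is correct in outline and shares the same overall strategy as the paper---reduce to the homotopy pushout square of representables attached to the extension, lift to spectra, and invoke stability. The key difference lies in how the lifting to spectra is carried out.

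You propose to identify $\bb K(A,-)$ with $\Sigma^\infty rA$ in $\Ho(\Spt)$ by exhibiting a natural stable equivalence $\Sigma^\infty rA\to\cc R(A)$, and you correctly flag the compatibility of the structure maps $\varsigma$ with the suspension maps as ``where the real work lies''. The paper sidesteps this entirely. It never proves $\cc R(A)\simeq\Sigma^\infty rA$ as spectra; instead it argues \emph{levelwise}: since $\Omega^n F\rightarrowtail\Omega^n B\twoheadrightarrow\Omega^n C$ is again an $\ff F$-extension and $\xi_\upsilon\colon J^nA\to\Omega^nA$ is a quasi-isomorphism, each individual level of the square $\cc R(C)\to\cc R(B)$, $pt\to\cc R(F)$ is a homotopy pushout in $U\Re^{I,J}_\bullet$. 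A square that is levelwise homotopy pushout is a homotopy pushout in the \emph{level} projective model structure on $\Spt$; since the stable structure is a Bousfield localisation with the same cofibrations, it remains a homotopy pushout there. One then transfers along the stable equivalence $\kappa\colon\cc R(-)\to\bb K(-,-)$.

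What this buys: the paper's route avoids any bookkeeping about structure maps---no identification of $\cc R(A)$ with a suspension spectrum is needed, only the pointwise equivalences $\cc R(A)_n\simeq r(J^nA)\simeq r(\Omega^nA)$, which are immediate. Your route, if the structure-map compatibility is carried out, would yield the stronger statement that $\bb K(A,-)$ represents $\Sigma^\infty rA$ in $\Ho(\Spt)$; this is conceptually cleaner and potentially useful elsewhere, but it is more than the theorem requires and genuinely harder to pin down (Theorem~\ref{loop} and Proposition~\ref{excis} concern the $\bb K$-spectrum and the map $\iota$, not directly the comparison of $\cc R(A)$ with $\Sigma^\infty rA$). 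The passage to ordinary spectra via the right Quillen functor $U_D$ of Lemma~\ref{stabild} and the stability arguments are handled identically in both approaches.
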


\begin{proof}
Given a distinguished square $\alpha$
   $$\xymatrix{D\ar[r]\ar[d]&A\ar[d]\\
               B\ar[r]&C}$$
in $\Re$, the square $r\alpha_+$
   $$\xymatrix{rC_+\ar[r]\ar[d]&rA_+\ar[d]\\
               rB_+\ar[r]&rD_+}$$
is homotopy pushout in $U\Re^{I,J}_\bullet$.

We claim that there is a $J$-weak equivalence of pointed simplicial
functors $rA_+\to rA$ for any algebra $A\in\Re$. The object $rA$ is
a cofibre product of the diagram
   $$pt\leftarrow r0_+\to rA_+,$$
in which the right arrow is an injective cofibration. It follows
that for every pointed fibrant object $\cc X$ in $U\Re_{J,\bullet}$
the sequence of simplicial sets
   $$\Map_{U\Re_\bullet}(rA,\cc X)\to\cc X(A)\to\cc X(0)$$
is a homotopy fibre sequence with $\cc X(0)$ contractible. Hence the
left arrow is a weak equivalence of simplicial sets, and so the map
of pointed simplicial functors $rA_+\to rA$ is a $J$-weak
equivalence. Using~\cite[13.5.9]{Hir} the square $r\alpha$ with
$\alpha$ as above
   $$\xymatrix{rC\ar[r]\ar[d]&rA\ar[d]\\
               rB\ar[r]&rD}$$
is homotopy pushout in $U\Re^{I,J}_\bullet$.

Given an algebra $A\in\Re$ and $n\geq 0$, there is an $I$-weak
equivalence of simplicial functors pointed at zero
$i_{J^nA}:r(J^nA)\to Sing(r(J^nA))$. By Theorem~\ref{exinfi}
   $$\cc R(A)_n=Ex^\infty\circ Sing(r(J^nA)).$$
Since the map
   $$\xi_\upsilon:JA\to\Omega A,$$
which is functorial in $A$, is a quasi-isomorphism, then the square
   \begin{equation}\label{tyty}
    \xymatrix{r(J^nC)\ar[r]\ar[d]&r(J^nB)\ar[d]\\
               pt\ar[r]&r(J^nF)}
   \end{equation}
is weakly equivalent to the homotopy pushout square in
$U\Re_\bullet^{I,J}$
   $$\xymatrix{r(\Omega^nC)\ar[r]\ar[d]&r(\Omega^nB)\ar[d]\\
               pt\ar[r]&r(\Omega^nF)}$$
By~\cite[13.5.9]{Hir} square~\eqref{tyty} is then homotopy pushout
in $U\Re_\bullet^{I,J}$. Also, \cite[13.5.9]{Hir} implies that
   $$\xymatrix{Sing(r(J^nC))\ar[r]\ar[d]&Sing(r(J^nB))\ar[d]\\
               pt\ar[r]&Sing(r(J^nF))}$$
is homotopy pushout in $U\Re_\bullet^{I,J}$, and hence so is
   $$\xymatrix{\cc R(C)_n\ar[r]\ar[d]&\cc R(B)_n\ar[d]\\
               pt\ar[r]&\cc R(F)_n.}$$
We see that the square of spectra
   \begin{equation}\label{push}
   \xymatrix{\cc R(C)\ar[r]^u\ar[d]&\cc R(B)\ar[d]\\
               pt\ar[r]&\cc R(F)}
   \end{equation}
is level pushout. We can find a projective cofibration of spectra
$\iota:\cc R(C)\to\cc X$ and a level weak equivalence $s:\cc X\to\cc
R(B)$ such that $u=s\iota$. Consider a pushout square
   $$\xymatrix{\cc R(C)\ar[r]^\iota\ar[d]&\cc X\ar[d]\\
               pt\ar[r]&\cc Y.}$$
It is homotopy pushout in the projective model structure of spectra,
and hence it is levelwise homotopy pushout in $U\Re_\bullet^{I,J}$.
Therefore the induced map $\cc Y\to\cc R(F)$ is a level weak
equivalence, and so \eqref{push} is homotopy pushout in the
projective model structure of spectra by~\cite[13.5.9]{Hir}.

Since the vertical arrows in the commutative diagram
   $$\xymatrix@!0{{pt}\ar[rrrrr] &&&&& \mathbb{K}(F,-)\\
                  && \mathbb{K}(C,-)\ar[ull]\ar[rrrrr] &&&&&\mathbb{K}(B,-)\ar[ull]\\
                  {pt} \ar'[rr][rrrrr] \ar[uu] &&&&& \cc R(F)\ar'[u][uu]\\
                  &&\cc R(C)\ar[ull]\ar[uu]\ar[rrrrr]&&&&&\cc R(B)\ar[ull]\ar[uu]^\kappa}$$
are stable weak equivalences and the lower square is homotopy
pushout in the stable model structure of spectra, then so is the
upper square by~\cite[13.5.9]{Hir}. By~\cite[3.9; 10.3]{H} $\Spt$ is
a stable model category with respect to the stable model structure,
and therefore the square of the theorem is also homotopy pullback
by~\cite[7.1.12]{Hov}.

It follows from Lemma~\ref{stabild} that the square of simplicial
spectra
   $$\xymatrix{\bb K(C,D)\ar[r]\ar[d]&\bb K(B,D)\ar[d]\\
               pt\ar[r]&\bb K(F,D)}$$
is homotopy pullback for all $D\in\Re$. It is also homotopy pushout
in the stable model category of Bousfield--Friedlander spectra
by~\cite[7.1.12]{Hov}, because this model structure is stable.
\end{proof}

It is also useful to have the following

\begin{thm}\label{mainequiv}
Suppose $u:A\to B$ is a quasi-isomorphism in $\Re$. Then the induced
map of spectra
   $$u^*:\bb K(B,-)\to\bb K(A,-)$$
is a stable equivalence in $\Spt$. In particular, the map of spaces
   $$u^*:\cc K(B,C)\to\cc K(A,C)$$
is a weak equivalence for all $C\in\Re$.
\end{thm}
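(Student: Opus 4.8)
The plan is to transport the statement to the representable simplicial functors $r(J^nA)$, among which the $(I,J)$-weak equivalences are, by the very definition of quasi-isomorphism, exactly those induced by quasi-isomorphisms of algebras, using the comparison spectrum $\cc R(A)$ and the stable equivalence $\cc R(A)\to\bb K(A,-)$ recorded above. Recall from the proof of Theorem~\ref{mainstab} that $\cc R(A)_n=Ex^\infty\circ Sing(r(J^nA))$, that $i_{J^nA}:r(J^nA)\to Sing(r(J^nA))$ is an $I$-weak equivalence of pointed simplicial functors, and that $Sing(r(J^nA))\to Ex^\infty Sing(r(J^nA))$ is an objectwise weak equivalence; hence the natural map $r(J^nA)\to\cc R(A)_n$ is an $(I,J)$-weak equivalence, functorially in $A$.

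The first real step is to show that the functor $J$, hence each $J^n$, preserves quasi-isomorphisms. For any fibrant, homotopy invariant and excisive $\cc X\in U\Re_\bullet$, the $\ff F$-extension $JA\to TA\to A$ induces a homotopy fibre sequence $\cc X(JA)\to\cc X(TA)\to\cc X(A)$; since $TA$ is contractible by Lemma~\ref{ta}, the space $\cc X(TA)$ is contractible, so $\cc X(JA)\simeq\Omega\cc X(A)$ naturally in $A$. Hence if $u:A\to B$ is a quasi-isomorphism, then $\cc X(A)\to\cc X(B)$ is a weak equivalence for every such $\cc X$, so $\cc X(JA)\to\cc X(JB)$ is one as well, i.e. $Ju$ is again a quasi-isomorphism; iterating gives the assertion for $J^n$.

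Now let $u:A\to B$ be a quasi-isomorphism. Each $J^nu$ is then a quasi-isomorphism, so $r(J^nB)\to r(J^nA)$ is an $(I,J)$-weak equivalence; by the comparison of the first paragraph and two-out-of-three, $\cc R(B)_n\to\cc R(A)_n$ is an $(I,J)$-weak equivalence for every $n$, whence $u^*:\cc R(B)\to\cc R(A)$ is a level weak equivalence of spectra, in particular a stable equivalence in $\Spt$. The natural maps $\cc R(A)\to\bb K(A,-)$ and $\cc R(B)\to\bb K(B,-)$ are stable equivalences and fit, together with the two copies of $u^*$, into a commutative square, so two-out-of-three gives that $u^*:\bb K(B,-)\to\bb K(A,-)$ is a stable equivalence in $\Spt$.

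For the last assertion, $\bb K(A,-)$ and $\bb K(B,-)$ are stably fibrant spectra (Lemma~\ref{preexcsi} and Excision Theorem~A), and a stable equivalence between stably fibrant spectra is a level weak equivalence; thus $u^*$ induces an $(I,J)$-weak equivalence $\cc K(J^nB,-)\to\cc K(J^nA,-)$ of pointed simplicial functors for each $n$. Since $\cc K(J^nA,-)$ and $\cc K(J^nB,-)$ are homotopy invariant and excisive, hence $(I,J)$-local, these maps are objectwise weak equivalences; taking $n=0$ yields that $\cc K(B,C)\to\cc K(A,C)$ is a weak equivalence for every $C\in\Re$. I expect the crux to be the first step — the identification of $\cc R(A)_n$ with $r(J^nA)$ in the $(I,J)$-local structure and the stability of quasi-isomorphisms under $J$; once the problem is reduced to representable functors, the remainder is formal two-out-of-three bookkeeping in the $(I,J)$-localized and stable model categories.
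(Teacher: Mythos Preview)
Your proof is correct and follows essentially the same route as the paper's own argument: pass to the comparison spectrum $\cc R(A)$, use that $J$ preserves quasi-isomorphisms to get a level weak equivalence $\cc R(B)\to\cc R(A)$, then transfer along the stable equivalences $\kappa$ to $\bb K$, and finally use $(I,J)$-locality of the levels to descend to an objectwise weak equivalence. You spell out the justification for $J$ preserving quasi-isomorphisms and the final local-to-objectwise step a bit more explicitly than the paper does, but the architecture is the same.
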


\begin{proof}
Consider the square in $U\Re_\bullet$
   $$\xymatrix{rB\ar[r]^{u^*}\ar[d]&rA\ar[d]\\
               \cc R(B)_0\ar[r]^{u^*}&\cc R(A)_0.}$$
The upper arrow is an $(I,J)$-weak equivalence, the vertical maps
are $I$-weak equivalences. Therefore the lower arrow is an
$(I,J)$-weak equivalence.

Since the endofunctor $J:\Re\to\Re$ respects quasi-isomorphisms,
then
   $$u^*:\cc R(B)\to\cc R(A)$$
is a level weak equivalence of spectra. Consider the square in
$\Spt$
   $$\xymatrix{\cc R(B)\ar[r]^{u^*}\ar[d]_\kappa&\cc R(A)\ar[d]^\kappa\\
               \mathbb{K}(B,-)\ar[r]^{u^*}&\mathbb{K}(A,-).}$$
The upper arrow is a level weak equivalence, the vertical maps are
stable weak equivalences. Therefore the lower arrow is a stable weak
equivalence.

The map $\mathbb{K}(B,-)\lra{u^*}\mathbb{K}(A,-)$ is a weak
equivalence in the projective model structure on $Sp(\Re)$, because
both spectra are stably fibrant and levelwise fibrant in
$U\Re_{\bullet}^{I,J}$. It follows that the map of spaces
   $$u^*:\cc K(B,C)\to\cc K(A,C)$$
is a weak equivalence for all $C\in\Re$.
\end{proof}

We can now prove Excision
Theorem~B.\label{pageexc}\renewcommand{\proofname}{Proof of Excision
Theorem B}

\begin{proof}
Let $\Re$ be an arbitrary admissible $T$-closed category of
$k$-algebras. We have to prove that the square of spaces
   $$\xymatrix{\cc K(C,D)\ar[r]\ar[d]&\cc K(B,D)\ar[d]\\
               pt\ar[r]&\cc K(F,D)}$$
is homotopy pullback for any extension $F\rightarrowtail
B\twoheadrightarrow C$ in $\Re$ and any algebra $D\in\Re$.

A subtle difference with what we have defined for spectra is that we
do not assume $\Re$ to be small. So to apply Theorem~\ref{mainstab}
one has to find a small admissible $T$-closed category of
$k$-algebras $\Re'$ containing $F,B,C,D$.

We can inductively construct such a category as follows. Let
$\Re'_0$ be the full subcategory of $\Re$ such that
$\Ob\Re_0'=\{F,B,C,D\}$. If the full subcategory $\Re_n'$ of $\Re$,
$n\geq 0$, is constructed we define $\Re_{n+1}'$ by adding  the
following algebras to $\Re_{n}'$:

\begin{itemize}
\item[$\vartriangleright$] all ideals and quotient algebras of algebras from
$\Re_n'$;

\item[$\vartriangleright$] all algebras which are pullbacks for diagrams
   $$A\to E\leftarrow L$$
with $A,E,L\in\Re_n'$;

\item[$\vartriangleright$] all polynomial algebras in one
variable $A[x]$ with $A\in\Re_n'$;

\item[$\vartriangleright$] all algebras $TA$ with $A\in\Re_n'$.
\end{itemize}
Then we set $\Re'=\bigcup_n\Re_n'$. Clearly $\Re'$ is a small
admissible $T$-closed category of algebras containing $F,B,C,D$. It
remains to apply Theorem~\ref{mainstab}.
\end{proof}
\renewcommand{\proofname}{Proof}

We can now also prove
Theorem~\ref{spectrumunstb}.\renewcommand{\proofname}{Proof of
Theorem~\ref{spectrumunstb}}

\begin{proof}
Let $\Re$ be an arbitrary admissible $T$-closed category of
$k$-algebras. We have to prove that the square of spectra
   $$\xymatrix{\bb K(C,D)\ar[r]\ar[d]&\bb K(B,D)\ar[d]\\
               pt\ar[r]&\bb K(F,D)}$$
is homotopy pullback for any extension $F\rightarrowtail
B\twoheadrightarrow C$ in $\Re$ and any algebra $D\in\Re$.

To apply Theorem~\ref{mainstab} one has to find a small admissible
$T$-closed category of $k$-algebras $\Re'$ containing $F,B,C,D$.
Such a category is constructed in the proof of Excision Theorem~B.
\end{proof}
\renewcommand{\proofname}{Proof}

\begin{cor}\label{suspen}
Let $\Re$ be an admissible $T$-closed category of $k$-algebras. Then
for every $A,B\in\Re$ the spectrum $\bb K(JA,B)$ has homotopy type
of $\Sigma\bb K(A,B)$.
\end{cor}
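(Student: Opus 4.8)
The plan is to recognise $\bb K(JA,B)$ as a one-fold shift of the $\Omega$-spectrum $\bb K(A,B)$ and then use the standard fact that, in the stable homotopy category of spectra, shifting a spectrum up by one degree models the suspension. First I would unwind the definition of the spectrum $\bb K(-,B)$: by Theorem~\ref{loop} and the definition of the unstable Kasparov $KK$-spectrum, $\bb K(A,B)$ is the $\Omega$-spectrum whose $n$-th space is $\cc K(J^nA,B)$, with structure maps the isomorphisms $\cc K(J^nA,B)\iso\Omega\cc K(J^{n+1}A,B)$ of Theorem~\ref{loop}. Substituting $JA$ for $A$, the $n$-th space of $\bb K(JA,B)$ is $\cc K(J^{n+1}A,B)=\bb K(A,B)_{n+1}$ and the structure maps are unchanged; in other words $\bb K(JA,B)$ is exactly the shifted spectrum $s\,\bb K(A,B)$ in the notation of subsection~\ref{trew}, where $(s\cc X)_n=\cc X_{n+1}$. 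The same identification holds level-wise in $\Spt$, giving $\bb K(JA,-)=s\,\bb K(A,-)$.

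Next I would invoke the stable model structure. Since $\bb K(A,B)$ is an $\Omega$-spectrum it is stably fibrant, and in the stable model structure on $Sp$ (the Bousfield--Friedlander structure, see~\cite{BF}, with the general stable-model-category formalism in~\cite{H,Hov}) the shift functor $s$ induces on the stable homotopy category the self-equivalence inverse to $t$, and on $\Ho(Sp)$ it agrees with the suspension $\Sigma$; concretely $\pi_k(s\,\bb K(A,B))\cong\pi_{k-1}(\bb K(A,B))$ for every $k\in\bb Z$, and there is a natural stable equivalence $s\,\cc X\iso\Sigma\,\cc X$. Combining this with the previous paragraph yields $\bb K(JA,B)=s\,\bb K(A,B)\iso\Sigma\,\bb K(A,B)$, which is the assertion.

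As a cross-check using the machinery just developed, the same conclusion follows from Excision Theorem~B for spectra applied to the universal extension $JA\rightarrowtail TA\twoheadrightarrow A$: Theorem~\ref{spectrumunstb} (with $D=B$) gives a homotopy fibration $\bb K(A,B)\to\bb K(TA,B)\to\bb K(JA,B)$, while $TA$ is contractible by Lemma~\ref{ta}, so $0\to TA$ is an $I$-weak equivalence and hence a quasi-isomorphism; by Theorem~\ref{mainequiv} the spectrum $\bb K(TA,B)$ is stably equivalent to $\bb K(0,B)$, which is $pt$ because $\cc K(0,B)$ is a point (as $J^n0=0$ and $\Hom_{\inda}(0,-)$ is a point). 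Thus $\bb K(JA,B)$ is the homotopy cofibre of $\bb K(A,B)\to pt$, which in $\Ho(Sp)$ is $\Sigma\,\bb K(A,B)$. In either route the only step that is not completely formal is the identification of a combinatorial reindexing --- the shift, equivalently the cofibre of the map to the point --- with the suspension functor $\Sigma$ of the stable homotopy category; this is a standard property of stable model categories of spectra and brings in no further input from the algebraic side, so I do not anticipate any real obstacle.
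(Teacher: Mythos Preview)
Your ``cross-check'' paragraph is exactly the paper's proof: apply Theorem~\ref{spectrumunstb} (equivalently Excision Theorem~B for spectra) to the universal extension $JA\rightarrowtail TA\twoheadrightarrow A$, use Lemma~\ref{ta} together with Theorem~\ref{mainequiv} to see that $\bb K(TA,B)$ is contractible, and conclude.

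Your primary argument is a genuinely different and in fact more direct route. The identification $\bb K(JA,B)=s\,\bb K(A,B)$ is immediate from the definition of the spectrum (the $n$th space is $\cc K(J^{n+1}A,B)$ in both cases, with the same structure isomorphism coming from Theorem~\ref{loop} applied to $J^{n+1}A$), and the natural map $\Sigma\cc X\to s\cc X$ given levelwise by the structure maps is a stable equivalence for any spectrum. This avoids Excision Theorem~B entirely and uses only the internal structure of the $\Omega$-spectrum $\bb K(A,B)$. The paper's route, on the other hand, fits the statement into the general excision framework and makes the triangulated/cofibre interpretation explicit; it also yields the statement as a special case of a result that is needed anyway. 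Both are sound; your first argument is the more economical one.
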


\begin{proof}
We have an extension $JA\rightarrowtail TA\twoheadrightarrow A$ in
which $TA$ is contractible by Lemma~\ref{ta}. Hence $\bb
K(TA,B)\simeq *$ by Theorem~\ref{mainequiv} (as above one can choose
a small admissible $T$-closed category of algebras such that all
considered algebras belong to it). Now our assertion follows from
Excision Theorem B.
\end{proof}

\section{Comparison Theorem A}\label{compa}

In this section we prove a couple of technical (but important!)
results giving a relation between simplicial and polynomial homotopy
for algebra homomorphisms. As an application, we prove Comparison
Theorem~A. Throughout $\Re$ is supposed to be $T$-closed.

\subsection{Categories of fibrant objects}

\begin{defs}{\rm
Let $\cc A$ be a category with finite products and a final object
$e$. Assume that $\cc A$ has two distinguished classes of maps,
called {\it weak equivalences\/} and {\it fibrations}. A map is
called a {\it trivial fibration\/} if it is both a weak equivalence
and a fibration. We define a {\it path space\/} for an object $B$ to
be an object $B^I$ together with maps
   $$B\lra{s}B^I\xrightarrow{(d_0,d_1)}B\times B,$$
where $s$ is a weak equivalence, $(d_0,d_1)$ is a fibration, and the
composite is the diagonal map.

Following Brown~\cite{B}, we call $\cc A$ a {\it category of fibrant
objects\/} or a {\em Brown category\/} if the following axioms are
satisfied.

(A) Let $f$ and $g$ be maps such that $gf$ is defined. If two of
$f$, $g$, $gf$ are weak equivalences then so is the third. Any
isomorphism is a weak equivalence.

(B) The composite of two fibrations is a fibration. Any isomorphism
is a fibration.

(C) Given a diagram
   $$A\bl u\longrightarrow C\bl v\longleftarrow B,$$
with $v$ a fibration (respectively a trivial fibration), the
pullback $A\times_CB$ exists and the map $A\times_CB\to A$ is a
fibration (respectively a trivial fibration).

(D) For any object $B$ in $\cc A$ there exists at least one path
space $B^I$ (not necessarily functorial in $B$).

(E) For any object $B$ the map $B\to e$ is a fibration.

}\end{defs}

\subsection{The Hauptlemma}

Every map $u$ in $\Re$ can be factored $u=pi$, where $p\in\ff F$ is
a fibration and $i$ is an $I$-weak equivalence. Indeed, let $A'$ be
the the fibre product of the diagram
   $$A\xrightarrow u B\xleftarrow{\partial^0_x} B[x].$$
Then the map $i:A\to A'$, $a\longmapsto(a,u(a))$, is split
 and obviously an elementary homotopy equivalence.
Hence it is an $I$-weak equivalence. We define $p:A'\to B$ as
composition of the projection $A'\to B[x]$ and $\partial^1_x$. We
call a homomorphism an {\it $I$-trivial fibration\/} if it is both a
fibration and an $I$-weak equivalence. We denote by $I^n$, $n\geq
0$, the simplicial set $\Delta^1\times\bl n\cdots\times\Delta^1$ and
by $\delta^0,\delta^1:I^n\to I^{n+1}$ the maps $1_{I^n}\times
d^0,1_{I^n}\times d^1$ whose images are
$I^n\times\{1\},I^n\times\{0\}$ respectively.

Let $\ff W_{\min}$ be the minimal class of weak equivalences
containing the homomorphisms $A\to A[t]$, $A\in\Re$, such that the
triple $(\Re,\ff F,\ff W_{\min})$ is a Brown category. We should
mention that every excisive, homotopy invariant simplicial functor
$\cc X:\Re\to SSets$ gives rise to a class of weak equivalences $\ff
W$ containing the homomorphisms $A\to A[t]$, $A\in\Re$, such that
the triple $(\Re,\ff F,\ff W)$ is a Brown category (see~\cite{Gar}).
Precisely, $\ff W$ consists of those homomorphisms $f$ for which
$\cc X(f)$ is a weak equivalence of simplicial sets.

We shall denote by $B^{\ff S^n}$, $n\geq 0$, the ind-algebra
   $$B^{\ff S^n}_0\to B^{\ff S^n}_1\to B^{\ff S^n}_2\to\cdots$$
consisting of the 0-simplices of the simplicial ind-algebra $\bb
B(\Omega^n)$. So we have for any $k\geq 0$:
   $$B^{\ff S^n}_k=\kr(B^{\sd^k I^n}\to B^{\sd^k(\partial I^n)}).$$
One also sets
   $$\wt B^{\ff S^n}_k=\kr(B^{\sd^k I^{n+1}}\to B^{\sd^k(\partial I^{n}\times I)}).$$

\begin{hauptlemma}\label{haupt}
Let $A,B\in\Re$ then for any $m,n\geq 0$ we have:
\begin{enumerate}
\item If $f:A\to B^{\sd^m\Delta^{n+1}}$ is a homomorphism, then the
homomorphism $\partial_if$ is algebraically homotopic to
$\partial_jf$ with $i,j\leq n+1$.

\item If $f:A\to B^{\sd^m I^{n+1}}$ (respectively $f:A\to\wt B^{\ff S^n}_m$) is a homomorphism, then
$d_0f,d_1f:A\to B^{\sd^m I^n}$ (respectively $d_0f,d_1f:A\to B^{\ff
S^n}_m$) are algebraically homotopic, where $d_0,d_1:B^{\sd^m
I^{n+1}}\to B^{\sd^m I^n}$ are induced by $\delta^0,\delta^1$.

\item If $f_0,f_1:A\to B^{\sd^m I^{n}}$ (respectively $f_0,f_1:A\to B^{\ff S^n}_m$)
are two algebraically homotopic homomorphisms by means of a map
$h:A\to(B^{\sd^m I^{n}})^{\sd^k\Delta^1}$ (respectively $h:A\to
(B^{\ff S^n}_m)^{\sd^k\Delta^1}$), then there are a homomorphism
$g:A'\to A$, which is obtained by pulling back an $I$-trivial
fibration along $h$, and hence $g\in\ff W_{\min}$, and a
homomorphism $H:A'\to B^{\sd^m I^{n+1}}$ (respectively $H:A'\to\wt
B^{\ff S^n}_m$) such that $d_0H=f_0g$ and $d_1H=f_1g$.
\end{enumerate}
\end{hauptlemma}

The Hauptlemma essentially says that the condition for homomorphisms
of being simplicially homotopic implies that of being polynomially
homotopic. The converse is true up to multiplication with some maps
from $\ff W_{\min}$.

\begin{proof}
(1). Given $i<j$ define a homomorphism
$\phi_{i,j}:B[t_0,\ldots,t_{n+1}]\to B^{\Delta^n}[x]$ as
   \begin{equation}\label{homot1}
    \phi_{i,j}(t_k)=
     \left\{
      \begin{array}{rcl}
       t_k,&\ k<i\\
       xt_i,&\ k=i\\
       xt_k+(1-x)t_{k-1},&\ i<k<j\\
       (1-x)t_{j-1},&\ k=j\\
       t_{k-1},&\ k>j
      \end{array}
      \right.
   \end{equation}
It takes $1-\sum_{i=0}^{n+1}t_i$ to zero, and hence one obtains a
homomorphism $\phi_{i,j}:B^{\Delta^{n+1}}\to B^{\Delta^{n}}[x]$. We
define
$\phi_{j,i}(f)(t_0,\ldots,t_n,x)=\phi_{i,j}(f)(t_0,\ldots,t_n,1-x)$
if $j>i$. It follows that for any $h\in B^{\Delta^{n+1}}$
   $$\phi_{i,j}(h)(t_0,\ldots,t_n,x)=\left\{\begin{aligned}
      \partial_ih,&\quad x=0,\\
      \partial_{j}h,&\quad x=1.
     \end{aligned}\right.$$
We see that $\partial_i\alpha$ is elementary homotopic to
$\partial_j\alpha$ for any $\alpha:A\to B^{\Delta^{n+1}}$. If there
is no likelihood of confusion we shall denote this homotopy by
$\phi_{i,j}$ omitting $\alpha$.

Now consider the algebra $B^{\sd^k\Delta^{n+1}}$, $k\geq 1$. By
definition, it is the fiber product over $B^{\Delta^n}$ of
$((n+2)!)^k$ copies of $B^{\Delta^{n+1}}$. Let $\alpha:A\to
B^{\sd^k\Delta^{n+1}}$ be a homomorphism of algebras. A polynomial
homotopy from $\partial_{j}\alpha$ to $\partial_{i}\alpha$ can be
arranged as follows. We pick up the barycenter of
$\partial_{j}\alpha$ and pull it towards the barycenter of $\alpha$.
This operation consists of finitely many polynomial homotopies. Next
we pull the vertex $i$ towards the vertex $j$. Again we have
finitely many elementary polynomial homotopies. Finally, we pull the
barycenter of $\alpha$ towards the barycenter of
$\partial_{i}\alpha$, resulting the desired polynomial homotopy.
Each step of the polynomial homotopy is determined by homotopies of
the form $\phi_{i,i+1}$ or $\phi_{i+1,i}$.

In order to give a formal description of the algorithm, it is enough
to do this for the identity homomorphism of $B^{\sd^k\Delta^{n+1}}$
without loss of generality. Recall that $\sd^k\Delta^{n+1}$ is the
nerve $|\ff{sd}^k\Delta^{n+1}|$ of a poset $\ff{sd}^k\Delta^{n+1}$.
We shall also regard the poset as a category. By the length of a
path in $\ff{sd}^k\Delta^{n+1}$ between two vertices we mean the
maximal number of (non-identity) arrows connecting them. Note that
the largest (respectively smallest) possible length equals $n+1$
(respectively 0). We can think about $B^{\sd^k\Delta^{n+1}}$ in the
following terms:
\begin{itemize}
\item[$\diamond$] the poset $\ff{sd}^k\Delta^{n+1}$;
\item[$\diamond$] to every path $\pi$ of length $k\geq 0$ in $\ff{sd}^k\Delta^{n+1}$,
a polynomial in $f_\pi\in B^{\Delta^k}$ is attached;
\item[$\diamond$] if $f_{\pi},f_{\pi'}$ are two polynomials attached to
paths $\pi,\pi'$ in $\ff{sd}^k\Delta^{n+1}$ having a non-trivial
common subpath $\pi''$, then restriction of $f_{\pi},f_{\pi'}$ to
$\pi''$ by means of face operators of $B^\Delta$ equals $f_{\pi''}$.
\end{itemize}
The desired algorithm of an algebraic homotopy from
$\partial_{j}\alpha$ to $\partial_{i}\alpha$ consists of the
following steps (we shall sometimes regard
$\partial_{j}\alpha,\partial_{i}\alpha$ as posets uniquely
associated to them):
\begin{itemize}
\item[Step 0:] The poset $A_0:=\partial_{j}\alpha$, the vertex $w_0:=i$,
and the poset $(\ff{sd}^k\Delta^{n+1})_0:=\ff{sd}^k\Delta^{n+1}$.

\item[Step 1:] Suppose we have found a subposet $A_\ell$, $\ell\geq 0$, such that the
vertices of the subposet $\partial_{j}\alpha\cap\partial_{i}\alpha$
are in $A_\ell$, $A_\ell$ is isomorphic to $A_0$ by an isomorphism
of posets leaving the vertices of
$\partial_{j}\alpha\cap\partial_{i}\alpha$ unchanged, and $A_\ell$
has a unique vertex $w_\ell$ on the edge between vertices $i$ and
$j$. If $A_\ell=(\ff{sd}^k\Delta^{n+1})_\ell=\partial_{i}\alpha$, in
which case $w_\ell=j$, we stop the process. Otherwise, we can find a
vertex $v\in
A_\ell\setminus(\partial_{j}\alpha\cap\partial_{i}\alpha)$, a
subposet $A_{\ell+1}$ having the same vertices as $A_\ell$ except a
unique vertex $v'\in
A_{\ell+1}\setminus(\partial_{j}\alpha\cap\partial_{i}\alpha)$, and
a path $v\to v'$ or a path $v'\to v$ of length 1. Having chosen
$A_{\ell+1}$, we set
$(\ff{sd}^k\Delta^{n+1})_{\ell+1}=(\ff{sd}^k\Delta^{n+1})_\ell\setminus\{v\}$.
Notice that $w_{\ell+1}\in A_{\ell+1}$ is either $v'$ or $w_\ell$.

\item[Step 2:] If we have a path $v'\to v$ from Step~1, then there
are a unique $i\in\{0,1,\ldots,n\}$ and a polynomial homotopy
$H_\ell:B^{{sd}^k\Delta^{n+1}}\to B^{{sd}^k\Delta^{n}}[x]$ such
that:

\begin{itemize}
\item  $\partial_x^0 H_\ell$ equals the homomorphism
$B^{{sd}^k\Delta^{n+1}}\to B^{|A_\ell|}$, induced by the inclusion
$A_\ell\hookrightarrow\ff{sd}^k\Delta^{n+1}$, and $\partial_x^1
H_\ell$ equals the homomorphism $B^{{sd}^k\Delta^{n+1}}\to
B^{|A_{\ell+1}|}$;

\item $H_\ell$ factors as
   $$B^{{sd}^k\Delta^{n+1}}\to B^{|E_\ell|}\xrightarrow{h_\ell} B^{{sd}^k\Delta^{n}}[x],$$
where $E_\ell$ is the subposet of $(\ff{sd}^k\Delta^{n+1})_{\ell}$
whose vertices are those of $A_\ell\cup A_{\ell+1}$ and the left
homomorphism is induced by the inclusion
$E_\ell\hookrightarrow\ff{sd}^k\Delta^{n+1}$;

\item $h_\ell$ is a fibre product of homomorphisms of the
form $i:B^{\Delta^n}\hookrightarrow B^{\Delta^n}[x]$ or
$\phi_{i,i+1}:B^{\Delta^{n+1}}\to B^{\Delta^n}[x]$.
\end{itemize}
If we have a path $v\to v'$ of Step~1, then $H_\ell$ has the same
properties as above except that $h_\ell$ is a fibre product of
homomorphisms of the form $i:B^{\Delta^n}\hookrightarrow
B^{\Delta^n}[x]$ or $\phi_{i+1,i}:B^{\Delta^{n+1}}\to
B^{\Delta^n}[x]$.

\item[Step 3:] Apply Step 1 to the pair $(A_{\ell+1},(\ff{sd}^k\Delta^{n+1})_{\ell+1})$.
\end{itemize}

The algorithm stops in finitely many steps. The desired polynomial
homotopy is given by the collection $\{H_\ell\}_{\ell}$. The
homotopy pulls the barycenter of $\partial_{j}\alpha$ towards the
barycenter of $\alpha$. Also it pulls the vertex $i$ towards $j$ by
means of $\{w_\ell\}_{\ell}$. Let us illustrate the algorithm by
considering for simplicity the algebra $B^{\sd^1\Delta^{2}}$, which
is glued out of six polynomials in $B^{\Delta^{2}}$.
  $$\xy
    *+{\{01\}}="\{01\}",<1cm,1.5cm>*+{1}="1",-<-1cm,1.5cm>*+{\{12\}}="\{12\}",
    -<1cm,1cm>*+{\{012\}}="\{012\}",
    -<0cm,1.2cm>*+{\{02\}}="\{02\}",-<-2.37cm,0cm>*+{2}="2",-<4.75cm,0cm>*+{0}="0",
    \ar_{v_2} "1";"\{01\}",\ar "1";"\{12\}",\ar_b "1";"\{012\}",\ar "\{01\}";"\{012\}",\ar_c "\{12\}";"\{012\}",
    \ar_{u_1} "0";"\{02\}",\ar^{v_1} "2";"\{02\}",\ar^{u_2} "0";"\{01\}",\ar "\{02\}";"\{012\}",\ar "2";"\{12\}",
    \ar^a "0";"\{012\}",\ar_d "2";"\{012\}"
   \endxy\Rightarrow
   \xy
    *+{1}="1",-<-1cm,1.5cm>*+{\{12\}}="\{12\}",
    -<1cm,1cm>*+{\{012\}}="\{012\}",
    -<0cm,1.2cm>*+{\{02\}}="\{02\}",-<-2.37cm,0cm>*+{2}="2",-<4.75cm,0cm>*+{0}="0",
    \ar "1";"\{12\}",\ar "1";"\{012\}",\ar "\{12\}";"\{012\}",
    \ar "0";"\{02\}",\ar "2";"\{02\}",\ar "\{02\}";"\{012\}",\ar "2";"\{12\}",
    \ar "0";"\{012\}",\ar "2";"\{012\}"
   \endxy\Rightarrow$$

  $$\xy
    *+{\{12\}}="\{12\}",
    -<1cm,1cm>*+{\{012\}}="\{012\}",
    -<0cm,1.2cm>*+{\{02\}}="\{02\}",-<-2.37cm,0cm>*+{2}="2",-<4.75cm,0cm>*+{0}="0",
    \ar "\{12\}";"\{012\}",
    \ar "0";"\{02\}",\ar "2";"\{02\}",\ar "\{02\}";"\{012\}",\ar "2";"\{12\}",
    \ar "0";"\{012\}",\ar "2";"\{012\}"
   \endxy\Rightarrow
   \xy
    *+{\{012\}}="\{012\}",
    -<0cm,1.2cm>*+{\{02\}}="\{02\}",-<-2.37cm,0cm>*+{2}="2",-<4.75cm,0cm>*+{0}="0",
    \ar "0";"\{02\}",\ar "2";"\{02\}",\ar "\{02\}";"\{012\}",
    \ar "0";"\{012\}",\ar "2";"\{012\}"
   \endxy\Rightarrow$$
   $$\xy
    *+{\{02\}}="\{02\}",-<-2.37cm,0cm>*+{2}="2",-<4.75cm,0cm>*+{0}="0",
    \ar "0";"\{02\}",\ar "2";"\{02\}"
   \endxy$$
The picture depicts each poset $(\ff{sd}^1\Delta^{2})_\ell$. It also
says that\footnotesize
   \begin{gather*}
     \partial_2\alpha=(0\lra{u_2}\{01\}\bl{v_2}\longleftarrow 1)\bl{H_0}\sim (0\lra{a}\{012\}\bl{b}\longleftarrow 1,w_1=1,v'=\{012\})
     \bl{H_1}\sim (0\lra{a}\{012\}\bl{c}\longleftarrow\{12\},w_2=v'=\{12\})\bl{H_2}\sim\\
     \sim (0\lra{a}\{012\}\bl{d}\longleftarrow 2,w_3=v'=2)\bl{H_3}\sim(0\lra{u_1}\{02\}\bl{v_1}\longleftarrow
     2,w_4=2,v'=\{02\})=\partial_1\alpha
   \end{gather*}
\normalsize and $\{h_0=(\phi_{2,1},\phi_{2,1}),E_0=\langle
0,\{01\},1,\{012\}\rangle\}$, $\{h_1=(i,\phi_{1,0}),E_1=\langle
0,\{012\},1,\{12\}\rangle\}$, $\{h_2=(i,\phi_{1,2}),E_2=\langle
0,\{012\},\{12\},2\rangle\}$,
$\{h_3=(\phi_{1,2},\phi_{1,2}),E_2=\langle
0,\{012\},\{02\},2\rangle\}$.

Another example is for the algebra $B^{\sd^1\Delta^3}$, which is
glued out of 24 polynomials in $B^{\Delta^3}$. Consider a
tetrahedron labeled with $0,1,2,3$
   $$\xymatrix{&3\\
            0\ar[drr]\ar[ur]\ar@{.>}[rrr]&&&2\ar[ull]\\
            &&1\ar[ur]\ar[uul]}$$
The polynomial homotopy from $\partial_2\alpha$ to
$\partial_3\alpha$ is encoded by the following data of the
algorithm:
\begin{itemize}
\item$\diamond$ $v=\{013\}$ and $v'=\{0123\}$;
\item$\diamond$ $v=\{13\}$ and $v'=\{123\}$;
\item$\diamond$ $v=\{03\}$ and $v'=\{023\}$;
\item$\diamond$ $v=3$ and $v'=\{23\}$;
\item$\diamond$ $v=\{23\}$ and $v'=2$;
\item$\diamond$ $v=\{123\}$ and $v'=\{12\}$;
\item$\diamond$ $v=\{023\}$ and $v'=\{02\}$
\item$\diamond$ finally, $v=\{0123\}$ and
$v'=\{012\}$.
\end{itemize}

(2). Note that this assertion follows from the particular case when
$f$ is the identity map of its codomain. Nevertheless we shall prove
the original statement. The cube $I^{n+1}$ is glued out of $(n+1)!$
simplices of dimension $n+1$. Its vertices can be labeled with
$(n+1)$-tuples of numbers which equal either zero or one. The number
of vertices equals $2^{n+1}$. A homomorphism $\alpha:A\to
B^{I^{n+1}}$ is glued out of $(n+1)!$ homomorphisms $\alpha_i:A\to
B^{\Delta^{n+1}}$. The set of vertices $V_{d_0\alpha}$ of the face
$d_0\alpha$ consists of those $(n+1)$-tuples whose last coordinate
equals 1, and the set of vertices $V_{d_1\alpha}$ of the face
$d_1\alpha$ consists of those $(n+1)$-tuples whose last coordinate
equals 0. The desired algebraic homotopy from $d_0\alpha$ to
$d_1\alpha$ is constructed in the following way. We first construct
an algebraic homotopy $H_0$ from $f_0:=d_0\alpha$ to a homomorphism
$f_1:A\to B^{I^n}$. The set of vertices $V_1$ for $f_1$ equals
$(V_{d_0\alpha}\setminus\{00\ldots01\})\cup\{00\ldots0\}$. In other
words, we pull $\{00\ldots01\}$ towards $\{00\ldots0\}$. The number
of $(n+1)$-simplices having vertices from
$V_{d_0\alpha}\cup\{00\ldots0\}$ equals $n!$. Let $S$ be the set of
such simplices. If $\alpha_i:A\to B^{\Delta^{n+1}}$ is in $S$, then
the result is an algebraic homotopy $\phi_{0,1}$ defined in (1) from
$\partial_0\alpha_i$ to $\partial_1\alpha_i$. The homotopy $H_0$ at
each $\alpha_i$, $i\leq n!$, is $\phi_{0,1}$. Next one constructs an
algebraic homotopy $H_1$ from $f_1$ to $f_2:A\to B^{I^n}$. The set
of vertices $V_2$ of $f_2$ equals
$(V_1\setminus\{10\ldots01\})\cup\{10\ldots0\}$. In other word, we
pull $\{10\ldots01\}$ towards $\{10\ldots0\}$. The homotopy $H_1$ at
each simplex is either $\phi_{1,2}$ or $\id$. One repeats this
procedure $2^n$ times. The last step is to pull $(11\ldots 11)$
towards $(11\ldots 10)$ resulting a polynomial homotopy $H_{2^n-1}$
which is $\phi_{n,n+1}$ at each simplex. Clearly, if there are
boundary conditions as in (2) then the algebraic homotopy behaves on
the boundary in a consistent way.

In the case $\alpha:A\to B^{\sd^m I^{n+1}}$, $m>0$, the desired
polynomial homotopy is constructed in a similar way (we should also
use the algorithm of (1)).

Let us illustrate the algorithm by considering for simplicity the
case $\alpha:A\to B^{I^3}$. Such a map is glued out of six
homomorphisms $\alpha_i:A\to B^{\Delta^3}$, $i=1,\ldots,6$.
   $$\xymatrix{&&111\\
     011\ar[urr]&&&101\ar[ul]\\
     &001\ar@/_/[uur]\ar[ul]\ar[urr]\\
     &&110\ar@{.>}[uuu]\\
     010\ar@{.>}[urr]\ar@{.>}[uuuurr]\ar[uuu]&&&100\ar[uuu]\ar@{.>}[ul]\ar@{.>}[uuuul]\\
     &000\ar@{.>}[uuuuur]\ar@{.>}[uur]\ar[uuu]\ar@/_/[uuuurr]\ar[ul]\ar[uuuul]\ar[urr]}$$
The desired algebraic homotopy from $d_0\alpha$ to $d_1\alpha$ is
arranged as follows. We first pull (001) towards (000) resulting a
polynomial homotopy $H_0$ from $d_0\alpha$, which is labeled by
$\{(001),(101),(011),(111)\}$, to the square labeled by
$\{(000),(101),(011),(111)\}$. This step is a result of the
algebraic homotopy $\phi_{0,1}$ described in (1) corresponding to
two glued tetrahedra having vertices $\{(000),(001),(011),(111)\}$
and $\{(000),(001),(101),(111)\}$ respectively. So
$H_0=(\phi_{0,1},\phi_{0,1})$. Next we pull (101) towards (100)
resulting a polynomial homotopy $H_1$ from the square labeled by
$\{(000),(101),(011),(111)\}$ to the square labeled by
$\{(000),(100),(011),(111)\}$. So $H_1=(\phi_{1,2},\id)$. The next
step is to pull (011) towards (010) resulting a polynomial homotopy
$H_2$ from the square labeled by $\{(000),(100),(011),(111)\}$ to
the square labeled by $\{(000),(100),(010),(111)\}$. So
$H_2=(\id,\phi_{1,2})$. And finally one pulls (111) towards (110)
resulting a polynomial homotopy $H_3$ from the square labeled by
$\{(000),(100),(010),(111)\}$ to the square labeled by
$\{(000),(100),(010),(110)\}$. In this case
$H_3=(\phi_{2,3},\phi_{2,3})$.

(3) We first want to prove the following statement.

\begin{hauptsub}
$B^{\sd^m I^{n+1}}$ (respectively $\wt B^{\ff S^n}_m$) is a path
space for $B^{\sd^m I^{n}}$ (respectively $B^{\ff S^n}_m$) in the
Brown category $(\Re,\ff F,\ff W_{\min})$.
\end{hauptsub}

\begin{proof}
By (2) the maps
   $$d_0,d_1:B^{\sd^m I^{n+1}}\to B^{\sd^m I^{n}}$$
are algebraically homotopic, hence equal in the category $\cc
H(\Re)$.

The map
   $$(d_0,d_1):B^{\sd^m I^{n+1}}\xrightarrow{}B^{\sd^m I^{n}}\times B^{\sd^m I^{n}}$$
is a $k$-linear split homomorphism, hence a fibration. A splitting
is defined as
   $$(b_1,b_2)\in B^{\sd^m I^{n}}\times B^{\sd^m I^{n}}\mapsto\imath(b_1)\cdot(1-\mathbf{t})+\imath(b_2)\cdot\mathbf{t}\in B^{\sd^m I^{n+1}},$$
where $\mathbf{t}\in k^{\sd^m I^{n+1}}$ is defined on
page~\pageref{elt} and $\imath:\bb B^\Delta(I^n)\to(\bb
B^\Delta(I^n))^{\Delta^1}$ is the natural inclusion. There is a
commutative diagram
   $$\xymatrix{B^{\sd^m I^{n}}\ar[rd]_{s}\ar[rr]^{diag}&&B^{\sd^m I^{n}}\times B^{\sd^m I^{n}}\\
               &B^{\sd^m I^{n+1}},\ar[ur]_{(d_0,d_1)}}$$
where $s$ is induced by projection of $I^{n+1}$ onto $I^n$ which
forgets the last coordinate. To show that $B^{\sd^m I^{n+1}}$ is a
path space, we shall check that $s$ is an $I$-weak equivalence. We
have that $d_0s=\id$. We want to check that $sd_0$ is algebraically
homotopic to $\id$.

In the proof of Proposition~\ref{excis} we have constructed a
simplicial map
   $$\lambda:I^2\to I.$$
It induces a simplicial homotopy between $sd_0$ and $\id$
   $$\lambda^*:B^{\sd^m I^{n+1}}\to B^{\sd^m I^{n+2}}.$$
By (2) these are algebraically homotopic. We conclude that $s,d_0$
are $I$-weak equivalences, and hence so is $d_1$. The statement for
$\wt B^{\ff S^n}_m$ is verified in a similar way.
\end{proof}

The algebra $B':=(B^{\sd^m I^{n}})^{\sd^k\Delta^1}$ is another path
object of $B^{\sd^m I^{n}}$, and so there is a commutative diagram
   $$\xymatrix{B^{\sd^m I^{n}}\ar[rd]_{s'}\ar[rr]^{diag}&&B^{\sd^m I^{n}}\times B^{\sd^m I^{n}}\\
               &B',\ar[ur]_{(d'_0,d'_1)}}$$
where $s$ is an $I$-weak equivalence and $(d'_0,d'_1)$ is a
fibration. Let $X$ be the fibre product for
   $$\xymatrix{B^{\sd^m I^{n+1}}\ar[rr]^(.40){(d_0,d_1)}&&B^{\sd^m I^{n}}\times B^{\sd^m I^{n}}&&\ar[ll]_(.30){(d_0',d_1')}B'.}$$
Then $(s,s')$ induces a unique map $q:B^{\sd^m I^{n}}\to X$ such
that $pr_1\circ q=s$ and $pr_2\circ q=s'$. We can factor $q$ as
   $$B^{\sd^m I^{n}}\lra{s''}B''\lra{p}X,$$
where $s''$ is an $I$-weak equivalence and $p$ is a fibration. It
follows that $u:=pr_2\circ p$ and $v:=pr_1\circ p$ are $I$-trivial
fibrations, because $vs''=s,us''=s'$. It follows that there is a
commutative diagram
   $$\xymatrix{B^{\sd^m I^{n}}\ar[rd]_{s''}\ar[rr]^{diag}&&B^{\sd^m I^{n}}\times B^{\sd^m I^{n}}\\
               &B''\ar[ur]_{(d_0'',d_1'')}}$$
with $(d_0'',d_1''):=(d_0,d_1)\circ v=(d_0',d_1')\circ u$, and so
the algebra $B''$ is a path object of $B^{\sd^m I^{n}}$.

Now let us consider a commutative diagram
   $$\xymatrix{A'\ar[r]^{h'}\ar[d]_g&B''\ar[d]_u\ar[r]^(.35)v&B^{\sd^m I^{n+1}}\ar[d]^{(d_0,d_1)}\\
               A\ar[r]_h&B'\ar[r]_(.27){(d_0',d_1')}&B^{\sd^m I^{n}}\times B^{\sd^m I^{n}}}$$
with the left square cartesian. The desired homomorphism $H:A'\to
B^{\sd^m I^{n+1}}$ is then defined as $vh'$. The homomorphism
$H:A'\to\wt B^{\ff S^n}_m$ is constructed in a similar way.
\end{proof}

The proof of the Hauptlemma also applies to showing that for any
homomorphism $h:A\to B^{\sd^m\Delta^1\times\Delta^n}$ the induced
maps $d_0h,d_1h:A\to B^{\sd^m\Delta^n}$ are algebraically homotopic.
If $m=0$ then the homotopy is constructed in $n$ steps similar to
that described above for cubes $I^n$ (each step is obtained by
applying the polynomial homotopy $\phi_{i,j}$).

We can use the homotopy to describe explicitly a polynomial
contraction of an algebra $B^{\Delta^n}$ to $B$. Precisely, consider
the maps $s:B\to B^{\Delta^n}$, $\delta:B^{\Delta^n}\to B$ induced
by the unique map $[n]\to[0]$ and the map $[0]\to[n]$ taking 0 to
$n$. Then $\delta s=1_B$ and $s\delta$ is polynomially homotopic to
the identity map of $B^{\Delta^n}$. The homotopy is constructed by
lifting the simplicial homotopy that contracts $\Delta^n$ to its
last vertex. This simplicial homotopy is given by a simplicial map
   $$\Delta^1\times\Delta^{n}\lra{h}\Delta^n$$
that takes $(v:[m]\to[1],u:[m]\to[n])$ to $\bar u:[m]\to[n]$, where
$\bar u$ is defined as the composite
   $$[m]\xrightarrow{(u,v)}[n]\times[1]\lra{w}[n]$$
and where $w(j,0)=j$ and $w(j,1)=n$.

We have a homomorphism
   $$h^*:B^{\Delta^n}\to B^{\Delta^1\times\Delta^n}$$
which is induced by $h$. Then $d_0h^*=1$ is polynomially homotopic
to $d_1h^*=s\delta$.

If a homomorphism $f:A'\to A$ is homotopic to $g:A'\to A$ by means
of a homomorphism $h:A'\to A[x]$ then $J(f)$ is homotopic to $J(g)$.
Indeed, consider a commutative diagram of algebras
   $$\xymatrix{JA'\ar[r]\ar[d]_{J(h)}&TA'\ar[d]_{T(h)}\ar[r]&A'\ar[d]^h\\
               J(A[x])\ar[r]\ar[d]_\gamma &T(A[x])\ar[d]\ar[r]&A[x]\ar@{=}[d]\\
               (JA)[x]\ar[d]_{\partial_x^{0;1}}\ar[r]&(TA)[x]\ar[d]_{\partial_x^{0;1}}\ar[r]&A[x]\ar[d]^{\partial_x^{0;1}}\\
               JA\ar[r]&TA\ar[r]&A.}$$
Then $\gamma\circ J(h)$ yields the required homotopy between $J(f)$
and $J(g)$.

Let $A,B\in\Re$ and $n\geq 0$. Part~(i) of the Hauptlemma implies
that there is a map
   $$\pi_0(\Hom_{\inda}(J^nA,\bb B(\Omega^n)))\to[J^nA,B^{\ff S^n}]$$
which is consistent with the colimit maps
   $$\varsigma:\Hom_{\inda}(J^nA,\bb B(\Omega^n))\to\Hom_{\inda}(J^{n+1}A,\bb B(\Omega^{n+1}))$$
defined by~\eqref{adj} and $\sigma:[J^nA,B^{\ff
S^n}]\to[J^{n+1}A,B^{\ff S^{n+1}}]$ which is defined like
$\varsigma$. So we get a map
   $$\Gamma:\cc K_0(A,B)\to\lp_n[J^nA,B^{\ff S^n}].$$

\begin{comparisa}\label{comar}
The map $\Gamma:\cc K_0(A,B)\to\lp_n[J^nA,B^{\ff S^n}]$ is an
isomorphism.
\end{comparisa}

\begin{proof}
It is obvious that
   $$\pi_0(\Hom_{\inda}(J^nA,\bb B(\Omega^n)))\to[J^nA,B^{\ff S^n}]$$
is surjective for each $n\geq 0$, and hence so is $\Gamma$. Suppose
$f_0,f_1:J^nA\to B^{\ff S^n}$ are polynomially homotopic by means of
$h$. By the Hauptlemma there are a homomorphism $g:A'\to J^nA$,
which is a fibre product of an $I$-trivial fibration along $h$, and
hence $g\in\ff W_{\min}$, and a homomorphism $H:A'\to\wt B^{\ff
S^n}$ such that $d_0H=f_0g$ and $d_1H=f_1g$. Similar to the proof of
Excision Theorem~B one can construct a small admissible category of
algebras $\Re'$ such that it contains all algebras
$\{A',J^nA,B^{\sd^mI^n}\}_{m,n}$ we work with and such that $g$ is a
quasi-isomorphism of $\Re'$.

By Theorem~\ref{mainequiv} the induced map of graded abelian groups
   $$g^*:\cc K_*(\Re')(J^nA,B)\to\cc K_*(\Re')(A',B)$$
is an isomorphism. We have that $g^*$ takes $f_0,f_1\in\cc
K_n(\Re')(J^nA,B)$ to the same element in $\cc K_n(\Re')(A',B)$, and
so $f_0=f_1$. We see that $\Gamma$ is also injective, hence it is an
isomorphism.
\end{proof}

\begin{cor}\label{comarcor}
The homotopy groups of $\bb K(A,B)$ are computed as follows:
   $$\bb K_m(A,B)\cong
      \left\{
      \begin{array}{rcl}
       \lp_n[J^nA,(\Omega^mB)^{\ff S^n}],\ m\geq 0\\
       \lp_n[J^{-m+n}A,B^{\ff S^n}],\ m<0
      \end{array}
      \right.$$
\end{cor}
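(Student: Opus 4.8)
The plan is to obtain the Corollary as a direct consequence of Comparison Theorem~A together with the loop identifications already established, treating the two ranges $m\geq 0$ and $m<0$ separately. First recall, from the definition of the $\Omega$-spectrum $\bb K(A,B)$ and the remark immediately following it, that $\bb K_m(A,B)\cong\cc K_m(A,B)$ for $m\geq 0$ and $\bb K_m(A,B)\cong\cc K_0(J^{-m}A,B)$ for $m<0$. Hence in either case the problem reduces to computing $\pi_0$ of a Kasparov space $\cc K(A',B')$ with $A',B'\in\Re$ and then quoting Comparison Theorem~A, which asserts that $\cc K_0(A,B)\cong\lp_n[J^nA,B^{\ff S^n}]$ for all $A,B\in\Re$.

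For $m\geq 0$ I would argue as follows. The space $\cc K(A,B)$ is fibrant and pointed at zero, so $\cc K_m(A,B)=\pi_m\cc K(A,B)\cong\pi_0(\Omega^m\cc K(A,B))$. Iterating Corollary~\ref{excwww} $m$ times, and using that $\Omega^j$ preserves homotopy equivalences, gives $\Omega^m\cc K(A,B)\simeq\cc K(A,\Omega^mB)$; here each $\Omega^jB$ lies in $\Re$ because $\Re$ is admissible, hence closed under polynomial extensions and under passing to ideals ($EB\subset B[x]$ and $\Omega B\subset EB$ being ideals). Therefore $\cc K_m(A,B)\cong\cc K_0(A,\Omega^mB)$, and Comparison Theorem~A applied to the pair $(A,\Omega^mB)$ yields $\cc K_m(A,B)\cong\lp_n[J^nA,(\Omega^mB)^{\ff S^n}]$, which is the asserted formula.

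For $m<0$ the deduction is one line: $\bb K_m(A,B)\cong\cc K_0(J^{-m}A,B)$, and Comparison Theorem~A applied to the pair $(J^{-m}A,B)$ gives $\cc K_0(J^{-m}A,B)\cong\lp_n[J^n(J^{-m}A),B^{\ff S^n}]=\lp_n[J^{-m+n}A,B^{\ff S^n}]$, since $J^n\circ J^{-m}=J^{-m+n}$.

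I do not anticipate a real obstacle; once Corollary~\ref{excwww} and Comparison Theorem~A are in place the proof is essentially bookkeeping. The single point that merits a line of checking is that the identification $\cc K_m(A,B)\cong\cc K_0(A,\Omega^mB)$ coming from iterating Corollary~\ref{excwww} is compatible with the two colimit systems, i.e.\ that it intertwines the transition maps $\varsigma$ of~\eqref{adj} used to form $\lp_n\Hom_{\inda}(J^nA,\bb B^\Delta(\Omega^n))$ with the maps $\sigma$ appearing in Comparison Theorem~A. This follows from the explicit description of $\varsigma$ through the classifying map $\xi_\upsilon$ and of the equivalence of Corollary~\ref{excwww} (which is induced by Excision Theorem~A applied to the loop extension $\Omega B\to EB\to B$), so no new idea is needed.
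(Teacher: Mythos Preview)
Your proposal is correct and follows exactly the same route as the paper, which simply records that the corollary ``follows from Corollary~\ref{excwww} and the preceding theorem''; you have merely spelled out the two cases in detail. The compatibility concern in your final paragraph is unnecessary: once Corollary~\ref{excwww} gives the homotopy equivalence $\Omega^m\cc K(A,B)\simeq\cc K(A,\Omega^mB)$, you are composing two already-established isomorphisms of sets, so no further check of transition maps is required.
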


\begin{proof}
This follows from Corollary~\ref{excwww} and the preceding theorem.
\end{proof}

\section{Comparison Theorem B}\label{compb}

In this section $\Re$ is supposed to be $T$-closed. Let $\ff W$ be a
class of weak equivalences containing homomorphisms $A\to A[t]$,
$A\in\Re$, such that the triple $(\Re,\ff F,\ff W)$ is a Brown
category.

\begin{defs}{\rm
The {\it left derived category\/} $D^-(\Re,\ff F,\ff W)$ of $\Re$
with respect to $(\ff F,\ff W)$ is the category obtained from $\Re$
by inverting the weak equivalences. }\end{defs}

By~\cite{Gar1} the family of weak equivalences in the category $\cc
H\Re$ admits a calculus of right fractions. The left derived
category $D^-(\Re,\ff F,\ff W)$ (possibly ``large") is obtained from
$\cc H\Re$ by inverting the weak equivalences. The left derived
category $D^-(\Re,\ff F,\ff W)$ is left triangulated
(see~\cite{Gar,Gar1} for details) with $\Omega$ a loop functor on
it.

There is a general method of stabilizing $\Omega$ (see
Heller~\cite{Hel}) and producing a triangulated (possibly ``large")
category $D(\Re,\ff F,\ff W)$ from the left triangulated structure
on $D^-(\Re,\ff F,\ff W)$.

An object of $D(\Re,\ff F,\ff W)$ is a pair $(A,m)$ with $A\in
D^-(\Re,\ff F,\ff W)$ and $m\in\bb Z$. If $m,n\in\bb Z$ then we
consider the directed set $I_{m,n}=\{k\in\bb Z\mid m,n\leq k\}$. The
morphisms between $(A,m)$ and $(B,n)\in D(\Re,\ff F,\ff W)$ are
defined by
   $$D(\Re,\ff F,\ff W)[(A,m),(B,n)]:=\lp_{k\in I_{m,n}}D^-(\Re,\ff F,\ff W)(\Omega^{k-m}(A),\Omega^{k-n}(B)).$$
Morphisms of $D(\Re,\ff F,\ff W)$ are composed in the obvious
fashion. We define the {\it loop\/} automorphism on $D(\Re,\ff F,\ff
W)$ by $\Omega(A,m):=(A,m-1)$. There is a natural functor
$S:D^-(\Re,\ff F,\ff W)\to D(\Re,\ff F,\ff W)$ defined by
$A\longmapsto(A,0)$.

$D(\Re,\ff F,\ff W)$ is an additive category \cite{Gar,Gar1}. We
define a triangulation $\cc Tr(\Re,\ff F,\ff W)$ of the pair
$(D(\Re,\ff F,\ff W),\Omega)$ as follows. A sequence
   $$\Omega(A,l)\to (C,n)\to(B,m)\to(A,l)$$
belongs to $\cc Tr(\Re,\ff F,\ff W)$ if there is an even integer $k$
and a left triangle of representatives
$\Omega(\Omega^{k-l}(A))\to\Omega^{k-n}(C)\to\Omega^{k-m}(B)\to\Omega^{k-l}(A)$
in $D^-(\Re,\ff F,\ff W)$. Then the functor $S$ takes left triangles
in $D^-(\Re,\ff F,\ff W)$ to triangles in $D(\Re,\ff F,\ff W)$.
By~\cite{Gar,Gar1} $\cc Tr(\Re,\ff F,\ff W)$ is a triangulation of
$D(\Re,\ff F,\ff W)$ in the classical sense of Verdier~\cite{Ver}.

Let $\cc E$ be the class of all $\ff F$-extensions of $k$-algebras
   \begin{equation}\label{extensionE}
    (E):A\to B\to C.
   \end{equation}

\begin{defs}{\rm Following Corti\~nas--Thom~\cite{CT} a {\it ($\ff
F$-)excisive homology theory\/} on $\Re$ with values in a
triangulated category $(\cc T,\Omega)$ consists of a functor
$X:\Re\to \cc T$, together with a collection $\{\partial_E:E\in\cc
E\}$ of maps $\partial_E^X=\partial_E\in{\cc T}(\Omega X(C), X(A))$.
The maps $\partial_E$ are to satisfy the following requirements. \sn
\noindent{(1)} For all $E\in \cc E$ as above,
\[
\xymatrix{\Omega
X(C)\ar[r]^{\partial_E}&X(A)\ar[r]^{X(f)}&X(B)\ar[r]^{X(g)}& X(C)}
\]
is a distinguished triangle in $\cc T$. \sn \noindent{(2)} If
\[
\xymatrix{(E): &A\ar[r]^f\ar[d]_\alpha& B\ar[r]^g\ar[d]_\beta& C\ar[d]_\gamma\\
          (E'):&A'\ar[r]^{f'}& B'\ar[r]^{g'}& C'}
\]
is a map of extensions, then the following diagram commutes
\[
\xymatrix{{\Omega} X(C)\ar[d]_{{\Omega} X(\gamma)}\ar[r]^{\partial_E}& X(A)\ar[d]^{X(\alpha)}\\
 \Omega X(C')\ar[r]_{\partial_{E'}}& X(A).}
\]
We say that the functor $X:\Re\to\cc T$ is {\it homotopy
invariant\/} if it maps homotopic homomomorphisms to equal maps, or
equivalently, if for every $A\in\ahaw$, $X$ maps the inclusion
$A\subset A[t]$ to an isomorphism.

}\end{defs}

Denote by $\ff W_{\triangle}$ the class of homomorphisms $f$ such
that $X(f)$ is an isomorphism for any excisive, homotopy invariant
homology theory $X:\Re\to\cc T$. We shall refer to the maps from
$\ff W_{\triangle}$ as {\it stable weak equivalences}. The triple
$(\Re,\ff F,\ff W_{\triangle})$ is a Brown category. In what follows
we shall write $D^-(\Re,\ff F)$ and $D(\Re,\ff F)$ to denote
$D^-(\Re,\ff F,\ff W_{\triangle})$ and $D(\Re,\ff F,\ff
W_{\triangle})$ respectively, dropping $\ff W_{\triangle}$ from
notation.

In this section we prove the following theorem.

\begin{comparisb}\label{comparisb}
For any algebras $A,B\in\Re$ there is an isomorphism of $\bb
Z$-graded abelian groups
   $$\bb K_*(A,B)\cong D(\Re,\ff F)_*(A,B)=\bigoplus_{n\in\bb Z}D(\Re,\ff F)(A,\Omega^nB),$$
functorial both in $A$ and in $B$.
\end{comparisb}

The graded isomorphism consists of a zig-zag of isomorphisms each of
which is constructed below.

\begin{cor}\label{small}
$D(\Re,\ff F)$ is a category with small Hom-sets.
\end{cor}

\begin{defs}{\rm
Let $\Re$ be a small $T$-closed admissible category of algebras. A
homomorphism $A\to B$ in $\Re$ is said to be a {\it stable $\ff
F$-quasi-isomorphism\/} or just a {\it stable quasi-isomorphism\/}
if the map $\Omega^nA\to\Omega^nB$ is a quasi-isomorphism for some
$n\geq 0$. The class of quasi-isomorphisms will be denoted by $\ff
W_{qis}$. By~\cite{Gar} the triple $(\Re,\ff F,\ff W_{qis})$ is a
Brown category.}\end{defs}

Consider the ind-algebra $(B^{\ff S^n},\bb Z_{\geq 0})$ with each
$B^{\ff S^n}_k$, $k\in\bb Z_{\geq 0}$, being $\ker(B^{\sd^kI^n}\to
B^{\partial(\sd^kI^n)})$, that is $B^{\ff S^n}$ is the underlying
ind-algebra of 0-simplices of $\bb B(\Omega^n)$. We shall denote by
$B^{\cc S^n}$ the algebra $B^{\ff S^n}_0$. Notice that $B^{\cc
S^1}=\Omega B$. There is a sequence of maps
   $$\Hom_{\ahaw}(B,B)\bl\varsigma\to\Hom_{\ahaw}(JB,B^{\ff S^1}_k)\bl\varsigma\to\Hom_{\ahaw}(J^2B,B^{\ff S^2}_k)\bl\varsigma\to\cdots$$
One sets $1^{n,k}_B:=\varsigma^n(1_B)$.

Recall that $\ff W_{\min}$ is the least collection of weak
equivalences containing $A\to A[x]$, $A\in\Re$, such that the triple
$(\Re,\ff F,\ff W_{\min})$ is a Brown category.

\begin{lem}\label{bquasnoch}
Let $\Re$ be a $T$-closed admissible category of algebras and
$B\in\Re$. Then for any $n\geq 0$ all morphisms of the sequence
   $$B^{\ff S^n}_0\to B^{\ff S^n}_1\to B^{\ff S^n}_2\to\cdots$$
belong to $\ff W_{\min}$.
\end{lem}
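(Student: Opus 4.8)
The plan is to show that every transition homomorphism in the sequence is an elementary homotopy equivalence, and then to invoke the fact that any elementary homotopy equivalence in $\Re$ lies in $\ff W_{\min}$.

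First I would identify the maps concretely. The transition map $B^{\ff S^n}_k\to B^{\ff S^n}_{k+1}$ is the restriction to kernels of the homomorphism $(h_{\sd^k I^n})^*\colon B^{\sd^k I^n}\to B^{\sd^{k+1}I^n}$ induced by the last-vertex map $h_{\sd^k I^n}\colon\sd^{k+1}I^n\to\sd^k I^n$. Since $h$ is natural with respect to the inclusion of the subcomplex $\partial I^n\subset I^n$, one obtains a commutative square comparing $B^{\sd^k I^n}\to B^{\sd^k\partial I^n}$ with $B^{\sd^{k+1}I^n}\to B^{\sd^{k+1}\partial I^n}$, whose induced map on kernels is exactly the transition map in question. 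So it is enough to produce a polynomial homotopy inverse to $(h_{\sd^k I^n})^*$ which is compatible with passage to $\partial I^n$, so that it and the accompanying homotopies restrict to the sub-algebras $B^{\ff S^n}_\bullet$.

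This subdivision statement is the technical heart, and the machinery of Section~\ref{compa} is designed precisely for it: on the level of realizations $|\sd K|\to|K|$ is visibly a homotopy equivalence, but in $\Re$ only polynomial homotopies are available, and the Hauptlemma (together with the explicit polynomial contraction of $B^{\Delta^m}$ onto $B$ obtained after its proof by lifting the simplicial contraction of $\Delta^m$ to its last vertex) is exactly the device turning simplicial homotopy data into polynomial homotopy data. Concretely, $B^{\sd^k I^n}$ is glued out of copies of $B^{\Delta^n}$ indexed by the nondegenerate $n$-simplices of $\sd^k I^n$, and one builds a homomorphism $g\colon B^{\sd^{k+1}I^n}\to B^{\sd^k I^n}$ together with polynomial homotopies $g\circ(h_{\sd^k I^n})^*\sim\id$ and $(h_{\sd^k I^n})^*\circ g\sim\id$ by running, simplex by simplex, an algorithm of the same kind as in the proof of the Hauptlemma; arranging each step to be constant in the $\partial I^n$-directions makes $g$ and the two homotopies restrict to $B^{\ff S^n}_\bullet$. (Alternatively one may induct on $n$: the cases $n=0,1$ can be done by hand, since $\sd^k I^0$ is a point and $\sd^k I^1$ is an iterated fibre product of $2^k$ copies of $B[x]$ over $B$.)

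Finally, to pass from "elementary homotopy equivalence" to membership in $\ff W_{\min}$ one uses that $(\Re,\ff F,\ff W_{\min})$ is a Brown category. Given an elementary homotopy equivalence $f\colon A\to B$, factor $f=p\circ i$ through the polynomial mapping path space $A'=A\times_B B[x]$, the pullback of $\partial^0_x\colon B[x]\to B$ along $f$; since $\partial^0_x$ is split by $B\to B[x]\in\ff W_{\min}$ it is a trivial fibration, so by axiom (C) the projection $A'\to A$ is a trivial fibration, and hence its section $i$ lies in $\ff W_{\min}$ by two-out-of-three. A homotopy inverse of $f$ together with a homotopy $fg\sim\id_B$ furnishes a section of the fibration $p$, and a short argument with the remaining homotopies and the Brown-category axioms (equivalently, the standard fact that the weak equivalences of a Brown category are exactly the maps inverted in its homotopy category) shows $p\in\ff W_{\min}$, so $f=p\circ i\in\ff W_{\min}$; this is the formal input already available from \cite{Gar,Gar1}. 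The main obstacle is the second step — carrying out the subdivision argument algebraically and relatively, rather than topologically — whereas the last step, though requiring care with the axioms, is essentially bookkeeping once the homotopy-equivalence statement is established.
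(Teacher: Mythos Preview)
Your approach is plausible but genuinely different from the paper's, and the step you flag as the ``technical heart'' is left largely unexecuted: you assert one can build an explicit algebra homomorphism $g\colon B^{\sd^{k+1}I^n}\to B^{\sd^k I^n}$ together with two-sided polynomial homotopies, relative to $\partial I^n$, by a Hauptlemma-style algorithm, but the Hauptlemma produces polynomial homotopies between given maps, not homotopy inverses, and there is no simplicial map $\sd^k I^n\to\sd^{k+1}I^n$ to induce $g$. Filling this in correctly is real work, and your sketch does not make clear how it goes.

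The paper sidesteps this entirely with a short induction on $n$. For $n=0$ the sequence is constant. For the inductive step one uses the $\ff F$-extension
\[
B^{\ff S^n}_k\;\longrightarrow\;PB^{\ff S^{n-1}}_k\;\longrightarrow\;B^{\ff S^{n-1}}_k
\]
(the $0$-simplices of $\bb B^\Delta(\Omega^n)\hookrightarrow P\bb B^\Delta(\Omega^{n-1})\twoheadrightarrow\bb B^\Delta(\Omega^{n-1})$). The Hauptsublemma shows that on $0$-simplices $d_0$ is an $I$-trivial fibration, so each $PB^{\ff S^{n-1}}_k$, being its kernel, satisfies $(0\to PB^{\ff S^{n-1}}_k)\in\ff W_{\min}$. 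Mapping the extension for $k$ to that for $k+1$, the right vertical map is in $\ff W_{\min}$ by induction and the middle one is in $\ff W_{\min}$ by two-out-of-three (both objects are $\ff W_{\min}$-contractible); the Brown-category comparison of fibre sequences then forces the left vertical map into $\ff W_{\min}$. No explicit inverse to the subdivision map is ever needed.

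So: your strategy could perhaps be pushed through, and the final paragraph (elementary homotopy equivalences lie in $\ff W_{\min}$) is fine, but the paper's argument is both shorter and avoids exactly the unfinished construction in your middle step.
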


\begin{proof}
Recall that the simplicial ind-algebra $P\bb B^\Delta(\Omega^n)$ is
indexed over $\bb Z_{\geq 0}$ and defined as $\ker((\bb
B^\Delta(\Omega^n))^I\lra{d_0}\bb B^\Delta(\Omega^n))$. The proof of
the Hauptsublemma shows that on the level of 0-simplices $d_0$ is an
$I$-trivial fibration. Its kernel consists of 0-simplices of $P\bb
B^\Delta(\Omega^n)$ and whose underlying sequence of algebras is
denoted by
   $$PB^{\ff S^n}_0\to PB^{\ff S^n}_1\to PB^{\ff S^n}_2\to\cdots$$
For each algebra of the sequence $PB^{\ff S^n}_k$ one has $(0\to
PB^{\ff S^n}_k)\in\ff W_{\min}$, because it is the kernel of an
$I$-trivial fibration.

The assertion is obvious for $n=0$. We have a commutative diagram of
extensions for all $n\geq 1,k\geq 0$
   $$\xymatrix{B^{\ff S^n}_k\ar[d]\ar[r]&PB^{\ff S^{n-1}}_k\ar[d]\ar[r]&B^{\ff S^{n-1}}_{k}\ar[d]\\
               B^{\ff S^n}_{k+1}\ar[r]&PB^{\ff S^{n-1}}_{k+1}\ar[r]&B^{\ff S^{n-1}}_{k+1}}$$
with the right and the middle arrows belonging to $\ff W_{\min}$ by
induction, hence so is the left one.
\end{proof}

\begin{lem}\label{ochich}
Let $\Re$ be a $T$-closed admissible category of algebras and
$B\in\Re$. Then each $1^{n,k}_B$, $n,k\geq 0$, belongs to $\ff
W_{\min}$.
\end{lem}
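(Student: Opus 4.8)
The plan is to prove the lemma by induction on $n$, uniformly in $k$. The base case $n=0$ is immediate: since $I^0=\Delta^0$ and $\partial I^0=\emptyset$ one has $B^{\ff S^0}_k=\kr(B\to 0)=B$, so $1^{0,k}_B=\varsigma^0(1_B)=\id_B$, which lies in $\ff W_{\min}$ by axiom~(A) of a Brown category. For the inductive step I would fix $k$, assume $1^{n,k}_B\in\ff W_{\min}$, and realize $1^{n+1,k}_B=\varsigma(1^{n,k}_B)$ as the map induced on kernels by a map of $\ff F$-extensions.

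Concretely, recall from Section~\ref{excisionsec} that $\varsigma(g)=\xi_{\upsilon}\circ J(g)$, where $\xi_{\upsilon}$ is the classifying map of the $\ff F$-extension $B^{\ff S^{n+1}}_k\rightarrowtail PB^{\ff S^n}_k\xrightarrow{d_1}B^{\ff S^n}_k$ equipped with the natural $k$-linear splitting $\upsilon$ of $d_1$ from page~\pageref{elt}. Composing the functorial map of universal extensions induced by $1^{n,k}_B$ with the (strictly commutative) classifying-map square of $\xi_{\upsilon}$ — which uses $d_1\circ\upsilon=\id$, the fact that $\eta$ restricts to the identity on generators of $T(-)$, and naturality of the counit $\eta\colon T\Rightarrow\id$ — one obtains a strictly commutative diagram of $\ff F$-extensions
$$\xymatrix{J^{n+1}B\ar[d]_{1^{n+1,k}_B}\ar[r]&T(J^nB)\ar[d]\ar[r]^{\eta}&J^nB\ar[d]^{1^{n,k}_B}\\
            B^{\ff S^{n+1}}_k\ar[r]&PB^{\ff S^n}_k\ar[r]^{d_1}&B^{\ff S^n}_k}$$
whose top row is the universal extension of $J^nB$. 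Both rows are $\ff F$-extensions, hence homotopy fibre sequences in the Brown category $(\Re,\ff F,\ff W_{\min})$. By Lemma~\ref{ta} the algebra $T(J^nB)$ is contractible, and by (the proof of) Lemma~\ref{bquasnoch} the map $0\to PB^{\ff S^n}_k$ lies in $\ff W_{\min}$; thus both middle terms are $\ff W_{\min}$-equivalent to $0$, so the middle vertical map is a $\ff W_{\min}$-equivalence by axiom~(A), and the right-hand one is by the inductive hypothesis. The gluing lemma for maps of homotopy fibre sequences in a category of fibrant objects (see~\cite{B}; cf.~\cite{Gar,Gar1}) then forces the left-hand map $1^{n+1,k}_B$ into $\ff W_{\min}$, completing the induction. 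Note that the passage from $n$ to $n+1$ only involves $1^{n,k}_B$ at the same level $k$, and that the extension $B^{\ff S^{n+1}}_k\rightarrowtail PB^{\ff S^n}_k\twoheadrightarrow B^{\ff S^n}_k$ invoked here is exactly the one recorded in the proof of Lemma~\ref{bquasnoch}, available for every $k\geq 0$.

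The step requiring the most care is the claim that unravelling the definition of $\varsigma$ really produces the displayed strictly commutative map of $\ff F$-extensions with $1^{n+1,k}_B$ sitting on the left; this is a bookkeeping check within the classifying-map formalism of Section~\ref{exten}. The other delicate input is the gluing lemma, which must be applied in Brown's framework of categories of fibrant objects — with $\ff F$-extensions in the role of fibration sequences — rather than in an ambient model category; since here both total spaces are weakly contractible, the instance actually needed is just that the loop construction built from path objects carries $\ff W_{\min}$-equivalences to $\ff W_{\min}$-equivalences, which one deduces from Brown's factorization lemma.
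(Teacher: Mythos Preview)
Your proof is correct and follows essentially the same route as the paper's, just organized differently. The paper fixes $k$, writes $1^{n,k}_B=\xi_\upsilon\circ J(1^{n-1,k}_B)$, and then invokes two closure properties separately: that the classifying map $\xi_\upsilon:J(B^{\ff S^{n-1}}_k)\to B^{\ff S^n}_k$ lies in $\ff W_{\min}$ (because it sits in a map of extensions with contractible middle terms $T(B^{\ff S^{n-1}}_k)$ and $PB^{\ff S^{n-1}}_k$), and that $J$ preserves $\ff W_{\min}$. You instead stack the two maps of extensions into the single diagram you display and apply the gluing/five-lemma argument once. Both arguments rest on exactly the same inputs --- Lemma~\ref{ta}, the weak contractibility of $PB^{\ff S^{n-1}}_k$ from Lemma~\ref{bquasnoch}, and the behaviour of fibre sequences in a Brown category --- so there is no substantive difference.
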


\begin{proof}
We fix $k$. The identity map $1_B=1^{0,k}_B$ belongs to $\ff
W_{\min}$. The map $1^{1,k}_B$ is the classifying map
$\xi_\upsilon:JB\to B^{\ff S^1}_k$, which is in $\ff W_{\min}$.
Suppose $1^{n-1,k}_B$, $n>1$, belongs to $\ff W_{\min}$. Then
$1^{n,k}_B=\xi_\upsilon J(1^{n-1,k}_B)$, where
$\xi_\upsilon:J(B^{\ff S^{n-1}}_k)\to B^{\ff S^n}_k$ is in $\ff
W_{\min}$. Since $J$ respects maps from $\ff W_{\min}$, then
$1^{n,k}_B$ is in $\ff W_{\min}$.
\end{proof}

\begin{lem}\label{stabquas}
The following conditions are equivalent for a homomorphism $f:A\to
B$ in $\Re$:

\begin{enumerate}
\item $f$ is a stable quasi-isomorphism;
\item $J^n(f):J^nA\to J^nB$ is a quasi-isomorphism for some $n\geq
0$;
\item for any $k\geq 0$ there is a $n\geq 0$ such that $f^{\ff S^n}:A^{\ff S^n}_k\to B^{\ff S^n}_k$ is a quasi-isomorphism.
\end{enumerate}
\end{lem}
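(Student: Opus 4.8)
The plan is to prove the cyclic chain of equivalences by producing, for each fixed $n\geq 0$, quasi-isomorphisms — natural in the algebra, at least up to homotopy — linking the three algebras $J^nA$, $\Omega^nA$ and $A^{\ff S^n}_k$, and then invoking the two-out-of-three axiom for quasi-isomorphisms. First I would assemble the following facts, all already available. The functor $J\colon\Re\to\Re$ preserves quasi-isomorphisms (used in the proof of Theorem~\ref{mainequiv}), and the natural map $\xi_\upsilon\colon JA\to\Omega A$ is a quasi-isomorphism (used in the proof of Theorem~\ref{mainstab}). Quasi-isomorphisms satisfy two-out-of-three and are invariant under the relation of being (polynomially) homotopic, being precisely the homomorphisms $f$ with $\cc X(f)$ a weak equivalence for every fibrant, homotopy invariant, excisive $\cc X\in U\Re$, and each such $\cc X$ is homotopy invariant. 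Applying two-out-of-three to the naturality square of $\xi_\upsilon$ for an arbitrary $g$ shows that $Jg$ is a quasi-isomorphism iff $\Omega g$ is; in particular $\Omega$ also preserves quasi-isomorphisms. Finally, $(\Re,\ff F,\{\text{quasi-isomorphisms}\})$ is a Brown category containing the maps $A\to A[t]$ — axiom~(C) holds because a homotopy invariant excisive functor sends a distinguished square to a homotopy pullback, so the base change of a quasi-isomorphic fibration along anything is again a quasi-isomorphism — hence $\ff W_{\min}$ is contained in the class of quasi-isomorphisms.

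For $(1)\Leftrightarrow(2)$ I would build, for each $n$, a transformation $\phi_n\colon J^n(-)\Rightarrow\Omega^n(-)$ (natural up to homotopy) as the composite
\[
J^n\xrightarrow{\ \xi_\upsilon J^{n-1}\ }\Omega J^{n-1}\xrightarrow{\ \Omega\xi_\upsilon J^{n-2}\ }\Omega^2J^{n-2}\to\cdots\xrightarrow{\ \Omega^{n-1}\xi_\upsilon\ }\Omega^n,
\]
each component of which is a quasi-isomorphism since $\Omega^i$ preserves quasi-isomorphisms. For $f\colon A\to B$ the square with horizontal edges $J^nf,\Omega^nf$ and vertical edges $\phi_n(A),\phi_n(B)$ commutes up to homotopy and has both verticals quasi-isomorphisms, so two-out-of-three gives that $J^nf$ is a quasi-isomorphism iff $\Omega^nf$ is. Since the condition in (1) is ``$\Omega^nf$ is a quasi-isomorphism for some $n$'' and that in (2) is ``$J^nf$ is a quasi-isomorphism for some $n$'', this yields $(1)\Leftrightarrow(2)$.

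For $(2)\Leftrightarrow(3)$ I would use the maps $1^{n,k}_B\colon J^nB\to B^{\ff S^n}_k$, which lie in $\ff W_{\min}$ by Lemma~\ref{ochich}, hence are quasi-isomorphisms by the first paragraph, and are natural in $B$ up to homotopy — they are iterated classifying maps $1^{n,k}_B=\xi_\upsilon\circ J(1^{n-1,k}_B)$, so naturality up to homotopy follows from Lemma~\ref{class} together with the fact that $J$ carries homotopies to homotopies. For fixed $n,k$ the square with horizontal edges $J^nf$ and $f^{\ff S^n}\colon A^{\ff S^n}_k\to B^{\ff S^n}_k$ and vertical edges $1^{n,k}_A,1^{n,k}_B$ then shows, via two-out-of-three, that $J^nf$ is a quasi-isomorphism iff $f^{\ff S^n}\colon A^{\ff S^n}_k\to B^{\ff S^n}_k$ is. Consequently (2), with a given $n$, yields (3) (the same $n$ works for every $k$), and conversely (3) instantiated at $k=0$ yields (2); Lemma~\ref{bquasnoch} moreover shows that, for each fixed $n$, the condition in (3) is independent of $k$, so the three statements ``for some $n$ and some $k$'', ``for some $n$ and all $k$'', ``for all large $n$ and all $k$'' coincide with~(2).

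The routine but slightly delicate part is the bookkeeping of ``naturality up to homotopy'': one must verify that each comparison square genuinely commutes up to a polynomial homotopy, so that the two-out-of-three argument legitimately applies, and I would lean on Lemmas~\ref{class} and \ref{classqmn} and on the invariance of quasi-isomorphisms under homotopy recorded in the first paragraph. No new idea beyond the results already quoted appears to be required.
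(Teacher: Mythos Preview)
Your argument is correct and follows essentially the same route as the paper: for $(1)\Leftrightarrow(2)$ one uses the natural quasi-isomorphism $J^nA\to\Omega^nA$ built from iterates of the classifying map $JA\to\Omega A$, and for $(2)\Leftrightarrow(3)$ one uses the natural quasi-isomorphism $J^nA\to A^{\ff S^n}_k$ (your $1^{n,k}_A$, which is exactly the chain $J^nA\to J^{n-1}(A^{\ff S^1}_k)\to\cdots\to A^{\ff S^n}_k$ the paper writes down), then applies two-out-of-three. One simplification worth noting: since the splitting $\upsilon$ is itself natural in the algebra, Lemma~\ref{classqmn} shows that both comparison squares commute \emph{strictly}, not merely up to homotopy, so your careful bookkeeping of homotopy-commutativity (and the appeal to homotopy invariance of quasi-isomorphisms) is unnecessary; the paper exploits this and simply records that the relevant zig-zags are ``functorial in $A$''.
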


\begin{proof}
$(1)\Leftrightarrow(2)$. Consider a commutative diagram of
extensions
   $$\xymatrix{JA\ar[r]\ar[d]_{\rho_A}&TA\ar[r]\ar[d]&A\ar@{=}[d]\\
               \Omega A\ar[r]&EA\ar[r]&A,}$$
where $TA,EA$ are contractible. It follows that $\rho_A$ is a
quasi-isomorphism. It is plainly functorial in $A$. Since $J$
respects quasi-isomorphisms, it follows that there is a commutative
diagram for any $n\geq 1$
   $$\xymatrix{J^nA\ar[r]\ar[d]_{J^n(f)}&\Omega^n A\ar[d]^{\Omega^n(f)}\\
               J^nB\ar[r]&\Omega^nB,}$$
in which the horizontal maps are quasi-isomorphisms. We see that
$\Omega^n(f)$ is a quasi-isomorphism if and only if $J^n(f)$ is.

$(2)\Leftrightarrow(3)$. There is a commutative diagram of
extensions for all $n\geq 1,k\geq 0$
   $$\xymatrix{J(A^{\ff S^{n-1}}_k)\ar[d]\ar[r]&T(A^{\ff S^{n-1}}_k)\ar[d]\ar[r]&A^{\ff S^{n-1}}_{k}\ar@{=}[d]\\
               A^{\ff S^n}_k\ar[r]&PA^{\ff S^{n-1}}_{k}\ar[r]&A^{\ff S^{n-1}}_{k}}$$
in which the right and the middle arrows are quasi-isomorphisms,
hence so is the left one. The middle arrow is actually
quasi-isomorphic to zero. Since $J$ respects quasi-isomorphisms, we
get a chain of quasi-isomorphisms
   $$J^nA\to J^{n-1}(A^{\ff S^1}_k)\to\cdots\to J(A^{\ff S^{n-1}}_k)\to A^{\ff S^n}_k,$$
functorial in $A$. It follows that there is a commutative diagram
for any $n\geq 1$
   $$\xymatrix{J^nA\ar[r]\ar[d]_{J^n(f)}&A^{\ff S^n}_k\ar[d]^{f^{\ff S^n}}\\
               J^nB\ar[r]&B^{\ff S^n}_k,}$$
in which the horizontal maps are quasi-isomorphisms. We see that
$f^{\ff S^n}_k$ is a quasi-isomorphism if and only if $J^n(f)$ is.
\end{proof}

We call a homomorphism $f:A\to B$ in a $T$-closed category $\Re$ a
{\it $\cc K$-equivalence\/} if the induced map $\cc K(C,A)\to\cc
K(C,B)$ is a weak equivalence of spaces.

\begin{prop}\label{stabquasnoch}
Let $\Re$ be a small $T$-closed admissible category of algebras. A
homomorphism $t:A\to B$ in $\Re$ is a stable quasi-isomorphism if
and only if it is a $\cc K$-equivalence.
\end{prop}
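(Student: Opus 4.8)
The plan is to prove the two implications separately, reducing each to statements about the spaces $\cc K(C,?)$ and about the classes $\ff W_{\min}$, $\ff W_{qis}$.

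\emph{Stable quasi-isomorphism $\Rightarrow\cc K$-equivalence.} Given such a $t$, Lemma~\ref{stabquas} provides an $n\geq 0$ with $\Omega^n t\colon\Omega^n A\to\Omega^n B$ a quasi-isomorphism. For every $D\in\Re$ the simplicial functor $\cc K(D,?)\colon\Re\to\bb S$ is homotopy invariant (Lemma~\ref{preexcsi}) and excisive (Excision Theorem~A) and sectionwise fibrant, hence fibrant in $U\Re^{I,J}$, in particular $(I,J)$-local; consequently it sends quasi-isomorphisms to weak equivalences (a quasi-isomorphism $X\to Y$ means $rY\to rX$ is an $(I,J)$-weak equivalence, and the simplicial Yoneda lemma identifies $\Map(rX,\cc K(D,?))=\cc K(D,X)$). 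Thus $\cc K(D,\Omega^n A)\to\cc K(D,\Omega^n B)$ is a weak equivalence for every $D$. Iterating Corollary~\ref{excspm} yields natural homotopy equivalences $\cc K(C,X)\simeq\cc K(J^n C,\Omega^n X)$; taking $D=J^n C$ above and comparing through these natural equivalences shows $\cc K(C,A)\to\cc K(C,B)$ is a weak equivalence for all $C$, i.e. $t$ is a $\cc K$-equivalence.

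\emph{$\cc K$-equivalence $\Rightarrow$ stable quasi-isomorphism.} Factor $t=p\circ i$ with $i\colon A\to A'$ an $I$-weak equivalence and $p\colon A'\to B$ a fibration, as recalled in Section~\ref{compa}. Then $i$ is a quasi-isomorphism, hence a stable quasi-isomorphism, hence a $\cc K$-equivalence by the first part; since $\cc K$-equivalences satisfy $2$-out-of-$3$, $p$ is a $\cc K$-equivalence. Set $F=\ker p$, so that $F\to A'\xrightarrow{p}B$ is an $\ff F$-extension. By Excision Theorem~A the sequence $\cc K(C,F)\to\cc K(C,A')\to\cc K(C,B)$ is a homotopy fibre sequence for every $C$; since its second map is a weak equivalence, $\cc K(C,F)$ is weakly contractible for every $C$, and in particular $\bb K_0(F,F)=0$. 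Now $\bb K_0(F,F)=\cc K_0(F,F)\cong\lp_n[J^n F,F^{\ff S^n}]$ by Comparison Theorem~A, with the identity represented by the maps $1^{n,k}_F=\varsigma^n(1_F)$, so the vanishing forces $1^{N,k}_F\colon J^N F\to F^{\ff S^N}_k$ to be polynomially homotopic to $0$ for some $N,k$. By Lemma~\ref{ochich} $1^{N,k}_F\in\ff W_{\min}$, and since homotopic homomorphisms coincide in $\cc H(\Re)$ the zero homomorphism $J^N F\to F^{\ff S^N}_k$ belongs to $\ff W_{\min}$; hence $J^N F$ is a retract of the zero object in $D^-(\Re,\ff F,\ff W_{\min})$, so $J^N F\cong 0$ there, and also in $D^-(\Re,\ff F,\ff W_{qis})$ as $\ff W_{\min}\subseteq\ff W_{qis}$. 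Because $J^N F\to\Omega^N F$ is a quasi-isomorphism (built from the natural quasi-isomorphisms $\xi_\upsilon\colon JX\to\Omega X$), we obtain $\Omega^N F\cong 0$, hence $F\cong 0$, in $D(\Re,\ff F,\ff W_{qis})$. Feeding the extension $F\to A'\xrightarrow{p}B$ into the triangulated structure of $D(\Re,\ff F,\ff W_{qis})$ gives a distinguished triangle $\Omega B\to F\to A'\to B$; with $F\cong 0$ this forces $p$ to be an isomorphism of $D(\Re,\ff F,\ff W_{qis})$, hence a stable quasi-isomorphism. Therefore $t=p\circ i$ is a stable quasi-isomorphism.

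The delicate point is the last step of the converse. The family $\{\cc K(C,?):C\in\Re\}$ of homotopy invariant excisive functors that tests a $\cc K$-equivalence is not, a priori, rich enough to detect a stable quasi-isomorphism directly (stable quasi-isomorphisms are detected by \emph{all} homotopy invariant excisive functors through the $(I,J)$-model structure), so one cannot simply argue that ``every such functor kills $F$''. Instead the argument isolates the single vanishing $\bb K_0(F,F)=0$ of the fibre and exploits the special identity class together with Lemma~\ref{ochich} and Comparison Theorem~A to produce a representative homotopic to zero, thereby collapsing $F$ in the derived category. One should also note that ``isomorphism in $D(\Re,\ff F,\ff W_{qis})$ induced by a homomorphism'' coincides with ``stable quasi-isomorphism'', which rests on the saturation properties of $\ff W_{qis}$ recorded in~\cite{Gar,Gar1}.
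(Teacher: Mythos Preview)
Your proof is correct and, for the converse, takes a genuinely different route from the paper.

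For the forward implication you argue exactly as the paper does (using that $\cc K(D,?)$ is homotopy invariant and excisive, hence $(I,J)$-local, together with Corollary~\ref{excspm}).

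For the converse the paper proceeds ``multiplicatively'': it uses that $t_*\colon\cc K(B,A)\to\cc K(B,B)$ is a weak equivalence to pick $e\colon J^nB\to A_k^{\ff S^n}$ with $t^{\ff S^n}e\simeq 1^{n,k}_B$, shows by induction on the fibre sequences $B^{\ff S^n}_k\to PB^{\ff S^{n-1}}_k\to B^{\ff S^{n-1}}_k$ that $t^{\ff S^n}$ is itself a $\cc K$-equivalence, hence so is $e$, then uses $e_*$ on $\cc K(J^nA,-)$ to exhibit $e^{\ff S^m}$ (equivalently $J^m(e)$) as a left unit in $D^-(\Re,\ff F,\ff W_{qis})$; together with the right-unit property this makes $J^m(e)$ an isomorphism there, and saturation of $\ff W_{qis}$ then forces $J^{n+m}(t)$ to be a quasi-isomorphism. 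Your argument is instead ``additive'': you replace $t$ by a fibration $p$ with fibre $F$, observe that $\cc K(-,F)$ is contractible, and read off from $\bb K_0(F,F)=0$ via Comparison Theorem~A and Lemma~\ref{ochich} that $1^{N,k}_F\simeq 0$, hence $J^NF\cong 0$ in $D^-(\Re,\ff F,\ff W_{\min})$; the triangle attached to the extension then forces $p$ (after looping) to be invertible in $D^-(\Re,\ff F,\ff W_{qis})$, and you conclude by the same saturation step the paper uses. Your approach is arguably more conceptual---it reduces the question to ``$F\cong 0$'' and only needs the easy (surjective) direction of Comparison Theorem~A---at the cost of an extra factorization and an appeal to the triangulated structure. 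One cosmetic point: rather than passing to the stabilization $D(\Re,\ff F,\ff W_{qis})$, you can stay in $D^-$ by noting that $\Omega^NF\cong 0$ already makes $\Omega^{N+1}p$ an isomorphism in $D^-$ (from the long exact $\Hom$-sequence of the left triangle), after which saturation applies directly.
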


\begin{proof}
Suppose $t:A\to B$ is a stable quasi-isomorphism. Then $\Omega^n(t)$
is a quasi-isomorphism for some $n\geq 0$, and hence a $\cc
K$-equivalence. For any algebra $C\in\Re$ the induced map
   $$\cc K(J^nC,\Omega^nA)\to\cc K(J^nC,\Omega^nB)$$
is a weak equivalence of spaces. By Corollaries~\ref{excwww}
and~\ref{excspm} the map
   $$\Omega^n\cc K(J^nC,A)\to\Omega^n\cc K(J^nC,B)$$
is a weak equivalence, hence so is the map
   $$t_*:\cc K(C,A)\to\cc K(C,B).$$
Thus $t$ is a $\cc K$-equivalence.

Suppose now $t:A\to B$ is a $\cc K$-equivalence. Then the induced
map
   $$\cc K(B,A)\to\cc K(B,B)$$
is a homotopy equivalence of spaces. There are $k,n\geq 0$, a map
$e:J^nB\to A_k^{\ff S^n}$, and a sequence of maps
   $$J^nB\xrightarrow{e}A_k^{\ff S^n}\xrightarrow{t^{\ff S^n}}B_k^{\ff S^n}$$
such that $t^{\ff S^n}e$ is simplicially homotopic to $1^{n,k}_B$.
By the Hauptlemma $t^{\ff S^n}e$ is polynomially homotopic to
$1^{n,k}_B$. By Lemma~\ref{ochich} $1^{n,k}_B$ is a
quasi-isomorphism. It follows that $e$ is a right unit in the
category $D^-(\Re,\ff F,\ff W_{qis})$. For every $m\geq 0$ one has
   \begin{equation}\label{ewqr}
    \varsigma^m(t^{\ff S^n}e)=p\circ J^m(t^{\ff S^n})\circ J^m(e)\simeq 1^{n+m,k}_B,
   \end{equation}
where $p$ is a quasi-isomorphism. By Lemma~\ref{ochich}
$1^{n+m,k}_B$ is a quasi-isomorphism. It follows that $J^m(e)$ is  a
right unit in $D^-(\Re,\ff F,\ff W_{qis})$.

We claim that $t^{\ff S^n}$ is a $\cc K$-equivalence. By assumption
$t^{\ff S^0}=t$ is a $\cc K$-equivalence. Suppose $t^{\ff S^{n-1}}$
is a $\cc K$-equivalence for $n\geq 1$. There is a commutative
diagram of extensions
   $$\xymatrix{A^{\ff S^n}_k\ar[d]_{t^{\ff S^{n}}}\ar[r]&PA^{\ff S^{n-1}}_k\ar[d]\ar[r]&A^{\ff S^{n-1}}_{k}\ar[d]^{t^{\ff S^{n-1}}}\\
               B^{\ff S^n}_k\ar[r]&PB^{\ff S^{n-1}}_{k}\ar[r]&B^{\ff S^{n-1}}_k,}$$
in which the right and the middle arrows are $\cc K$-equivalences by
induction, hence so is the left one. The middle arrow is actually
quasi-isomorphic to zero.

We see that $t^{\ff S^{n}}e$ is a $\cc K$-equivalence. The two out
of three property implies $e$ is a $\cc K$-equivalence. Therefore
the induced map
   $$e_*:\cc K(J^nA,J^nB)\to\cc K(J^nA,A_k^{\ff S^n})$$
is a homotopy equivalence of spaces. Let
$q=e_*^{-1}(1^{n,k}_A):J^{n+m}A\to(J^{n}B)^{\ff S^m}_l$; then
$e^{\ff S^m}q$ is simplicially homotopic to
$\varsigma^m(1^{n,k}_A)$. By the Hauptlemma $e^{\ff S^m}q$ is
polynomially homotopic to $\varsigma^m(1^{n,k}_A)$. It follows from
Lemma~\ref{ochich} that $\varsigma^m(1^{n,k}_A)$ is a
quasi-isomorphism. We see that $e^{\ff S^m}$ is a left unit in
$D^-(\Re,\ff F,\ff W_{qis})$. The proof of Lemma~\ref{stabquas}
shows that $J^m(e)$ is quasi-isomorphic to $e^{\ff S^m}$. Thus
$J^m(e)$ is a left unit in $D^-(\Re,\ff F,\ff W_{qis})$.

By above $J^m(e)$ is also a right unit in $D^-(\Re,\ff F,\ff
W_{qis})$, and so is an isomorphism in $D^-(\Re,\ff F,\ff W_{qis})$.
Since the canonical functor $\Re\to D^-(\Re,\ff F,\ff W_{qis})$ has
the property that a homomorphism of algebras is a quasi-isomorphism
if and only if its image in $D^-(\Re,\ff F,\ff W_{qis})$ is an
isomorphism, we see that $J^m(e)$ is a quasi-isomorphism.

By~\eqref{ewqr} $J^m(t^{\ff S^n})$ is a quasi-isomorphism, because
so are $p$, $1^{n+m,k}_B$ and $J^m(e)$. Since $J$ preserves
quasi-isomorphisms, the proof of Lemma~\ref{stabquas} shows that
there is a commutative diagram
   $$\xymatrix{J^{n+m}A\ar[r]\ar[d]_{J^{n+m}(t)}&J^m(A^{\ff S^n}_k\ar[d]^{J^m(t^{\ff S^n})})\\
               J^{n+m}B\ar[r]&J^m(B^{\ff S^n}_k),}$$
in which the horizontal maps are quasi-isomorphisms. We see that
$J^{n+m}(t)$ is a quasi-isomorphism, because so is $J^m(t^{\ff
S^n})$. So $t$ is a stable quasi-isomorphism by Lemma~\ref{stabquas}
as required.
\end{proof}

The next result is an improvement of Theorem~\ref{mainequiv}. It
will also be useful when proving Comparison Theorem~B.

\begin{thm}\label{mainequivvv}
Suppose $\Re$ is an admissible $T$-closed category of algebras and
$u:A\to B$ is a $\cc K$-equivalence in $\Re$. Then the induced map
   $$u^*:\bb K(B,D)\to\bb K(A,D)$$
is a weak equivalence of spectra for any $D\in\Re$.
\end{thm}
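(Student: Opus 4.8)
The plan is to deduce the statement from Theorem~\ref{mainequiv}, using Proposition~\ref{stabquasnoch} to trade the hypothesis ``$u$ is a $\cc K$-equivalence'' for the more tractable one ``$J^n(u)$ is a quasi-isomorphism for some $n$''.

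First I would reduce to a small $\Re$. Exactly as in the proof of Excision Theorem~B, replace $\Re$ by a small admissible $T$-closed subcategory $\Re'\subseteq\Re$ containing $A$, $B$, $D$, obtained by starting from these algebras and closing under ideals, homomorphic images, fibre products, one-variable polynomial extensions and the functor $T$. Since $\Re'$ is full in $\Re$ and $T$-closed, all the $J^kA$, $J^kB$ lie in $\Re'$; and by the Remark following the definition of $\cc K(\Re)(-,-)$ --- according to which the space depends only on the endofunctor $T$ --- the spaces, hence the spectra $\bb K(?,D)$, computed in $\Re'$ and in $\Re$ coincide. In particular $u$ is still a $\cc K$-equivalence in $\Re'$, so I may assume $\Re$ small and $T$-closed.

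Next, Proposition~\ref{stabquasnoch} gives that $u$ is a stable quasi-isomorphism, whence by Lemma~\ref{stabquas} there is an $n\geq 0$ with $J^n(u)\colon J^nA\to J^nB$ a quasi-isomorphism; as $J$ respects quasi-isomorphisms, $J^k(u)$ is a quasi-isomorphism for every $k\geq n$. Feeding each of these into Theorem~\ref{mainequiv}, the induced map of spaces
   $$\cc K(J^k(u),D)\colon\cc K(J^kB,D)\longrightarrow\cc K(J^kA,D)$$
is a weak equivalence for every $k\geq n$. Now $\bb K(B,D)$ is by definition the $\Omega$-spectrum whose $k$-th space is $\cc K(J^kB,D)$, with structure isomorphisms supplied by Theorem~\ref{loop} (natural in the first variable), and likewise for $A$; the $k$-th component of $u^*\colon\bb K(B,D)\to\bb K(A,D)$ is exactly $\cc K(J^k(u),D)$. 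A map of $\Omega$-spectra which is a weak equivalence on all spaces of index $\geq n$ is a weak equivalence on every space --- apply $\Omega$ to the structure isomorphisms to come down from level $n$ --- and so induces isomorphisms on all stable homotopy groups; hence $u^*$ is a weak equivalence of spectra.

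I do not expect a real obstacle here: the result is a formal assembly of Proposition~\ref{stabquasnoch} (the substantive input, already proved), Lemma~\ref{stabquas}, Theorem~\ref{mainequiv} and Theorem~\ref{loop}. The only points requiring any care are the legitimacy of passing to a small category --- which rests on $\cc K$ being intrinsic to the functor $T$ --- and the routine fact about $\Omega$-spectra used at the end.
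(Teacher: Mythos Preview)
Your argument is correct and follows the same route as the paper: reduce to a small $T$-closed subcategory, invoke Proposition~\ref{stabquasnoch} and Lemma~\ref{stabquas} to get $J^n(u)$ a quasi-isomorphism, then apply Theorem~\ref{mainequiv}. The only cosmetic difference is in the final step: the paper appeals to Corollary~\ref{suspen} (i.e.\ $\bb K(JA,D)\simeq\Sigma\bb K(A,D)$) to deloop, whereas you use the $\Omega$-spectrum isomorphisms of Theorem~\ref{loop} directly to propagate the level-$\geq n$ equivalences down; both amount to the same thing.
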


\begin{proof}
Similar to the proof of Excision Theorem~B one can construct a small
admissible $T$-closed full subcategory of algebras $\Re'$ such that
it contains $A,B,D$. By assumption $u$ is a $\cc K$-equivalence in
$\Re'$, hence $J^n(u)$ is a quasi-isomorphism of $\Re'$ for some
$n\geq 0$ by the preceding proposition and Lemma~\ref{stabquas}.

By Theorem~\ref{mainequiv} the induced map of spectra
   $$(J^n(u))^*:\bb K(\Re')(J^nB,D)\to\bb K(\Re')(J^nA,D)$$
is a weak equivalence. Corollary~\ref{suspen} now implies the claim.
\end{proof}

\begin{lem}\label{gammastable}
Suppose $\Re$ is an admissible $T$-closed category of algebras. Then
every stable weak equivalence in $\Re$ is a $\cc K$-equivalence.
\end{lem}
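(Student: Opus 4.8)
The plan is to produce, for each fixed algebra $C\in\Re$, a homotopy invariant $\ff F$-excisive homology theory (in the sense of the definition preceding Comparison Theorem~B) out of the assignment $A\mapsto\mathbb{K}(C,A)$, and then simply feed a stable weak equivalence into it. Concretely, I would take as target triangulated category the homotopy category $\Ho(Sp)$ of Bousfield--Friedlander spectra, with loop functor $\Omega=\Sigma^{-1}$, and set $X_C(A):=\mathbb{K}(C,A)$. Homotopy invariance of $X_C$ is nothing but the homotopy invariance of $\mathbb{K}(C,?)$ recorded in Theorem~\ref{spectrumunst}. For the excision axiom I would invoke the same theorem: every $\ff F$-extension $F\to B\to C'$ is carried to a homotopy fibration of spectra $\mathbb{K}(C,F)\to\mathbb{K}(C,B)\to\mathbb{K}(C,C')$, which, spectra being stable, is a distinguished triangle $\Omega X_C(C')\to X_C(F)\to X_C(B)\to X_C(C')$ in $\Ho(Sp)$; this supplies the boundary morphism $\partial_E\in\Ho(Sp)(\Omega X_C(C'),X_C(F))$.

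Next I would verify the naturality axiom (axiom~(2) of an excisive homology theory): for a morphism of $\ff F$-extensions the square comparing $\partial_E$ with $\partial_{E'}$ must commute. I expect this to be the only genuinely delicate point, since a boundary morphism in a triangulated category carries the usual indeterminacy. The remedy is that the homotopy fibration delivered by Excision Theorem~A is itself functorial in the extension: the ind-algebras $P_f(\Omega^n)$, the $k$-linear splittings $\tau$, the maps $\vartheta$, and the homotopy cartesian square $\cc X\to\cc K(C,B)\to\cc K(C,C')$ are all manufactured from the datum $F\to B\to C'$ by natural constructions, and the identification $\cc K(C,F)\simeq\cc X$ is natural as well; hence the connecting maps are compatible with morphisms of extensions and axiom~(2) holds. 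Granting this, $X_C$ is a homotopy invariant $\ff F$-excisive homology theory for every $C\in\Re$.

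Finally I would unwind the definition of $\ff W_{\triangle}$. If $f\colon A\to B$ lies in $\ff W_{\triangle}$ then, by the very definition of $\ff W_{\triangle}$, the map $X_C(f)=\mathbb{K}(C,f)$ is an isomorphism in $\Ho(Sp)$, i.e.\ a stable equivalence of spectra, and this holds for every $C\in\Re$. Since $\mathbb{K}(C,A)$ and $\mathbb{K}(C,B)$ are $\Omega$-spectra by Theorem~\ref{loop}, a stable equivalence between them is a levelwise weak equivalence of pointed simplicial sets (the same reasoning already used in the proof of Theorem~\ref{mainequivvv}); in particular its $0$th level $\cc K(C,f)\colon\cc K(C,A)\to\cc K(C,B)$ is a weak equivalence of spaces. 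As $C$ was arbitrary, $f$ is a $\cc K$-equivalence, which is exactly what had to be shown.
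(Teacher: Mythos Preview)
Your proposal is correct and follows essentially the same route as the paper: exhibit $\mathbb{K}(C,-):\Re\to\Ho(Sp)$ as a homotopy invariant $\ff F$-excisive homology theory via Theorem~\ref{spectrumunst}, and then invoke the definition of $\ff W_{\triangle}$. The paper's proof is terser---it neither spells out axiom~(2) nor the passage from an isomorphism in $\Ho(Sp)$ to a weak equivalence of zeroth spaces---so your added care on those two points is welcome but not a departure in strategy.
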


\begin{proof}
Using Theorem~\ref{spectrumunst} for every $A\in\Re$ the map
   $$\bb K(A,-):\Re\to\Ho(Sp)$$
with $\Ho(Sp)$ the homotopy category of spectra yields an excisive,
homotopy invariant homology theory. Therefore it takes stable weak
equivalence to isomorphisms in $\Ho(Sp)$.
\end{proof}

Given an ind-algebra $(B,J)\in\Re^{\ind}$ and $A\in\Re$, we set
   $$D^-(\Re,\ff F)(A,B)=\lp_{j\in J}D^-(\Re,\ff F)(A,B_j).$$
Using the fact that $J$ respects polynomial homotopy and stable weak
equivalences, we can extend the map $\varsigma:\Hom_{\inda}(A,B^{\ff
S^n})\to\Hom_{\inda}(JA,B^{\ff S^{n+1}})$ to a functor
   $$\sigma:D^-(\Re,\ff F)(A,B^{\ff S^{n}})\to D^-(\Re,\ff F)(JA,B^{\ff S^{n+1}}).$$

The functor $\sigma$ takes a map
   $$\xymatrix{&A'\ar[dl]_s\ar[dr]^f\\
               A&&B^{\ff S^{n}}}$$
in $D^-(\Re,\ff F)(A,B^{\ff S^{n}})$, where $s\in\ff W_\triangle$,
to the map
   $$\xymatrix{&JA'\ar[dl]_{J(s)}\ar[dr]^{\varsigma(f)}\\
               JA&&B^{\ff S^{n+1}}.}$$
Since $J$ respects weak equivalences and homotopy, it follows that
$\sigma$ is well-defined.

The map $\Gamma:\cc K_0(A,B)\to\lp_n[J^nA,B^{\ff S^n}]$ is an
isomorphism by Comparison Theorem~A. There is a natural map
   $$\Gamma_1:\lp_n[J^nA,B^{\ff S^n}]\to\lp_n D^-(\Re,\ff F)(J^nA,B^{\ff S^n}).$$

\begin{lem}\label{gammaone}
$\Gamma_1$ is an isomorphism, functorial in $A$ and $B$.
\end{lem}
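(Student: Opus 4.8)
The plan is to prove surjectivity and injectivity of $\Gamma_1$ separately, and in both cases to reduce first — exactly as in the proof of Excision Theorem~B — to a small $T$-closed admissible category $\Re'$ containing $A$, $B$ and all the auxiliary algebras (the roofs occurring in the calculus of fractions and their resolutions) relevant to the argument at hand. This reduction is harmless because, by Comparison Theorem~A, $\lp_n[J^nA,B^{\ff S^n}]\cong\cc K_0(A,B)$ is intrinsic and the property of a homomorphism being a $\cc K$-equivalence does not depend on the ambient admissible category. On $\Re'$ I can then use Proposition~\ref{stabquasnoch}, Lemma~\ref{stabquas} and the constructions in their proofs, together with the dictionary: a map in $\ff W_\triangle$ is a $\cc K$-equivalence (Lemma~\ref{gammastable}), hence a stable quasi-isomorphism (Proposition~\ref{stabquasnoch}), hence — on a small category, where quasi-isomorphisms and stable weak equivalences agree — an isomorphism of $D^-(\Re',\ff F)$; and, tracing the argument of Proposition~\ref{stabquasnoch}, for $s\colon C\to C'$ in $\ff W_\triangle$ there are $m,l\ge 0$ and a ``homotopy inverse after stabilization'' $e\colon J^mC'\to C^{\ff S^m}_l$ with $s^{\ff S^m}e$ polynomially homotopic to $1^{m,l}_{C'}$, the maps $s^{\ff S^m}$ and $1^{m,l}_{C'}$ being $\cc K$-equivalences (Lemma~\ref{ochich} and the induction in the proof of Proposition~\ref{stabquasnoch}). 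Finally, since the structure maps of the ind-algebra $B^{\ff S^n}$ lie in $\ff W_{\min}\subseteq\ff W_\triangle$ (Lemma~\ref{bquasnoch}), the ind-index collapses in $D^-$, so that $D^-(\Re,\ff F)(J^nA,B^{\ff S^n})=D^-(\Re,\ff F)(J^nA,B^{\cc S^n})$.

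For surjectivity I would represent a given class of $\lp_n D^-(\Re,\ff F)(J^nA,B^{\ff S^n})$ by a roof $J^nA\xleftarrow{s}C\xrightarrow{g}B^{\cc S^n}$ with $s\in\ff W_\triangle$, choose the homotopy inverse $e\colon J^{n+m}A\to C^{\ff S^m}_l$ of $s$ as above, and consider the honest homomorphism $g^{\ff S^m}\circ e\colon J^{n+m}A\to(B^{\cc S^n})^{\ff S^m}_l$; composing with the canonical chain of quasi-isomorphisms identifying $(B^{\cc S^n})^{\ff S^m}_l$ with the tower $B^{\ff S^{n+m}}$ in $D^-$ (the ``$\ff S$ commutes with $J$'' chains implicit in the proof of Lemma~\ref{stabquas}) and using $\Gamma$ to transport, this yields a class $\eta\in\lp_n[J^nA,B^{\ff S^n}]$. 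One then checks — using $s^{\ff S^m}e\simeq 1^{m,l}$, the relation~\eqref{ewqr}, and the functoriality of the classifying map $\xi_\upsilon$ — that $\Gamma_1(\eta)$ is the $m$-fold stabilization $\sigma^m$ of the original roof, hence the original class. For injectivity, if $f_0,f_1\colon J^nA\to B^{\cc S^n}$ become equal in $D^-(\Re,\ff F)$ then, by the calculus of right fractions, $f_0s$ and $f_1s$ are polynomially homotopic for some $s\colon C\to J^nA$ in $\ff W_\triangle$; applying $J^m$ to this homotopy and precomposing with the homotopy inverse $e$ of $s$ (and using the compatibility of $\varsigma$ with $(-)^{\ff S^m}$ recorded in~\eqref{ewqr} together with Lemma~\ref{ochich}) shows that $\varsigma^m(f_0)$ is polynomially homotopic to $\varsigma^m(f_1)$, so that $f_0$ and $f_1$ already coincide in $\lp_n[J^nA,B^{\ff S^n}]$. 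Functoriality of $\Gamma_1$ in $A$ and $B$ follows from the naturality of the roof calculus, of $\xi_\upsilon$, and of the maps $1^{m,k}$.

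The hard part will be the bookkeeping in the surjectivity step together with the ingredients feeding the dictionary: arranging the small-subcategory reduction so that all the $\cc K$-equivalences that arise become quasi-isomorphisms of $\Re'$ simultaneously, confirming that on a small category $\ff W_\triangle$ and $\ff W_{qis}$ coincide (so that quasi-isomorphisms are invertible in $D^-(\Re',\ff F)$), pinning down the canonical comparison between the iterated mapping algebras $(B^{\cc S^n})^{\ff S^m}_l$ and the tower $B^{\ff S^{n+m}}$, and verifying that the homotopy-inverse construction of Proposition~\ref{stabquasnoch} is compatible with the structure maps $\varsigma$ and $\sigma$ — i.e. that the square identifying $\Gamma_1(\eta)$ with the stabilized roof actually commutes. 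Once that compatibility is secured, the remainder is a routine application of the calculus of right fractions and of Comparison Theorem~A.
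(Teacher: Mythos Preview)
Your approach is workable and rests on the same circle of ideas, but you are doing much more work than the paper does. The paper's proof is short because it invokes Theorem~\ref{mainequivvv} as a black box rather than unpacking its ingredients (Proposition~\ref{stabquasnoch}, the explicit homotopy inverse $e$, the small-category reduction) inline.

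Concretely, the paper argues as follows. For injectivity: if $\Gamma_1(f_0)=\Gamma_1(f_1)$ then, after stabilizing and applying Hauptlemma~(3), there is $t\colon A'\to J^nA$ in $\ff W_\triangle$ with $f_0t$ simplicially homotopic to $f_1t$; by Lemma~\ref{gammastable} $t$ is a $\cc K$-equivalence, so by Theorem~\ref{mainequivvv} the map $t^*\colon\cc K_*(J^nA,B)\to\cc K_*(A',B)$ is an isomorphism, and since $t^*$ identifies $f_0$ with $f_1$ they were already equal. For surjectivity: given a roof $J^nA\xleftarrow{s}A'\xrightarrow{f}B^{\ff S^n}$ with $s\in\ff W_\triangle$, again $s^*\colon\cc K_n(J^nA,B)\to\cc K_n(A',B)$ is an isomorphism by Theorem~\ref{mainequivvv}, so $f$ has a preimage; unwinding, there are $m\ge 0$ and $g\colon J^{n+m}A\to B^{\ff S^{n+m}}$ with $\varsigma^m(f)$ simplicially (hence, by the Hauptlemma, polynomially) homotopic to $g\circ J^m(s)$, whence $\Gamma_1(g)=fs^{-1}$.

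By contrast, you extract the homotopy inverse $e$ directly from the proof of Proposition~\ref{stabquasnoch}, which forces you to confront the comparison between the iterated algebras $(B^{\cc S^n})^{\ff S^m}_l$ and the tower $B^{\ff S^{n+m}}$ and to check compatibility of $e$ with $\varsigma$ and $\sigma$ --- precisely the bookkeeping you flag as the ``hard part''. All of that is already absorbed into Theorem~\ref{mainequivvv} (together with its own internal small-category reduction), so the paper never has to face it. Likewise your separate small-category reduction and the identification of $\ff W_\triangle$ with $\ff W_{qis}$ on $\Re'$ become unnecessary once Theorem~\ref{mainequivvv} is cited, since that theorem is already stated for arbitrary $T$-closed $\Re$. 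In short: your plan would succeed, but the cleaner route is to use Theorem~\ref{mainequivvv} directly and let it do the heavy lifting.
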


\begin{proof}
Suppose maps $f_0,f_1:J^nA\to B^{\ff S^n}$ are such that
$\Gamma_1(f_0)=\Gamma_1(f_1)$. Using the Hauptlemma, we may choose
$n$ big enough to find a stable weak equivalence $t:A'\to J^nA$ such
that $f_0t$ is simplicially homotopic to $f_1t$. By
Lemma~\ref{gammastable} $t$ is a $\cc K$-equivalence of $\Re$. By
Theorem~\ref{mainequivvv} the induced map of graded abelian groups
   $$t^*:\cc K_*(J^nA,B)\to\cc K_*(A',B)$$
is an isomorphism. We have that $t^*$ takes $f_0,f_1\in\cc
K_n(J^nA,B)$ to the same element in $\cc K_n(A',B)$, and so
$f_0=f_1$. We see that $\Gamma_1$ is injective.

Consider a map
   $$\xymatrix{&A'\ar[dl]_s\ar[dr]^f\\
               J^nA&&B^{\ff S^{n}}}$$
with $s\in\ff W_\triangle$. By Lemma~\ref{gammastable} $s$ is a $\cc
K$-equivalence of $\Re$. By Theorem~\ref{mainequivvv} the induced
map of abelian groups
   $$s^*:\cc K_n(J^nA,B)\to\cc K_n(A',B)$$
is an isomorphism. Then there are a $m\geq 0$, a morphism
$g:J^{n+m}A\to B^{\ff S^{n+m}}$ such that $\varsigma^m(f)$ is
simplicially homotopic to $g\circ J^m(s):J^mA'\to B^{\ff S^{n+m}}$.
By the Hauptlemma these are polynomially homotopic. It follows that
$\Gamma_1(g)=fs^{-1}$, and so $\Gamma_1$ is also surjective.
\end{proof}

\begin{lem}\label{gammatwo}
The natural map
   $$\Gamma_2:\lp_n D^-(\Re,\ff F)(J^nA,B^{\cc S^n})\to\lp_n D^-(\Re,\ff F)(J^nA,B^{\ff S^n})$$
is an isomorphism, functorial in $A$ and $B$.
\end{lem}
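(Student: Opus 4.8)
The plan is to deduce the isomorphism from Lemma~\ref{bquasnoch}, which says that for each fixed $n$ all structure maps of the ind-algebra $B^{\ff S^n}=(B^{\ff S^n}_0\to B^{\ff S^n}_1\to B^{\ff S^n}_2\to\cdots)$ lie in $\ff W_{\min}$. First I would record that $\ff W_{\min}\subseteq\ff W_{\triangle}$: the class $\ff W_{\triangle}$ contains every inclusion $A\subseteq A[t]$, since excisive homotopy invariant homology theories send it to an isomorphism, and $(\Re,\ff F,\ff W_{\triangle})$ is a Brown category, so $\ff W_{\triangle}$ is one of the classes over which $\ff W_{\min}$ is minimal. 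Because $D^-(\Re,\ff F)=D^-(\Re,\ff F,\ff W_{\triangle})$ is obtained by inverting $\ff W_{\triangle}$, every structure map $B^{\ff S^n}_k\to B^{\ff S^n}_{k+1}$ becomes an isomorphism there; hence post-composition induces a bijection
\[
D^-(\Re,\ff F)(J^nA,B^{\ff S^n}_k)\xrightarrow{\ \cong\ }D^-(\Re,\ff F)(J^nA,B^{\ff S^n}_{k+1})
\]
for every $n$ and $k$.

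Next I would unwind the colimits. By the definition of $D^-(\Re,\ff F)(-,-)$ on ind-algebras,
\[
\lp_n D^-(\Re,\ff F)(J^nA,B^{\ff S^n})=\lp_n\lp_k D^-(\Re,\ff F)(J^nA,B^{\ff S^n}_k),
\]
a colimit of sets over the poset $\bb Z_{\geq 0}\times\bb Z_{\geq 0}$ in which the $k$-transitions are the maps just displayed while the $n$-transitions are induced by $\sigma$, which is defined levelwise in $k$ through the classifying maps $\xi_\upsilon\colon J(B^{\ff S^n}_k)\to B^{\ff S^{n+1}}_k$ of Lemma~\ref{ochich}. For each fixed $n$ the inner colimit $\lp_k D^-(\Re,\ff F)(J^nA,B^{\ff S^n}_k)$ is a sequential colimit of sets whose transition maps are all bijections, so the canonical map out of its initial term is a bijection
\[
D^-(\Re,\ff F)(J^nA,B^{\cc S^n})=D^-(\Re,\ff F)(J^nA,B^{\ff S^n}_0)\xrightarrow{\ \cong\ }\lp_k D^-(\Re,\ff F)(J^nA,B^{\ff S^n}_k).
\]
These bijections are compatible with the $n$-transitions, so passing to $\lp_n$ recovers precisely $\Gamma_2$ and exhibits it as a bijection. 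Functoriality in $A$ and $B$ is inherited from that of the ind-algebras $B^{\ff S^n}$, of the endofunctor $J$, and of the classifying maps $\xi_\upsilon$.

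The only delicate point, and the one I would treat most carefully, is the commutation at the level of $D^-(\Re,\ff F)$ of the ``increase $k$'' maps with the ``increase $n$'' maps $\sigma$. This follows from the naturality of the classifying map of an $\ff F$-extension with respect to morphisms of extensions (Lemma~\ref{class} and Lemma~\ref{classqmn}): the inclusion $B^{\ff S^n}_0\hookrightarrow B^{\ff S^n}_k$ underlies a morphism between the extension of $(\bb Z_{\geq 0}\times\Delta)$-diagrams defining $\xi_\upsilon$ at level $0$ and the one at level $k$, with compatible $k$-linear splittings, so the two composites round the square agree up to elementary homotopy, hence agree in $D^-(\Re,\ff F)$. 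Everything else is a routine manipulation of filtered colimits of sets.
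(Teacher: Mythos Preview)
Your proof is correct and follows essentially the same approach as the paper: both deduce the bijection from Lemma~\ref{bquasnoch} by observing that the structure maps $B^{\ff S^n}_k\to B^{\ff S^n}_{k+1}$ lie in $\ff W_{\min}\subseteq\ff W_{\triangle}$ and hence become isomorphisms in $D^-(\Re,\ff F)$. The paper's proof is a two-line sketch of exactly this, while you have simply spelled out the intermediate steps (the inclusion $\ff W_{\min}\subseteq\ff W_{\triangle}$, the identification of the ind-Hom as a sequential colimit with bijective transitions, and the compatibility of the $k$- and $n$-transitions) that the paper leaves implicit.
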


\begin{proof}
It follows from Lemma~\ref{bquasnoch} that
   $$D^-(\Re,\ff F)(J^nA,B^{\cc S^n})\to D^-(\Re,\ff F)(J^nA,B^{\ff S^n})$$
is bijective for all $n\geq 0$. Therefore $\Gamma_2$ is an
isomorphism.
\end{proof}

Consider a commutative diagram of algebras
   $$\xymatrix{B^{\cc S^n}\ar[r]&PB^{\cc S^{n-1}}\ar[r]&B^{\cc S^{n-1}}\ar@{=}[d]\\
               J(B^{\cc S^{n-1}})\ar[d]_{\rho^{n-1}}\ar[u]^{\xi^{n-1}}\ar[r]&T(B^{\cc S^{n-1}})\ar[d]\ar[u]\ar[r]&B^{\cc S^{n-1}}\ar@{=}[d]\\
               \Omega B^{\cc S^{n-1}}\ar[r]&E(B^{\cc S^{n-1}})\ar[r]&B^{\cc S^{n-1}}\ar@{=}[u].}$$
The middle arrows are stably weak equivalent to zero and
$\rho^{n-1},\xi^{n-1}$ are stable weak equivalences, functorial in
$B$. Since $\Omega$ respects stable weak equivalences, one obtains a
functorial zig-zag of stable weak equivalences of length $2n$
   $$B^{\cc S^n}\xleftarrow{\xi^{n-1}}J(B^{\cc S^{n-1}})\xrightarrow{\rho^{n-1}}\Omega B^{\cc S^{n-1}}
     \xleftarrow{\Omega\xi^{n-2}}\cdots \xleftarrow{\Omega^{n-1}\xi^0}\Omega^{n-1}JB\xrightarrow{\Omega^{n-1}\rho^0}\Omega^nB.$$
The zig-zag yields an isomorphism $\delta^n:B^{\cc S^n}\to\Omega^nB$
in $D^-(\Re,\ff F)$.

Let us define a map
   $$\Gamma_3:\lp_n D^-(\Re,\ff F)(J^nA,B^{\cc S^n})\to\lp_n D^-(\Re,\ff F)(J^nA,\Omega^nB)$$
by taking
   $$\xymatrix{&A'\ar[dl]_s\ar[dr]^f\\
               J^nA&&B^{\cc S^{n}}}$$
to $\delta^nfs^{-1}$. We have to verify that $\Gamma_3$ is
consistent with colimit maps, where a colimit map on the right hand
side $u_n:D^-(\Re,\ff F)(J^nA,\Omega^nB)\to D^-(\Re,\ff
F)(J^{n+1}A,\Omega^{n+1}B)$ takes
   $$\xymatrix{&A'\ar[dl]_s\ar[dr]^f\\
               J^nA&&\Omega^nB}$$
to $\rho^0_{\Omega^nB}J(f)(J(s))^{-1}$. Let $v_n:D^-(\Re,\ff
F)(J^nA,B^{\cc S^n})\to D^-(\Re,\ff F)(J^{n+1}A,B^{\cc S^{n+1}})$ be
a colimit map on the left. So we have to check that
$\Gamma_3(v_n(fs^{-1}))=u_n(\Gamma_3(fs^{-1}))$.

The map $u_n(\Gamma_3(fs^{-1}))$ is a zig-zag
   \begin{gather*}
     J^{n+1}A\xleftarrow{Js}JA'\xrightarrow{Jf}
     JB^{\cc S^n}\xleftarrow{J\xi^{n-1}}J^2(B^{\cc S^{n-1}})\xrightarrow{J\rho^{n-1}}J\Omega B^{\cc S^{n-1}}
     \xleftarrow{J\Omega\xi^{n-2}}\cdots\\ \xleftarrow{J\Omega^{n-1}\xi^0}J\Omega^{n-1}JB\xrightarrow{J\Omega^{n-1}\rho^0}J\Omega^nB
     \xrightarrow{\rho^0_{\Omega^nB}}\Omega^{n+1}B.
   \end{gather*}
The map $\Gamma_3(v_n(fs^{-1}))$ is a zig-zag
   \begin{gather*}
     J^{n+1}A\xleftarrow{Js}JA'\xrightarrow{Jf}
     JB^{\cc S^n}\xrightarrow{\xi^n}B^{\cc S^{n+1}}\xleftarrow{\xi^n}JB^{\cc S^n}
     \xrightarrow{\rho^n}\Omega B^{\cc S^n}\xleftarrow{\Omega\xi^{n-1}}\cdots\\
     \xrightarrow{\Omega^{n-1}\rho^1}\Omega^nB^{\cc S^1}\xleftarrow{\Omega^n\xi^0}\Omega^nJB
     \xrightarrow{\Omega^n\rho^0}\Omega^{n+1}B.
   \end{gather*}
We can cancel two $\xi^n$-s. One has therefore to check that the
zig-zag \scriptsize
   $$JB^{\cc S^n}\xleftarrow{J\xi^{n-1}}J^2(B^{\cc S^{n-1}})\xrightarrow{J\rho^{n-1}}J\Omega B^{\cc S^{n-1}}
     \xleftarrow{J\Omega\xi^{n-2}}\cdots\xleftarrow{J\Omega^{n-1}\xi^0}J\Omega^{n-1}JB\xrightarrow{J\Omega^{n-1}\rho^0}J\Omega^nB
     \xrightarrow{\rho^0_{\Omega^nB}}\Omega^{n+1}B$$
\normalsize equals the zig-zag
   $$JB^{\cc S^n}
     \xrightarrow{\rho^n}\Omega B^{\cc S^n}\xleftarrow{\Omega\xi^{n-1}}\cdots\\
     \xrightarrow{\Omega^{n-1}\rho^1}\Omega^nB^{\cc S^1}\xleftarrow{\Omega^n\xi^0}\Omega^nJB
     \xrightarrow{\Omega^n\rho^0}\Omega^{n+1}B.$$
For this one should use the property that if $g:A\to B$ is a
homomorphism then there is a commutative diagram
   \begin{equation}\label{diagrc}
    \xymatrix{J(A)\ar[d]_{J(g)}\ar[r]^{\rho_A}&\Omega A\ar[d]^{\Omega(g)}\ar[r]&EA\ar[r]\ar[d]&A\ar[d]^g\\
               J(B)\ar[r]^{\rho_B}&\Omega B\ar[r]&EB\ar[r]&B.}
   \end{equation}
So the desired compatibility with colimit maps determines a map of
colimits.

\begin{lem}\label{gammathree}
The map $\Gamma_3$ is an isomorphism, functorial in $A$ and $B$.
\end{lem}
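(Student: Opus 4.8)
The plan is to recognize that $\Gamma_3$ is, level by level, nothing more than post-composition with the isomorphism $\delta^n\colon B^{\cc S^n}\to\Omega^nB$ of $D^-(\Re,\ff F)$ that has just been built from the functorial zig-zag of stable weak equivalences $B^{\cc S^n}\xleftarrow{\xi^{n-1}}J(B^{\cc S^{n-1}})\xrightarrow{\rho^{n-1}}\Omega B^{\cc S^{n-1}}\xleftarrow{}\cdots\xrightarrow{\Omega^{n-1}\rho^0}\Omega^nB$. Since each $\delta^n$ is an isomorphism, the induced map
$$(\delta^n)_*\colon D^-(\Re,\ff F)(J^nA,B^{\cc S^n})\longrightarrow D^-(\Re,\ff F)(J^nA,\Omega^nB)$$
sending a roof $J^nA\xleftarrow{s}A'\xrightarrow{f}B^{\cc S^n}$ with $s\in\ff W_{\triangle}$ to $\delta^n f s^{-1}$ is a bijection for every $n\geq 0$.

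The first step is then to observe that the family $\{(\delta^n)_*\}_{n\geq 0}$ is a morphism of directed systems indexed by $\bb Z_{\geq 0}$, i.e.\ that each $(\delta^n)_*$ intertwines the colimit map $v_n$ on the source with the colimit map $u_n$ on the target. This is precisely the identity $\Gamma_3(v_n(fs^{-1}))=u_n(\Gamma_3(fs^{-1}))$ established in the computation immediately preceding the statement of the lemma, whose only real ingredient is the naturality square~\eqref{diagrc} relating $J$ with $\rho$, applied after cancelling the two copies of $\xi^n$. A levelwise isomorphism of filtered diagrams induces an isomorphism on colimits, so $\Gamma_3=\lp_n(\delta^n)_*$ is a bijection. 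For functoriality I would note that $\rho_A$ and the classifying maps $\xi_\upsilon$ are natural in their algebra arguments, so the zig-zag defining $\delta^n$, and hence $\delta^n$ itself as a morphism of $D^-(\Re,\ff F)$, is functorial in $B$; naturality in $A$ is immediate, since $(\delta^n)_*$ alters only the target of a roof.

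I do not expect a genuine obstacle here. The substantive facts---that $\delta^n$ is an isomorphism in $D^-(\Re,\ff F)$, and that $\Gamma_3$ is compatible with the two kinds of transition maps---have already been secured before the lemma, so the proof itself reduces to the formal remark that a colimit of a coherent system of bijections is a bijection. The one point that still deserves care is checking that the zig-zag description of $v_n$ followed by $\delta^{n+1}$ really coincides, after the obvious cancellations, with $\delta^n$ followed by $u_n$; but this is exactly the explicit manipulation of zig-zags carried out just above, so in writing the proof I would simply invoke it.
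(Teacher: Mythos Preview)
Your proposal is correct and is precisely the argument the paper has in mind: the paper's own proof consists of the single line ``This follows from the fact that all $\delta^n$-s are isomorphisms in $D^-(\Re,\ff F)$,'' and your write-up just unpacks this by noting that $\Gamma_3$ is the colimit of the levelwise bijections $(\delta^n)_*$, compatibility with the transition maps $v_n,u_n$ having already been verified just before the lemma.
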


\begin{proof}
This follows from the fact that all $\delta^n$-s are isomorphisms in
$D^-(\Re,\ff F)$.
\end{proof}

Consider a sequence of stable weak equivalences
 $$J^nA\xrightarrow{\rho}\Omega J^{n-1}A\xrightarrow{\Omega\rho}\Omega^2J^{n-2}A\xrightarrow{\Omega^2\rho}\cdots\xrightarrow{\Omega^{n-1}\rho}\Omega^nA,$$
which is functorial in $A$. Denote its composition by $\gamma_n$.

Let us define a map
   $$\Gamma_4:\lp_n D^-(\Re,\ff F)(J^nA,\Omega^nB)\to\lp_n D^-(\Re,\ff F)(\Omega^nA,\Omega^nB)$$
by taking
   $$\xymatrix{&A'\ar[dl]_s\ar[dr]^f\\
               J^nA&&\Omega^nB}$$
to $fs^{-1}\gamma^{-1}_n$. We have to verify that $\Gamma_4$ is
consistent with colimit maps, where a colimit map on the right hand
side $w_n:D^-(\Re,\ff F)(\Omega^nA,\Omega^nB)\to D^-(\Re,\ff
F)(\Omega^{n+1}A,\Omega^{n+1}B)$ takes
   $$\xymatrix{&A'\ar[dl]_s\ar[dr]^f\\
               \Omega^nA&&\Omega^nB}$$
to $\Omega(f)(\Omega(s))^{-1}$. So we have to check that
$\Gamma_4(u_n(fs^{-1}))=w_n(\Gamma_4(fs^{-1}))$.

The map $\Gamma_4(u_n(fs^{-1}))$ equals the zig-zag from
$\Omega^{n+1}A$ to $\Omega^{n+1}B$
   $$\Omega^{n+1}B\xleftarrow{\rho}J\Omega^nB\xleftarrow{Jf}JA'\xrightarrow{Js} J^{n+1}A\xrightarrow{\rho}\Omega J^nA\xrightarrow
     {\Omega\rho}\Omega^2J^{n-1}A\xrightarrow{\Omega^2\rho}\cdots\xrightarrow{\Omega^n\rho}\Omega^{n+1}A.$$
In turn, the map $w_n(\Gamma_4(fs^{-1}))$ equals the zig-zag from
$\Omega^{n+1}A$ to $\Omega^{n+1}B$
   $$\Omega^{n+1}B\xleftarrow{\Omega f}\Omega A'\xrightarrow{\Omega s}\Omega J^nA\xrightarrow{\Omega\rho}\Omega^2J^{n-1}A\xrightarrow
     {\Omega^2\rho}\Omega^3J^{n-2}A\xrightarrow{\Omega^3\rho}\cdots\xrightarrow{\Omega^{n}\rho}\Omega^{n+1}A.$$
The desired compatibility would be checked if we showed that the
zig-zag
   \begin{equation}\label{twotwo}
    \Omega J^nA\xleftarrow{\rho}J^{n+1}A\xleftarrow{Js}JA'\xrightarrow{Jf}
    J\Omega^nB\xrightarrow{\rho}\Omega^{n+1}B
   \end{equation}
equals the zig-zag
   $$\Omega J^nA\xleftarrow{\Omega s}\Omega A'\xrightarrow{\Omega f}\Omega^{n+1}B.$$
For this we use commutative diagram~\eqref{diagrc} to show that
$\rho_{J^nA}\circ Js=\Omega s\circ\rho_{A'}$ and
$\rho_{\Omega^nB}\circ Jf=\Omega f\circ\rho_{A'}$. We see
that~\eqref{twotwo} equals $\Omega
f\circ\rho_{A'}\circ\rho_{A'}^{-1}\circ(\Omega s)^{-1}=\Omega
f\circ(\Omega s)^{-1}$ in $D^-(\Re,\ff F)$ and the desired
compatibility follows.

\begin{lem}\label{gammafour}
The map $\Gamma_4$ is an isomorphism, functorial in $A$ and $B$.
\end{lem}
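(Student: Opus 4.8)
The plan is to mirror the proof of Lemma~\ref{gammathree}. Everything comes down to the single fact that, for each $n$, the comparison map $\gamma_n\colon J^nA\to\Omega^nA$ is an isomorphism in $D^-(\Re,\ff F)$. This is immediate from the way $\gamma_n$ was built: it is the composite of the maps $\Omega^i\rho$, $0\leq i\leq n-1$, where each $\rho\colon J(-)\to\Omega(-)$ is a stable weak equivalence (as already recorded when the defining zig-zag of $\gamma_n$ was introduced; concretely $\rho_C$ sits in a map of $\ff F$-extensions $(JC\to TC\to C)\to(\Omega C\to EC\to C)$ over $\id_C$ with $TC,EC$ contractible, so any excisive homotopy invariant homology theory sends it to a morphism of distinguished triangles whose $TC$- and $EC$-terms vanish, forcing its value on $\rho_C$ to be invertible), and $\Omega$ preserves $\ff W_{\triangle}$. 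Hence $\gamma_n\in\ff W_{\triangle}$, so it becomes invertible in $D^-(\Re,\ff F)=D^-(\Re,\ff F,\ff W_{\triangle})$, and the family $(\gamma_n)_n$ is natural in $A$.

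First I would note that for each fixed $n$ the assignment $fs^{-1}\mapsto fs^{-1}\gamma_n^{-1}$ is a bijection $D^-(\Re,\ff F)(J^nA,\Omega^nB)\to D^-(\Re,\ff F)(\Omega^nA,\Omega^nB)$ --- it is precomposition with the isomorphism $\gamma_n^{-1}$ --- which is natural in $A$ (the inverses $\gamma_n^{-1}$ again form a natural transformation $\Omega^n\Rightarrow J^n$) and trivially natural in $B$; this is exactly the level-$n$ formula defining $\Gamma_4$ on a right fraction $J^nA\xleftarrow{s}A'\xrightarrow{f}\Omega^nB$ with $s\in\ff W_{\triangle}$. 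So $\Gamma_4$ is well defined and is a bijection on every term of the two directed systems. It then only remains to pass to the colimit over $n$, and the required compatibility of these termwise bijections with the structure maps $u_n$ on the source and $w_n$ on the target is precisely the zig-zag comparison carried out in the paragraphs immediately before the lemma, matching $\Gamma_4(u_n(fs^{-1}))$ with $w_n(\Gamma_4(fs^{-1}))$ via the identities $\rho_{J^nA}\circ Js=\Omega s\circ\rho_{A'}$ and $\rho_{\Omega^nB}\circ Jf=\Omega f\circ\rho_{A'}$ coming from diagram~\eqref{diagrc}. Taking colimits gives the isomorphism $\Gamma_4$, functorial in $A$ and $B$.

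Since $\gamma_n\in\ff W_{\triangle}$ is already in hand and the colimit-compatibility bookkeeping has already been discharged in the text above, there is no substantive obstacle: the proof is the one-line statement that this follows from the fact that all $\gamma_n$ are isomorphisms in $D^-(\Re,\ff F)$, exactly parallel to the proof of Lemma~\ref{gammathree}. The only point requiring any care --- and the one I would single out as the "hard part" if anything --- is confirming that the two zig-zags representing $\Gamma_4(u_n(fs^{-1}))$ and $w_n(\Gamma_4(fs^{-1}))$ genuinely coincide in $D^-(\Re,\ff F)$, and that computation is precisely what was done just before the statement.
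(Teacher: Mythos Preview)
Your proposal is correct and matches the paper's approach exactly: the paper's proof is the single sentence ``This follows from the fact that all $\gamma_n$-s are isomorphisms in $D^-(\Re,\ff F)$,'' and you have simply unpacked that line, noting that each $\gamma_n$ is a composite of stable weak equivalences (hence invertible in $D^-(\Re,\ff F)$), that the termwise maps are therefore bijections, and that the colimit compatibility was already verified in the paragraphs preceding the lemma.
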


\begin{proof}
This follows from the fact that all $\gamma_n$-s are isomorphisms in
$D^-(\Re,\ff F)$.
\end{proof}

\renewcommand{\proofname}{Proof of Comparison Theorem~B}
\begin{proof}
Using Comparison Theorem~A, Lemmas~\ref{gammaone}, \ref{gammatwo},
\ref{gammathree}, \ref{gammafour}, the isomorphism of abelian groups
   $$\bb K_0(A,B)\cong D(\Re,\ff F)(A,B)$$
is defined as $\Gamma_4\Gamma_3\Gamma_2^{-1}\Gamma_1$. Using
Corollary~\ref{comarcor}, we get that
   $$\bb K_{n>0}(A,B)\cong D(\Re,\ff F)(A,\Omega^{n>0}B)$$
and
   $$\bb K_{n<0}(A,B)\cong D(\Re,\ff F)(J^{-n}A,B).$$
It remains to observe that $D(\Re,\ff F)(J^{-n}A,B)\cong D(\Re,\ff
F)(A,\Omega^{n}B)$ for all negative $n$.
\end{proof}

\renewcommand{\proofname}{Proof}

\begin{cor}\label{corc}
Let $\Re$ be $T$-closed. Then the classes of stable weak
equivalences and $\cc K$-equivalences coincide.
\end{cor}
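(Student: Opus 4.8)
The plan is to establish the two inclusions separately. One of them is already in hand: Lemma~\ref{gammastable} shows that every stable weak equivalence is a $\cc K$-equivalence, so all the work lies in the converse. My strategy for that converse is to route everything through the triangulated category $D(\Re,\ff F)$: given a $\cc K$-equivalence $u\colon A\to B$ I would first show that $j(u)$ is an isomorphism in $D(\Re,\ff F)$, and then invoke the universal property of $j$ to deduce that $u$ is inverted by \emph{every} $\ff F$-excisive homotopy invariant homology theory, i.e.\ that $u\in\ff W_{\triangle}$.

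For the first step, start from Theorem~\ref{mainequivvv}: a $\cc K$-equivalence $u\colon A\to B$ induces a weak equivalence of spectra $u^*\colon\bb K(B,D)\to\bb K(A,D)$ for every $D\in\Re$. Passing to homotopy groups and feeding this into Comparison Theorem~B, which is functorial in the first variable, I obtain that $u^*\colon D(\Re,\ff F)(B,\Omega^nD)\to D(\Re,\ff F)(A,\Omega^nD)$ is an isomorphism for every $n\in\bb Z$ and every $D\in\Re$. Now the objects of $D(\Re,\ff F)$ are exactly the pairs $(D,m)$ with $D$ an object of $\Re$ and $m\in\bb Z$, and under the loop automorphism $\Omega(D,m)=(D,m-1)$ one has $(D,m)\cong\Omega^{-m}j(D)$; moreover, via the loop extensions $\Omega D\rightarrowtail ED\twoheadrightarrow D$ with $ED$ contractible, the algebraic $\Omega$ agrees with the categorical loop functor, so $D(\Re,\ff F)(j(A),(D,m))$ is precisely the summand $D(\Re,\ff F)(A,\Omega^{-m}D)$ of $\bb K_*(A,D)$. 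Hence the isomorphisms above say exactly that $j(u)$ induces an isomorphism on every representable functor $D(\Re,\ff F)(-,Z)$, $Z\in D(\Re,\ff F)$. Since $D(\Re,\ff F)$ has small Hom-sets (Corollary~\ref{small}), Yoneda's lemma then forces $j(u)\colon j(A)\to j(B)$ to be an isomorphism in $D(\Re,\ff F)$.

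The second step is short. By \cite{Gar1} the functor $j\colon\Re\to D(\Re,\ff F)$ is the universal $\ff F$-excisive homotopy invariant homology theory, so any $\ff F$-excisive homotopy invariant homology theory $X\colon\Re\to\cc T$ factors as $X=\bar X\circ j$ for a triangulated functor $\bar X\colon D(\Re,\ff F)\to\cc T$. Applying $\bar X$ to the isomorphism $j(u)$ yields that $X(u)$ is an isomorphism; since $X$ was arbitrary, $u\in\ff W_{\triangle}$. Combined with Lemma~\ref{gammastable}, this gives the asserted equality of the two classes.

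I expect the only delicate point to be the book-keeping in the middle step: one must check that letting $n$ run over all of $\bb Z$ in Comparison Theorem~B really is enough to test isomorphisms against \emph{all} objects of the stabilized category $D(\Re,\ff F)$, not merely the ``positive'' part --- that is, that every object of $D(\Re,\ff F)$ is an (iterated, possibly negative) loop of some $j(D)$ with $D\in\Re$, and that these loops are computed on one side by the algebraic loop extension and on the other by the categorical $\Omega$. All of this is immediate from the explicit description of $D(\Re,\ff F)$ recalled at the start of Section~\ref{compb}, so it is a matter of care rather than a genuine obstacle; the substantive inputs --- Theorem~\ref{mainequivvv}, Comparison Theorem~B, and the universality of $j$ --- are all available.
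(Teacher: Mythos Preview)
Your argument is correct and follows essentially the same route the paper intends: one inclusion is Lemma~\ref{gammastable}; for the other you pass through Comparison Theorem~B to see that $j(u)$ induces isomorphisms on all Hom-functors in $D(\Re,\ff F)$, conclude by Yoneda that $j(u)$ is an isomorphism, and then invoke the universality of $j$ from~\cite{Gar1}. The only unnecessary step is your detour through Theorem~\ref{mainequivvv}: since a $\cc K$-equivalence $u\colon A\to B$ by definition makes $u_*\colon\cc K(C,A)\to\cc K(C,B)$ (and hence $u_*\colon\bb K(C,A)\to\bb K(C,B)$, level by level) a weak equivalence for every $C$, Comparison Theorem~B already gives $u_*\colon D(\Re,\ff F)(Z,jA)\to D(\Re,\ff F)(Z,jB)$ an isomorphism for all $Z$, and covariant Yoneda finishes --- no need to pass to the contravariant side first.
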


\begin{cor}\label{corcompar}
Let $\Re'$ be a full admissible $T$-closed subcategory of an
admissible $T$-closed category of algebras $\Re$. Then the natural
functor
   $$D(\Re',\ff F)\to D(\Re,\ff F)$$
is full and faithful.
\end{cor}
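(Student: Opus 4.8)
The plan is to build the functor from a universal property and then identify it, on morphism sets, with the isomorphism supplied by Comparison Theorem~B. First, the composite $\Re'\hookrightarrow\Re\xrightarrow{\,j\,}D(\Re,\ff F)$ is homotopy invariant and carries every $\ff F$-extension of $\Re'$ — which is an $\ff F$-extension of $\Re$, the $k$-linear splitting being a condition on underlying modules — to a distinguished triangle; endowed with the connecting maps restricted from $j$ it is an excisive, homotopy invariant homology theory on $\Re'$, so by the universal property of $j'\colon\Re'\to D(\Re',\ff F)$ from~\cite{Gar1} it factors uniquely as $\Phi\circ j'$ for a triangulated functor $\Phi\colon D(\Re',\ff F)\to D(\Re,\ff F)$; this is the natural functor of the statement, and it is the identity on objects, every object of $D(\Re',\ff F)$ being a pair $(A,m)$ with $A\in\Re'$. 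Since $\Re'$ is admissible it is closed under the loop–algebra construction ($\Omega B=\kr(EB\to B)$ is an ideal of the ideal $EB\subset B[x]$), so all the algebras $\Omega^{j}A,\Omega^{j}B$ entering $D(\Re',\ff F)[(A,m),(B,n)]=\lp_{k}D^-(\Re',\ff F)(\Omega^{k-m}A,\Omega^{k-n}B)$ lie in $\Re'$, and $\Phi$ commutes with the loop automorphism of the $D$'s. Hence fullness and faithfulness of $\Phi$ reduce to the assertion that $\Phi$ induces an isomorphism of $\bb Z$-graded abelian groups $D(\Re',\ff F)_*(A,B)\to D(\Re,\ff F)_*(A,B)$ for all $A,B\in\Re'$.

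Next I would compare the two Kasparov spectra. Because $\Re'$ is a full $T$-closed subcategory, the endofunctor $T$ (hence $J$) restricts to $\Re'$ and all the auxiliary algebras $B^{\sd^{m}\Delta^{\ell}}$, $J^{n}A$, $B^{\ff S^{n}}$, etc.\ already lie in $\Re'$ by admissibility; by the Remark following the definition of $\cc K(\Re)(A,B)$ this yields $\cc K(\Re')(A,B)=\cc K(\Re)(A,B)$ for $A,B\in\Re'$, whence $\bb K(\Re')(A,B)=\bb K(\Re)(A,B)$ and $\bb K_*(\Re')(A,B)=\bb K_*(\Re)(A,B)$. Applying Comparison Theorem~B over $\Re'$ and over $\Re$ therefore gives a chain of isomorphisms
$$D(\Re',\ff F)_*(A,B)\;\xrightarrow{\ \cong\ }\;\bb K_*(\Re')(A,B)\;=\;\bb K_*(\Re)(A,B)\;\xrightarrow{\ \cong\ }\;D(\Re,\ff F)_*(A,B),$$
functorial in $A$ and $B$; it remains to check that this composite is $\Phi_*$.

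The bulk of the work, and what I expect to be the main obstacle, is precisely this last identification. For it I would unwind the construction of the Comparison~B isomorphism — the chain $\Gamma_4\Gamma_3\Gamma_2^{-1}\Gamma_1$ preceded by the map $\Gamma$ of Comparison Theorem~A — and verify that every factor is natural with respect to the inclusion $\Re'\hookrightarrow\Re$. The map $\Gamma$ sees only algebra homomorphisms and polynomial homotopy classes, which are ambient–independent; $\Gamma_1$ is the canonical map $[X,Y]=\cc H\Re'(X,Y)\to D^-(\Re',\ff F)(X,Y)$ into the localization, and the square relating it to its $\Re$-analogue commutes because $\Phi\circ j'=j$ and $\cc H\Re'\hookrightarrow\cc H\Re$ is a full embedding; and $\Gamma_2,\Gamma_3,\Gamma_4$ are induced by the functorial zig-zags of stable weak equivalences $B^{\ff S^{n}}\to B^{\cc S^{n}}$, $B^{\cc S^{n}}\to\Omega^{n}B$ and $J^{n}A\to\Omega^{n}A$, each homomorphism of which is the classifying map, resp.\ the canonical map $\rho$, of an $\ff F$-extension whose middle term is contractible (a tensor algebra, a path algebra, or an $\ff S$-path space $P(-)$) and hence a stable weak equivalence \emph{both} in $\Re'$ and in $\Re$ — so the induced isomorphisms $\delta^{n},\gamma_{n}$ already exist in $D^-(\Re',\ff F)$ and $\Phi$ carries them to their $\Re$-counterparts. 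Pushing these $D^-$-level compatibilities through the Heller stabilization that produces the $D$'s then identifies $\Phi_*$ with the displayed composite, and the reduction of the first paragraph completes the proof. (If a more self-contained route is preferred, one can instead use Corollary~\ref{corc} and Proposition~\ref{stabquasnoch} to see that a homomorphism of $\Re'$ is a stable weak equivalence in $\Re'$ precisely when it is one in $\Re$, so that the two derived categories are localizations along literally matching classes of arrows; the delicate point there is the passage from $\cc K$-equivalences of $\Re'$ to $\cc K$-equivalences of $\Re$, which enlarges the test objects.)
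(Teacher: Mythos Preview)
Your approach is essentially the same as the paper's: use Comparison Theorem~B on both sides together with the observation (the Remark after the definition of $\cc K(\Re)(A,B)$) that $\bb K(\Re')(A,B)=\bb K(\Re)(A,B)$ for $A,B\in\Re'$. The paper's proof is the one-liner ``This follows from Comparison Theorem~B''; you have supplied the details the paper leaves implicit, in particular the verification that the composite of the two Comparison~B isomorphisms coincides with the map induced by the natural functor $\Phi$.
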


\begin{proof}
This follows from Comparison Theorem B.
\end{proof}

We want to introduce the class of unstable weak equivalences on
$\Re$. Recall that $\ff W_{\min}$ is the minimal class of weak
equivalences containing the homomorphisms $A\to A[t]$, $A\in\Re$,
such that the triple $(\Re,\ff F,\ff W_{\min})$ is a Brown category.
We do not know whether the canonical functor $\Re\to D^-(\Re,\ff
F,\ff W_{\min})$ has the property that $f\in\ff W_{\min}$ if and
only if its image in $D^-(\Re,\ff F,\ff W_{\min})$ is an
isomorphism. For this reason we give the following

\begin{defs}{\rm
Let $\Re$ be $T$-closed. A homomorphism of algebras $f:A\to B$,
$A,B\in\Re$, is called an {\it unstable weak equivalence\/} if its
image in $D^-(\Re,\ff F,\ff W_{\min})$ is an isomorphism. The class
of unstable weak equivalences will be denoted by $\ff W_{unst}$.

}\end{defs}

By construction, $D^-(\Re,\ff F,\ff W_{\min})=\Re[\ff
W_{\min}^{-1}]$ and $\ff W_{\min}\subseteq\ff W_{unst}$. Using
universal properties of localization, one obtains that $D^-(\Re,\ff
F,\ff W_{\min})=\Re[\ff W_{unst}^{-1}]$. We can now apply results of
the section to prove the following

\begin{thm}\label{smalltria}
Let $\Re$ be $T$-closed. Then
   \begin{equation}\label{zvezda}
    \ff W_{\triangle}=\{f\in\Mor(\Re)\mid\Omega^n(f)\in\ff W_{unst}\textrm{ for some $n\geq 0$}\}.
   \end{equation}
\end{thm}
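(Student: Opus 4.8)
The plan is to identify both sides of~\eqref{zvezda} with the single class $\ff W_{qis}$ of stable quasi-isomorphisms. First I would invoke Corollary~\ref{corc} to replace $\ff W_{\triangle}$ by the class of $\cc K$-equivalences, and then Proposition~\ref{stabquasnoch} together with Lemma~\ref{stabquas} to see that a homomorphism is a $\cc K$-equivalence \ifff it is a stable quasi-isomorphism, i.e. \ifff $\Omega^nf$ (equivalently $J^nf$) is a quasi-isomorphism for some $n\geq 0$. As in the proofs of Excision Theorem~B and Comparison Theorem~B, one first passes here to a small $T$-closed admissible subcategory containing $A,B$ together with the algebras $J^nA,J^nB,B^{\sd^mI^n}$ built from them; this is harmless for $\ff W_{\triangle}$ by Corollary~\ref{corcompar}, and for $\ff W_{unst}$ and $\ff W_{qis}$ by a direct inspection of their construction. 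This reduces the theorem to the statement $\ff W_{qis}=\{f\mid\Omega^nf\in\ff W_{unst}\textrm{ for some }n\geq 0\}$.

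For the inclusion ``$\supseteq$'' I would argue formally. Since $(\Re,\ff F,\ff W_{qis})$ is a Brown category containing the elementary homotopies $A\to A[t]$, minimality gives $\ff W_{\min}\subseteq\ff W_{qis}$, hence a localization functor $D^-(\Re,\ff F,\ff W_{\min})\to D^-(\Re,\ff F,\ff W_{qis})$. If $\Omega^nf\in\ff W_{unst}$, then by definition the image of $\Omega^nf$ in $D^-(\Re,\ff F,\ff W_{\min})$ is an isomorphism, hence so is its image in $D^-(\Re,\ff F,\ff W_{qis})$; by the reflection property recorded in the proof of Proposition~\ref{stabquasnoch} this forces $\Omega^nf$ to be a quasi-isomorphism, so $\Omega^nf\in\ff W_{qis}$ and therefore $f\in\ff W_{qis}=\ff W_{\triangle}$.

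For the inclusion ``$\subseteq$'' it is enough to show that every quasi-isomorphism $g$ satisfies $\Omega^mg\in\ff W_{unst}$ for some $m\geq 0$; applying this to $g=\Omega^Nf$ for an $f\in\ff W_{qis}$ with $\Omega^Nf$ an honest quasi-isomorphism then yields $\Omega^{m+N}f\in\ff W_{unst}$. A quasi-isomorphism $g$ is in particular a stable quasi-isomorphism, so $g\in\ff W_{qis}=\ff W_{\triangle}$ by the first paragraph, hence $g$ is inverted by every homotopy invariant excisive homology theory on $\Re$ with triangulated target. Now by~\cite{Gar1} the left triangulated structure on $D^-(\Re,\ff F,\ff W_{\min})$, whose left triangles are exactly the $\ff F$-extensions, together with the Heller stabilization $S\colon D^-(\Re,\ff F,\ff W_{\min})\to D(\Re,\ff F,\ff W_{\min})$, exhibits the canonical functor $\Re\to D(\Re,\ff F,\ff W_{\min})$ as such a homology theory. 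Therefore the image of $g$ in $D(\Re,\ff F,\ff W_{\min})$ is an isomorphism. Unwinding the colimit formula $D(\Re,\ff F,\ff W_{\min})[(A,0),(B,0)]=\lp_{k\geq 0}D^-(\Re,\ff F,\ff W_{\min})(\Omega^kA,\Omega^kB)$ and the analogous one for the opposite Hom-set, an inverse of the image of $g$ is represented at some finite level and both composites become identities at some finite level, which says precisely that $\Omega^mg$ is an isomorphism in $D^-(\Re,\ff F,\ff W_{\min})$ for some $m\geq 0$, i.e. $\Omega^mg\in\ff W_{unst}$, as required.

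The hard part, I expect, will be the input from~\cite{Gar1} used in the last step: that $\ff F$-extensions already become left triangles in the minimal left derived category $D^-(\Re,\ff F,\ff W_{\min})$, so that the canonical functor into its Heller stabilization is a genuine homotopy invariant excisive homology theory. Once this is in hand, the rest is formal bookkeeping with Brown categories, Heller stabilization and the passage to a small $T$-closed subcategory, all parallel to arguments already made in Section~\ref{compb} and in~\cite{Gar,Gar1}.
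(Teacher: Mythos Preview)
Your overall strategy is sound, but the detour through $\ff W_{qis}$ and the passage to a small subcategory are both unnecessary and not fully justified. The assertion that membership in $\ff W_{unst}$ is unchanged under restriction to a small $T$-closed $\Re'$ does not follow ``by direct inspection'': $\ff W_{\min}$ is defined by a minimality condition relative to the ambient category, and there is no a priori reason why $\ff W_{\min}(\Re')$ should coincide with $\ff W_{\min}(\Re)\cap\Mor(\Re')$, nor why $D^-(\Re',\ff F,\ff W_{\min})\to D^-(\Re,\ff F,\ff W_{\min})$ should be fully faithful. Fortunately your actual arguments do not need this. For ``$\supseteq$'' you can bypass $\ff W_{qis}$ entirely: if $\Omega^nf$ is an isomorphism in $D^-(\Re,\ff F,\ff W_{\min})$ then, since $\ff W_{\min}\subseteq\ff W_\triangle$, it is an isomorphism in $D(\Re,\ff F)$, whence $f\in\ff W_\triangle$. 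Your ``$\subseteq$'' argument---that $\Re\to D(\Re,\ff F,\ff W_{\min})$ is an excisive homotopy invariant homology theory by~\cite{Gar,Gar1}, so $f\in\ff W_\triangle$ becomes invertible there, and unwinding the Heller colimit produces an $m$ with $\Omega^mf$ invertible in $D^-(\Re,\ff F,\ff W_{\min})$---is correct and works directly in the given $\Re$, no smallness needed.

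This last step is where you genuinely diverge from the paper. The paper does not invoke universality or Heller stabilization; instead it observes that the proof of Proposition~\ref{stabquasnoch} can be rerun verbatim with $\ff W_{unst}$ in place of $\ff W_{qis}$, because every map used there (via the Hauptlemma and Lemmas~\ref{bquasnoch}, \ref{ochich}) already lies in $\ff W_{\min}\subseteq\ff W_{unst}$. Your route is shorter and more conceptual, at the price of importing the excision property of $D(\Re,\ff F,\ff W_{\min})$ from~\cite{Gar1}; the paper's route is self-contained but asks the reader to replay a page of argument. For ``$\supseteq$'' the paper argues directly that $\ff W_{\min}$-maps (hence $\ff W_{unst}$-maps) are $\cc K$-equivalences, by minimality of $\ff W_{\min}$ applied to the functor $\bb K(A,-)$, and then invokes Corollary~\ref{corc}; this is your argument with the $\ff W_{qis}$ middleman removed.
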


\begin{proof}
By minimality of $\ff W_{\min}$ the functor
   $$\bb K(A,-):\Re\to Spectra$$
takes the maps from $\ff W_{\min}$ (hence the maps from $\ff
W_{unst}$) to weak equivalences for all $A\in\Re$.
Corollary~\ref{corc} implies the right hand side of~\eqref{zvezda}
is contained in $\ff W_{\triangle}$.

The proof of Proposition~\ref{stabquasnoch} can literally be
repeated for $\ff W_{unst}$ to show that for every stable weak
equivalence $f$ there is $n\geq 0$ such that $\Omega^n(f)$ is in
$\ff W_{unst}$.
\end{proof}

We can now make a table showing similarity of spaces and non-unital
algebras. It is a sort of a dictionary for both categories.
Precisely, one has:

\vskip12pt
   \begin{center}
   \begin{tabular}{|r|l|}
    \hline
    {\bf Spaces} Top & {\bf Algebras} $\Re$\\
    \hline
    Fibrations & Fibrations $\ff F$\\
    Loop spaces $\Omega X$ & Algebras $\Omega A=(x^2-x)A[x]$\\
    Homotopies $X\to Y^I$ & Polynomial homotopies $A\to
    B[x]$\\
    Unstable weak equivalences & Unstable weak equivalences $\ff W_{unst}$\\
    Unstable homotopy category & The category $D^-(\Re,\ff F,\ff
    W_{unst})$\\
    Stable weak equivalences & Stable weak equivalences $\ff W_{\triangle}$\\
    Stable homotopy category of spectra & The category $D(\Re,\ff F)$\\
    \hline
   \end{tabular}
   \end{center}
\vskip12pt

To conclude the section, we should mention that Comparison Theorem~B
implies representability of the Hom-set $D(\Re,\ff F)(A,B)$,
$A,B\in\Re$, by the spectrum $\bb K(A,B)$. By~\cite{Gar1} the
natural functor $j:\Re\to D(\Re,\ff F)$ is the universal excisive,
homotopy invariant homology theory in the sense that any other such
a theory $X:\Re\to\cc T$ uniquely factors through $j$.

\section{Morita stable and stable bivariant
$K$-theories}\label{moritast}

In this section we introduce matrices into the game. We start with
preparations.

If $A$ is an algebra and $n \leq m$ are positive integers, then
there is a natural inclusion $\iota_{n,m}: M_nA\to M_mA$ of rings,
sending $M_nA$ into the upper left corner of $M_mA$. We write
$M_\infty A=\cup_n M_nA$. Let $\Gamma A$, $A\in\ahaw$, be the
algebra of $\bb N\times\bb N$-matrices which satisfy the following
two properties.
\begin{itemize}
\item[(i)] The set $\{a_{ij}\mid i,j \in \bb N\}$ is finite.
\item[(ii)] There exists a natural number $N \in \bb N$ such that each row and each column has at most
$N$ nonzero entries.
\end{itemize}
$M_\infty A\subset\Gamma A$ is an ideal. We put
   $$\Sigma A=\Gamma A/M_\infty A.$$
We note that $\Gamma A$, $\Sigma A$ are the cone and suspension
rings of $A$ considered by Karoubi and Villamayor in \cite[p.
269]{KV}, where a different but equivalent definition is given.
By~\cite{CT} there are natural ring isomorphisms
   $$\Gamma A\cong\Gamma k\otimes A,\quad \Sigma A\cong\Sigma k\otimes A.$$
We call the short exact sequence
   $$M_\infty A\rightarrowtail\Gamma A\twoheadrightarrow\Sigma A$$
the {\it cone extension}. By~\cite{CT} $\Gamma
A\twoheadrightarrow\Sigma A\in\ff F_{\spl}$.

Throughout this section we assume that $\Re$ is a $T$-closed
admissible category of $k$-algebras with $k,M_nA,\Gamma A\in\Re$,
$n\geq 1$, for all $A\in\Re$. Then $M_\infty A,\Sigma A\in\Re$ for
any $A\in\Re$ and $M_\infty(f)\in\ff F$ for any $f\in\ff F$. Note
that $M_\infty A\cong A\otimes M_\infty(k)\in\Re$ for any $A\in\Re$.
It follows from Proposition~\ref{power=tensor} that for any finite
simplicial set $L$, there are natural isomorphisms
   $$M_\infty A\otimes k^L\cong (M_\infty A)^L\cong A\otimes (M_\infty k)^L.$$

Given an algebra $A$, one has a natural homomorphism $\iota:A\to
M_\infty(k)\otimes A\cong M_\infty(A)$ and an infinite  sequence of
maps
   $$A\lra{\iota}M_\infty(k)\otimes A\lra{\iota}M_\infty(k)\otimes M_\infty(k)\otimes A\lra{}\cdots\lra{} M_\infty^{\otimes n}(k)\otimes A\lra{}\cdots$$

\begin{defs}{\rm
(1) The {\it stable algebraic Kasparov $K$-theory space\/} of two
algebras $A,B\in\Re$ is the space
   $$\cc K^{st}(A,B)=\lp_{n}\cc K(A,M_\infty k^{\otimes n}\otimes B).$$
Its homotopy groups will be denoted by $\cc K^{st}_n(A,B)$, $n\geq
0$.

(2) The {\it Morita stable algebraic Kasparov $K$-theory space\/} of
two algebras $A,B\in\Re$ is the space
   $$\cc K^{mor}(A,B)=\lp(\cc K(A,B)\to\cc K(A,M_2k\otimes B)\to\cc K(A,M_3k\otimes B)\to\cdots).$$
Its homotopy groups will be denoted by $\cc K^{mor}_n(A,B)$, $n\geq
0$.

(3) A functor $X:\Re\to\bb S/(Spectra)$ is {\it
$M_\infty$-invariant\/} (respectively {\it Morita invariant\/}) if
$X(A)\to X(M_\infty A)$ (respectively each $X(A)\to X(M_nA),n>0$) is
a weak equivalence.

(4) An excisive, homotopy invariant homology theory $X:\Re\to\cc T$
is {\it $M_\infty$-invariant\/} (respectively {\it Morita
invariant\/}) if $X(A)\to X(M_\infty A)$ (respectively each $X(A)\to
X(M_nA)$, $n>0$) is an isomorphism.

}\end{defs}

\begin{lem}\label{morst}
The functor $\cc K^{st}(A,-)$ (respectively $\cc K^{mor}(A,-)$) is
$M_\infty$-in\-va\-riant (respectively Morita invariant) for all
$A\in\Re$.
\end{lem}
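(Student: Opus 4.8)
The plan is to prove both statements by a cofinality argument: I will show that, up to the canonical isomorphisms permuting tensor factors of matrix algebras, the directed sequence of spaces computing $\cc K^{st}(A,M_\infty B)$ (resp. $\cc K^{mor}(A,M_nB)$) is a cofinal subsequence of the one computing $\cc K^{st}(A,B)$ (resp. $\cc K^{mor}(A,B)$), with matching structure maps, and that under this identification the comparison map is exactly the resulting cofinality map, hence an isomorphism of simplicial sets.

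For the $M_\infty$-case I must show that $\cc K^{st}(A,\iota)\colon\cc K^{st}(A,B)\to\cc K^{st}(A,M_\infty B)$ is a weak equivalence, where $\iota\colon B\to M_\infty B$ is the canonical corner embedding $b\mapsto e_{11}\otimes b$. Since $M_\infty(M_\infty^{\otimes m}(k)\otimes B)\cong M_\infty^{\otimes(m+1)}(k)\otimes B$ naturally, the sequence $\bigl(\cc K(A,M_\infty^{\otimes m}(k)\otimes M_\infty B)\bigr)_m$ is, after the obvious regrouping and the permutation isomorphisms of the tensor powers of $M_\infty(k)$, the sequence $\bigl(\cc K(A,M_\infty^{\otimes m}(k)\otimes B)\bigr)_m$ shifted by one step, with the same structure maps $\cc K(A,\iota)$; and $\cc K^{st}(A,\iota)$ is precisely the tautological ladder map between a sequence and its one–step shift. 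The commutativity of this ladder reduces, level by level, to the elementary observation that inserting the corner idempotent $e_{11}$ into one tensor slot commutes with inserting it into another. Hence the induced map on colimits is the cofinality isomorphism, and in particular $\cc K^{st}(A,\iota)$ is a weak equivalence, so $\cc K^{st}(A,-)$ is $M_\infty$-invariant.

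For Morita invariance one runs the same argument with $M_n(k)$ in place of $M_\infty(k)$. Write $\mathcal X=\bigl(\cc K(A,M_mB)\bigr)_m$ and $\mathcal Y=\bigl(\cc K(A,M_m(M_nB))\bigr)_m=\bigl(\cc K(A,M_{mn}B)\bigr)_m$. Under the standard identification $M_m(M_n)=M_{mn}$ the structure map $\iota_{m,m+1}\otimes\id_{M_n}$ of $\mathcal Y$ is exactly the corner inclusion $\iota_{mn,(m+1)n}$, so $\mathcal Y$ is the subsequence of $\mathcal X$ indexed by the multiples of $n$, with matching transition maps, and $\lp\mathcal Y\cong\lp\mathcal X=\cc K^{mor}(A,B)$. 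The remaining point is to identify the comparison map $\cc K^{mor}(A,\iota)$, whose level‑$m$ component is $\cc K\bigl(A,M_m(\iota_{1,n})\bigr)$, with this cofinality isomorphism; equivalently, that $M_m(\iota_{1,n})$ and the corner inclusion $\iota_{m,mn}\colon M_mB\to M_{mn}B$ induce the same map on $\lp_m\cc K(A,M_mB)$. These two homomorphisms differ by conjugation with a permutation matrix $P\in M_{mn}(k)$; after one further stabilization $M_{mn}B\hookrightarrow M_{2mn}B$ the element $P\oplus P^{-1}$ is a product of elementary matrices (Whitehead lemma), so $\mathrm{Ad}(P\oplus P^{-1})$ is polynomially homotopic to the identity, and by homotopy invariance of $\cc K(A,-)$ (Lemma~\ref{preexcsi}(1)) the two maps agree in the colimit. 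Thus $\cc K^{mor}(A,\iota)$ is a weak equivalence and $\cc K^{mor}(A,-)$ is Morita invariant.

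The main obstacle is the combinatorial bookkeeping needed to pin the ladders down precisely — keeping track of which tensor slot, resp. which matrix block, each new copy of $M_\infty(k)$, resp. $M_n(k)$, occupies, and checking compatibility of the natural regrouping isomorphisms with all structure maps. In the $M_\infty$-case the permutation isomorphisms of $M_\infty(k)^{\otimes\bullet}$ handle this on the nose; in the Morita case the bookkeeping is genuinely non-formal, since one must absorb a permutation discrepancy via the Whitehead lemma together with homotopy invariance after a single extra stabilization.
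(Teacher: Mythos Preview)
The paper's own proof is the single word ``Straightforward'', so there is no detailed argument to compare against; what matters is whether your proposal actually works.

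Your Morita argument is essentially correct. Identifying $M_m(M_nB)\cong M_{mn}B$ turns the target system into the cofinal subsequence $(\cc K(A,M_{mn}B))_m$ of $(\cc K(A,M_mB))_m$, and you correctly observe that the comparison map $M_m(\iota_{1,n})$ differs from the cofinality map $\iota_{m,mn}$ by conjugation with a permutation matrix; the Whitehead trick $P\oplus P^{-1}\in E_{2mn}(k)$ together with homotopy invariance then shows the two agree on all $\pi_*$ in the colimit, so $\cc K^{mor}(A,\iota_{1,n})$ is a weak equivalence.

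The $M_\infty$ case, however, has a genuine gap. You claim that ``the permutation isomorphisms of $M_\infty(k)^{\otimes\bullet}$ handle this on the nose'', but they do not. With the identifications $Y_m=\cc K(A,M_\infty^{\otimes m}k\otimes M_\infty B)=X_{m+1}$ given by associativity, the structure maps of $Y_\bullet$ do coincide with the shifted structure maps $s_{m+1}$ of $X_\bullet$, but the comparison map $c_m=\cc K(A,\id_{M_\infty^{\otimes m}k}\otimes\iota_B)$ inserts $e_{11}$ in the \emph{rightmost} tensor slot, whereas the structure map $s_m$ inserts it in the \emph{leftmost} slot. One checks that no choice of permutation isomorphisms $\phi_m\colon Y_m\to X_{m+1}$ can simultaneously intertwine the structure maps and turn $c_m$ into $s_m$: forcing $\phi_m\circ c_m=s_m$ requires $\phi_m$ to be the cyclic shift, and then $\phi_{m+1}\circ t_m\ne s_{m+1}\circ\phi_m$. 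A commuting ladder $c\colon X_\bullet\to X[1]_\bullet$ by itself does \emph{not} induce an isomorphism on colimits (take $X_m=\bb Z$, $s_m=\id$, $c_m=2\cdot$).

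So the $M_\infty$ case needs exactly the kind of homotopy argument you supplied for the Morita case, not a purely formal one. Concretely, one shows that $c_m$ and $s_m$ become polynomially homotopic after one further stabilization: identify $M_\infty^{\otimes(m+1)}k$ with $M_\infty k$ (any bijection $\bb N^{m+1}\cong\bb N$), so that $c_m$ and $s_m$ become two corner embeddings associated to injections $\bb N^m\hookrightarrow\bb N^{m+1}$, and then use the standard rotation trick (conjugation by an elementary matrix in $M_2$) inside $M_\infty^{\otimes(m+2)}k\otimes B$ to produce the polynomial homotopy. This is the same mechanism as your Whitehead argument, just carried out inside the $M_\infty$-tower; your summary sentence has the dichotomy backwards.
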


\begin{proof}
Straightforward.
\end{proof}

\begin{thm}[Excision]\label{excsionast}
For any algebra $A\in\Re$ and any $\ff F$-extension in $\Re$
   $$F\lra{i}B\lra{f}C$$
the induced sequences of spaces
   $$\mathcal{K}^{\star}(A,F)\lra{}\mathcal{K}^{\star}(A,B)\lra{}\mathcal{K}^{\star}(A,C)$$
and
   $$\mathcal{K}^{\star}(C,A)\lra{}\mathcal{K}^{\star}(B,A)\lra{}\mathcal{K}^{\star}(F,A)$$
are homotopy fibre sequences, where $\star\in\{st,mor\}$.
\end{thm}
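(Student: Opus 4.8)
The plan is to deduce the Morita stable and stable excision statements from the already‑proved Excision Theorems~A and~B for the unstable space $\cc K(A,B)$, by exploiting that the stabilised spaces are sequential colimits along transition maps coming from functors that preserve $\ff F$‑extensions, and that such colimits preserve homotopy fibre sequences.

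Two elementary observations carry the argument. First, for each $n\geq 0$ the $k$‑linear functor $M_\infty k^{\otimes n}\otimes_k(-)$ on $\Re$ (and likewise $M_n k\otimes_k(-)\cong M_n(-)$ for $n\geq 1$) sends an $\ff F$‑extension $F\rightarrowtail B\twoheadrightarrow C$ to an $\ff F$‑extension: tensoring over $k$ with a fixed $k$‑module is additive, hence preserves $k$‑linearly split short exact sequences, and the algebras $M_\infty k^{\otimes n}\otimes F\cong M_\infty^{\otimes n}F$, etc., lie in $\Re$ by the standing hypotheses of Section~\ref{moritast} (iterate $A\mapsto M_\infty A$, the latter being an ideal of $\Gamma A\in\Re$). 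These functors come equipped with the natural transformations $\iota$ of that section ($M_\infty^{\otimes n}\to M_\infty^{\otimes(n+1)}$, resp.\ $M_n\to M_{n+1}$), so the homotopy fibre sequences produced below for varying $n$ assemble into a sequential diagram. Second, a sequential colimit of Kan simplicial sets is again Kan (horns are finite, hence compact), such colimits preserve weak equivalences and commute with finite limits and with the cotensor $(-)^{\Delta^1}$; consequently, for a map $\lp_n Y_n\to\lp_n Z_n$ with every $Z_n$ fibrant one has $\mathrm{hofib}(\lp_n Y_n\to\lp_n Z_n)\simeq\lp_n\mathrm{hofib}(Y_n\to Z_n)$, and therefore a sequential colimit of homotopy fibre sequences of fibrant spaces is a homotopy fibre sequence.

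Granting this, the proof is short. Fix the $\ff F$‑extension $F\rightarrowtail B\twoheadrightarrow C$ and $A\in\Re$. For $\star=st$ and the first sequence: Excision Theorem~A applied to the $\ff F$‑extension $M_\infty k^{\otimes n}\otimes F\rightarrowtail M_\infty k^{\otimes n}\otimes B\twoheadrightarrow M_\infty k^{\otimes n}\otimes C$ gives, for each $n$, a homotopy fibre sequence $\cc K(A,M_\infty k^{\otimes n}\otimes F)\to\cc K(A,M_\infty k^{\otimes n}\otimes B)\to\cc K(A,M_\infty k^{\otimes n}\otimes C)$ of fibrant spaces (fibrancy via Proposition~\ref{exinficor} and the colimit observation); these are compatible with the transition maps $\iota_\ast$ because $\cc K$ is functorial in the second variable, so passing to the colimit over $n$ yields $\cc K^{st}(A,F)\to\cc K^{st}(A,B)\to\cc K^{st}(A,C)$. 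For the second sequence one uses Excision Theorem~B in place of Excision Theorem~A, stabilising the remaining variable: with $D:=M_\infty k^{\otimes n}\otimes A$ one gets $\cc K(C,D)\to\cc K(B,D)\to\cc K(F,D)$ and takes the colimit over $n$. The case $\star=mor$ is identical, using the system $M_n(-)$, $n\geq 1$, in place of $M_\infty^{\otimes n}(-)$.

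The only point that is not pure formalism is the interchange $\mathrm{hofib}\,\lp_n\simeq\lp_n\mathrm{hofib}$ in the second observation; I expect this to be the crux, and I would spell it out using fibrancy of all the spaces involved (Proposition~\ref{exinficor}, Excision Theorem~A) together with the compactness of finite simplicial sets. If one prefers to avoid point‑set manipulation, an acceptable fallback is to pass to homotopy groups — each Excision Theorem yields a long exact sequence of homotopy groups and a filtered colimit of exact sequences of abelian groups is exact, so $\cc K^{\star}$ inherits long exact sequences — but upgrading this to the stated homotopy‑fibre‑sequence conclusion still needs the colimit interchange, so I would carry the latter out directly.
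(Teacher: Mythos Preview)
Your proposal is correct and is essentially the paper's own approach: the paper's proof reads in full ``This follows from Excision Theorems~A,~B and some elementary properties of simplicial sets,'' and what you have written simply unpacks those elementary properties---that tensoring with $M_\infty k^{\otimes n}$ or $M_n k$ preserves $\ff F$-extensions, and that sequential colimits of homotopy fibre sequences of Kan complexes remain homotopy fibre sequences.
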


\begin{proof}
This follows from Excision Theorems A, B and some elementary
properties of simplicial sets.
\end{proof}

\begin{defs}{\rm
(1) Given two $k$-algebras $A,B\in\Re$ and $\star\in\{st,mor\}$, the
sequence of spaces
   $$\mathcal{K}^{\star}(A,B),\mathcal{K}^{\star}(JA,B),\mathcal{K}^{\star}(J^2A,B),\ldots$$
together with isomorphisms
$\mathcal{K}^{\star}(J^nA,B)\cong\Omega\mathcal{K}^{\star}(J^{n+1}A,B)$
constructed in Theorem~\ref{loop} forms an $\Omega$-spectrum which
we also denote by $\mathbb{K}^{\star}(A,B)$. Its homotopy groups
will be denoted by $\mathbb{K}_n^{\star}(A,B)$, $n\in\bb Z$. Observe
that $\mathbb{K}_n^{\star}(A,B)\cong\mathcal{K}_n^{\star}(A,B)$ for
any $n\geq 0$ and
$\mathbb{K}_n^{\star}(A,B)\cong\mathcal{K}_0^{\star}(J^{-n}A,B)$ for
any $n<0$.

(2) The {\it stable algebraic Kasparov $K$-theory spectrum\/} of
$(A,B)$ (respectively {\it Morita stable algebraic Kasparov
$K$-theory spectrum}) is the $\Omega$-spectrum
$\mathbb{K}^{st}(A,B)$ (respectively $\mathbb{K}^{mor}(A,B)$).

}\end{defs}

\begin{thm}\label{spectrumst}
Let $\star\in\{st,mor\}$. The assignment
$B\mapsto\mathbb{K}^{\star}(A,B)$ determines a functor
   $$\mathbb{K}^{\star}(A,?):\Re\to(Spectra)$$
which is homotopy invariant and excisive in the sense that for every
$\ff F$-extension $F\to B\to C$ the sequence
   $$\mathbb{K}^{\star}(A,F)\to\mathbb{K}^{\star}(A,B)\to\mathbb{K}^{\star}(A,C)$$
is a homotopy fibration of spectra. In particular, there is a long
exact sequence of abelian groups
   $$\cdots\to\mathbb{K}_{i+1}^{\star}(A,C)\to\mathbb{K}_i^{\star}(A,F)\to\mathbb{K}_i^{\star}(A,B)
     \to\mathbb{K}_i^{\star}(A,C)\to\cdots$$
for any $i\in\bb Z$.
\end{thm}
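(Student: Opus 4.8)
The plan is to reduce everything to the already-established unstable case. The statement to prove is that for $\star\in\{st,mor\}$, $B\mapsto\mathbb K^\star(A,B)$ is a homotopy-invariant, excisive functor to spectra. First I would observe that $\mathbb K^\star(A,B)$ is, by construction in the preceding definition, the $\Omega$-spectrum whose $n$-th space is $\mathcal K^\star(J^nA,B)$, with the structure equivalences coming from Theorem~\ref{loop} (which applies levelwise to the $M_\infty k^{\otimes m}\otimes B$'s, since Theorem~\ref{loop} is natural in $B$). Consequently it suffices to prove the corresponding statements on the level of the spaces $\mathcal K^\star(A,B)$: homotopy invariance of $B\mapsto\mathcal K^\star(A,B)$, and that an $\ff F$-extension $F\to B\to C$ induces a homotopy fibre sequence $\mathcal K^\star(A,F)\to\mathcal K^\star(A,B)\to\mathcal K^\star(A,C)$. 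The long exact sequence of homotopy groups is then the standard consequence of a homotopy fibration of $\Omega$-spectra.

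For homotopy invariance, I would argue that $\mathcal K^{st}(A,B)=\lp_n\mathcal K(A,M_\infty k^{\otimes n}\otimes B)$ is a filtered colimit of the spaces $\mathcal K(A,M_\infty k^{\otimes n}\otimes B)$, each of which is homotopy invariant in $B$ by Lemma~\ref{preexcsi}(1): indeed $(M_\infty k^{\otimes n}\otimes B)[x]\cong M_\infty k^{\otimes n}\otimes B[x]$, so the natural map $B\to B[x]$ induces a weak equivalence on each term, and filtered colimits of weak equivalences of simplicial sets are weak equivalences. The same argument handles $\star=mor$ with the tower $B\to M_2 k\otimes B\to\cdots$. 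For excision, given an $\ff F$-extension $F\rightarrowtail B\twoheadrightarrow C$, tensoring with $M_\infty k^{\otimes n}$ (a $k$-split operation, and $M_\infty(f)\in\ff F$ for $f\in\ff F$ as noted in the text) yields again an $\ff F$-extension $M_\infty k^{\otimes n}\otimes F\to M_\infty k^{\otimes n}\otimes B\to M_\infty k^{\otimes n}\otimes C$, to which Excision Theorem~A applies, giving a homotopy fibre sequence of spaces at each level $n$. Passing to the filtered colimit over $n$ preserves homotopy fibre sequences of fibrant simplicial sets (filtered colimits commute with finite homotopy limits in $\bb S$, using that all the spaces in sight are fibrant by Proposition~\ref{exinficor}), so $\mathcal K^\star(A,F)\to\mathcal K^\star(A,B)\to\mathcal K^\star(A,C)$ is a homotopy fibre sequence. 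This is essentially the content already packaged as Theorem~\ref{excsionast}, so I would just cite it together with Lemma~\ref{morst}.

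Finally, to promote the space-level statements to spectrum-level ones, I would invoke the fact that $\mathbb K^\star(A,-)$ is an $\Omega$-spectrum (each $\mathcal K^\star(J^nA,B)\cong\Omega\mathcal K^\star(J^{n+1}A,B)$), so a levelwise homotopy fibre sequence of such spectra is a homotopy fibration of spectra, and such a homotopy fibration induces the asserted long exact sequence $\cdots\to\mathbb K^\star_{i+1}(A,C)\to\mathbb K^\star_i(A,F)\to\mathbb K^\star_i(A,B)\to\mathbb K^\star_i(A,C)\to\cdots$ on stable homotopy groups, valid for all $i\in\bb Z$ because the negative homotopy groups are recovered as $\mathbb K^\star_n(A,B)\cong\mathcal K^\star_0(J^{-n}A,B)$. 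The main obstacle — and it is a minor one — is the careful bookkeeping needed to know that the colimit defining $\mathcal K^\star$ interacts correctly with the homotopy-fibre-sequence property; this is where one uses fibrancy of all the relevant simplicial sets (Corollary~\ref{corloop2}, Proposition~\ref{exinficor}) so that the honest colimit computes the homotopy colimit and commutes with the relevant homotopy pullbacks. Everything else is a direct appeal to Excision Theorems~A and~B, Lemma~\ref{preexcsi}, Theorem~\ref{loop}, and Lemma~\ref{morst}.
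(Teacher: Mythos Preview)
Your proposal is correct and follows essentially the same route as the paper, which proves this theorem in one line by citing Theorem~\ref{excsionast}; your expanded argument simply unpacks that citation. Note that Excision Theorem~B and Lemma~\ref{morst} are not actually needed here (the theorem concerns only the covariant variable and does not assert $M_\infty$- or Morita-invariance), so you can drop those references.
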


\begin{proof}
This follows from Theorem~\ref{excsionast}.
\end{proof}

We also have the following

\begin{thm}\label{spectrumunstbst}
Let $\star\in\{st,mor\}$. The assignment
$B\mapsto\mathbb{K}^{\star}(B,D)$ determines a functor
   $$\mathbb{K}^{\star}(?,D):\Re^{\op}\to(Spectra),$$
which is excisive in the sense that for every $\ff F$-extension
$F\to B\to C$ the sequence
   $$\mathbb{K}^{\star}(C,D)\to\mathbb{K}^{\star}(B,D)\to\mathbb{K}^{\star}(F,D)$$
is a homotopy fibration of spectra. In particular, there is a long
exact sequence of abelian groups
   $$\cdots\to\mathbb{K}_{i+1}^{\star}(F,D)\to\mathbb{K}_i^{\star}(C,D)\to\mathbb{K}_i^{\star}(B,D)
     \to\mathbb{K}_i^{\star}(F,D)\to\cdots$$
for any $i\in\bb Z$.
\end{thm}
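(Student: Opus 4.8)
The plan is to deduce this from the contravariant excision for the unstable spectra already established in Theorem~\ref{spectrumunstb} (which itself rests on Theorem~\ref{mainstab}), by passing to a filtered colimit. The key observation is that, for fixed $D$, the functor $\mathbb{K}^{\star}(?,D)\colon\Re^{\op}\to(Spectra)$ is a filtered colimit of functors of the form $\mathbb{K}(?,M\otimes D)$: by definition $\cc K^{st}(A,B)=\lp_n\cc K(A,M_\infty k^{\otimes n}\otimes B)$ and $\cc K^{mor}(A,B)=\lp_n\cc K(A,M_n k\otimes B)$, so the $m$-th space $\cc K^{\star}(J^mA,B)$ of $\mathbb{K}^{\star}(A,B)$ is the corresponding colimit of the $m$-th spaces $\cc K(J^mA,M\otimes B)$ of the $\mathbb{K}(A,M\otimes B)$, and the structure maps (constructed as in Theorem~\ref{loop}) are compatible with these colimits. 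This gives natural isomorphisms of spectra
   $$\mathbb{K}^{st}(?,D)\cong\lp_n\mathbb{K}(?,M_\infty k^{\otimes n}\otimes D),\qquad\mathbb{K}^{mor}(?,D)\cong\lp_n\mathbb{K}(?,M_n k\otimes D),$$
functorial in the first variable, with transition maps induced by the corner inclusions; here $M_\infty k^{\otimes n}\otimes D$ and $M_n k\otimes D$ again lie in $\Re$ by the standing assumptions of the section.

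First I would note that for any $\ff F$-extension $F\to B\to C$ in $\Re$ and any $k$-module $M$ the sequence $M\otimes F\to M\otimes B\to M\otimes C$ is again an $\ff F$-extension, since tensoring a $k$-split exact sequence over $k$ preserves $k$-splitness. Hence for each $n$, Theorem~\ref{spectrumunstb} applied with second variable $M_\infty k^{\otimes n}\otimes D$ (resp. $M_n k\otimes D$) shows that
   $$\mathbb{K}(C,M_\infty k^{\otimes n}\otimes D)\to\mathbb{K}(B,M_\infty k^{\otimes n}\otimes D)\to\mathbb{K}(F,M_\infty k^{\otimes n}\otimes D)$$
(resp. the analogue with $M_n k$) is a homotopy fibration of spectra. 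Passing to the filtered colimit over $n$, and using that in the stable category filtered homotopy colimits commute with homotopy fibres — equivalently, that stable homotopy groups commute with filtered colimits — one concludes that $\mathbb{K}^{\star}(C,D)\to\mathbb{K}^{\star}(B,D)\to\mathbb{K}^{\star}(F,D)$ is a homotopy fibration for $\star\in\{st,mor\}$. The long exact sequence of abelian groups then follows by taking the colimit of the long exact sequences supplied by Theorem~\ref{spectrumunstb}, since $\lp_n$ is exact on abelian groups; functoriality in the first variable is inherited from that of the $\mathbb{K}(?,M\otimes D)$.

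The step to watch is the spectrum-level identification of $\mathbb{K}^{\star}(?,D)$ as a filtered colimit of the $\mathbb{K}(?,M\otimes D)$, together with the observation that this detour is genuinely necessary: one cannot simply apply the space-level Theorem~\ref{excsionast} levelwise, as was done for the covariant spectrum statement Theorem~\ref{spectrumst}, because the $m$-th space of $\mathbb{K}^{\star}(?,D)$ is $\cc K^{\star}(J^m?,D)$ and $J^m$ does not carry $\ff F$-extensions to $\ff F$-extensions (already $T$ fails to do so). Importing the contravariant excision at the spectrum level from Theorem~\ref{spectrumunstb}/Theorem~\ref{mainstab} circumvents this obstacle, after which the colimit argument is routine. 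Alternatively one could prove a $\star$-analogue of Theorem~\ref{mainstab} directly in $Sp(\Re)$, but the colimit route reuses the existing machinery without extra work.
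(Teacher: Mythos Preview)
Your proof is correct, and it is essentially what the paper's one-line argument (``This follows from Theorem~\ref{excsionast}'') must unpack to. You are right that one cannot simply read the paper's proof as ``apply Theorem~\ref{excsionast} levelwise'': the $m$-th space of $\mathbb{K}^{\star}(?,D)$ is $\cc K^{\star}(J^m?,D)$, and $J^m$ does not take $\ff F$-extensions to $\ff F$-extensions, so Theorem~\ref{excsionast} as stated does not apply at level $m>0$. What makes the paper's sentence legitimate is that the contravariant half of Theorem~\ref{excsionast} is itself obtained by taking a filtered colimit (over $D\mapsto M_\infty k^{\otimes n}\otimes D$ or $M_n k\otimes D$) of Excision Theorem~B, and Excision Theorem~B is the level-$0$ shadow of the spectrum-level Theorem~\ref{spectrumunstb}/Theorem~\ref{mainstab}, which already gives the fibre sequence $\cc K(J^mC,D')\to\cc K(J^mB,D')\to\cc K(J^mF,D')$ at every level $m$ and for every $D'$. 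Colimiting over $D'$ then yields the level-$m$ statement for $\cc K^{\star}$.

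Your route does the same thing but phrases it once at the spectrum level: write $\mathbb{K}^{\star}(?,D)$ as a filtered colimit of the spectra $\mathbb{K}(?,M\otimes D)$, apply Theorem~\ref{spectrumunstb} to each term, and pass to the colimit. This is cleaner than the implicit levelwise unpacking and avoids having to articulate why the level-$m$ fibre sequence holds despite $J^m$ not preserving extensions. The two arguments are the same in substance; yours is simply the explicit version.
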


\begin{proof}
This follows from Theorem~\ref{excsionast}.
\end{proof}

\begin{thm}[Comparison]\label{comarst}
There are natural isomorphisms
   $$\cc K_0^{st}(A,B)\to\lp_{m,n}[J^nA,M_\infty(k)^{\otimes m}\otimes B^{\ff S^n}]$$
and
   $$\cc K_0^{mor}(A,B)\to\lp_{m,n}[J^nA,M_m(k)\otimes B^{\ff S^n}],$$
functorial in $A$ and $B$.
\end{thm}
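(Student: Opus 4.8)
The plan is to deduce both isomorphisms from Comparison Theorem~A by interchanging filtered colimits.

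Since $\pi_0$ commutes with filtered colimits of simplicial sets, the definitions give $\cc K_0^{st}(A,B)=\lp_m\cc K_0(A,M_\infty(k)^{\otimes m}\otimes B)$ and $\cc K_0^{mor}(A,B)=\lp_m\cc K_0(A,M_m(k)\otimes B)$. The coefficient algebras $M_\infty(k)^{\otimes m}\otimes B$ and $M_m(k)\otimes B$ lie in $\Re$ by the standing hypotheses of this section, so Comparison Theorem~A applies to each and identifies the inner term with $\lp_n[J^nA,C^{\ff S^n}]$, where $C$ is the relevant coefficient algebra.

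Next I would identify $C^{\ff S^n}$ when $C=R\otimes B$ with $R=M_\infty(k)^{\otimes m}$ or $R=M_m(k)$. Recall that $C^{\ff S^n}_k=\kr(C^{\sd^k I^n}\to C^{\sd^k(\partial I^n)})$ and that, by Proposition~\ref{power=tensor}, $C^L\cong C\otimes_k k^L$ naturally for every finite simplicial set $L$. Since $R$ is a free $k$-module, $-\otimes_k R$ is exact, so $(R\otimes B)^{\sd^k I^n}\cong R\otimes B^{\sd^k I^n}$ compatibly with the boundary restriction maps, whence $(R\otimes B)^{\ff S^n}_k\cong R\otimes B^{\ff S^n}_k$ and therefore $(R\otimes B)^{\ff S^n}\cong R\otimes B^{\ff S^n}$ as ind-algebras, naturally in $R$ and $B$; this is the ind-level form of the isomorphism $M_\infty A\otimes k^L\cong(M_\infty A)^L$ recorded before the theorem. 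Combining with the previous step gives $\cc K_0^{st}(A,B)\cong\lp_m\lp_n[J^nA,M_\infty(k)^{\otimes m}\otimes B^{\ff S^n}]$, and similarly in the Morita case.

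It remains to replace the iterated colimit by a colimit over $\bb Z_{\geq 0}\times\bb Z_{\geq 0}$, which requires that the two systems of transition maps commute. The maps $\sigma\colon[J^nA,C^{\ff S^n}]\to[J^{n+1}A,C^{\ff S^{n+1}}]$ underlying Comparison Theorem~A send $f$ to $\xi_\upsilon\circ J(f)$ and are manifestly natural in the coefficient algebra $C$; the transition maps in $m$ are induced on homotopy classes by the stabilization homomorphisms $M_\infty(k)^{\otimes m}\otimes B\to M_\infty(k)^{\otimes(m+1)}\otimes B$ (resp. $M_m(k)\otimes B\to M_{m+1}(k)\otimes B$), which are algebra homomorphisms, so naturality of $\sigma$ makes the relevant squares commute. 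Hence the bi-indexed system is a functor on $\bb Z_{\geq 0}\times\bb Z_{\geq 0}$ and its iterated colimit is the colimit over the product poset, giving the right-hand sides of the theorem; functoriality in $A$ and $B$ is automatic since every identification used is natural. The step requiring the most care is the compatibility of the isomorphism $(R\otimes B)^{\ff S^n}\cong R\otimes B^{\ff S^n}$ with subdivision and with the maps $\varsigma,\sigma$, i.e. making Proposition~\ref{power=tensor} natural enough that the defining kernels and all colimit maps are respected; the rest is bookkeeping with filtered colimits layered on Comparison Theorem~A.
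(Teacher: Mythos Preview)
Your proposal is correct and follows the same route as the paper, which simply records ``This follows from Comparison Theorem~A.'' You have spelled out the details the paper leaves implicit: commuting $\pi_0$ with the filtered colimit in $m$, applying Comparison Theorem~A termwise, identifying $(R\otimes B)^{\ff S^n}\cong R\otimes B^{\ff S^n}$ via Proposition~\ref{power=tensor} and flatness of $R$, and interchanging the two filtered colimits using naturality of $\sigma$; the tensor identification is exactly the observation the paper makes just before the theorem.
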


\begin{proof}
This follows from Comparison Theorem~A.
\end{proof}

\begin{cor}\label{comarcorrr}
$(1)$ The homotopy groups of $\bb K^{st}(A,B)$ are computed as
follows:
   $$\bb K_i^{st}(A,B)\cong
      \left\{
      \begin{array}{rcl}
       \lp_{m,n}[J^nA,(\Omega^iM_\infty(k)^{\otimes m}\otimes B)^{\ff S^n}],\ i\geq 0\\
       \lp_{m,n}[J^{-i+n}A,M_\infty(k)^{\otimes m}\otimes B^{\ff S^n}],\ i<0
      \end{array}
      \right.$$
$(2)$ The homotopy groups of $\bb K^{mor}(A,B)$ are computed as
follows:
   $$\bb K_i^{mor}(A,B)\cong
      \left\{
      \begin{array}{rcl}
       \lp_{m,n}[J^nA,(\Omega^iM_m(B))^{\ff S^n}],\ i\geq 0\\
       \lp_{m,n}[J^{-i+n}A,M_m(B)^{\ff S^n}],\ i<0
      \end{array}
      \right.$$
\end{cor}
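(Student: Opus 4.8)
The plan is to mimic the proof of Corollary~\ref{comarcor}, deriving the description of all homotopy groups from the $0$-th one (which is Comparison Theorem~\ref{comarst}) together with the loop identity of Corollary~\ref{excwww}. First I would record the elementary observation that for $\star\in\{st,mor\}$ and any $A,B\in\Re$ there is a natural homotopy equivalence $\cc K^{\star}(A,\Omega B)\simeq\Omega\cc K^{\star}(A,B)$. For $\star=st$ this holds because $M_\infty(k)^{\otimes n}\otimes\Omega B\cong\Omega(M_\infty(k)^{\otimes n}\otimes B)$ — tensoring with a fixed algebra commutes with the functor $\Omega=(x^2-x)(-)[x]$; alternatively one uses Proposition~\ref{power=tensor} and the isomorphisms $M_\infty A\otimes k^L\cong(M_\infty A)^L$ recorded at the start of this section — so that, since the filtered colimit defining $\cc K^{st}$ commutes with $\Omega$ and with taking homotopy groups,
$$\cc K^{st}(A,\Omega B)=\lp_n\cc K(A,\Omega(M_\infty(k)^{\otimes n}\otimes B))\simeq\lp_n\Omega\cc K(A,M_\infty(k)^{\otimes n}\otimes B)\simeq\Omega\cc K^{st}(A,B),$$
the middle equivalence being Corollary~\ref{excwww}. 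The same argument with $M_m(k)$ in place of $M_\infty(k)^{\otimes n}$ gives the case $\star=mor$.

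Iterating this equivalence yields $\cc K^{\star}_i(A,B)=\pi_0\Omega^i\cc K^{\star}(A,B)\cong\pi_0\cc K^{\star}(A,\Omega^iB)=\cc K^{\star}_0(A,\Omega^iB)$ for every $i\geq 0$. Combined with the description of $\bb K^{\star}_n$ in terms of $\cc K^{\star}_0$ recorded in the definition of the spectrum $\bb K^{\star}$ — namely $\bb K^{\star}_n(A,B)\cong\cc K^{\star}_n(A,B)$ for $n\geq 0$ and $\bb K^{\star}_n(A,B)\cong\cc K^{\star}_0(J^{-n}A,B)$ for $n<0$ — this reduces everything to computing $\cc K^{\star}_0$. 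Plugging in Comparison Theorem~\ref{comarst}, i.e. $\cc K^{st}_0(X,Y)\cong\lp_{m,n}[J^nX,M_\infty(k)^{\otimes m}\otimes Y^{\ff S^n}]$ and $\cc K^{mor}_0(X,Y)\cong\lp_{m,n}[J^nX,M_m(k)\otimes Y^{\ff S^n}]$, with $(X,Y)=(A,\Omega^iB)$ for $i\geq 0$ and $(X,Y)=(J^{-i}A,B)$ for $i<0$, gives exactly the asserted formulas.

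There is no genuine obstacle here: the only points needing care are the interchange of $\Omega$ (and of $\pi_*$) with the filtered colimits defining $\cc K^{st}$ and $\cc K^{mor}$, which is legitimate because these colimits are filtered colimits of fibrant simplicial sets (hence homotopy colimits) and $\Omega$ preserves sequential colimits, and the identification $M_\infty(k)^{\otimes m}\otimes(\Omega^iB)^{\ff S^n}\cong M_\infty(k)^{\otimes m}\otimes\Omega^i(B^{\ff S^n})$ (and its Morita analogue), which is immediate from $\Omega$ being given by a polynomial extension together with Proposition~\ref{power=tensor}. Everything else is a direct substitution, so the corollary follows.
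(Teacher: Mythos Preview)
Your proof is correct and follows exactly the approach the paper uses: the paper's own proof is the single line ``This follows from Corollary~\ref{excwww} and the preceding theorem,'' and you have simply unpacked that citation by passing $\Omega$ through the filtered colimit defining $\cc K^\star$, applying Corollary~\ref{excwww} termwise, and then substituting into Theorem~\ref{comarst}. The commutation of $\Omega$ and of tensoring by $M_\infty(k)^{\otimes m}$ (resp.\ $M_m(k)$) with the $(-)^{\ff S^n}$ construction, which you flag, is indeed the only point needing a word of justification, and Proposition~\ref{power=tensor} handles it as you say.
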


\begin{proof}
This follows from Corollary~\ref{excwww} and the preceding theorem.
\end{proof}

We denote by $D^-_{st}(\Re,\ff F)$ the category whose objects are
those of $\Re$ and whose maps between $A,B\in\Re$ are defined as
   $$\lp_{n}D^-(\Re,\ff F)(A,M_\infty(k)^{\otimes n}(B)).$$
Similarly, denote by $D^-_{mor}(\Re,\ff F)$ the category whose
objects are those of $\Re$ and whose maps between $A,B\in\Re$ are
defined as
   $$\lp_n D^-(\Re,\ff F)(A,M_n(B)).$$
It follows from~\cite{Gar1} that $D^-_{st}(\Re,\ff F)$ and
$D^-_{mor}(\Re,\ff F)$ are naturally left triangulated. Similar to
the definition of $D(\Re,\ff F)$ we can stabilize the loop
endofunctor $\Omega$ to get new categories $D_{mor}(\Re,\ff F)$ and
$D_{st}(\Re,\ff F)$ which are in fact triangulated.

\begin{thm}[\cite{Gar1}]\label{swsweq}
The functor $\Re\to D_{st}(\Re,\ff F)$ (respectively $\Re\to
D_{mor}(\Re,\ff F)$) is the universal $\ff F$-excisive, homotopy
invariant, $M_\infty$-invariant (respectively Morita invariant)
homology theory on $\Re$.
\end{thm}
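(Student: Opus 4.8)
The plan is to deduce Theorem~\ref{swsweq} from the already established universal property of $j\colon\Re\to D(\Re,\ff F)$ as the universal $\ff F$-excisive, homotopy invariant homology theory (stated at the end of Section~\ref{compb}, after Comparison Theorem~B), by identifying $D_{st}(\Re,\ff F)$ with a suitable localization of $D(\Re,\ff F)$ and observing that $M_\infty$-invariance is exactly the extra condition needed to factor through that localization. I treat the $M_\infty$-case in detail; the Morita case is identical, with $M_\infty$ replaced throughout by the maps $A\to M_nA$, $n>0$.

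First I would check that $j_{st}\colon\Re\to D_{st}(\Re,\ff F)$ is itself an $\ff F$-excisive, homotopy invariant, $M_\infty$-invariant homology theory. Homotopy invariance and excision are inherited from $D(\Re,\ff F)$: the canonical functor $D^-(\Re,\ff F)\to D^-_{st}(\Re,\ff F)$, and hence $D(\Re,\ff F)\to D_{st}(\Re,\ff F)$, is the identity on objects and carries the structure maps $\partial_E$ of $\ff F$-extensions to the corresponding ones, so it sends distinguished triangles to distinguished triangles (using that $D^-_{st}$ and $D_{st}$ are (left) triangulated by~\cite{Gar1}), and $A\to A[t]$ is already invertible in $D(\Re,\ff F)$. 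For $M_\infty$-invariance one uses the definition $D^-_{st}(\Re,\ff F)(A,B)=\lp_n D^-(\Re,\ff F)(A,M_\infty(k)^{\otimes n}\otimes B)$, with transition maps induced by $\iota\colon B\to M_\infty(k)\otimes B$: a standard telescope/cofinality argument shows that post-composition and pre-composition with $\iota_A\colon A\to M_\infty A$ induce bijections on all $\Hom$-sets of $D^-_{st}(\Re,\ff F)$, so $\iota_A$ is an isomorphism there, and after stabilizing $\Omega$ it remains an isomorphism in $D_{st}(\Re,\ff F)$.

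Next, given any $\ff F$-excisive, homotopy invariant, $M_\infty$-invariant homology theory $X\colon\Re\to\cc T$ with values in a triangulated category, the universal property of $D(\Re,\ff F)$ yields a unique triangulated functor $\bar X\colon D(\Re,\ff F)\to\cc T$ with $\bar X j=X$, compatibly with the boundary maps. Since $X$ is $M_\infty$-invariant, $\bar X(j\iota_A)=X(\iota_A)$ is an isomorphism for every $A\in\Re$, hence so is $\bar X$ applied to every map $M_\infty(k)^{\otimes n}\otimes A\to M_\infty(k)^{\otimes(n+1)}\otimes A$. I would then identify $D_{st}(\Re,\ff F)$ with the localization $D(\Re,\ff F)[S^{-1}]$ of $D(\Re,\ff F)$ at the class $S=\{j\iota_A\mid A\in\Re\}$: the morphisms of $D_{st}(\Re,\ff F)$ are by construction a double filtered colimit, over the loop-stabilization index and over $M_\infty(k)^{\otimes n}\otimes(-)$, and because $\Omega$ commutes with $M_\infty$ up to natural isomorphism (as $\Omega A=(x^2-x)A[x]$ and $M_\infty(k)\otimes(x^2-x)A[x]\cong(x^2-x)(M_\infty(k)\otimes A)[x]$) these two colimits interchange, giving precisely the calculus-of-fractions description of the Verdier quotient $D(\Re,\ff F)[S^{-1}]$. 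Granting this, the universal property of the localization makes $\bar X$ factor uniquely as a triangulated functor through $D_{st}(\Re,\ff F)$; and the induced factorization of $X$ is unique because $j_{st}$ is the identity on objects. This establishes the claimed universal property.

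The main obstacle I expect is exactly the last identification: one must verify carefully that forming $D^-_{st}(\Re,\ff F)$ by the colimit-of-$\Hom$-sets recipe and then stabilizing $\Omega$ produces the \emph{same} triangulated category as first stabilizing $\Omega$ (to get $D(\Re,\ff F)$) and then inverting $S$ by a calculus of fractions — in particular that the two filtered colimits commute, that the resulting localization of a triangulated category is again triangulated, and that $S$ becomes precisely the class of isomorphisms forced by $M_\infty$-invariance. Once the triangulated structures on $D^-_{st}$ and $D_{st}$ are taken from~\cite{Gar1} and the commutation $\Omega M_\infty\simeq M_\infty\Omega$ is in hand, the remaining work is a routine but bookkeeping-heavy verification; the Morita case goes through verbatim with $\{A\to M_nA\}_{n>0}$ and the analogous commutation of $\Omega$ with $M_n(-)$.
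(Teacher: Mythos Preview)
The paper does not prove this theorem at all: it is stated with the attribution \cite{Gar1} and no proof environment follows. The ``paper's own proof'' is simply a citation to the companion paper where the triangulated categories $D_{st}(\Re,\ff F)$ and $D_{mor}(\Re,\ff F)$ are constructed and their universal properties established.

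Your proposal therefore goes well beyond what the present paper does. The strategy you outline --- use the already-stated universal property of $j\colon\Re\to D(\Re,\ff F)$ and then identify $D_{st}(\Re,\ff F)$ with a suitable localization of $D(\Re,\ff F)$ at the maps $\{j(\iota_A)\}$ --- is the natural one and is essentially how~\cite{Gar1} proceeds. You have also correctly located the only genuine content: checking that the colimit-of-Hom-sets definition of $D^-_{st}$ followed by $\Omega$-stabilization agrees with first stabilizing and then inverting the $\iota_A$'s, and that the result inherits a triangulated structure in which the canonical functor is exact. One small caution: calling this a ``Verdier quotient'' and invoking a calculus of fractions is slightly misleading, since the class $S=\{j(\iota_A)\}$ is not a priori saturated or multiplicative; what is actually being used is the filtered-colimit description of Hom-sets together with the fact that $M_\infty(k)\otimes(-)$ commutes with $\Omega$ and preserves triangles, which is exactly the bookkeeping you flag. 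Modulo that terminological point, your sketch is sound and matches the approach of the cited reference.
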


The next result says that the Hom-sets $D_{st}(\Re,\ff F)(A,B)$
($D_{mor}(\Re,\ff F)(A,B)$), $A,B\in\Re$, can be represented as
homotopy groups of spectra $\bb K^{st}(\Re,\ff F)(A,B)$ ($\bb
K^{mor}(A,B)$).

\begin{thm}[Comparison]\label{comparisbst}
Let $\star\in\{st,mor\}$. Then for any algebras $A,B\in\Re$ there is
an isomorphism of $\bb Z$-graded abelian groups
   $$\bb K^{\star}_*(A,B)\cong D_{\star}(\Re,\ff F)_*(A,B)=\bigoplus_{n\in\bb Z}D_{\star}(\Re,\ff F)(A,\Omega^nB),$$
functorial both in $A$ and in $B$.
\end{thm}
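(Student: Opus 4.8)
The plan is to bootstrap from Comparison Theorem~B, which settles the case of $D(\Re,\ff F)$, by passing to the filtered colimit in the matrix direction on both sides of the claimed isomorphism. The only structural input needed beyond Comparison Theorem~B is that the loop functor commutes, on the nose, with matrix amplification: since $M_n(k)$ and $M_\infty(k)^{\otimes m}$ are free $k$-modules and $\Omega B=(x^2-x)B[x]$, one has $\Omega(M_n(k)\otimes B)=M_n(k)\otimes\Omega B$ and $\Omega(M_\infty(k)^{\otimes m}\otimes B)=M_\infty(k)^{\otimes m}\otimes\Omega B$ naturally in $B$ (this is also recorded, via Proposition~\ref{power=tensor}, in Section~\ref{moritast}).

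First I would rewrite the left-hand side as a colimit of unstable groups. By construction $\cc K^{st}(J^jA,B)=\colim_m\cc K(J^jA,M_\infty(k)^{\otimes m}\otimes B)$, and the delooping isomorphisms $\cc K^{st}(J^jA,B)\cong\Omega\cc K^{st}(J^{j+1}A,B)$ are the colimit over $m$ of the isomorphisms of Theorem~\ref{loop} applied to $M_\infty(k)^{\otimes m}\otimes B$; since $\Omega$ commutes with filtered colimits of simplicial sets, $\bb K^{st}(A,B)$ is the sectionwise filtered colimit $\colim_m\bb K(A,M_\infty(k)^{\otimes m}\otimes B)$ of $\Omega$-spectra. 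Taking homotopy groups, which commute with filtered colimits, gives for every $i\in\bb Z$
$$\bb K^{st}_i(A,B)\cong\colim_m\bb K_i(A,M_\infty(k)^{\otimes m}\otimes B),$$
the transition maps being induced by the corner inclusions $\iota\colon C\to M_\infty(k)\otimes C$; alternatively this can be read off from Corollaries~\ref{comarcor} and~\ref{comarcorrr}. By Comparison Theorem~B each term is naturally (in both variables) isomorphic to $D(\Re,\ff F)(A,\Omega^i(M_\infty(k)^{\otimes m}\otimes B))$, with $\Omega$ the loop automorphism of $D(\Re,\ff F)$; naturality in the second variable makes these isomorphisms compatible with the maps induced by $\iota$, and the commutation of $\Omega$ with matrices turns the colimit into
$$\bb K^{st}_i(A,B)\cong\colim_m D(\Re,\ff F)(A,M_\infty(k)^{\otimes m}\otimes\Omega^iB).$$

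It then remains to identify the right-hand side with $D_{st}(\Re,\ff F)(A,\Omega^iB)$. By definition $D^-_{st}(\Re,\ff F)(X,-)=\colim_m D^-(\Re,\ff F)(X,M_\infty(k)^{\otimes m}\otimes-)$, and $D_{st}(\Re,\ff F)$ is obtained from $D^-_{st}(\Re,\ff F)$ by stabilizing the loop functor, exactly as $D(\Re,\ff F)$ was built from $D^-(\Re,\ff F)$. Unwinding the definition of the stabilized Hom-sets as a colimit over the stabilization index and interchanging that filtered colimit with the filtered colimit over $m$ --- legitimate precisely because $\Omega$ commutes with $M_\infty(k)^{\otimes m}\otimes-$, so that the double colimit is an honest colimit of a bifunctor over a product of directed sets --- yields $D_{st}(\Re,\ff F)(A,\Omega^iB)\cong\colim_m D(\Re,\ff F)(A,M_\infty(k)^{\otimes m}\otimes\Omega^iB)$. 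Summing over $i\in\bb Z$ and assembling the naturality statements gives the functorial isomorphism $\bb K^{st}_*(A,B)\cong D_{st}(\Re,\ff F)_*(A,B)$. The Morita case is entirely parallel, using $\cc K^{mor}(A,B)=\colim_n\cc K(A,M_n(k)\otimes B)$, $D^-_{mor}(\Re,\ff F)(X,-)=\colim_n D^-(\Re,\ff F)(X,M_n(-))$, and the commutation of $\Omega$ with the $M_n$'s. I expect the colimit-interchange-and-unwinding of $D_{st}$ (respectively $D_{mor}$) to be the only delicate step; everything else is the naturality furnished by Comparison Theorem~B together with the facts that homotopy groups and $\Omega$ preserve filtered colimits.
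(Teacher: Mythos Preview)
Your proposal is correct and is precisely the natural unpacking of the paper's one-line proof, which reads in its entirety ``This follows from Comparison Theorem~B.'' You have spelled out the colimit-over-matrices argument that the paper leaves implicit, and the steps you flag as delicate (commuting $\Omega$ with matrix amplification, interchanging the stabilization colimit with the colimit over $m$) are exactly the ones needed to make that one line into a proof.
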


\begin{proof}
This follows from Comparison Theorem B.
\end{proof}

\begin{thm}[Corti\~{n}as--Thom]\label{ctthm}
There is a natural isomorphism of $\bb Z$-graded abelian groups
   $$D_{st}(\Re,\ff F)_*(k,A)\cong KH_*(A),$$
where $KH_*(A)$ is the $\bb Z$-graded abelian group consisting of
the homotopy $K$-theory groups in the sense of Weibel~\cite{W1}.
\end{thm}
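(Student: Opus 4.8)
The plan is to deduce the statement from the corresponding result of Corti\~nas--Thom by means of the universal property recorded in Theorem~\ref{swsweq} (which is itself taken from~\cite{Gar1}). Recall that in~\cite{CT} the bivariant $K$-theory $kk$ is constructed on the category $\ahaw$ of all non-unital $k$-algebras, and that the canonical functor $\ahaw\to kk$ is shown there to be the universal homology theory which is $\ff F$-excisive (i.e. sends every $k$-linearly split extension to a distinguished triangle), homotopy invariant and $M_\infty$-invariant. This is precisely the universal property that Theorem~\ref{swsweq} attributes to $\Re\to D_{st}(\Re,\ff F)$. Since $\ahaw$ is itself a $T$-closed admissible category --- take $T$ to be the tensor algebra, see the examples following Lemma~\ref{ta} --- and trivially satisfies the matrix hypotheses in force throughout this section, uniqueness of a category equipped with such a universal functor yields an equivalence $D_{st}(\ahaw,\ff F)\simeq kk$ compatible with the canonical functors out of $\ahaw$.

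First I would reduce the general case to $\Re=\ahaw$. By definition an admissible category $\Re$ is a full subcategory of $\ahaw$, so the $M_\infty$-stable analogue of Corollary~\ref{corcompar} --- proved verbatim as in the text, with Theorem~\ref{comparisbst} (the comparison with the spectrum $\bb K^{st}$) playing the role that Comparison Theorem~B played there --- shows that the inclusion $\Re\hookrightarrow\ahaw$ induces a full and faithful functor $D_{st}(\Re,\ff F)\to D_{st}(\ahaw,\ff F)$. Combined with the equivalence of the previous paragraph this gives, for every $A\in\Re$ and every $n\in\bb Z$, natural isomorphisms
\[
D_{st}(\Re,\ff F)(k,\Omega^nA)\cong D_{st}(\ahaw,\ff F)(k,\Omega^nA)\cong kk(k,\Omega^nA)
\]
and consequently a natural isomorphism of $\bb Z$-graded abelian groups $D_{st}(\Re,\ff F)_*(k,A)\cong kk_*(k,A)$.

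Finally I would invoke~\cite[8.2.1]{CT}, which identifies $kk_*(k,A)$ with Weibel's homotopy $K$-theory groups $KH_*(A)$ of~\cite{W1}, naturally in $A$; composing the two identifications gives the asserted natural isomorphism $D_{st}(\Re,\ff F)_*(k,A)\cong KH_*(A)$. Equivalently, one may first rewrite the left-hand side as $\bb K^{st}_*(k,A)$ using Theorem~\ref{comparisbst} and then carry out the comparison on the level of spectra; the content is unchanged.

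The hard part is the identification $D_{st}(\ahaw,\ff F)\simeq kk$: one has to verify that the class of functors against which $kk$ is declared universal in~\cite{CT} literally coincides with the class appearing in Theorem~\ref{swsweq}. Concretely, one must match Corti\~nas--Thom's ``$M_\infty$-stability'' with the $M_\infty$-invariance used here, check that their excision hypothesis is exactly excision for $k$-linearly split surjections (our $\ff F$), and confirm that the ground ring $k$ is treated on both sides in the same way; this then pins the two universal functors down up to a unique compatible equivalence. The remaining ingredients --- the $M_\infty$-stable version of Corollary~\ref{corcompar} and the bookkeeping with the loop shifts $\Omega^n$ --- are routine transcriptions of arguments already given in the paper.
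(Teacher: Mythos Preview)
The paper's proof is just the citation ``See~\cite{Gar1}'', so you have supplied substantially more than what appears here. Your strategy --- matching $D_{st}(\ahaw,\ff F)$ with Corti\~nas--Thom's $kk$ via the shared universal property of Theorem~\ref{swsweq} and then invoking~\cite[8.2.1]{CT} --- is the natural one and is essentially what is carried out in the cited reference. The reduction from a general $\Re$ to $\ahaw$ via an $M_\infty$-stable analogue of Corollary~\ref{corcompar} is also sound in outline.

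One technical point deserves more care than you give it. The proof of Corollary~\ref{corcompar} works because the spectrum $\cc K(A,B)$ depends only on $A$, $B$, and the endofunctor $T$, not on the ambient admissible category (see the Remark following the definition of $\cc K(\Re)(A,B)$). For your stable full-faithfulness step to go through verbatim, you need the $T$ on $\Re$ to coincide with the restriction of the tensor-algebra $T$ on $\ahaw$; but the standing hypotheses of this section only say that $\Re$ is $T$-closed for \emph{some} left adjoint $\wt T$, not that this adjoint is the tensor algebra. If the two $T$'s differ, the spectra $\bb K^{st}$ built over $\Re$ and over $\ahaw$ are a priori different objects, and the ``proved verbatim'' claim does not literally apply. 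This is not fatal --- the categories $D_{st}(\Re,\ff F)$ and $D_{st}(\ahaw,\ff F)$ are themselves defined purely through $\ff W_\triangle$ and make no reference to $T$, so one can instead argue directly with the universal properties, or first show that the relevant Hom-sets are independent of the choice of $T$ --- but you should either add this hypothesis explicitly or indicate how to close the gap.
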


\begin{proof}
See~\cite{Gar1}.
\end{proof}

We end up the paper by proving the following

\begin{thm}\label{kh}
For any $A\in\Re$ there is a natural isomorphism of $\bb Z$-graded
abelian groups
   $$\bb K^{st}_*(k,A)\cong KH_*(A).$$
\end{thm}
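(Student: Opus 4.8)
The plan is to obtain Theorem~\ref{kh} as a formal consequence of two results already established above: the Comparison Theorem for the stable Kasparov spectrum (Theorem~\ref{comparisbst} with $\star=st$) and the Cortiñas--Thom identification of the stable bivariant theory evaluated at $k$ with homotopy $K$-theory (Theorem~\ref{ctthm}).

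First I would specialize Theorem~\ref{comparisbst} to the pair $(k,A)$; note that $k\in\Re$ by the standing hypotheses of Section~\ref{moritast}. This gives a natural isomorphism of $\bb Z$-graded abelian groups $\bb K^{st}_*(k,A)\cong D_{st}(\Re,\ff F)_*(k,A)$, functorial in $A$. Next I would apply Theorem~\ref{ctthm}, which supplies a natural isomorphism $D_{st}(\Re,\ff F)_*(k,A)\cong KH_*(A)$. Composing the two isomorphisms yields $\bb K^{st}_*(k,A)\cong KH_*(A)$, and naturality in $A$ is inherited from that of each factor.

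The only bookkeeping point is to record that both input theorems are phrased for a $T$-closed admissible category $\Re$ subject to the matrix conditions of Section~\ref{moritast}, and that $k$ together with the given algebra $A$ indeed lie in such a category; one then invokes the fact that $\bb K^{st}(k,-)$, $D_{st}(\Re,\ff F)(k,-)$ and $KH_*$ do not depend on the ambient choice of $\Re$ (compare the remark following the definition of $\cc K(\Re)(A,B)$ together with Corollary~\ref{corcompar}). I do not expect any genuine obstacle here: the substance of the argument resides entirely in Comparison Theorem~B and in the Cortiñas--Thom theorem, so the present statement is a short composition of isomorphisms once those are in hand.
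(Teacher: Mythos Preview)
Your proposal is correct and matches the paper's own proof exactly: the paper simply writes ``This follows from Theorems~\ref{comparisbst} and~\ref{ctthm},'' which is precisely the composition of the two isomorphisms you spell out. Your additional bookkeeping paragraph about independence of the ambient $\Re$ is not strictly needed here (the paper does not mention it), but it does no harm.
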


\begin{proof}
This follows from Theorems~\ref{comparisbst} and~\ref{ctthm}.
\end{proof}

The preceding theorem is an analog of the same result of $KK$-theory
saying that there is a natural isomorphism $KK_*(\bb C,A)\cong K(A)$
for any $C^*$-algebra $A$.

\end{document}